\newtheorem{theorem}{Theorem}[section]
\newtheorem{corollary}[theorem]{Corollary}
\newtheorem{lemma}[theorem]{Lemma}
\newtheorem{proposition}[theorem]{Proposition}
\theoremstyle{definition}
\newtheorem{definition}[theorem]{Definition}
\newtheorem{example}[theorem]{Example}
\newtheorem{examples}[theorem]{Examples}
\renewcommand{\mod}{\operatorname{mod}\nolimits}
\newcommand{\SL}{\operatorname{SL}\nolimits}
\newcommand{\GL}{\operatorname{GL}\nolimits}
\newcommand{\soc}{\operatorname{soc}\nolimits}
\newcommand{\add}{\operatorname{add}\nolimits}
\newcommand{\Hom}{\operatorname{Hom}\nolimits}
\newcommand{\CM}{\operatorname{CM}\nolimits}
\newcommand{\Sub}{\operatorname{Sub}\nolimits}
\newcommand{\fl}{\operatorname{f.l.}\nolimits}
\newcommand{\id}{\operatorname{id}\nolimits}
\newcommand{\pd}{\operatorname{pd}\nolimits}
\newcommand{\fd}{\operatorname{fd}\nolimits}
\newcommand{\Ker}{\operatorname{Ker}\nolimits}
\newcommand{\ann}{\operatorname{ann}\nolimits}
\newcommand{\depth}{\operatorname{depth}\nolimits}
\newcommand{\gl}{\operatorname{gl.dim}\nolimits}
\newcommand{\dom}{\operatorname{dom.dim}\nolimits}
\newcommand{\repdim}{\operatorname{rep.dim}\nolimits}
\newcommand{\Ext}{\operatorname{Ext}\nolimits}
\newcommand{\End}{\operatorname{End}\nolimits}
\newcommand{\Spec}{\operatorname{Spec}\nolimits}
\newcommand{\Tr}{\operatorname{Tr}\nolimits}
\newcommand{\ord}{\operatorname{ord}\nolimits}
\newcommand{\rank}{\operatorname{rank}\nolimits}
\newcommand{\Z}{{\mathbf{Z}}}
\newcommand{\A}{{\mathcal A}}
\newcommand{\CC}{{\mathcal C}}
\newcommand{\DD}{{\mathcal D}}
\newcommand{\KK}{{\mathcal K}}
\newcommand{\MM}{{\mathcal M}}
\newcommand{\PP}{{\mathcal P}}
\newcommand{\II}{{\mathcal I}}
\newcommand{\XX}{{\mathcal X}}
\newcommand{\TTT}{{\mathcal T}}
\title[Auslander-Reiten theory revisited]{Auslander-Reiten theory revisited}
\author[O. Iyama]{Osamu Iyama}
\begin{document}

\begin{abstract}
We recall several results in Auslander-Reiten theory for finite-dimensional algebras over fields and orders over complete local rings.
Then we introduce $n$-cluster tilting subcategories and higher theory of almost split sequences and Auslander algebras there.
Several examples are explained.
\end{abstract}

\begin{classification}
Primary 16G70; Secondary 16E30, 16G30.
\end{classification}

\begin{keywords}
Auslander-Reiten theory, $n$-cluster tilting subcategory, $n$-Auslander  algebra, $n$-almost split sequence
\end{keywords}
\maketitle

\section*{Introduction}\label{Introduction}

After three fundamental papers
\begin{itemize}
 \settowidth{\labelwidth}{ABr}
 \settowidth{\itemindent}{ \ }
\item[\cite{A-coherent}] M. Auslander, \emph{Coherent functors}, 1966,
\item[\cite{ABr}] M. Auslander and M. Bridger, \emph{Stable module theory}, 1969,
\item[\cite{A-repdim}] M. Auslander, \emph{Representation dimension of artin algebras}, 1971,
\end{itemize}
M. Auslander and I. Reiten established their theory in series of papers containing \cite{A-almost-I,A-almost-II,A-order,AR-stable,AR-almost-III,AR-almost-IV,AR-almost-V,AR-almost-VI}.
Their exciting application of abstract homological algebra opened a right way to understand the vast world of non-commutative algebras.
The aim of this article is not to give a survey of Auslander-Reiten theory.
We recall only several results from Auslander-Reiten theory focusing on the concept of Auslander algebras.
Then we give an introduction to a higher theory of Auslander algebras and almost split sequences as discussed in \cite{I-ar,I-aorder}.

Our starting point is `Auslander correspondence' given in \cite{A-repdim}, which is a bijection between representation-finite algebras and
algebras satisfying $\gl\Gamma\le 2\le\dom\Gamma$, called \emph{Auslander algebras}.
This was a milestone in modern representation theory of algebras leading to later Auslander-Reiten theory.
Since Auslander algebras have `representation theoretic realization' \cite{I-tau4} as endomorphism algebras of additive generators of module categories,
some aspects of Auslander-Reiten theory can be regarded as a generalization of properties of Auslander algebras to
general module categories without additive generators by using coherent functors \cite{A-coherent} and stable module categories \cite{ABr}.
Our hope is the following.

\medskip

\noindent
\textit{Certain classes of nice algebras should have `representation theoretic realization'
as endomorphism algebras of additive generators of certain classes of nice categories.
}

\medskip

 The article is divided into the following sections and subsections.
\medskip

\noindent
1. From Auslander-Reiten theory

 1.1. Auslander algebras

 1.2. Almost split sequences

 1.3. Representation dimension

\medskip

\noindent
2. $n$-Auslander-Reiten theory

2.1. $n$-cluster tilting subcategories

2.2. $n$-Auslander algebras

2.3. $n$-almost split sequences

2.4. Example: $n$-cluster tilting for $(n-1)$-Auslander algebras

2.5. Questions

\medskip

\noindent
3. Orders and Cohen-Macaulay modules

3.1. From Auslander-Reiten theory

3.2. $n$-Auslander-Reiten theory

3.3. Cohen-Macaulay modules and triangulated categories

3.4. $2$-cluster tilting for $2$-Calabi-Yau categories

\medskip

\noindent
4. Examples of $2$-cluster tilting objects

4.1. Preprojective algebras

4.2. Hypersurface singularities

\medskip

In Section \ref{n-Auslander algebras and n-almost split sequences} we consider finite-dimensional algebras satisfying $\gl\Gamma\le n+1\le\dom\Gamma$, called \emph{$n$-Auslander algebras}.
We introduce \emph{$n$-cluster tilting subcategories} (maximal $(n-1)$-orthogonal subcategories) of module categories.
We show that they give `representation theoretic realization' of $n$-Auslander algebras.
As a generalization of properties of $n$-Auslander algebras, we find a certain analogue of Auslander-Reiten theory for $n$-cluster tilting subcategories,
namely the $n$-Auslander-Reiten translation, the $n$-Auslander-Reiten duality and $n$-almost split sequences.

In Section \ref{Orders and Cohen-Macaulay modules} we study a large class of nice algebras, containing finite-dimensional algebras, called \emph{orders},
and their nice modules called \emph{Cohen-Macaulay modules}.
We recall results from Auslander-Reiten theory for the category of Cohen-Macaulay modules over orders \cite{A-order,A-isolated}.
The theory is particularly nice for Krull dimension two since there exist \emph{fundamental sequences}
connecting projective modules and injective modules and having properties similar to almost split sequences.
We also introduce $n$-Auslander-Reiten theory for $n$-cluster tilting subcategories of the category of Cohen-Macaulay modules over orders.
This is particularly nice for Krull dimension $n+1$ since there exist \emph{$n$-fundamental sequences} connecting projective modules and injective modules
and having properties similar to $n$-almost split sequences.
Consequently $n$-Auslander algebras for Krull dimension $n+1$ enjoy particularly nice properties, and they form $(n+1)$-Calabi-Yau algebras in certain cases.
We also compare them with non-commutative crepant resolutions introduced by Van den Bergh.
Although in this article we could not deal with an exciting aspect of $2$-cluster tilting theory on categorification of Fomin-Zelevinsky cluster algebras,
a few results will be explained in Subsection \ref{2-cluster tilting for 2-Calabi-Yau categories}.


We give several examples of $n$-cluster tilting subcategories.
In Subsection \ref{$n$-cluster tilting for higher Auslander algebras} we give an inductive construction of $(n-1)$-Auslander algebras with $n$-cluster tilting subcategories \cite{I-inductive}.
In Subsection \ref{Cluster tilting for preprojective algebras} we construct a family of finite-dimensional factor algebras of preprojective algebras parametrized by elements $w$ in the corresponding Coxeter group.
We show that the categories of their syzygy modules contain $2$-cluster tilting objects given by reduced expressions of $w$ \cite{BIRSc}.
In Subsection \ref{Hypersurface singularities} we give a necessary and sufficient condition for one-dimensional hypersurface singularities to have $2$-cluster tilting objects,
and classify $2$-cluster tilting objects \cite{BIKR} by applying results in previous sections and the theory of tilting mutation due to Riedtmann-Schofield and Happel-Unger.

\medskip
\noindent{\bf Conventions.}
Throughout this paper all modules are usually right modules.
For a noetherian ring $\Lambda$, we denote by $\mod\Lambda$ the category of finitely generated $\Lambda$-modules,
and by $\fl\Lambda$ the category of $\Lambda$-modules with finite length.
The composition $fg$ of morphisms $f$ and $g$ means first $g$, then $f$.
For a module $M\in\mod\Lambda$, we denote by $\add M$ the full subcategory of $\mod\Lambda$ consisting of direct summands of finite direct sums of copies of $M$.
For example, $\add\Lambda$ is the category of finitely generated projective $\Lambda$-modules.
We denote by $J_\Lambda$ the Jacobson radical of $\Lambda$.

Let $\XX$ be an additive category.
We call $\XX$ \emph{Krull-Schmidt} if any object in $\XX$ is isomorphic to a finite direct sum of objects whose endomorphism rings are local.
We call an object in $\XX$ \emph{basic} if all indecomposable direct summands are mutually non-isomorphic.
We say that $I$ is an \emph{ideal} of $\XX$ if we have a subgroup $I(X,Y)$ of $\Hom_{\XX}(X,Y)$ for any $X,Y\in\XX$ and
\[(Y,Z)\cdot I(X,Y)\cdot (W,X)\subset I(W,Z)\]
holds for any $W,X,Y,Z\in\XX$.
There exists an ideal $J_{\XX}$ called the \emph{Jacobson radical} of $\XX$ such that
$J_{\XX}(X,X)$ coincides with the Jaconson radical $J_{\End_{\XX}(X)}$ of the endomorphism ring $\End_{\XX}(X)$ for any $X\in\XX$.
For any object $M\in\XX$, we have an ideal $[M]$ of $\XX$ defined by
\begin{eqnarray}\label{ideal}
[M](X,Y):=\{f\in(X,Y)\ |\ f\ \mbox{ factors through $M^\ell$ for some }\ell\}.
\end{eqnarray}
Let $\XX$ be an extension-closed subcategory of an abelian category $\A$.
We denote by
\begin{eqnarray*}
\PP&:=&\{X\in\XX\ |\ \Ext^i_{\A}(X,\XX)=0 \mbox{ for } i>0\},\\
\II&:=&\{X\in\XX\ |\ \Ext^i_{\A}(\XX,X)=0 \mbox{ for } i>0\},
\end{eqnarray*}
the categories of projective objects and injective objects in $\XX$.
We say that $\XX$ has \emph{enough projectives} if, for any $X\in\XX$, there exists an exact sequence $0\to Y\to P\to X\to0$ in $\A$ with $Y\in\XX$ and $P\in\PP$.
Dually, we define \emph{enough injectives}.

%
%

\section{From Auslander-Reiten theory}\label{From Auslander-Reiten theory}

In this section we recall several results in Auslander-Reiten theory.

\subsection{Auslander algebras}\label{Auslander algebras}

A finite-dimensional algebra $\Lambda$ is called \emph{representation-finite} if there are only finitely many isoclasses of indecomposable $\Lambda$-modules.
In this case there exists $M\in\mod\Lambda$ such that $\add M=\mod\Lambda$,
and we have a finite-dimensional algebra $\End_\Lambda(M)$ with a categorical equivalence
\[\Hom_\Lambda(M,-):\mod\Lambda\to\add\End_\Lambda(M).\]
The representation theory of $\Lambda$ is encoded in the structure of the finite-dimensional algebra $\End_\Lambda(M)$.
To study $\End_\Lambda(M)$ we introduce a certain class of `Auslander-type conditions' on injective resolutions of noetherian rings (see \cite{AR-k-Gorenstein,HI}),
whose prototype is a property of commutative Gorenstein rings.
\begin{definition}\label{Auslander-type condition}
Let $\Gamma$ be a noetherian ring with a minimal injective resolution
\[0\to\Gamma\to I_0\to I_1\to\cdots\]
of the $\Gamma$-module $\Gamma$.
Let $n$ and $m$ be positive integers.
Following Tachikawa \cite{Ta} and Hoshino \cite{Ho}, we write
\[\dom\Gamma\ge n\]
if $I_i$ is a flat $\Gamma$-module for any $0\le i<n$.
More generally, we say that $\Gamma$ satisfies the \emph{$(m,n)$-condition} \cite{I-tau3,Hu} if $\fd I_i<m$ holds for any $0\le i<n$.
Notice that when $\Gamma$ is a finite-dimensional algebra, we have $\fd I_i=\pd I_i$.

While the dominant dimension is known to be left-right symmetric in the sense that $\dom\Gamma=\dom\Gamma^{\rm op}$ holds,
easy examples show that the $(m,n)$-condition is not left-right symmetric.
We say that $\Gamma$ satisfies the \emph{two-sided $(m,n)$-condition} if $\Gamma$ and $\Gamma^{\rm op}$ satisfies the $(m,n)$-condition.
\end{definition}
We call a finite-dimensional algebra $\Gamma$ an \emph{Auslander algebra} if
\[\gl\Gamma\le 2\le\dom\Gamma.\]
We have the following classical \emph{Auslander correspondence} \cite{A-repdim,ARS}. In Theorem \ref{higher auslander correspondence} we shall give a proof for a more general statement.
\begin{theorem}\label{auslander correspondence}
\begin{itemize}
\item[(a)] Let $\Lambda$ be a representation-finite finite-dimensional algebra and $\add M=\mod\Lambda$.
Then $\End_\Lambda(M)$ is an Auslander algebra.

\item[(b)] Any Auslander algebra can be obtained in this way.
This gives a bijection between the Morita-equivalence classes of representation-finite finite-dimensional algebras and those of Auslander algebras.
\end{itemize}
\end{theorem}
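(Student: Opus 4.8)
The plan is to build everything on the equivalence $\Hom_\Lambda(M,-)\colon\mod\Lambda\to\add\Gamma$ recorded above, where $\Gamma=\End_\Lambda(M)$, together with two elementary observations: that $\Hom_\Lambda(M,-)$ is left exact, and that, since $\add M=\mod\Lambda$ contains $\Lambda$ and $D\Lambda$ (so $M$ is a generator--cogenerator), the functor is fully faithful on all of $\mod\Lambda$. Here $D=\Hom_k(-,k)$.

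For (a) I would first bound the global dimension. Given $X\in\mod\Gamma$, choose a projective presentation and lift it through the equivalence to a map $f\colon M_1\to M_0$ in $\mod\Lambda$, so that $X=\Cok\Hom_\Lambda(M,f)$. Left exactness gives $\Ker\Hom_\Lambda(M,f)=\Hom_\Lambda(M,\Ker f)$, and since $\Ker f\in\mod\Lambda=\add M$ this kernel is a projective $\Gamma$-module; hence $0\to\Hom_\Lambda(M,\Ker f)\to\Hom_\Lambda(M,M_1)\to\Hom_\Lambda(M,M_0)\to X\to0$ is a projective resolution, so $\pd X\le2$ and $\gl\Gamma\le2$. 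For $\dom\Gamma\ge2$ I would apply $\Hom_\Lambda(M,-)$ to the start $0\to M\to I^0\to I^1$ of a minimal injective coresolution of $M$ in $\mod\Lambda$; left exactness yields an exact sequence $0\to\Gamma\to\Hom_\Lambda(M,I^0)\to\Hom_\Lambda(M,I^1)$, and the two right-hand terms are projective-injective by the standard fact that $\Hom_\Lambda(M,-)$ carries injective $\Lambda$-modules to projective-injective $\Gamma$-modules. This exhibits $\dom\Gamma\ge2$.

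For (b) I would run this in reverse. The assignment $\Lambda\mapsto\Gamma$ is well defined on Morita classes, and it is injective because $\mod\Lambda\simeq\add\Gamma$ recovers $\Lambda$ up to Morita equivalence from the projective $\Gamma$-modules. For surjectivity, start from $\Gamma$ with $\gl\Gamma\le2\le\dom\Gamma$, let $M$ be an additive generator of the projective-injective $\Gamma$-modules and set $\Lambda=\End_\Gamma(M)$. The Morita--Tachikawa correspondence, using only $\dom\Gamma\ge2$, gives a double-centralizer isomorphism $\Gamma\cong\End_\Lambda(M)$ with $M$ a generator--cogenerator of $\mod\Lambda$. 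It remains to promote this to $\add M=\mod\Lambda$, which is where $\gl\Gamma\le2$ enters: reading the computation of (a) backwards, for any $g\colon M_1\to M_0$ in $\add M$ the second syzygy of $\Cok\Hom_\Lambda(M,g)$ equals $\Hom_\Lambda(M,\Ker g)$ and must be projective, so by full faithfulness $\Ker g\in\add M$ and $\add M$ is closed under kernels. Applying the same argument to $\Gamma^{\op}\cong\End_{\Lambda^{\op}}(DM)$, and using that global dimension is left--right symmetric, shows via $D$ that $\add M$ is also closed under cokernels. Since $\Lambda\in\add M$ and every module is a cokernel of a map of free modules, $\add M=\mod\Lambda$, so $\Lambda$ is representation-finite; part (a) then closes the loop and yields the bijection.

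The main obstacle is exactly this last step, the implication $\gl\Gamma\le2\Rightarrow\add M=\mod\Lambda$. The delicate points are the full faithfulness of $\Hom_\Lambda(M,-)$, which is what lets me reflect the projectivity of $\Hom_\Lambda(M,\Ker g)$ back to $\Ker g\in\add M$, and the passage to cokernels, where one must invoke the left--right symmetry of global dimension for $\Gamma$ rather than argue directly inside $\mod\Lambda$. By contrast, the supporting lemma that $\Hom_\Lambda(M,-)$ matches injective $\Lambda$-modules with projective-injective $\Gamma$-modules, and the Morita--Tachikawa double-centralizer property, I would treat as known inputs.
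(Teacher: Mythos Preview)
Your argument for (a) is correct and matches the paper's proof (given for general $n$ in Theorem~\ref{higher auslander correspondence}) essentially verbatim: lift a projective presentation through the equivalence and use that $\Ker f\in\mod\Lambda=\add M$ for the global-dimension bound; apply $\Hom_\Lambda(M,-)$ to the start of an injective resolution of $M$ for the dominant-dimension bound.

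Your argument for (b) is also correct, but you take an unnecessary detour. Once you have established that $\add M$ is closed under kernels, you are already done: since $M$ is a cogenerator, every $X\in\mod\Lambda$ sits in an exact sequence $0\to X\to I^0\to I^1$ with $I^0,I^1\in\add(D\Lambda)\subseteq\add M$, so $X=\Ker(I^0\to I^1)\in\add M$. There is no need to pass to $\Gamma^{\op}$ and argue closure under cokernels. This shortcut is exactly the paper's route: it applies $P_-=\Hom_\Lambda(M,-)$ to an injective resolution of an arbitrary $X$, observes that $P_X$ is a second syzygy of a $\Gamma$-module and hence projective by $\gl\Gamma\le 2$, and reflects back via $Q_-$ to get $X\in\add M$. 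Your kernel-closure step is the same computation, only phrased abstractly. The paper also carries out the Morita--Tachikawa reconstruction of $\Lambda$ and $M$ from $\Gamma$ explicitly rather than citing it, but that is a matter of presentation, not substance.
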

Auslander algebras $\Gamma$ enjoy a lot of interesting properties, which come from the representation theory of $\Lambda$.
Let us mention one of them.
\begin{definition}\label{Gorenstein condition}
Let $\Gamma$ be a noetherian ring with $\gl\Gamma=n<\infty$.
We say that $\Gamma$ satisfies the \emph{Gorenstein condition} if
any simple $\Gamma$-module $S$ satisfies
\begin{equation}\label{Gorenstein}
\Ext^i_\Gamma(S,\Gamma)=\left\{\begin{array}{cc}
0&i\neq n,\\
\mbox{a simple $\Gamma^{\rm op}$-module}&i=n,
\end{array}\right.
\end{equation}
and the same condition holds for the simple $\Gamma^{\rm op}$-modules.
More generally, we say that $\Gamma$ satisfies the \emph{restricted Gorenstein condition} if \eqref{Gorenstein} holds for any
simple $\Gamma$-module and $\Gamma^{\rm op}$-module $S$ with $\pd S=n$.
\end{definition}
The condition \eqref{Gorenstein} is satisfied by commutative local Gorenstein rings \cite{Ma},
and the Gorenstein condition is used in the definition of Artin-Schelter regular algebra \cite{ASc}.
The following observation \cite{I-tau3} shows the relationship between Gorenstein condition and Auslander-type condition.
\begin{proposition}\label{Gorenstein condition and (n,n)-condition}
A noetherian ring $\Gamma$ with $\gl\Gamma=n<\infty$ satisfies the restricted Gorenstein condition if and only if $\Gamma$ satisfies the two-sided $(n,n)$-condition.
\end{proposition}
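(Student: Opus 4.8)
The plan is to reduce both sides to the homological behaviour of simple modules, measured by grade, and to read it off from the minimal injective resolution $0\to\Gamma\to I_0\to\cdots\to I_n\to 0$ (of length $\le n$ because $\gl\Gamma=n$). For a finitely generated module $M$ write $\operatorname{grade}M:=\inf\{i\mid\Ext^i_\Gamma(M,\Gamma)\neq 0\}$, so that $\operatorname{grade}M\le\pd M\le n$. Since $\Ext^i_\Gamma(S,\Gamma)=0$ for $i>n$ automatically, the restricted Gorenstein condition says precisely that every simple $\Gamma$- and $\Gamma^{\rm op}$-module $S$ with $\pd S=n$ satisfies $\operatorname{grade}S=n$ (so that only the top $\Ext^n_\Gamma(S,\Gamma)$ survives) and that this $\Ext^n_\Gamma(S,\Gamma)$ is simple. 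I would also record at the outset the standard facts that I will use repeatedly: the multiplicity of the injective envelope $E(S)$ in $I_i$ equals $\dim\Ext^i_\Gamma(S,\Gamma)$ (Bass numbers), that $\fd I_i=\pd I_i$ since the $I_i$ are finitely generated over a noetherian ring, that $\operatorname{grade}\Ext^i_\Gamma(M,\Gamma)\ge i$ always, and that a simple module $S$ with $\pd S=n$ has $\Ext^n_\Gamma(S,\Gamma)\neq0$ (apply $\Hom_\Gamma(-,\Gamma)$ to a minimal projective resolution: its last differential is radical, hence not surjective).

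The technical heart is a bridge lemma expressing the $(n,n)$-condition for $\Gamma$ as a grade condition on the opposite side. In the finite-dimensional case this is transparent: applying the $k$-duality $D$ turns the minimal injective resolution of $\Gamma_\Gamma$ into the minimal projective resolution of ${}_\Gamma D\Gamma$, so that $\fd I_i=\pd I_i=\id_{\Gamma^{\rm op}}(DI_i)$, while $E(S)\cong D(\Gamma e_S)$ for the idempotent $e_S$ of $S$; combined with the Bass formula and the duality $\operatorname{Tor}^\Gamma_j(X,Y)\cong D\Ext^j_\Gamma(X,DY)$, the bound $\fd I_i<n$ for $i<n$ becomes the statement that all composition factors of the top modules $\Ext^n_{\Gamma^{\rm op}}(S^\dagger,\Gamma)$, for $S^\dagger$ a simple $\Gamma^{\rm op}$-module, are simple $\Gamma$-modules of grade $n$. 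For a general noetherian $\Gamma$ the duality $D$ is not available, and reproducing this dictionary is what I expect to be the main obstacle: I would instead work through the functor $\Ext^n_\Gamma(-,\Gamma)$ directly, devissage along the cosyzygy sequences $0\to\Omega^{-i}\to I_i\to\Omega^{-(i+1)}\to 0$ to propagate the vanishing $\operatorname{Tor}^\Gamma_n(I_i,-)=0$ into vanishing of lower Ext, passing to finite-length modules where a Matlis-type duality substitutes for $D$.

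Granting the bridge lemma, both implications follow from the Auslander--Bridger biduality. For the forward direction, the two-sided $(n,n)$-condition forces the subcategory of modules with $\operatorname{grade}=\pd=n$ to be closed under sub- and quotient modules, so that the spectral sequence $\Ext^p_{\Gamma^{\rm op}}(\Ext^q_\Gamma(M,\Gamma),\Gamma)\Rightarrow M$ collapses and $\Ext^n_\Gamma(-,\Gamma)$ becomes an \emph{exact} duality between these subcategories over $\Gamma$ and $\Gamma^{\rm op}$; this grade-rigidity is the crux, as it is what upgrades $\pd S=n$ to $\operatorname{grade}S=n$ for a simple $S$ and, an exact duality preserving simplicity, forces $\Ext^n_\Gamma(S,\Gamma)$ to be simple, giving \eqref{Gorenstein}. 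Conversely, the restricted Gorenstein condition makes $\Ext^n_\Gamma(-,\Gamma)$ carry the simple $\Gamma^{\rm op}$-modules of projective dimension $n$ bijectively to the simple $\Gamma$-modules of grade $n$; feeding this into the bridge lemma verifies the grade condition on the top Ext-modules on each side, hence the $(n,n)$-condition for both $\Gamma$ and $\Gamma^{\rm op}$.
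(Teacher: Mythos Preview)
The paper does not actually prove this proposition; it only states it with a citation to \cite{I-tau3}, so there is no argument here to compare yours against directly.

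That said, your proposal has a real gap at the level of generality claimed. You assert that ``$\fd I_i=\pd I_i$ since the $I_i$ are finitely generated over a noetherian ring,'' but for an arbitrary noetherian ring the terms $I_i$ of a minimal injective resolution are typically \emph{not} finitely generated, and flat dimension need not coincide with projective dimension; the paper itself is careful to note this equality only ``when $\Gamma$ is a finite-dimensional algebra'' (Definition~\ref{Auslander-type condition}). Similarly, your appeal to a minimal projective resolution with radical differentials to conclude $\Ext^n_\Gamma(S,\Gamma)\neq 0$ presumes a semiperfect ring, not an arbitrary noetherian one. So as written, your argument really only covers the artinian (or semiperfect) case, and the passage ``for a general noetherian $\Gamma$ \ldots\ I would instead work through the functor $\Ext^n_\Gamma(-,\Gamma)$ directly'' is precisely where the content lies and is left as a promissory note.

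The overall strategy---translating the $(n,n)$-condition into a grade condition on $\Ext$-groups and then using the biduality spectral sequence $\Ext^p_{\Gamma^{\rm op}}(\Ext^q_\Gamma(M,\Gamma),\Gamma)\Rightarrow M$ to force collapse---is the right circle of ideas and is what underlies the Auslander--Reiten treatment of $k$-Gorenstein and related conditions. To make this honest for noetherian $\Gamma$, you should replace the Bass-number/duality dictionary by the standard equivalence (see e.g.\ \cite{AR-k-Gorenstein,Hu,HI}) between ``$\fd I_i<m$ for $i<n$'' and grade bounds on the $\Ext^i_{\Gamma^{\rm op}}(-,\Gamma)$; this is the genuinely noetherian form of your ``bridge lemma'' and does not require finite generation of the $I_i$.
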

In particular, any Auslander algebra $\Gamma$ satisfies the restricted Gorenstein condition.
In the next subsection we shall interpret this property in terms of the representation theory of the corresponding representation-finite algebra $\Lambda$.
It is the existence of almost split sequences.

\subsection{Almost split sequences}\label{Almost split sequences}

Let us recall the following observation \cite{ASS,ARS}.
\begin{proposition}\label{selfduality of almost split sequences}
Let $\Lambda$ be a finite-dimensional algebra, and let
\begin{equation}\label{almost split}
0\to Z\xrightarrow{g} Y\xrightarrow{f} X\to 0
\end{equation}
be an exact sequence in $\mod\Lambda$ with $f,g\in J_{\mod\Lambda}$.
Then
\[0\to(-,Z)\xrightarrow{g}(-,Y)\xrightarrow{f}J_{\mod\Lambda}(-,X)\to0\]
is exact if and only if
\[0\to(X,-)\xrightarrow{f}(Y,-)\xrightarrow{g}J_{\mod\Lambda}(Z,-)\to0\]
is exact.
In this case $X$, is indecomposable if and only if $Z$ is indecomposable.
\end{proposition}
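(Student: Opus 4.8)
The plan is to translate the two exactness conditions into factorisation properties, to prove one implication by a pushout argument, and to recover the converse by duality. First I would observe that, since $g$ is a monomorphism and $f,g\in J_{\mod\Lambda}$, the only substantive content of the first sequence is the surjectivity of $f_*\colon(V,Y)\to J_{\mod\Lambda}(V,X)$ for all $V$: left exactness of $\Hom_\Lambda(V,-)$ gives exactness at $(-,Z)$ and $(-,Y)$ automatically, while $\Im f_*\subseteq J_{\mod\Lambda}(-,X)$ holds because $J_{\mod\Lambda}$ is an ideal and $f\in J_{\mod\Lambda}$. Thus the first condition is equivalent to (1a): every $a\in J_{\mod\Lambda}(V,X)$ factors through $f$ (for all $V$); dually the second is equivalent to (2a): every $h\in J_{\mod\Lambda}(Z,V)$ factors through $g$. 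Applying the duality $D=\Hom_k(-,k)$ to the sequence and using $\Hom_{\Lambda^{\op}}(DV,DX)\cong\Hom_\Lambda(X,V)$, condition (2a) for the given sequence becomes condition (1a) for $0\to DX\to DY\to DZ\to0$, and vice versa. Hence once the single implication (1a)$\Rightarrow$(2a) is proved in general, applying it to the dual sequence yields (2a)$\Rightarrow$(1a), giving the equivalence. Since condition (1) amounts to $f$ being right minimal and right almost split, and both (1a) and (2a) behave additively under a decomposition $X=\bigoplus X_i$ into indecomposables ($Z$ decomposing accordingly), I may further assume $X$ indecomposable.

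The core step is (1a)$\Rightarrow$(2a) for indecomposable $X$. Given $h\in J_{\mod\Lambda}(Z,W)$, I would form the pushout of $g$ and $h$, producing an exact sequence $0\to W\xrightarrow{g'}P\xrightarrow{f'}X\to0$ with a map $h'\colon Y\to P$ satisfying $g'h=h'g$ and $f'h'=f$; here $h$ factors through $g$ exactly when this sequence splits, i.e. when $f'$ is a split epimorphism. If $f'$ splits we are done, so suppose not. Then indecomposability of $X$ forces $f'\in J_{\mod\Lambda}(P,X)$, so (1a) provides $s$ with $f'=fs$; combined with $f'h'=f$ this gives $f(sh'-\id_Y)=0$, whence $sh'=\id_Y+g\tau$ for some $\tau\colon Y\to Z$, an automorphism of $Y$ since $g\tau\in J_{\End_\Lambda(Y)}$. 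Therefore $h'$ is a split monomorphism and $P\cong Y\oplus K$. Writing $f'$ as $(f\ q)$ on this decomposition and letting $v\colon W\to Y$, $w\colon W\to K$ be the components of $g'$, the complementary map $q\colon K\to X$ cannot be a split epimorphism (otherwise $f'$ would split), so $q\in J_{\mod\Lambda}(K,X)$ and (1a) gives $q=f\kappa$. The identities $fv+qw=0$, $vh=g$, $wh=0$ coming from $f'g'=0$ and $g'h=h'g$ then yield $f(v+\kappa w)=0$, so $v+\kappa w=g\sigma$ for some $\sigma\colon W\to Z$; composing with $h$ gives $g\sigma h=(v+\kappa w)h=g$. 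As $g$ is monic, $\sigma h=\id_Z$, but $h\in J_{\mod\Lambda}$ forces $\id_Z\in J_{\End_\Lambda(Z)}$, so $Z=0$ and $h$ factors through $g$ trivially. Thus the non-split case never occurs and (2a) holds. I expect the main obstacle to be precisely this bookkeeping inside the pushout: recognising that (1a) must be fed not only the map $f'$ but also the complementary map $q$ in order to manufacture the retraction $\sigma$ of $h$.

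Finally, for the indecomposability assertion, the duality $D$ interchanges the roles of $X$ and $Z$, so it suffices to show that $X$ indecomposable implies $Z$ indecomposable. When $X$ is indecomposable it is non-projective (a split epimorphism onto $X$ would put $\id_X$ in $J_{\End_\Lambda(X)}$), so condition (1) identifies the sequence as the almost split sequence ending at $X$, and the indecomposability of its left-hand term $Z$ is then the classical fact recalled in \cite{ARS} that the two end terms of an almost split sequence are simultaneously indecomposable; equivalently, the now-established condition (2a) exhibits $g$ as a minimal left almost split morphism, whose existence forces $\End_\Lambda(Z)$ to be local.
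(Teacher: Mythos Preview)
The paper does not supply its own proof of this proposition; it simply records it as a known observation with references to the textbooks \cite{ASS,ARS}. Your pushout argument for the implication (1a)$\Rightarrow$(2a) when $X$ is indecomposable is correct and is essentially the classical argument found in those references, so on that core step you are in agreement with the literature the paper invokes.

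There is, however, a genuine gap in your reduction to the indecomposable case. You assert that ``both (1a) and (2a) behave additively under a decomposition $X=\bigoplus X_i$ into indecomposables ($Z$ decomposing accordingly)'', but a decomposition of $X$ does \emph{not} induce a decomposition of $Z$ or of the short exact sequence. For instance, the direct sum of two almost split sequences satisfies both (1a) and (2a) with $X$ decomposable, so one cannot simply assume $X$ indecomposable from the outset; and for a general sequence satisfying (1a) with $X=X_1\oplus X_2$, pulling back along the inclusions $\iota_i$ yields sequences $\xi_i:0\to Z\to Y_i\to X_i\to 0$ all sharing the \emph{same} left-hand term $Z$, not separate pieces $Z_i$.

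The repair is not hard. Since $h_*\xi=0$ in $\Ext^1_\Lambda(X,W)=\bigoplus_i\Ext^1_\Lambda(X_i,W)$ if and only if $h_*\xi_i=0$ for every $i$, it suffices to run your pushout argument on each pullback $\xi_i$, whose right-hand term $X_i$ is now genuinely indecomposable. A short check shows each $\xi_i$ inherits condition (1a) and has $f_i\in J_{\mod\Lambda}$. The only wrinkle is that $g_i\in J_{\mod\Lambda}$ may fail (e.g.\ when $\xi_i=0$), so the step ``$g\tau\in J_{\End_\Lambda(Y)}$'' needs to be replaced: from $sh'_i=\id_{Y_i}+g_i\tau$ and $sg'_i=g_i\sigma'$ (the latter because $f_isg'_i=f'_ig'_i=0$) one obtains $\sigma' h=\id_Z+\tau g_i$, and then $h\in J_{\mod\Lambda}$ forces $\tau g_i$ to be a unit in $\End_\Lambda(Z)$, so $g_i$ is split mono, contradicting $f_i\in J_{\mod\Lambda}$ for $X_i\neq 0$. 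This uses only $h\in J_{\mod\Lambda}$ and $f_i\in J_{\mod\Lambda}$, both of which you have.
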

When these conditions are satisfied, we call \eqref{almost split} an \emph{almost split sequence}.

Now we give a relationship between almost split sequences and the restricted Gorenstein conditions.
Assume that $\Lambda$ is a representation-finite finite-dimensional algebra with an additive generator $M$ of $\mod\Lambda$.
Let $\Gamma:=\End_\Lambda(M)$ be the corresponding Auslander algebra.
Then we have categorical equivalences
\begin{eqnarray*}
P_-:=(M,-)&:&\mod\Lambda\xrightarrow{\sim}\add\Gamma_\Gamma,\\
P^-:=(-,M)&:&\mod\Lambda\xrightarrow{\sim}\add{}_\Gamma\Gamma,
\end{eqnarray*}
such that $P^-\simeq\Hom_\Gamma(P_-,\Gamma)$.
We also have the functors
\begin{eqnarray*}
S_-:=(M,-)/J_{\XX}(M,-)&:&\mod\Lambda\to\mod\Gamma,\\
S^-:=(-,M)/J_{\XX}(-,M)&:&\mod\Lambda\to\mod\Gamma^{\rm op},
\end{eqnarray*}
whose images are semisimple $\Gamma$-modules and $\Gamma^{\rm op}$-modules.
For an almost split sequence \eqref{almost split}, we have projective resolutions
\begin{eqnarray*}
&0\to P_Z\xrightarrow{P_g}P_Y\xrightarrow{P_f}P_X\to S_X\to0,&\\
&0\to P^X\xrightarrow{P^f}P^Y\xrightarrow{P^g}P^Z\to S^Z\to0,&
\end{eqnarray*}
of a simple $\Gamma$-module $S_X$ and a simple $\Gamma^{\rm op}$-module $S^Z$. Applying $\Hom_\Gamma(-,\Gamma)$ to the first sequence and comparing with the second sequence, we have
\[\Ext^i_\Gamma(S_X,\Gamma)=\left\{\begin{array}{cc}
0&i\neq 2,\\
S^Z&i=2.
\end{array}\right.\]
This implies that $\Gamma$ satisfies the restricted Gorenstein condition.
We also observe that almost split sequences in $\mod\Lambda$ correspond to projective resolutions of simple $\Gamma$-modules.

\medskip
If a finite-dimensional algebra $\Lambda$ is not representation-finite, then one can not consider its Auslander algebra directly.
Instead, Auslander and Reiten dealt with the \emph{functor category} consisting of additive functors
from the category $\mod\Lambda$ to the category of abelian groups,
and developped an exciting duality theory in functor categories \cite{A-coherent,AR-stable}
leading to their theory of almost split sequences \cite{AR-almost-III,AR-almost-IV,AR-almost-V,AR-almost-VI,A-order}.

\medskip
Let us recall a construction from stable module theory \cite{ABr}.
Let $\Lambda$ be a noetherian ring in general. We have the functor
\[(-)^*:=\Hom_\Lambda(-,\Lambda):\mod\Lambda\leftrightarrow\mod\Lambda^{\rm op}\]
which induces a duality $(-)^*:\add\Lambda_\Lambda\stackrel{\sim}{\longleftrightarrow}\add{}_\Lambda\Lambda$.
For $X\in\mod\Lambda$, take a projective presentation
\[P_1\xrightarrow{g}P_0\stackrel{f}{\to}X\to0.\]
Define a $\Lambda^{\rm op}$-module $\Tr X$ by an exact sequence
\[P_0^*\xrightarrow{g^*}P_1^*\to\Tr X\to0.\]
We notice that $\Tr X$ depends on a choice of projective presentation of $X$.
To make $\Tr$ functorial, we need the \emph{stable category} of $\Lambda$ defined by
\[\underline{\mod}\Lambda:=(\mod\Lambda)/[\Lambda],\]
where the ideal $[\Lambda]$ of $\mod\Lambda$ is defined by \eqref{ideal}.
Then we have a fundamental duality
\[\Tr:\underline{\mod}\Lambda\stackrel{\sim}{\longleftrightarrow}\underline{\mod}\Lambda^{\rm op},\]
called the \emph{Auslander-Bridger transpose} \cite{ABr}.

\medskip
In the rest of this subsection let $\Lambda$ be a finite-dimensional algebra over a field $k$.
Then we have a duality
\[D:=\Hom_k(-,k):\mod\Lambda\stackrel{\sim}{\longleftrightarrow}\mod\Lambda^{\rm op}.\]
Define the \emph{costable category} of $\Lambda$ by
\[\overline{\mod}\Lambda:=(\mod\Lambda)/[D\Lambda],\]
where the ideal $[D\Lambda]$ of $\mod\Lambda$ is defined by \eqref{ideal}.
The duality $D:\mod\Lambda\leftrightarrow\mod\Lambda^{\rm op}$ induces a duality $D:\underline{\mod}\Lambda\leftrightarrow\overline{\mod}\Lambda^{\rm op}$,
and we have mutually quasi-inverse equivalences
\begin{eqnarray*}
\tau:=D\Tr&:&\underline{\mod}\Lambda\xrightarrow{\Tr}\underline{\mod}\Lambda^{\rm op}\xrightarrow{D}\overline{\mod}\Lambda,\\
\tau^-:=\Tr D&:&\overline{\mod}\Lambda\xrightarrow{D}\underline{\mod}\Lambda^{\rm op}\xrightarrow{\Tr}\underline{\mod}\Lambda,
\end{eqnarray*}
called the \emph{Auslander-Reiten translations}.
Especially, $\tau$ gives a bijection from isoclasses of indecomposable non-projective $\Lambda$-modules to
isoclasses of indecomposable non-injective $\Lambda$-modules.
We have the following functorial isomorphisms called the \emph{Auslander-Reiten duality} \cite{ASS,ARS}.
\begin{theorem}
There exist the following functorial isomorphisms for any modules $X,Y$ in $\mod\Lambda$
\[\underline{\Hom}_\Lambda(\tau^-Y,X)\simeq D\Ext^1_\Lambda(X,Y)\simeq\overline{\Hom}_\Lambda(Y,\tau X).\]
\end{theorem}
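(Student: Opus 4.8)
The plan is to establish the right-hand isomorphism $\overline{\Hom}_\Lambda(Y,\tau X)\simeq D\Ext^1_\Lambda(X,Y)$ functorially, and then to obtain the left-hand one essentially for free. Indeed, since $\tau=D\Tr$ and $\tau^-=\Tr D$ are mutually quasi-inverse equivalences $\underline{\mod}\Lambda\leftrightarrow\overline{\mod}\Lambda$, applying $\tau^-$ gives a natural isomorphism $\overline{\Hom}_\Lambda(Y,\tau X)\simeq\underline{\Hom}_\Lambda(\tau^-Y,\tau^-\tau X)\simeq\underline{\Hom}_\Lambda(\tau^-Y,X)$, and composing this with the right-hand isomorphism yields $\underline{\Hom}_\Lambda(\tau^-Y,X)\simeq D\Ext^1_\Lambda(X,Y)$. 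So the whole theorem reduces to the single functorial identity on the right.

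The main tool I would use is the Nakayama functor $\nu:=D(-)^*=D\Hom_\Lambda(-,\Lambda)$, which restricts to an equivalence $\add\Lambda_\Lambda\xrightarrow{\sim}\add D\Lambda$ carrying projectives to injectives. First I would record the natural isomorphism
\[\Hom_\Lambda(Y,\nu P)\simeq D\Hom_\Lambda(P,Y),\qquad P\in\add\Lambda,\ Y\in\mod\Lambda,\]
coming from the tensor--hom adjunction together with $P^*\otimes_\Lambda Y\simeq\Hom_\Lambda(P,Y)$ for projective $P$. Next, fixing a minimal projective presentation $P_1\xrightarrow{g}P_0\to X\to0$, the definitions of $\Tr$ and $D$ identify $\tau X$ with $\Ker(\nu g\colon\nu P_1\to\nu P_0)$, so that there is an exact sequence $0\to\tau X\to\nu P_1\xrightarrow{\nu g}\nu P_0$ with $\nu P_1$ injective.

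Applying $\Hom_\Lambda(Y,-)$ to this sequence and substituting the natural isomorphism above, I would compare the resulting complex with the one obtained by applying $D\Hom_\Lambda(-,Y)$ to $P_1\xrightarrow{g}P_0$, which yields
\[\Hom_\Lambda(Y,\tau X)\simeq D\Cok\bigl(\Hom_\Lambda(P_0,Y)\xrightarrow{(g,Y)}\Hom_\Lambda(P_1,Y)\bigr).\]
Since $P_1\xrightarrow{g}P_0\to X\to0$ is a presentation with $\Omega X:=\Im g$, the group $\Ext^1_\Lambda(X,Y)$ occurs inside this cokernel as the subquotient $\Hom_\Lambda(\Omega X,Y)/\Im\Hom_\Lambda(P_0,Y)$, but the full cokernel is a priori larger.

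The crux, and the step I expect to be the main obstacle, is to reconcile $\Hom_\Lambda(Y,\tau X)$ with its image $\overline{\Hom}_\Lambda(Y,\tau X)$ in the costable category: one must show that dualizing the quotient by maps factoring through $\add D\Lambda$ removes exactly the discrepancy between the cokernel above and $\Ext^1_\Lambda(X,Y)$. Concretely, I would verify that a morphism $Y\to\tau X$ factors through an injective precisely when the corresponding functional on $\Hom_\Lambda(P_1,Y)$ annihilates $\Hom_\Lambda(\Omega X,Y)$; here the minimality of the presentation, which guarantees that $\nu P_1$ contains no superfluous injective summand, is essential. Granting this, $D$ converts the subobject $\Ext^1_\Lambda(X,Y)\subseteq\Cok(g,Y)$ into the quotient $\overline{\Hom}_\Lambda(Y,\tau X)$ of $\Hom_\Lambda(Y,\tau X)$, giving the desired isomorphism. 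Finally, since every isomorphism above is assembled from natural transformations, the identity is functorial in both $X$ and $Y$, and the deduction of the left-hand isomorphism via $\tau^-$ in the first paragraph completes the argument.
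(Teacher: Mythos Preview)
The paper does not give a proof of this theorem; it is recorded as a classical result with citations to the textbooks \cite{ASS,ARS}, and the exposition immediately moves on to its consequences. So there is no argument in the paper to compare your proposal against.

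That said, your outline is essentially the standard textbook proof. Computing $\tau X$ as $\Ker(\nu P_1\to\nu P_0)$ from a projective presentation, invoking the natural isomorphism $\Hom_\Lambda(Y,\nu P)\simeq D\Hom_\Lambda(P,Y)$ for projective $P$, and then dualizing to obtain $\Hom_\Lambda(Y,\tau X)\simeq D\Cok\bigl(\Hom_\Lambda(P_0,Y)\to\Hom_\Lambda(P_1,Y)\bigr)$ is exactly the route taken in \cite{ARS}. Your identification of the ``crux'' is accurate: one must show that passing to the costable quotient $\overline{\Hom}_\Lambda(Y,\tau X)$ corresponds under $D$ to restricting from $\Cok$ to the subgroup $\Ext^1_\Lambda(X,Y)$, and this is where the real work lies. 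Your reduction of the left-hand isomorphism to the right-hand one via the equivalence $\tau^-:\overline{\mod}\Lambda\to\underline{\mod}\Lambda$ is also valid and functorial. One small remark: minimality of the presentation is convenient for controlling injective summands of $\tau X$, but the final isomorphism is independent of the chosen presentation, so minimality is not strictly essential to the argument---it just keeps the bookkeeping clean.
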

In particular, we have an isomorphism $\Ext^1_\Lambda(X,\tau X)\simeq D\underline{\End}_\Lambda(X)$ for any $X\in\mod\Lambda$, which gives an element of $\Ext^1_\Lambda(X,\tau X)$.
Thus we have the following existence theorem of almost split sequences \cite{ASS,ARS}.
\begin{theorem}
\begin{itemize}
\item[(a)] For any indecomposable non-projective module $X$ in $\mod\Lambda$, there exists an almost split sequence \eqref{almost split} such that $Z\simeq\tau X$.
\item[(b)] For any indecomposable non-injective module $X$ in $\mod\Lambda$, there exists an almost split sequence \eqref{almost split} such that $X\simeq\tau^-Z$.
\end{itemize}
\end{theorem}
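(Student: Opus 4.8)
The plan is to construct the desired sequence from the canonical extension exhibited just above the statement, and then to verify that it is almost split by means of the functorial criterion of Proposition~\ref{selfduality of almost split sequences}. Throughout, fix an indecomposable non-projective module $X$; since $\End_\Lambda(X)$ is local and $X$ is non-projective, $\underline{\End}_\Lambda(X)$ is a nonzero local $k$-algebra, with unique maximal ideal $J_{\underline{\End}_\Lambda(X)}$.

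First I would use the Auslander--Reiten duality $\Ext^1_\Lambda(X,\tau X)\simeq D\underline{\End}_\Lambda(X)$ to choose the element $\eta\in\Ext^1_\Lambda(X,\tau X)$ corresponding to any $k$-linear form on $\underline{\End}_\Lambda(X)$ that is nonzero and vanishes on $J_{\underline{\End}_\Lambda(X)}$. Let
\[0\to\tau X\xrightarrow{g}Y\xrightarrow{f}X\to0\]
be a short exact sequence representing $\eta$. Since $\eta\neq0$ this sequence is non-split, and as $X$ and $\tau X$ are indecomposable the maps $f$ and $g$ lie in $J_{\mod\Lambda}$ (otherwise $f$ would be a split epimorphism or $g$ a split monomorphism). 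By Proposition~\ref{selfduality of almost split sequences} it then suffices to prove exactness of
\[0\to(-,\tau X)\xrightarrow{g}(-,Y)\xrightarrow{f}J_{\mod\Lambda}(-,X)\to0.\]

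Left-exactness and exactness at $(-,Y)$ are automatic from the long exact sequence of $\Hom_\Lambda(-,-)$ applied to the short exact sequence, whose connecting map sends $h\in(W,X)$ to the pullback $h^*\eta\in\Ext^1_\Lambda(W,\tau X)$. Thus the image of $f_*$ equals $\{h\mid h^*\eta=0\}$, and everything reduces to the identity $\{h\in(W,X)\mid h^*\eta=0\}=J_{\mod\Lambda}(W,X)$ for all $W$. To establish it I would invoke the naturality of Auslander--Reiten duality in the form $\Ext^1_\Lambda(W,\tau X)\simeq D\underline{\Hom}_\Lambda(X,W)$: under it the contravariant map $h^*$ is transported to the dual of $h_*\colon\underline{\Hom}_\Lambda(X,W)\to\underline{\End}_\Lambda(X)$, $\alpha\mapsto h\alpha$. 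Hence $h^*\eta=0$ if and only if $\eta$ vanishes on the right ideal $h\cdot\underline{\Hom}_\Lambda(X,W)$ of $\underline{\End}_\Lambda(X)$. Since $\underline{\End}_\Lambda(X)$ is local, $\eta$ vanishes on this right ideal precisely when it is contained in $J_{\underline{\End}_\Lambda(X)}$, which in turn holds precisely when $h\in J_{\mod\Lambda}(W,X)$ (otherwise a splitting of $h$ puts $1_X$ into the ideal). By the self-duality in Proposition~\ref{selfduality of almost split sequences} this shows the sequence is almost split with $Z\simeq\tau X$, proving~(a).

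The step I expect to be the main obstacle is the identification of the connecting morphism with precomposition under the Auslander--Reiten duality, that is, pinning down the naturality square so that the kernel of $h\mapsto h^*\eta$ can be computed inside the local ring $\underline{\End}_\Lambda(X)$; once this compatibility is in place the conclusion follows from locality and the choice of $\eta$. Part~(b) is dual: applying~(a) to $\Lambda^{\rm op}$ and transporting along the duality $D$ (using $\tau=D\Tr$ and $\tau^-=\Tr D$), or equivalently running the same construction with the second exactness condition of Proposition~\ref{selfduality of almost split sequences}, yields for each indecomposable non-injective module an almost split sequence in which it occurs as $Z$ and the right-hand term satisfies $X\simeq\tau^-Z$.
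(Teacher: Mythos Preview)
Your argument is correct and is precisely the approach the paper indicates. The paper does not spell out a proof here but only remarks (just before the theorem) that the isomorphism $\Ext^1_\Lambda(X,\tau X)\simeq D\underline{\End}_\Lambda(X)$ furnishes the relevant extension class and then refers to \cite{ASS,ARS}; your proof carries out exactly this construction, verifying the almost-split property via Proposition~\ref{selfduality of almost split sequences} and the naturality of the Auslander--Reiten duality.
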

This is fundamental in the study of the category $\mod\Lambda$.
For example, the structure theory of Auslander algebras using mesh categories of translation quivers was developped by
Riedtmann \cite{Ri}, Bongartz-Gabriel \cite{BG} and Igusa-Todorov \cite{IT1,IT2} (see also \cite{I-tau12-I,I-tau12-II,I-tau3}).

We end this subsection with the following exercise on further properties of Auslander algebras, which is closely related to
combinatorial characterizations of finite Auslander-Reiten quivers given by Brenner \cite{B} and Igusa-Todorov \cite{IT1,IT2} (see also \cite{I-tau4}).

\medskip
\noindent{\bf Exercise.}
Let $\Gamma$ be an Auslander algebra, and let $S$ be a simple $\Gamma$-module with $\pd S\le 1$.
Give a proof of the following statements by using the representation theory of the corresponding representation-finite algebra $\Lambda$.
\begin{itemize}
\item[(a)] Any composition factor $T$ of the $\Gamma^{\rm op}$-module $\Ext^1_\Gamma(S,\Gamma)$ satisfies $\pd T=2$.
\item[(b)] The socle $S'$ of the $\Gamma^{\rm op}$-module $\Hom_\Gamma(S,\Gamma)$ is simple and satisfies $\pd S'\le 1$.
Any composition factor $T$ of the $\Gamma^{\rm op}$-module $\Hom_\Gamma(S,\Gamma)/S'$ satisfies $\pd T=2$.
\end{itemize}
Also give a direct proof of the above statements by using homological algebra on $\Gamma$.

\subsection{Representation dimension}

For a representation-finite finite-dimensional algebra $\Lambda$, a `model' of the category $\mod\Lambda$ is given by the Auslander algebra $\Gamma$ of $\Lambda$.
In other words an Auslander algebra $\Gamma$ of $\Lambda$ has a `representation theoretic realization' $\mod\Lambda$.
It is natural to ask whether the correspondence $\Gamma\leftrightarrow\mod\Lambda$ can be generalized to other classes of algebras and categories.
One satisfactory answer will be given in Theorem \ref{higher auslander correspondence} which gives a bijection between `$n$-Auslander algebras'
and `$n$-cluster tilting subcategories'.
Let us recall another approach given by Auslander \cite{A-repdim}.
The following result \cite{A-repdim} due to Morita and Tachikawa deals with a wider class of algebras than considered in Theorem \ref{auslander correspondence}.
\begin{itemize}
\item Let $\Lambda$ be a finite-dimensional algebra and $M$ a generator-cogenerator in $\mod\Lambda$. Then $\Gamma:=\End_\Lambda(M)$ satisfies $\dom\Gamma\ge2$.
\item Any finite-dimensional algebra $\Gamma$ satsifying $\dom\Gamma\ge2$ is obtained in this way.
\end{itemize}
In view of this correspondence, Auslander defined the \emph{representation dimension} of a finite-dimensional algebra $\Lambda$ by
\[\repdim\Lambda:=\inf\{\gl\End_\Lambda(M)\ |\ M\ \mbox{ is a generator-cogenerator in } \mod\Lambda\}.\]
Then he proved that $\Lambda$ is representation-finite if and only if $\repdim\Lambda\le 2$.
In this sense the representation dimension measures how far an algebra is from being representation-finite.

A lot of interesting results are known recently (see \cite{X,EHIS}).
It was shown in \cite{I-repdim} that $\repdim\Lambda$ is always finite.
Igusa and Todorov \cite{IT3} proved that algebras with $\repdim\Lambda\le3$ have finite finitistic dimension.
Rouquier \cite{Rou} proved that the exterior algebra $\Lambda$ of an $n$-dimensional vector space satisfies $\repdim\Lambda=n+1$.
Oppermann \cite{O} gave an effective method to give a lower bound of representation dimension.

\section{$n$-Auslander-Reiten theory}\label{n-Auslander algebras and n-almost split sequences}

\subsection{$n$-cluster tilting subcategories}\label{n-cluster tilting subcategories}

The concept of functorially finite subcategories was introduced by Auslander and Smal\o\ \cite{ASm1,ASm2} and studied mainly by the school of Auslander.
Today it turns out to be one of the fundamental concepts in representation theory of algebras, especially tilting theory \cite{AR-contravariant,ABu}.

Let $\XX$ be an additive category and $\CC$ a subcategory of $\XX$.
We call $\CC$ \emph{contravariantly finite} if, for any $X\in\XX$, there exists a morphism $f\in(C,X)$ with $C\in\CC$ such that
\[(-,C)\xrightarrow{f}(-,X)\to0\]
is exact. Such $f$ is called a \emph{right $\CC$-approximation} of $X$.
Dually, a \emph{covariantly finite} subcategory and a \emph{left $\CC$-approximation} are defined.
A contravariantly and covariantly finite subcategory is called \emph{functorially finite}.

A class of functorially finite subcategories is introduced in \cite{I-ar,I-aorder} (see also \cite{KR}).
\begin{definition}\label{definition of n-cluster tilting}
Let $\XX$ be an extension-closed subcategory of an abelian category $\A$.
We call a subcategory $\CC$ of $\XX$ \emph{$n$-rigid} if
\[\Ext^i_{\A}(\CC,\CC)=0\]
for any $0<i<n$. We call a subcategory $\CC$ of $\XX$ \emph{$n$-cluster tilting} (or \emph{maximal $(n-1)$-orthogonal}) if it is functorially finite and
\begin{eqnarray*}
\CC&=&\{X\in\XX\ |\ \Ext^i_{\A}(\CC,X)=0 \mbox{ for } 0<i<n\}\\
&=&\{X\in\XX\ |\ \Ext^i_{\A}(X,\CC)=0 \mbox{ for } 0<i<n\}.
\end{eqnarray*}
An object $M\in\XX$ is called \emph{$n$-cluster tilting} (respectively, \emph{$n$-rigid}) if so is $\add M$.
\end{definition}
Clearly $\XX$ is a unique $1$-cluster tilting subcategory of $\XX$, and $2$-cluster tilting subcategories are often called \emph{cluster tilting}.
If an $n$-cluster tilting subcategory $\CC$ is contained in an $n$-rigid subcategory $\DD$, then we have $\CC=\DD$.

Let us give a few properties of $n$-cluster tilting subcategories \cite{I-ar,I-aorder}.
\begin{proposition}\label{one-side condition}
Assume that an additive category $\XX$ has enough projectives and enough injectives.
Let $\CC$ be a functorially finite subcategory of $\XX$.
Then the following conditions are equivalent.
\begin{itemize}
\item[(a)] $\CC$ is an $n$-cluster tilting subcategory of $\XX$.
\item[(b)] $\CC=\{X\in\XX\ |\ \Ext^i_{\A}(\CC,X)=0 \mbox{ for } 0<i<n\}$ and $\CC$ contains all projective objects in $\XX$.
\item[(c)] $\CC=\{X\in\XX\ |\ \Ext^i_{\A}(X,\CC)=0 \mbox{ for } 0<i<n\}$ and $\CC$ contains all injective objects in $\XX$.
\end{itemize}
\end{proposition}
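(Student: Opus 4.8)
The plan is to show that each of the one-sided conditions (b) and (c) is equivalent to the two-sided condition defining an $n$-cluster tilting subcategory in Definition 2.4. By the evident left-right symmetry, it suffices to prove that (a) is equivalent to (b); the equivalence of (a) and (c) follows by the dual argument applied to $\XX^{\rm op}$. The implication (a)$\Rightarrow$(b) is almost immediate: if $\CC$ is $n$-cluster tilting, then the first defining equality already gives $\CC=\{X\in\XX\mid\Ext^i_\A(\CC,X)=0\text{ for }0<i<n\}$, and since every projective object $P$ in $\XX$ satisfies $\Ext^i_\A(\CC,P)=\Ext^i_\A(-,P)|_\CC$; wait—rather, projectives lie in $\PP\subset\CC$ because for $P\in\PP$ one has $\Ext^i_\A(\CC,P)=0$ for all $i>0$, hence in particular for $0<i<n$, so $P$ belongs to the right-hand set, which equals $\CC$.

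The substance is the converse (b)$\Rightarrow$(a). Assume (b). I must produce the second equality
\[
\CC=\{X\in\XX\mid\Ext^i_\A(X,\CC)=0\text{ for }0<i<n\}.
\]
The inclusion $\subseteq$ is trivial from $n$-rigidity of $\CC$ (which (b) supplies). For $\supseteq$, take $X\in\XX$ with $\Ext^i_\A(X,\CC)=0$ for $0<i<n$; I want $X\in\CC$. The key device is to build, using that $\CC$ is contravariantly finite and contains all projectives, a finite resolution of $X$ by objects of $\CC$. Concretely, since $\CC$ contains the projectives and $\XX$ has enough projectives, one can splice right $\CC$-approximations to obtain an exact sequence
\[
0\to C_{n-1}\to\cdots\to C_1\to C_0\to X\to 0
\]
with all $C_j\in\CC$, arranging the syzygies to stay inside $\XX$. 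I would then run a dimension-shifting argument: the hypothesis $\Ext^i_\A(X,\CC)=0$ together with the vanishing $\Ext^i_\A(C_j,\CC)=0$ (valid for $0<i<n$ by rigidity of $\CC$) propagates along the resolution, forcing the relevant syzygies to lie in $\CC$ and ultimately collapsing the resolution so that $X\in\CC$.

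The main obstacle I anticipate is the bookkeeping in constructing the $\CC$-resolution of the correct length and controlling its syzygies. Two points need care. First, one must verify that the iterated kernels remain in the extension-closed subcategory $\XX$, so that the approximation property of $\CC$ can be reapplied at each stage; this is where enough projectives and the extension-closedness of $\XX$ are used together. Second, the dimension shift must be orchestrated so that the hypothesis on $X$ feeds into exactly the Ext-groups that govern whether a syzygy is forced into $\CC$, using the characterization in (b) that $\CC$ is cut out by the vanishing of $\Ext^i_\A(\CC,-)$ for $0<i<n$. Once these two technical points are set up correctly, the homological algebra is routine long-exact-sequence chasing, and the symmetric statement (c) requires no new idea.
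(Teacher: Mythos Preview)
The paper does not supply its own proof of this proposition; it is stated with a citation to \cite{I-ar,I-aorder} and then used as input to later results. So there is nothing in the present paper to compare against, and your plan must be judged on its own merits.

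Your overall strategy is the standard one and is correct: reduce by symmetry to (a)$\Leftrightarrow$(b), observe (a)$\Rightarrow$(b) is immediate, and for (b)$\Rightarrow$(a) build a length-$(n{-}1)$ right $\CC$-resolution of $X$ and then use the hypothesis $\Ext^i_\A(X,\CC)=0$ to collapse it. Two refinements will make the argument go through cleanly. First, for the kernels to remain in $\XX$: given a right $\CC$-approximation $f\colon C_0\to X$, replace it by $(f,\pi)\colon C_0\oplus P\to X$ where $0\to\Omega X\to P\to X\to0$ comes from enough projectives; the kernel of $(f,\pi)$ is then an extension of $C_0$ by $\Omega X$, hence lies in $\XX$ by extension-closedness. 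Second, the ``collapsing'' is really a splitting argument, and it helps to separate the two ingredients explicitly. Using only (b) and the approximation property, one gets $\Ext^j(\CC,K_{i})=0$ for $0<j\le i{+}1$, so $K_{n-2}\in\CC$ by the characterisation in (b); this is exactly the construction behind Proposition~\ref{M-approximation}. Independently, the hypothesis on $X$ dimension-shifts to $\Ext^1(K_j,\CC)=0$ for all $j$. Combining the two, the short exact sequence $0\to K_{n-2}\to C_{n-2}\to K_{n-3}\to0$ splits (since $K_{n-2}\in\CC$ and $\Ext^1(K_{n-3},\CC)=0$), so $K_{n-3}\in\CC$; downward induction then gives $K_0\in\CC$, and finally $0\to K_0\to C_0\to X\to0$ splits, so $X\in\CC$. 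Your sketch gestures at this but conflates the two dimension shifts; making the split explicit removes any ambiguity.

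One small point: you should also note that (b) already forces $\II\subset\CC$ (since injectives satisfy $\Ext^i_\A(\CC,I)=0$), which is why the dual resolution is available and why the (b)/(c) symmetry is genuine.
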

The following result shows that any object in $\XX$ has a finite sequence of right (respectively, left) $\CC$-approximations.
There is a nice interpretation of this property in terms of relative homological algebra of Auslander-Solberg \cite{ASo-I,ASo-II,ASo-III}. See also \cite{L}.
\begin{proposition}\label{M-approximation}
Assume that $\XX$ has enough projectives and enough injectives.
Let $\CC$ be an $n$-cluster tilting subcategory of $\XX$.
For any $X\in\XX$, there exist exact sequences
\begin{eqnarray*}
&0\to C_{n-1}\xrightarrow{f_{n-1}}\cdots\xrightarrow{f_2}C_1\xrightarrow{f_1}C_0\xrightarrow{f_0}X\to0,&\\
&0\to X\xrightarrow{f'_0}C'_0\xrightarrow{f'_1}C'_1\xrightarrow{f'_2}\cdots\xrightarrow{f'_{n-2}}C'_{n-1}\to0,&
\end{eqnarray*}
in $\A$ with terms in $\CC$ such that the following sequences are exact on $\CC$
\begin{eqnarray*}
&0\to(-,C_{n-1})\xrightarrow{f_{n-1}}\cdots\xrightarrow{f_2}(-,C_1)\xrightarrow{f_1}(-,C_0)\xrightarrow{f_0}(-,X)\to0,&\\
&0\to(X,-)\xrightarrow{f'_0}(C'_0,-)\xrightarrow{f'_1}(C'_1,-)\xrightarrow{f'_2}\cdots\xrightarrow{f'_{n-2}}(C'_{n-1},-)\to0.&
\end{eqnarray*}
\end{proposition}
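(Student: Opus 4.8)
The plan is to construct the first (right-approximation) sequence by an iterated syzygy-type procedure and to deduce the second sequence by the evident duality, which interchanges the roles of $\PP$ and $\II$ and of the two symmetric conditions defining an $n$-cluster tilting subcategory. Throughout I would use Proposition \ref{one-side condition}, so that $\CC$ contains all projective objects $\PP$ and all injective objects $\II$ of $\XX$.

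First I would set $X_0:=X$ and, given $X_j\in\XX$, choose a right $\CC$-approximation of $X_j$; enlarging it by a projective cover $P_j\to X_j$ with $P_j\in\PP\subseteq\CC$ (enough projectives), I obtain an \emph{epimorphic} right $\CC$-approximation $f_j\colon C_j\to X_j$ with $C_j\in\CC$. Put $X_{j+1}:=\Ker f_j$, so that $0\to X_{j+1}\to C_j\to X_j\to0$ is exact in $\A$ while $(-,C_j)\to(-,X_j)\to0$ is exact on $\CC$. The step I expect to be the main obstacle is verifying that each syzygy $X_{j+1}$ again lies in $\XX$, so that the construction can be iterated and the $\Ext$-characterization of $\CC$ can be applied to it. To settle this I would fix an epimorphism $\pi\colon P\to X_j$ with $P\in\PP$ and $\Ker\pi\in\XX$ (enough projectives) and form the pullback $E:=C_j\times_{X_j}P$. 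Then $0\to\Ker\pi\to E\to C_j\to0$ is exact, so $E\in\XX$ as $\XX$ is extension-closed; on the other hand the factorization of $\pi$ through the approximation $f_j$ yields a section of the projection $E\to P$, whence $E\cong X_{j+1}\oplus P$. Thus $X_{j+1}$ is a direct summand of $E\in\XX$, and since $\XX$ is closed under direct summands, $X_{j+1}\in\XX$.

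Next I would control the homology by dimension shifting. Applying $\Hom_\A(C,-)$ for $C\in\CC$ to each sequence $0\to X_{j+1}\to C_j\to X_j\to0$ and using $\Ext^i_\A(C,C_j)=0$ for $0<i<n$ together with the surjectivity of $(C,C_j)\to(C,X_j)$, I obtain $\Ext^1_\A(C,X_{j+1})=0$ and the isomorphisms $\Ext^i_\A(C,X_{j+1})\cong\Ext^{i-1}_\A(C,X_j)$ for $2\le i\le n-1$. Iterating these from $X_{n-1}$ downward collapses each $\Ext^i_\A(C,X_{n-1})$ with $0<i<n$ into some $\Ext^1_\A(C,X_{n-i})=0$, so that $\Ext^i_\A(\CC,X_{n-1})=0$ for all $0<i<n$. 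As $X_{n-1}\in\XX$, the defining property of the $n$-cluster tilting subcategory $\CC$ forces $X_{n-1}\in\CC$; I then set $C_{n-1}:=X_{n-1}$ and stop after $n-1$ steps.

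Finally I would splice. The short exact sequences $0\to X_{j+1}\to C_j\to X_j\to0$ assemble into the asserted exact sequence $0\to C_{n-1}\to\cdots\to C_0\to X\to0$ in $\A$, and the induced short exact sequences of functors $0\to(-,X_{j+1})\to(-,C_j)\to(-,X_j)\to0$ on $\CC$ (right exactness being the approximation property, the rest being left exactness of $\Hom$) splice to the required sequence exact on $\CC$. The left-hand sequence then follows by running the same argument in $\A^{\mathrm{op}}$: the two conditions defining $\CC$ are interchanged, $\II\subseteq\CC$ replaces $\PP\subseteq\CC$, enough injectives replaces enough projectives, and left $\CC$-approximations together with cokernels replace right $\CC$-approximations and kernels.
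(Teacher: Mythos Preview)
Your argument is correct and is the standard iterated-approximation proof; the paper itself does not include a proof of this proposition (it is cited from \cite{I-ar,I-aorder}), so there is nothing to compare beyond noting that your approach matches the one in those references. One small technical point: in the pullback step you invoke closure of $\XX$ under direct summands, which the paper does not state explicitly in its conventions; this is harmless in all the applications (where $\XX$ is $\mod\Lambda$, $\CM(\Lambda)$, or $T^\perp$, all Krull--Schmidt), but you should flag it as an assumption.
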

In the rest of this subsection we give a few examples.

\begin{examples} \label{examples-A}
(a) Let $\Lambda_n$ be a finite-dimensional algebra given by the quiver
\[0\xrightarrow{a_0}1\xrightarrow{a_1}2\xrightarrow{a_2}\cdots\xrightarrow{a_{n-2}}n-1\xrightarrow{a_{n-1}}n\]
with relations $a_ia_{i+1}=0$ for any $0\le i\le n-2$.
Then the Auslander-Reiten quiver of $\Lambda_n$ is the following, where $P_i$ and $S_i$ are projective and simple modules associated with the vertex $i$, respectively.
\[\xymatrix@C0.2cm@R0.2cm{
&P_{n-1}\ar[rd]&&P_{n-2}\ar[rd]&&\cdots&&P_1\ar[rd]&&P_0\ar[rd]\\
P_n=S_n\ar[ru]&&S_{n-1}\ar[ru]\ar@{..}[ll]&&S_{n-2}\ar@{..}[ll]&\cdots&S_2\ar[ru]&&S_1\ar[ru]\ar@{..}[ll]&&S_0\ar@{..}[ll]}
\]
One can easily check that $\add(\Lambda_n\oplus S_0)$ is a unique $n$-cluster tilting subcategory of $\mod\Lambda_n$
such that $\End_{\Lambda_n}(\Lambda_n\oplus S_0)\simeq\Lambda_{n+1}$.

(b) Let $\Lambda$ be a representation-finite selfinjective algebra such that the tree type of the stable Auslander-Reiten quiver of $\Lambda$
is either $A_m$, $B_m$, $C_m$ or $D_m$.
Then a combinatorial criterion for existence of $n$-cluster tilting subcategories in $\mod\Lambda$ is given in \cite{I-ar}.
This is related to the example (d) below.

(c) Another example is given by Geiss, Leclerc and Schr\"oer \cite{GLS1,GLS2}.
See Subsection \ref{Cluster tilting for preprojective algebras} for the definition of preprojective algebras.
\end{examples}
\begin{theorem}
Let $\Lambda$ be a preprojective algebra of Dynkin type. Then there exists a $2$-cluster tilting object in $\mod\Lambda$.
\end{theorem}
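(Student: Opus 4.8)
The plan is to exhibit an explicit module $M$ and to verify the criterion of Proposition~\ref{one-side condition}. Since $\Lambda$ is a finite-dimensional selfinjective algebra, $\mod\Lambda$ is an abelian category with enough projectives and injectives, and its projective and injective objects both coincide with $\add\Lambda$. Taking $\XX=\A=\mod\Lambda$ and $n=2$ in Proposition~\ref{one-side condition}, and noting that $\add M$ is automatically functorially finite for a finite-dimensional module $M$, it suffices to find $M$ with $\Lambda\in\add M$ and
\[\add M=\{X\in\mod\Lambda\ |\ \Ext^1_\Lambda(M,X)=0\}.\]
Here the inclusion $\subseteq$ is exactly the rigidity $\Ext^1_\Lambda(M,M)=0$, while $\supseteq$ is a maximality statement.

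For the construction I would follow Geiss, Leclerc and Schr\"oer \cite{GLS1,GLS2}. Fix a reduced expression $w_0=s_{i_1}\cdots s_{i_N}$ of the longest element of the Weyl group, where $N$ is the number of positive roots. To this word one attaches explicit indecomposable $\Lambda$-modules $M_1,\ldots,M_N$---for instance by iterating the reflection functors associated with the simple reflections, or by using the two-sided ideals $I_i=\Lambda(1-e_i)\Lambda$---and one sets $M:=\bigoplus_{k=1}^N M_k$. The construction is arranged so that the $n$ indecomposable projective-injective modules occur among the $M_k$, whence $\Lambda\in\add M$.

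To verify the rigidity $\Ext^1_\Lambda(M,M)=0$ I would use that, for a preprojective algebra of Dynkin type, $\mod\Lambda$ is a Frobenius category whose stable category $\underline{\mod}\Lambda$ is $2$-Calabi-Yau; concretely, this yields a functorial isomorphism $\Ext^1_\Lambda(X,Y)\cong D\Ext^1_\Lambda(Y,X)$. This symmetry has the pleasant side effect that the two defining conditions of a $2$-cluster tilting subcategory become equivalent, and it reduces the vanishing $\Ext^1_\Lambda(M,M)=0$ to a one-sided computation that one carries out inductively along the word $(i_1,\ldots,i_N)$, tracking the composition series of the $M_k$.

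The main obstacle is the reverse inclusion $\{X\ |\ \Ext^1_\Lambda(M,X)=0\}\subseteq\add M$: rigidity alone only shows that $M$ is a rigid module, and this must be upgraded to $2$-cluster tilting. One route, following \cite{GLS1,GLS2}, is to prove that $M$ is \emph{maximal} rigid---i.e. has the maximal possible number $N$ of pairwise non-isomorphic indecomposable summands---and that over a preprojective algebra of Dynkin type every maximal rigid module is $2$-cluster tilting; the hard input is the count of summands, which rests on the representation-theoretic geometry (dimensions of the varieties of $\Lambda$-modules, equivalently the combinatorics of $w_0$). A second route, following Buan-Iyama-Reiten-Scott \cite{BIRSc}, is to induct on the length of $w_0$: one sets up the reflection functors for the simple reflections, shows that passing from $s_{i_1}\cdots s_{i_{k-1}}$ to $s_{i_1}\cdots s_{i_k}$ preserves the $2$-cluster tilting property, and starts from the trivial case $w=e$. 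Once this inclusion is secured, Proposition~\ref{one-side condition} gives that $M$ is a $2$-cluster tilting object, and Proposition~\ref{M-approximation} supplies the associated approximation sequences.
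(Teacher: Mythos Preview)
Your outline is sound: you correctly identify the candidate object (attached to a reduced expression of $w_0$), the $2$-Calabi-Yau symmetry that makes the two orthogonality conditions equivalent, and the two routes (GLS maximal-rigid argument; BIRSc induction) for the hard maximality step.  The paper does not prove this theorem at the point where it is stated; it defers to Theorem~\ref{cluster tilting from reduced expression}, which is the BIRSc route, so your second option is the one the paper takes.

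There is, however, a concrete difference in how the maximality step is executed.  You frame the target as the one-sided orthogonality condition of Proposition~\ref{one-side condition} and propose to reach it by an induction that ``preserves the $2$-cluster tilting property''.  The paper instead uses Lemma~\ref{criterion for n-cluster tilting}: once $M=T(i_1,\ldots,i_\ell)$ is known to be a $2$-rigid generator-cogenerator, it suffices to show $\gl\End_\Lambda(M)\le 3$.  The crucial step, as the paper states just after Theorem~\ref{cluster tilting from reduced expression}, is to bound this global dimension by induction on $\ell$.  This is what makes the induction tractable---one is controlling homological dimension of an explicit endomorphism ring rather than directly tracking the growth of $\{X:\Ext^1_\Lambda(M,X)=0\}$.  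Your description of the BIRSc induction as ``passing from $s_{i_1}\cdots s_{i_{k-1}}$ to $s_{i_1}\cdots s_{i_k}$ preserves the $2$-cluster tilting property'' is slightly off: the ambient category $\Sub\Lambda_w$ itself changes with $w$ (equalling $\mod\Lambda$ only at $w=w_0$), so the induction is not within a fixed category, and the invariant carried along is the global-dimension bound rather than cluster-tiltingness per se.
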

In fact, each reduced expression of the longest element in the corresponding Weyl group gives a 2-cluster tilting object in $\mod\Lambda$.

We shall explain this in Theorem \ref{cluster tilting from reduced expression} below, in a general setting.

(d) An interesting example of categories with $n$-cluster tilting subcategories is given by \emph{$(n-1)$-cluster categories} (\cite{BMRRT,CCS} for $n=2$, and \cite{Th,Z,BM,W} for $n\ge2$),
which are $n$-Calabi-Yau triangulated categories constructed from the derived categories of path algebras \cite{Ke-orbit}.
(Notice that our $n$-cluster tilting objects are sometimes called `$(n-1)$-(cluster) tilting' in this setting.)

\subsection{$n$-Auslander algebras}

Throughout this subsection we fix a positive integer $n$.
We call a finite-dimensional algebra $\Gamma$ an \emph{$n$-Auslander algebra} if
\begin{equation}\label{n-Auslander algebra}
\gl\Gamma\le n+1\le\dom\Gamma.
\end{equation}
It is easily checked that any $n$-Auslander algebra $\Gamma$ is either semisimple or satisfies $\gl\Gamma=n+1=\dom\Gamma$.
We have the following \emph{$n$-Auslander correspondence} \cite{I-aorder},
where we call an additive category \emph{$n$-cluster tilting} if it is equivalent to an $n$-cluster tilting subcategory of $\mod\Lambda$ for some finite-dimensional algebra $\Lambda$.
\begin{theorem}\label{higher auslander correspondence}
\begin{itemize}
\item[(a)] Let $\Lambda$ be a finite-dimensional algebra and $M$ an $n$-cluster tilting object in $\mod\Lambda$. Then $\End_\Lambda(M)$ is an $n$-Auslander algebra.
\item[(b)] Any $n$-Auslander algebra $\Gamma$ can be obtained in this way.
This gives a bijection between the sets of equivalence classes of $n$-cluster tilting categories with additive generators and Morita-equivalence classes of $n$-Auslander algebras.
\end{itemize}
\end{theorem}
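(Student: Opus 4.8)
The plan is to prove Theorem~\ref{higher auslander correspondence} by generalizing the proof of the classical Auslander correspondence (Theorem~\ref{auslander correspondence}), replacing $\mod\Lambda$ by an $n$-cluster tilting subcategory $\CC$ and the bound $2$ by $n+1$. For part (a), fix an $n$-cluster tilting object $M$ with $\CC=\add M$ and set $\Gamma:=\End_\Lambda(M)$. Since $\Lambda$ is finite-dimensional, $\Gamma$ is too, and $\CC$ is a Krull--Schmidt category with additive generator $M$. The equivalence $\Hom_\Lambda(M,-):\CC\xrightarrow{\sim}\add\Gamma_\Gamma$ identifies the projective $\Gamma$-modules with the image of $\CC$. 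The two bounds must be established separately.

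First I would verify $\dom\Gamma\ge n+1$. Here I would use the injective objects of $\CC$: by Proposition~\ref{one-side condition} the $n$-cluster tilting category contains all injectives of $\mod\Lambda$, and by Proposition~\ref{M-approximation}, for any $\Lambda$-module one has a finite coresolution by objects of $\CC$ that stays exact after applying $\Hom_\Lambda(-,\CC)$. Applying $\Hom_\Lambda(M,-)$ to such sequences and dualizing, I would produce the first $n$ terms of a minimal injective resolution of $\Gamma_\Gamma$ consisting of projective-injective $\Gamma$-modules, which are in particular flat; this is exactly the statement $\dom\Gamma\ge n+1$. The reduction $\fd=\pd$ noted in Definition~\ref{Auslander-type condition} lets me phrase everything with projective dimensions.

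Next I would bound $\gl\Gamma\le n+1$. The point is that every simple $\Gamma$-module has projective dimension at most $n+1$. Each simple corresponds to an indecomposable $X\in\CC$, and the first exact sequence of Proposition~\ref{M-approximation}, namely $0\to C_{n-1}\to\cdots\to C_0\to X\to0$ remaining exact under $\Hom_\Lambda(-,\CC)$, yields under $\Hom_\Lambda(M,-)$ a projective resolution of length $n$ of the radical of the corresponding indecomposable projective $\Gamma$-module, hence a projective resolution of length $n+1$ of the top simple. Taking suprema over all simples gives $\gl\Gamma\le n+1$, completing (a).

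For part (b), given an $n$-Auslander algebra $\Gamma$, the conditions $\gl\Gamma\le n+1\le\dom\Gamma$ force, via the Morita--Tachikawa-type correspondence recalled in Subsection~1.3, a finite-dimensional algebra $\Lambda$ and a generator--cogenerator $M$ with $\Gamma=\End_\Lambda(M)$; the delicate part is to show $\add M$ is actually $n$-cluster tilting in $\mod\Lambda$, which I would extract from $\gl\Gamma\le n+1$ by reversing the resolution computation above, using the vanishing of $\Ext^i_\Gamma(S,\Gamma)$ in the intermediate degrees to recover $\Ext^i_\Lambda(M,X)=0$ for $0<i<n$. I expect the main obstacle to be precisely this reverse direction: translating the homological conditions on $\Gamma$ back into the orthogonality and maximality conditions defining $n$-cluster tilting, and verifying functorial finiteness of $\add M$, which does not come for free and must be argued from the existence of the generator--cogenerator together with the dominant-dimension hypothesis.
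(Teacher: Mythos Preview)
Your plan for bounding $\gl\Gamma$ has a concrete gap. You propose to apply Proposition~\ref{M-approximation} to an indecomposable $X\in\CC$ and read off a resolution of the radical of the corresponding projective $\Gamma$-module. But Proposition~\ref{M-approximation} is about approximating an arbitrary object of $\mod\Lambda$ by objects of $\CC$; when $X$ already lies in $\CC$, its right $\CC$-approximation is the identity and the resulting sequence is trivial. It certainly does not produce a resolution of $J_{\CC}(M,X)$, and nothing in Proposition~\ref{M-approximation} sees the radical at all. The paper's argument is different: for an arbitrary $\Gamma$-module $Y$, take a projective presentation $P_1\xrightarrow{f}P_0\to Y\to0$, lift $f$ through the equivalence $\Hom_\Lambda(M,-):\add M\to\add\Gamma$ to a map $g:M_1\to M_0$ in $\CC$, and then apply Proposition~\ref{M-approximation} to $\Ker g\in\mod\Lambda$, which is \emph{not} in $\CC$ in general. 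This yields a $\CC$-resolution of $\Ker g$ of length $n-1$ that stays exact under $\Hom_\Lambda(M,-)$; splicing with the presentation gives $\pd_\Gamma Y\le n+1$.

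Your argument for $\dom\Gamma\ge n+1$ is also off target: the coresolution from Proposition~\ref{M-approximation} has terms merely in $\CC$, so after applying $\Hom_\Lambda(M,-)$ you obtain projective, not necessarily injective, $\Gamma$-modules, and no dualizing step fixes this. The paper instead takes the ordinary injective resolution $0\to M\to I_0\to\cdots\to I_n$ of the $\Lambda$-module $M$, uses $n$-rigidity of $M$ directly to conclude that $\Hom_\Lambda(M,-)$ keeps it exact, and observes that each $\Hom_\Lambda(M,I_i)$ is both projective (since $D\Lambda\in\add M$) and injective (since $\Lambda\in\add M$ gives $\Hom_\Lambda(M,D\Lambda)\simeq DM$). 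Finally, your worry about functorial finiteness of $\add M$ in part~(b) is misplaced: $\add M$ is automatically functorially finite for any single module $M$. The substantive work in~(b), as the paper carries it out, is to recover $n$-rigidity of $M$ from the minimal injective resolution of $\Gamma$, and then to use Proposition~\ref{one-side condition} together with $\gl\Gamma\le n+1$ to upgrade rigidity to the full $n$-cluster tilting condition.
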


\begin{proof}
Although this is shown in \cite{I-aorder} in a general setting, we give a complete proof for our setting here for the convenience of the reader.

(a) We have an equivalence
\[P_-=\Hom_\Lambda(M,-):\add M_\Lambda\xrightarrow{\sim}\add\Gamma_\Gamma.\]
For any $X\in\mod\Gamma$, we take a projective resolution
$P_1\xrightarrow{f}P_0\to X\to0$.
Using the equivalence $P_-$, there exists a morphism
$g\in\Hom_\Lambda(M_1,M_0)$
in $\add M$ such that $f=P_{g}$.
Applying Proposition \ref{M-approximation} to $\Ker g$, we have an exact sequence
\[0\to M_{n+1}\to\cdots\to M_2\to\Ker g\to0\]
such that
\[0\to P_{M_{n+1}}\to\cdots\to P_{M_2}\to P_{\Ker g}\to0\]
is exact.
Since $P_{\Ker g}=\Ker f$, we have $\pd X\le n+1$, and so $\gl\Gamma\le n+1$.

Now we shall show $\dom\Gamma\ge n+1$.
Take an injective resolution
\[0\to M\to I_0\to I_1\to\cdots\to I_n\]
of the $\Lambda$-module $X$. Applying $P_-$, we have an exact sequence
\[0\to\Gamma\to P_{I_0}\to P_{I_1}\to\cdots\to P_{I_n}\]
since $M$ is $n$-rigid.
Since $D\Lambda\in\add M$, we have that $P_{I_i}$ is a projective $\Gamma$-module.
Moreover $P_{D\Lambda}=DM=D\Hom_\Lambda(\Lambda,M)$ holds.
Since $\Lambda\in\add M$, we have that $P_{I_i}$ is an injective $\Gamma$-module.
Thus we have $\dom\Gamma\ge n+1$.

(b) Let $I$ be a direct sum of all indecomposable projective-injective $\Gamma$-modules.
Define a projective $\Gamma$-module by $Q:=\Hom_{\Gamma^{\rm op}}(DI,\Gamma)$.
Put $\Lambda:=\End_\Gamma(Q)$ and $M:=\Hom_\Gamma(Q,\Gamma)$.
We have the functors
\begin{eqnarray*}
P_-:=\Hom_\Lambda(M,-)&:&\mod\Lambda\to\mod\Gamma,\\
Q_-:=\Hom_\Gamma(Q,-)&:&\mod\Gamma\to\mod\Lambda,
\end{eqnarray*}
such that $Q_{P_-}\simeq\id_{\mod\Lambda}$.
It is easily checked that we have mutually quasi-inverse equivalences
\begin{equation}\label{P and Q}
P_-:\add(D\Lambda)_\Lambda\to\add I_\Gamma\ \mbox{ and }\ Q_-:\add I_\Gamma\to\add(D\Lambda)_\Lambda.
\end{equation}

Take a minimal injective resolution
\begin{equation}\label{Gamma injective}
0\to\Gamma\to I_0\to\cdots\to I_n
\end{equation}
of the $\Gamma$-module $\Gamma$. Applying $Q_-$, we have an exact sequence
\begin{equation}\label{M injective}
0\to M\to Q_{I_0}\to\cdots\to Q_{I_n}
\end{equation}
with injective $\Lambda$-modules $Q_{I_i}$.
Applying $P_-$ to \eqref{M injective} and using \eqref{P and Q}, we get the exact sequence \eqref{Gamma injective}.
This implies that $M$ is an $n$-rigid $\Lambda$-module, and that $\End_\Lambda(M)\simeq\Gamma$.

We shall show that $M$ is an $n$-cluster tilting object in $\mod\Lambda$.
Since $M$ is a generator in $\mod\Lambda$, it is enough, by Proposition \ref{one-side condition}, to show that
any $X\in\mod\Lambda$ satisfying $\Ext^i_\Lambda(M,X)=0$ for any $0<i<n$ belongs to $\add M$.
Take an injective resolution
\[0\to X\to I'_0\to\cdots\to I'_n.\]
Applying $P_-$, we have an exact sequence
\[0\to P_X\to P_{I'_0}\to\cdots\to P_{I'_n}\]
with projective $\Gamma$-modules $P_{I'_i}$ since we asumed $\Ext^i_\Lambda(M,X)=0$ for any $0<i<n$.
Since $\gl\Gamma\le n+1$, we have $P_X\in\add\Gamma$.
Thus $X=Q_{P_X}\in\add Q_\Gamma=\add M$.
\end{proof}
We also have the relative version \cite{I-tau4,I-aorder} of Theorem \ref{higher auslander correspondence}.
For a tilting $\Lambda$-module $T$ with $\pd T<\infty$ \cite{Ha,Mi}, define a full subcategory $T^\perp$ of $\mod\Lambda$ by
\[T^\perp:=\{X\in\mod\Lambda\ |\ \Ext^i_\Lambda(T,X)=0 \mbox{ for } i > 0\}.\]
Then $T^\perp$ forms an extension-closed subcategory of $\mod\Lambda$ with enough projectives $\add T$ and enough injectives $\add D\Lambda$.

Let $m$ and $n$ be integers with $0\le m\le n$ and $1\le n$.
We call an additive category \emph{$m$-relative $n$-cluster tilting} if it is equivalent to an $n$-cluster tilting subcategory of $T^\perp$
for some finite-dimensional algebra $\Lambda$ and some tilting $\Lambda$-module $T$ with $\pd T\le m$.
On the other hand, we say that a finite-dimensional algebra is an \emph{$m$-relative $n$-Auslander algebra} if $\Gamma$ satisfies $\gl\Gamma\le n+1$ and the two-sided $(m+1,n+1)$-condition in Definition \ref{Auslander-type condition}.
\begin{theorem}
Let $m$ and $n$ be integers with $0\le m\le n$ and $1\le n$.
\begin{itemize}
\item[(a)] Let $\Lambda$ be a finite-dimensional algebra and $T$ a tilting $\Lambda$-module with $\pd T\le m$. Let $M$ be an $n$-cluster tilting object in $T^\perp$.
Then $\End_\Lambda(M)$ is an $m$-relative $n$-Auslander algebra.
\item[(b)] Any $m$-relative $n$-Auslander algebra is obtained in this way.
This gives a bijection between the sets of equivalence classes of $m$-relative $n$-cluster tilting categories with additive generators and Morita-equivalence classes of $m$-relative $n$-Auslander algebras.
\end{itemize}
\end{theorem}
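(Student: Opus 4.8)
The plan is to imitate the proof of Theorem \ref{higher auslander correspondence} almost verbatim, replacing the unadorned condition $\gl\Gamma\le n+1\le\dom\Gamma$ by the refined homological data encoded in the two-sided $(m+1,n+1)$-condition, and correspondingly replacing $\mod\Lambda$ by the subcategory $T^\perp$. The key technical fact I would lean on is that $T^\perp$ is an extension-closed subcategory of $\mod\Lambda$ with enough projectives $\add T$ and enough injectives $\add D\Lambda$, so that Propositions \ref{one-side condition} and \ref{M-approximation} apply to $T^\perp$ with $\XX=T^\perp$, $\PP=\add T$ and $\II=\add D\Lambda$. This is precisely the setting those propositions were designed for, so the higher-approximation machinery transfers without change.

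For part (a), I would start from $M$ an $n$-cluster tilting object in $T^\perp$ and set $\Gamma:=\End_\Lambda(M)$. The proof that $\gl\Gamma\le n+1$ runs exactly as before: lift a projective presentation over $\Gamma$ to a map $g$ in $\add M$, apply Proposition \ref{M-approximation} to $\Ker g$ inside $T^\perp$, and conclude $\pd X\le n+1$. The new point is to control the flat (equivalently, projective) dimensions of the terms $I_i$ in the minimal injective resolution of $\Gamma$, so as to verify the $(m+1,n+1)$-condition rather than just $\dom\Gamma\ge n+1$. For this I would take an injective resolution $0\to X\to I_0\to\cdots$ of an object $X\in T^\perp$ inside $T^\perp$; its first $n+1$ terms lie in $\add D\Lambda$ (enough injectives), and applying $P_-=\Hom_\Lambda(M,-)$ gives the initial terms of the injective resolution of $\Gamma$, which are projective-injective $\Gamma$-modules exactly as in the absolute case. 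The refinement is that the hypothesis $\pd T\le m$ feeds, via the tilting resolution, into a bound $\fd I_i\le m$ for $0\le i<n+1$; here I would trace through how the projective dimension of the tilting module $T$ measures exactly the flat dimension of the relevant injective $\Gamma$-modules, obtaining the $(m+1,n+1)$-condition on $\Gamma$, and symmetrically on $\Gamma^{\op}$.

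For part (b), I would run the reverse construction of Theorem \ref{higher auslander correspondence}: let $I$ be the sum of indecomposable projective-injective $\Gamma$-modules, set $Q:=\Hom_{\Gamma^{\op}}(DI,\Gamma)$, $\Lambda:=\End_\Gamma(Q)$ and $M:=\Hom_\Gamma(Q,\Gamma)$, obtaining the mutually quasi-inverse equivalences \eqref{P and Q}. Applying $Q_-$ to the minimal injective resolution of $\Gamma$ recovers, as before, that $M$ is $n$-rigid with $\End_\Lambda(M)\simeq\Gamma$ and that $M$ is $n$-cluster tilting; the extra task is to identify the correct tilting module $T$ and show $\pd T\le m$ and $M\in T^\perp$. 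The natural candidate is the tilting module determined by the $(m+1,n+1)$-condition on $\Gamma^{\op}$, whose projective dimension over $\Lambda$ is governed by the flat dimensions $\fd I_i<m+1$ of the injective terms; one then checks $M$ is $n$-cluster tilting in $T^\perp$ rather than in all of $\mod\Lambda$.

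The main obstacle I anticipate is bookkeeping the precise translation between the two-sided $(m+1,n+1)$-condition on $\Gamma$ and the constraint $\pd T\le m$ on the tilting module, together with the identification of $T^\perp$ as the correct ambient subcategory in which $M$ becomes $n$-cluster tilting. In the absolute case ($m=0$, $T=\Lambda$) the condition $\fd I_i=0$ says the $I_i$ are flat, i.e. $\dom\Gamma\ge n+1$, and $T^\perp=\mod\Lambda$; the real work is verifying that allowing $\fd I_i\le m$ corresponds exactly, under the equivalences $P_-$ and $Q_-$, to replacing $\mod\Lambda$ by $T^\perp$ for a tilting module of projective dimension at most $m$, and that this correspondence is inverse to the one in part (a). Both directions are essentially a careful matching of homological invariants, so I expect no conceptual surprises beyond this dictionary, which I would state and verify as a lemma before assembling the two halves.
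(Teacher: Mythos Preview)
The paper does not give a proof of this theorem; it is stated with a reference to \cite{I-tau4,I-aorder}, so there is no in-text argument to compare against directly. Your overall plan of adapting the proof of Theorem~\ref{higher auslander correspondence} is the natural one and matches the spirit of the cited references.

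That said, there is a genuine gap in your sketch of part (a). You write that applying $P_-$ to an injective resolution of $M$ inside $T^\perp$ ``gives the initial terms of the injective resolution of $\Gamma$, which are projective-injective $\Gamma$-modules exactly as in the absolute case.'' This is not correct in the relative setting. In the proof of Theorem~\ref{higher auslander correspondence}, the injectivity of $P_{I_i}$ as a $\Gamma$-module is deduced from the identification $P_{D\Lambda}\simeq DM$ together with the fact that $\Lambda\in\add M$, which forces $M$ to be projective as a $\Gamma^{\rm op}$-module. In the relative case $M$ is an $n$-cluster tilting object of $T^\perp$, so it contains $T$ and $D\Lambda$ but need not contain $\Lambda$; hence $P_{I_i}$ is projective (because $D\Lambda\in\add M$) but typically \emph{not} injective. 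The exact sequence you produce is therefore not a piece of the injective resolution of $\Gamma$, and nothing about the flat dimensions of the terms in that resolution follows from it. Your subsequent sentence about $\pd T\le m$ feeding ``via the tilting resolution'' into a bound $\fd I_i\le m$ is too vague to repair this: you have not said what the injective $\Gamma$-modules look like under the functors $P_-,Q_-$, nor how the tilting coresolution $0\to\Lambda\to T_0\to\cdots\to T_m\to0$ translates into a statement about projective dimensions of injective $\Gamma$-modules.

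The missing ingredient is a dictionary identifying $\add D\Gamma$ on the $\Gamma$-side with a suitable subcategory on the $\Lambda$-side, and then relating the $\add T$-(co)resolutions available from tilting theory to projective resolutions over $\Gamma$. Once you pin this down, the bound $\pd T\le m$ does indeed control $\pd_\Gamma$ of the relevant injectives, but this step is the heart of the relative statement and cannot be waved through by analogy with the absolute case. A similar issue recurs in your part (b): you propose to recover $T$ from the $(m+1,n+1)$-condition on $\Gamma$, but without the dictionary above you have no mechanism for producing a tilting $\Lambda$-module of the right projective dimension from the homological data on $\Gamma$.
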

By Proposition \ref{Gorenstein condition and (n,n)-condition}, any $m$-relative $n$-Auslander algebra $\Gamma$ with $\gl\Gamma=n+1$ satisfies the Gorenstein condition.
From the viewpoint of Subsection \ref{Almost split sequences}, it is natural to consider an analogous theory of almost split sequences for $n$-cluster tilting subcategories, which is the subject in the next subsection.

\subsection{$n$-almost split sequences}
In this subsection we introduce an analogous theory for $n$-cluster tilting subcategories as disscussed in the paper \cite{I-ar}.

Let $\Lambda$ be a finite-dimensional algebra, and let $\CC$ be an $n$-cluster tilting subcategory of $\mod\Lambda$ with $n\ge1$.
Let
\[\tau=D\Tr:\underline{\mod}\Lambda\to\overline{\mod}\Lambda\ \mbox{ and }\ \tau^-=\Tr D:\overline{\mod}\Lambda\to\underline{\mod}\Lambda\]
be the Auslander-Reiten translations defined in Subsection \ref{Almost split sequences}.
We denote by
\[\Omega:\underline{\mod}\Lambda\to\underline{\mod}\Lambda\ \mbox{ and }\ \Omega^-:\overline{\mod}\Lambda\to\overline{\mod}\Lambda\]
the syzygy and the cosyzygy functors.
Consider the functors
\begin{eqnarray*}
\tau_n:=\tau\Omega^{n-1}&:&\underline{\mod}\Lambda\xrightarrow{\Omega^{n-1}}\underline{\mod}\Lambda\xrightarrow{\tau}\overline{\mod}\Lambda,\\
\tau_n^-:=\tau^-\Omega^{-(n-1)}&:&\overline{\mod}\Lambda\xrightarrow{\Omega^{-(n-1)}}\overline{\mod}\Lambda\xrightarrow{\tau^-}\underline{\mod}\Lambda.
\end{eqnarray*}
These functors $\tau_n$ and $\tau_n^-$ are not equivalences in general though $(\tau_n^-,\tau_n)$ forms an adjoint pair.
We denote by $\underline{\CC}$ (respectively, $\overline{\CC}$)
the corresponding subcategory of $\underline{\mod}\Lambda$ (respectively, $\overline{\mod}\Lambda$).
The following result shows that it is fundamental in the study of $n$-cluster tilting subcategories.
\begin{theorem}\label{n-AR translation}
The functors $\tau_n$ and $\tau_n^-$ induce mutually quasi-inverse equivalences
\[\tau_n:\underline{\CC}\to\overline{\CC}\ \mbox{ and }\ \tau_n^-:\overline{\CC}\to\underline{\CC}.\]
\end{theorem}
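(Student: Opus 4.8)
The plan is to make the functor $\tau_n$ completely explicit on $\CC$ via a single exact sequence, from which both its essential image and its quasi-inverse can be read off. Since projective objects vanish in $\underline{\mod}\Lambda$ and injective objects in $\overline{\mod}\Lambda$, it suffices to treat an indecomposable non-projective $C\in\CC$ and the object $\tau_nC=D\Tr\Omega^{n-1}C$; the corresponding assertions for $\tau_n^-$ then follow by applying the whole argument to $\Lambda^{\rm op}$ together with its $n$-cluster tilting subcategory $D\CC$. First I would fix a minimal projective resolution $\cdots\to P_1\xrightarrow{d_1}P_0\xrightarrow{d_0}C\to0$. By Proposition \ref{one-side condition} we have $\Lambda\in\CC$, so $n$-rigidity of $\CC$ gives $\Ext^i_\Lambda(C,\Lambda)=0$ for $0<i<n$; hence dualizing by $(-)^*=\Hom_\Lambda(-,\Lambda)$ yields an exact sequence of $\Lambda^{\rm op}$-modules
\[0\to C^*\to P_0^*\to\cdots\to P_{n-1}^*\to P_n^*\to N\to0,\qquad N:=\Tr\Omega^{n-1}C,\]
with every $P_i^*$ projective and $\tau_nC\simeq DN$. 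This sequence is the backbone of the proof.

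Next I would check that $\tau_n$ lands in $\overline{\CC}$, i.e. that $N$ lies in $D\CC$. By the dual of Proposition \ref{one-side condition}(c) applied on $\Lambda^{\rm op}$ — noting that $D\CC$ contains the injective $\Lambda^{\rm op}$-modules because $\CC$ contains the projective $\Lambda$-modules — it is enough to show $\Ext^i_{\Lambda^{\rm op}}(N,DC')=0$ for every $C'\in\CC$ and $0<i<n$. Computing this Ext from the projective resolution of $N$ extracted from the displayed sequence, the terms $P_0^*,\dots,P_n^*$ contribute, in the range $0<i<n$, exactly the interior homology of $DC'\otimes_\Lambda P_\bullet$; thus
\[\Ext^i_{\Lambda^{\rm op}}(N,DC')\simeq\operatorname{Tor}^\Lambda_{n-i}(DC',C)\simeq D\Ext^{n-i}_\Lambda(C,C'),\]
using the tensor–hom duality $\operatorname{Tor}^\Lambda_j(DC',C)\simeq D\Ext^j_\Lambda(C,C')$. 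Since $0<n-i<n$, the right-hand side vanishes by $n$-rigidity, so $N\in D\CC$ and $\tau_nC\in\overline{\CC}$. The inclusion $\tau_n^-(\overline{\CC})\subseteq\underline{\CC}$ is the same statement on $\Lambda^{\rm op}$.

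Then I would prove $\tau_n^-\tau_n\simeq\id_{\underline{\CC}}$. Writing $\tau=D\Tr$ and $\tau^-=\Tr D$, one has $\tau_n^-\tau_n=\Tr D\,\Omega^{-(n-1)}\,D\Tr\,\Omega^{n-1}$; since $D$ interchanges the cosyzygy on $\mod\Lambda$ with the syzygy on $\mod\Lambda^{\rm op}$ and $D^2\simeq\id$, this collapses to $\Tr\,\Omega^{n-1}_{\Lambda^{\rm op}}\,\Tr\,\Omega^{n-1}_\Lambda$. Reading off the first syzygies of $N$ from the displayed sequence and using the reflexivity $P_i^{**}\simeq P_i$, a direct computation identifies $\Tr\,\Omega^{n-1}_{\Lambda^{\rm op}}N$ with $\Cok(d_1\colon P_1\to P_0)\simeq C$, so $\tau_n^-\tau_nC\simeq C$, naturally in $C$. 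Applying the same to $\Lambda^{\rm op}$ gives $\tau_n\tau_n^-\simeq\id_{\overline{\CC}}$. Combined with the previous paragraph, this shows that $\tau_n\colon\underline{\CC}\to\overline{\CC}$ and $\tau_n^-\colon\overline{\CC}\to\underline{\CC}$ are mutually quasi-inverse equivalences.

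The hard part will be the second step — proving that $\tau_nC$ genuinely lies in $\CC$ rather than merely in $\mod\Lambda$. The composite-is-identity computations of the third step are essentially formal bookkeeping with syzygy, transpose and the duality $D$, and they do not by themselves pin down the essential image; the containment $\tau_n(\underline{\CC})\subseteq\overline{\CC}$ is precisely where the full strength of the $n$-cluster tilting hypothesis enters. The delicate point there is the index matching: the Ext one must kill is $\Ext^{n-i}_\Lambda(C,C')$ with $0<i<n$, and this lands inside the rigidity range $0<n-i<n$ only because the dualized resolution has length exactly $n+1$; one must also verify that truncating to $P_0^*,\dots,P_n^*$ leaves the interior homology undisturbed, which is what confines the argument to $0<i<n$ and uses that $\CC$ contains both $\Lambda$ and $D\Lambda$.
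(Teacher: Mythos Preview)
The paper itself does not prove this theorem; it is stated with a reference to \cite{I-ar}. Your argument is correct and is essentially the one given there: from $\Ext^i_\Lambda(C,\Lambda)=0$ for $0<i<n$ one obtains the exact sequence $0\to C^*\to P_0^*\to\cdots\to P_n^*\to N\to0$ with $N=\Tr\Omega^{n-1}C$, uses it together with the one-sided characterisation of $\CC$ (Proposition \ref{one-side condition}) to show $\tau_nC\in\CC$, and then reads off the inverse from the very same sequence.

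Two small comments. Your Tor notation is slightly off, since $DC'$ is a left $\Lambda$-module; the unambiguous identity you actually need is $\Hom_{\Lambda^{\rm op}}(P_j^*,DC')\simeq D\Hom_\Lambda(P_j,C')$, which directly gives $\Ext^i_{\Lambda^{\rm op}}(N,DC')\simeq D\Ext^{n-i}_\Lambda(C,C')$ for $0<i<n$. For the naturality in the third step, it is cleanest to observe that once $N\in D\CC$ you have $\Ext^i_{\Lambda^{\rm op}}(N,\Lambda)=0$ for $0<i<n$, so the dualised sequence is precisely the analogous construction for $N$ over $\Lambda^{\rm op}$; then $\Tr\Omega^{n-1}_{\Lambda^{\rm op}}\Tr\Omega^{n-1}_\Lambda\simeq\id$ on $\underline{\CC}$ follows from the natural isomorphism $\Tr^2\simeq\id$ rather than from an object-by-object check.
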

Especially, $\tau_n$ gives a bijection from isoclasses of indecomposable non-projective objects in $\CC$ to
isoclasses of indecomposable non-injective objects in $\CC$.
We call $\tau_n$ and $\tau_n^-$ the \emph{$n$-Auslander-Reiten translations}.
For $n$-cluster tilting subcategories $\CC$, we have the following \emph{$n$-Auslander-Reiten duality} which is analogous to the Auslander-Reiten duality.
\begin{theorem}
There exist the following functorial isomorphisms for any modules $X,Y$ in $\CC$
\[\underline{\Hom}_\Lambda(\tau_n^-Y,X)\simeq D\Ext^n_\Lambda(X,Y)\simeq\overline{\Hom}_\Lambda(Y,\tau_nX).\]
\end{theorem}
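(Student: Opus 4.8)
The plan is to reduce the statement to the classical Auslander-Reiten duality recalled in Subsection \ref{Almost split sequences}, namely the functorial isomorphism $\underline{\Hom}_\Lambda(\tau^-V,U)\simeq D\Ext^1_\Lambda(U,V)\simeq\overline{\Hom}_\Lambda(V,\tau U)$ valid for all $U,V\in\mod\Lambda$, and to combine it with dimension shifting along syzygies and cosyzygies. The two outer terms of the desired formula are obtained from the classical one by feeding in the shifted arguments $\Omega^{-(n-1)}Y$ and $\Omega^{n-1}X$; what then remains is to identify the resulting $\Ext^1$ groups with $\Ext^n_\Lambda(X,Y)$.

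For the first isomorphism I would unwind $\tau_n^-=\tau^-\Omega^{-(n-1)}$ and apply the classical duality with $V=\Omega^{-(n-1)}Y$, which gives
\[\underline{\Hom}_\Lambda(\tau_n^-Y,X)=\underline{\Hom}_\Lambda(\tau^-\Omega^{-(n-1)}Y,X)\simeq D\Ext^1_\Lambda(X,\Omega^{-(n-1)}Y).\]
It then suffices to produce a functorial isomorphism $\Ext^1_\Lambda(X,\Omega^{-(n-1)}Y)\simeq\Ext^n_\Lambda(X,Y)$. This is standard dimension shifting: each short exact sequence $0\to\Omega^{-i}Y\to I\to\Omega^{-(i+1)}Y\to0$ with $I$ injective yields $\Ext^j_\Lambda(X,\Omega^{-(i+1)}Y)\simeq\Ext^{j+1}_\Lambda(X,\Omega^{-i}Y)$ for $j\ge1$, and iterating $n-1$ times gives the claim. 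Since $\Ext^{\ge1}$ annihilates injective summands, these isomorphisms descend to the costable category and are natural in $Y$, and they are manifestly natural in $X$.

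Symmetrically, for the second isomorphism I would unwind $\tau_n=\tau\Omega^{n-1}$ and apply the classical duality with $U=\Omega^{n-1}X$, obtaining
\[\overline{\Hom}_\Lambda(Y,\tau_nX)=\overline{\Hom}_\Lambda(Y,\tau\Omega^{n-1}X)\simeq D\Ext^1_\Lambda(\Omega^{n-1}X,Y),\]
and then use dimension shifting along the syzygy sequences $0\to\Omega^{i+1}X\to P\to\Omega^iX\to0$ with $P$ projective to get $\Ext^1_\Lambda(\Omega^{n-1}X,Y)\simeq\Ext^n_\Lambda(X,Y)$. Composing the two chains identifies both outer terms with $D\Ext^n_\Lambda(X,Y)$, as required.

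The main obstacle I expect is not the existence of the individual isomorphisms, which are routine, but verifying that the entire chain is bifunctorial in $X$ and $Y$ simultaneously and that the syzygy and cosyzygy dimension-shift isomorphisms are genuinely compatible with the stable and costable structures on which $\tau,\tau^-,\Omega,\Omega^-$ are defined. Concretely, one must check that the shifts do not secretly depend on the choice of projective or injective resolutions beyond what is absorbed in passing to $\underline{\mod}\Lambda$ and $\overline{\mod}\Lambda$, and that naturality in one variable is preserved when the other is shifted. The $n$-cluster tilting hypothesis ensures that $\tau_n$ and $\tau_n^-$ restrict to the equivalences of Theorem \ref{n-AR translation}, so specializing the formula to $X,Y\in\CC$ is harmless; the chain of isomorphisms itself holds at the level of all of $\mod\Lambda$.
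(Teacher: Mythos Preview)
Your argument is correct and is precisely the standard reduction: apply the classical Auslander--Reiten duality with the shifted argument $\Omega^{-(n-1)}Y$ (respectively $\Omega^{n-1}X$) and then dimension-shift $\Ext^1$ to $\Ext^n$ along cosyzygy (respectively syzygy) sequences. The paper itself does not spell out a proof here but refers to \cite{I-ar}, where exactly this argument is carried out; your observation that the isomorphisms hold for arbitrary $X,Y\in\mod\Lambda$, with the $n$-cluster tilting hypothesis entering only through Theorem~\ref{n-AR translation}, is also on point.
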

In particular, we have an isomorphism $\Ext^n_\Lambda(X,\tau_nX)\simeq D\underline{\End}_\Lambda(X)$ for any $X\in\CC$.
Thus one can get a long exact sequence given by a class of $\Ext^n_\Lambda(X,\tau_nX)$ as in the case of Auslander-Reiten theory.
Let us start with the following preliminary observation which is analogous to Proposition \ref{selfduality of almost split sequences}.
\begin{proposition}
Let
\begin{equation}\label{n-almost split}
0\to Y\xrightarrow{f_{n+1}}C_n\xrightarrow{f_n}\cdots\xrightarrow{f_2}C_1\xrightarrow{f_1}X\to0
\end{equation}
be an exact sequence with terms in $\CC$ and $f_i\in J_{\CC}$ for any $i$.
Then
\[0\to(-,Y)\xrightarrow{f_{n+1}}(-,C_n)\xrightarrow{f_n}\cdots\xrightarrow{f_2}(-,C_1)\xrightarrow{f_1}J_{\CC}(-,X)\to0\]
is exact on $\CC$ if and only if
\[0\to(X,-)\xrightarrow{f_1}(C_1,-)\xrightarrow{f_2}\cdots\xrightarrow{f_n}(C_n,-)\xrightarrow{f_{n+1}}J_{\CC}(Y,-)\to0\]
is exact on $\CC$.
In this case, $X$ is indecomposable if and only if $Y$ is indecomposable.
\end{proposition}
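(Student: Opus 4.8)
The plan is to translate both exactness conditions into the language of projective resolutions in the functor categories $\mod\CC$ and $\mod\CC^{\op}$, and then to pass between them by a duality whose behaviour is governed by the $n$-Auslander-Reiten duality, in exact parallel with the way the case $n=1$ (Proposition~\ref{selfduality of almost split sequences}) rests on the ordinary Auslander-Reiten duality together with the principle that almost split sequences are projective resolutions of simple functors.

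First I would record that the representable functors $(-,C_i)$ and $(-,X)$ are the projectives of $\mod\CC$ and that, since $f_1\in J_{\CC}$, the image of $(-,C_1)\xrightarrow{f_1}(-,X)$ lies in $J_{\CC}(-,X)=\rad(-,X)$. Hence the first displayed sequence is exact on $\CC$ if and only if
\[0\to(-,Y)\xrightarrow{f_{n+1}}(-,C_n)\to\cdots\to(-,C_1)\xrightarrow{f_1}(-,X)\to\top(-,X)\to0\]
is a projective resolution in $\mod\CC$ of the top $\top(-,X)=(-,X)/J_{\CC}(-,X)$; it is automatically minimal because all the $f_i$ are radical. Dually, the second sequence is exact on $\CC$ if and only if
\[0\to(X,-)\xrightarrow{f_1}(C_1,-)\to\cdots\to(C_n,-)\xrightarrow{f_{n+1}}(Y,-)\to\top(Y,-)\to0\]
is a minimal projective resolution of $\top(Y,-)$ in $\mod\CC^{\op}$.

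Next I would exploit the duality sending a representable $(-,C)$ to $(C,-)$. Applying it termwise to the first resolution produces precisely the complex of representables underlying the second sequence, and its cohomology in degree $i$ records the failure of exactness there. When $\CC=\add M$ has an additive generator this is literally the result of applying $\Hom_\Gamma(-,\Gamma)$ to a projective resolution of the $\Gamma$-module $\top(-,X)$ over the $n$-Auslander algebra $\Gamma=\End_\Lambda(M)$, so that cohomology is $\Ext^i_\Gamma(\top(-,X),\Gamma)$; in general one argues identically inside $\mod\CC$. The decisive input is that an $n$-Auslander algebra satisfies the restricted Gorenstein condition (Proposition~\ref{Gorenstein condition and (n,n)-condition}), whose functorial incarnation is exactly the $n$-Auslander-Reiten duality $D\Ext^n_\Lambda(X,Y)\simeq\overline{\Hom}_\Lambda(Y,\tau_nX)$. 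Since $f_1$ is a radical epimorphism, $X$ has no nonzero projective direct summand, so every simple summand of $\top(-,X)$ has projective dimension exactly $n+1$ and the condition applies uniformly: it forces the dualized complex to have vanishing cohomology in every degree below the top and a semisimple cohomology in degree $n+1$. As $(Y,-)$ is the projective cover of that top cohomology, the latter is $\top(Y,-)$ and the second sequence is exact. Running the same argument over $\CC^{\op}\subseteq\mod\Lambda^{\op}$ supplies the converse implication, and comparing the numbers of simple summands of $\top(-,X)$ and $\top(Y,-)$, which agree because the restricted Gorenstein condition carries simples to simples, shows that $X$ is indecomposable if and only if $Y$ is.

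The main obstacle is this middle step: proving that dualizing the length-$(n+1)$ resolution of $\top(-,X)$ yields a complex that is exact in all degrees below the top, i.e. that $\Ext^i_\Gamma(\top(-,X),\Gamma)$ vanishes for $0\le i\le n$. This is the genuine homological content and cannot be extracted from the sequence by formal manipulation; it is precisely where the $n$-Auslander-Reiten duality, equivalently the Gorenstein property of $n$-Auslander algebras, must be invoked, and where one must verify that the surviving degree-$(n+1)$ cohomology is identified, through the transpose $\Tr$ and the syzygy shift concealed in $\tau_n=\tau\Omega^{n-1}$, with the top of the representable $(Y,-)$ attached to the given sequence. Once that identification is secured, the rest is formal bookkeeping in the two functor categories.
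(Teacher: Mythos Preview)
The paper itself does not supply a proof of this proposition; it is a survey and refers to \cite{I-ar} for details. Your outline---recasting each exactness condition as a minimal projective resolution of a semisimple functor in $\mod\CC$ (respectively $\mod\CC^{\op}$), then passing between them via the duality $P_C\mapsto P^C$, with the Gorenstein property of the $n$-Auslander algebra $\Gamma$ controlling the $\Ext$-groups---is the standard route and matches the argument in \cite{I-ar}.

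There is one genuine gap. You assert that ``since $f_1$ is a radical epimorphism, $X$ has no nonzero projective direct summand, so every simple summand of $\top(-,X)$ has projective dimension exactly $n+1$''. The first clause is correct, but the second implication is not automatic and is exactly what makes the \emph{restricted} Gorenstein condition applicable; you cannot simply invoke it. The clean fix bypasses projective dimension and proves the needed vanishing directly from $\dom\Gamma\ge n+1$: the first $n+1$ terms $I_0,\dots,I_n$ of a minimal injective resolution of $\Gamma$ lie in $\add P_{D\Lambda}$, and one checks (via the Nakayama functor) that the socle of each indecomposable projective-injective $P_I$ is $S_P$ with $P=\nu_\Lambda^{-1}I\in\add\Lambda$ projective. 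Hence if $X$ has no projective summand then $\Hom_\Gamma(S_X,I_i)=0$ for $i\le n$, giving $\Ext^i_\Gamma(\top(-,X),\Gamma)=0$ for $0\le i\le n$ immediately. Once this is in place, $\pd S_{X_j}=n+1$ follows (since $\Ext^{\pd}(S,\Gamma)\neq0$ for any $S$), the restricted Gorenstein condition yields that $\Ext^{n+1}_\Gamma(\top(-,X),\Gamma)$ is semisimple, and combining this with $f_{n+1}\in J_\CC$ forces $\coker(P^{C_n}\to P^Y)=\top(Y,-)$, exactly as you sketched. The indecomposability equivalence then follows by counting simple summands on both sides.
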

When these conditions are satisfied, the sequence \eqref{n-almost split} is called an \emph{$n$-almost split sequence}.
We have the following existence theorem of $n$-almost split sequences.
\begin{theorem}
\begin{itemize}
\item[(a)] For any indecomposable non-projective module $X$ in $\CC$, there exists an $n$-almost split sequence \eqref{n-almost split} such that $Y\simeq\tau_nX$.
\item[(b)] For any indecomposable non-injective module $X$ in $\CC$, there exists an $n$-almost split sequence \eqref{n-almost split} such that $X\simeq\tau_n^-Y$.
\end{itemize}
\end{theorem}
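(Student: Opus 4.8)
The plan is to prove (a) and to deduce (b) from it. Indeed, by the self-duality property of $n$-almost split sequences established just above, together with the bijection between indecomposable non-projectives and non-injectives induced by the equivalences $\tau_n,\tau_n^-$ of Theorem \ref{n-AR translation}, statement (b) is equivalent to (a) read from the opposite end (apply (a) to $\tau_n^-X$). So I concentrate on (a). Fix an indecomposable non-projective $X\in\CC$. Then $\underline{\End}_\Lambda(X)$ is a local finite-dimensional $k$-algebra, so the module $D\underline{\End}_\Lambda(X)$ has simple socle. Via the isomorphism $\Ext^n_\Lambda(X,\tau_nX)\simeq D\underline{\End}_\Lambda(X)$ furnished by the $n$-Auslander-Reiten duality, I choose a nonzero $\xi$ generating this socle; concretely $\xi\neq0$ while $\Ext^n_\Lambda(a,\tau_nX)(\xi)=0$ for every $a\in J_{\End_\Lambda(X)}$. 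This single element fixes simultaneously the Yoneda class of the sequence I want and, through the duality, its left-hand term $\tau_nX$.

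The next step is to realise $\xi$ by an exact sequence all of whose intermediate terms lie in $\CC$. I start from a minimal projective resolution of $X$ and truncate it to $0\to\Omega^{n-1}X\to P_{n-2}\to\cdots\to P_0\to X\to0$ with $P_i\in\add\Lambda\subseteq\CC$; this gives the dimension-shift isomorphism $\Ext^1_\Lambda(\Omega^{n-1}X,\tau_nX)\simeq\Ext^n_\Lambda(X,\tau_nX)$, and it is exactly here that $\tau_nX=\tau\Omega^{n-1}X$ enters. Under this isomorphism $\xi$ corresponds to a short exact sequence $0\to\tau_nX\to Z\to\Omega^{n-1}X\to0$, and splicing it with the truncated resolution exhibits $\xi$ as $0\to\tau_nX\to Z\to P_{n-2}\to\cdots\to P_0\to X\to0$. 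The single defect is that $Z$ need not lie in $\CC$. To repair this I feed the relevant syzygy objects into Proposition \ref{M-approximation}, whose right $\CC$-approximation sequences (exact on $\CC$) let me replace $Z$, and reorganise the complex, by objects of $\CC$ while keeping the Yoneda class equal to $\xi$; here I use that $\CC$ is $n$-cluster tilting and contains all projectives. The outcome is an exact sequence
\[0\to\tau_nX\xrightarrow{f_{n+1}}C_n\xrightarrow{f_n}\cdots\xrightarrow{f_1}X\to0\]
with every $C_i\in\CC$, representing $\xi$, and such that the induced functor sequence is exact on $\CC$ in all intermediate spots by construction.

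It remains to check that this sequence is $n$-almost split. The intermediate exactness of $0\to(-,C_n)\to\cdots\to(-,C_1)\to(-,X)$ on $\CC$ is built into the approximation construction, so only the surjectivity onto $J_{\CC}(-,X)$, i.e. that $f_1$ is right almost split in $\CC$, requires the choice of $\xi$. For this I translate the socle property of $\xi$ through the $n$-Auslander-Reiten duality: for $C\in\CC$ and $a\in(C,X)$ that is not a split epimorphism, $a$ factors through $f_1$ if and only if $\Ext^n_\Lambda(a,\tau_nX)(\xi)=0$, and the socle condition guarantees exactly this. Thus $0\to(-,C_n)\to\cdots\to(-,C_1)\to J_{\CC}(-,X)\to0$ is exact on $\CC$. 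The preceding self-duality proposition then promotes this to exactness of the dual functor sequence, so the sequence is genuinely $n$-almost split, and indecomposability of $\tau_nX$ follows from that of $X$; the left-hand term is $\tau_nX$ by our choice of $\xi$.

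The hard part will be the middle paragraph: producing a representative of $\xi$ whose $n$ intermediate terms \emph{all} lie in $\CC$. The projective resolution controls only the $P_i$, and the lone defect term $Z$ created by the pushout must be absorbed into $\CC$ without disturbing either the Yoneda class or the exactness of the Hom-sequence on $\CC$; arranging the approximation sequences of Proposition \ref{M-approximation} to splice into one exact, right-minimal complex representing $\xi$ is the genuine obstacle. By contrast, the equivalence between the socle condition on $\xi$ and the right-almost-split property of $f_1$ is the more routine point, handled by the $n$-Auslander-Reiten duality exactly as in the classical case $n=1$.
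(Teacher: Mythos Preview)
Your proposal is correct and follows the approach the paper indicates: the survey does not give a detailed proof here but explicitly says one uses the isomorphism $\Ext^n_\Lambda(X,\tau_nX)\simeq D\underline{\End}_\Lambda(X)$ to pick a distinguished extension class and then obtains a long exact sequence ``as in the case of Auslander-Reiten theory'', which is precisely your socle-element-plus-$\CC$-approximation strategy. Your identification of the delicate step---upgrading the spliced Yoneda representative to one with all middle terms in $\CC$ via Proposition~\ref{M-approximation} while preserving exactness of the Hom-sequence on $\CC$---matches where the real work lies in the full argument of \cite{I-ar}.
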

It is interesting to have a structure theory of $n$-Auslander algebras which is similar to the structure theory of Auslander algebras using translation quivers and mesh relations \cite{Ri,BG,IT1,IT2,I-tau12-I,I-tau12-II,I-tau3}.
For $n=2$, one can hope that it is given by some generalization of Jacobian algebras of quivers with potentials \cite{DWZ} like 3-CY algebras \cite{CRo,G,Ke},
from the viewpoint of Proposition \ref{(d-1) and d} below.

We end this subsection with the following useful characterization \cite{I-aorder} of $n$-cluster tilting objects, which we shall use later.
\begin{lemma}\label{criterion for n-cluster tilting}
Let $\Lambda$ be a finite-dimensional algebra, and $T$ a tilting $\Lambda$-module with $\pd T\le n+2$.
Let $M$ be an $n$-rigid generator-cogenerator in $T^\perp$.
Then the following conditions are equivalent.
\begin{itemize}
\item[(a)] $M$ is an $n$-cluster tilting object in $T^\perp$.
\item[(b)] $\gl\End_\Lambda(M)\le n+1$.
\item[(c)] For any indecomposable object $X\in\add M$, there exists an exact sequence
\[0\to C_{n+1}\xrightarrow{f_{n+1}}C_n\xrightarrow{f_n}\cdots\xrightarrow{f_2}C_1\xrightarrow{f_1}X\]
with terms in $\add M$ such that the following sequence is exact
\[0\to(M,C_{n+1})\xrightarrow{f_{n+1}}(M,C_n)\xrightarrow{f_n}\cdots\xrightarrow{f_2}(M,C_1)\xrightarrow{f_1}J_{\CC}(M,X)\to0.\]
\end{itemize}
\end{lemma}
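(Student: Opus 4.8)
The plan is to pass everything to the endomorphism algebra $\Gamma:=\End_\Lambda(M)$ and read the three conditions as statements about projective resolutions of $\Gamma$-modules. Write $P_-:=\Hom_\Lambda(M,-)$, which restricts to an equivalence $\add M\xrightarrow{\sim}\add\Gamma$ onto the projective $\Gamma$-modules and satisfies $P_-\colon\Hom_\Lambda(C,X)\xrightarrow{\sim}\Hom_\Gamma((M,C),(M,X))$ for every $C\in\add M$ and every $X$. Under this equivalence the indecomposable projective $\Gamma$-modules are the $(M,X)$ with $X$ an indecomposable summand of $M$, the simple $\Gamma$-modules are their tops $\top(M,X)$, and one has $\gl\Gamma=\sup_X\pd_\Gamma\top(M,X)$.

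First I would prove (b)$\Leftrightarrow$(c). For an indecomposable $X\in\add M$ the radical of the projective $\Gamma$-module $(M,X)$ is exactly $J_{\CC}(M,X)$, so a sequence as in (c), once we apply $P_-$, becomes a projective resolution
\[0\to(M,C_{n+1})\to\cdots\to(M,C_1)\to(M,X)\to\top(M,X)\to0\]
of length $n+1$; conversely any projective resolution of $\top(M,X)$ arises this way, since $P_-$ is an equivalence on $\add M$ and lifts the differentials to maps $f_i$ in $\add M$. Hence (c) for all indecomposable $X$ is equivalent to $\pd_\Gamma\top(M,X)\le n+1$ for all simple $\Gamma$-modules, i.e.\ to (b). The one genuinely non-formal point is the requirement in (c) that $0\to C_{n+1}\to\cdots\to C_1\to X$ be \emph{exact} in $\mod\Lambda$ and not merely $P_-$-exact. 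I would obtain this by computing the homologies $H_i$ of the complex: $P_-$-exactness together with left exactness of $P_-$ forces $\Hom_\Lambda(M,H_i)=0$, and since $M$ is a generator of $T^\perp$ it then suffices to check $H_i\in T^\perp$, for which I would track the syzygies and use the $n$-rigidity of $M$ together with the bound $\pd T\le n+2$.

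For (a)$\Rightarrow$(b) I would run the argument of Theorem \ref{higher auslander correspondence}(a) inside $\XX=T^\perp$, which has enough projectives $\add T$ and enough injectives $\add D\Lambda$: any $\Gamma$-module has a projective presentation $(M,M_1)\to(M,M_0)$ coming from a map $g$ in $\add M$, and applying Proposition \ref{M-approximation} to its kernel (which one must first check lies in $T^\perp$) produces a projective resolution of length $\le n+1$, giving $\gl\Gamma\le n+1$. For the converse (b)$\Rightarrow$(a) I would invoke Proposition \ref{one-side condition}: since $M$ is an $n$-rigid generator-cogenerator, $\add M$ is functorially finite and contains the projectives of $T^\perp$, so it remains to show that any $X\in T^\perp$ with $\Ext^i_\Lambda(M,X)=0$ for $0<i<n$ lies in $\add M$. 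This I would reduce to showing that $(M,X)$ is a projective $\Gamma$-module, after verifying that $P_-$ reflects isomorphisms between the relevant objects of $T^\perp$; taking an injective coresolution of $X$ in $T^\perp$ and applying $P_-$ presents $(M,X)$, via the assumed Ext-vanishing, as a high cosyzygy of projectives, whence a dimension shift against $\gl\Gamma\le n+1$ yields projectivity.

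The main obstacle is the relative nature of the statement. In the absolute case of Theorem \ref{higher auslander correspondence} one has $\Lambda\in\add M$, which makes $(M,D\Lambda)$ a projective-injective $\Gamma$-module and makes $P_-$ reflect both exactness and isomorphisms essentially for free; here $\Lambda\notin\add M$ in general, so these steps must be replaced by explicit control of which kernels, cokernels and syzygies remain inside $T^\perp$. This is precisely where the hypothesis $\pd T\le n+2$ is consumed: it bounds the homological width available in $T^\perp$ so that the length-$(n+1)$ resolutions detected over $\Gamma$ are realized by honestly exact sequences with terms in $\add M$, and so that the cosyzygy/dimension-shift computation closes off at projectives rather than only at modules of projective dimension one.
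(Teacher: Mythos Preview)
The paper does not actually prove this lemma; it is stated with a citation to \cite{I-aorder} and followed only by two one-line remarks: that (b) is a property of relative $n$-Auslander algebras, and that the sequence in (c) is an $n$-almost split sequence when $X$ is non-projective. So there is no in-text argument to compare against, and those remarks suggest a slightly different logical flow from yours, namely $(a)\Rightarrow(c)$ via the existence of $n$-almost split sequences (Theorem on $n$-almost split sequences) together with a separate treatment of the projective objects $X\in\add T$, then $(c)\Rightarrow(b)$ trivially, and $(b)\Rightarrow(a)$ as you outline. Your route through a direct $(b)\Leftrightarrow(c)$ is perfectly legitimate but, as you yourself flag, introduces an extra burden.

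On that burden: your identification of the genuine difficulty is exactly right, but the fix you sketch is too vague to count as a plan. Saying that $\Hom_\Lambda(M,H_i)=0$ and that it suffices to check $H_i\in T^\perp$ is correct, but ``track the syzygies and use $n$-rigidity together with $\pd T\le n+2$'' does not indicate how. Note also that $M$ need not contain $\Lambda$ as a summand (only $T$), so $\Hom_\Lambda(M,-)$ does not detect vanishing on all of $\mod\Lambda$; you really do need the homologies to land in $T^\perp$. A cleaner way to avoid this altogether is to \emph{build} the sequence in (c) inside $\mod\Lambda$ from the start rather than lifting a $\Gamma$-resolution: take successive minimal right $(\add M)$-approximations of the kernels, so that exactness is automatic, and then argue that (b) forces the process to stop after $n+1$ steps because $\Hom_\Lambda(M,-)$ of your sequence is a minimal projective resolution of the simple top. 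This is essentially the $n$-almost split construction and is where the role of $\pd T\le n+2$ becomes transparent: it guarantees that the intermediate kernels remain in $T^\perp$ long enough for $n$-rigidity to control the Ext-computations.

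Your sketches of $(a)\Rightarrow(b)$ and $(b)\Rightarrow(a)$ are in line with the proof of Theorem~\ref{higher auslander correspondence} given in the paper and with Proposition~\ref{one-side condition}; the only caution is that in $(b)\Rightarrow(a)$ you should coresolve $X$ by injectives of $T^\perp$ (that is, by objects of $\add D\Lambda$), not by arbitrary injective $\Lambda$-modules, so that each term lies in $\add M$ and hence maps to a projective $\Gamma$-module.
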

The condition (b) is a property of relative $n$-Auslander algebras.
The sequence in the condition (c) is given by an $n$-almost split sequences if $X$ is non-projective.

\subsection{Example: $n$-cluster tilting for $(n-1)$-Auslander algebras}\label{$n$-cluster tilting for higher Auslander algebras}

In this subsection we give a series of examples of finite-dimensional algebras with $n$-cluster tilting objects from \cite{I-inductive}.
Let us start with general results on $n$-cluster tilting subcategories of $\mod\Lambda$.
For a finite-dimensional algebra $\Lambda$, we define full subcategories of $\mod\Lambda$ by
\[\MM=\MM_n(\Lambda):=\add\{\tau_n^i(D\Lambda)\ |\ i\ge0\}\ \mbox{ and }\ \MM'=\MM'_n(\Lambda):=\add\{\tau_n^{-i}(\Lambda)\ |\ i\ge0\}.\]
Then the following result is an immediate consequence of Theorem \ref{n-AR translation}.
\begin{proposition}\label{tau_n closure}
\begin{itemize}
\item[(a)] Any $n$-cluster tilting subcategory $\CC$ of $\mod\Lambda$ contains $\MM$ and $\MM'$.
In particular, if $\mod\Lambda$ contains an $n$-cluster tilting subcategory, then $X\oplus Y$ is $n$-rigid for any $X\in\MM$ and $Y\in\MM'$.

\item[(b)] If $\mod\Lambda$ contains an $n$-cluster tilting object, then we have a bijection between indecomposable projective $\Lambda$-modules $I$
and indecomposable injective $\Lambda$-modules $I$.
It is given by $I\mapsto\tau_n^{\ell_I}I$, where $\ell_I$ is a maximal natural number $\ell$ satisfying $\tau_n^\ell I\neq0$ and $\tau_n^{\ell+1}I=0$.
\end{itemize}
\end{proposition}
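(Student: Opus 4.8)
The plan is to deduce both parts directly from Theorem \ref{n-AR translation}, which asserts that the $n$-Auslander-Reiten translations restrict to mutually quasi-inverse equivalences $\tau_n\colon\underline{\CC}\to\overline{\CC}$ and $\tau_n^-\colon\overline{\CC}\to\underline{\CC}$ for any $n$-cluster tilting subcategory $\CC$. The key observation is that the definitions $\MM=\add\{\tau_n^i(D\Lambda)\mid i\ge0\}$ and $\MM'=\add\{\tau_n^{-i}(\Lambda)\mid i\ge0\}$ are built by iterating exactly these functors starting from the injectives $D\Lambda$ and the projectives $\Lambda$, respectively. Since $\Lambda\in\PP$ and $D\Lambda\in\II$, and since any $n$-cluster tilting subcategory $\CC$ of $\mod\Lambda$ contains all projectives and all injectives (as the ambient $\XX=\mod\Lambda$ has these, cf. Proposition \ref{one-side condition}), the seed objects already lie in $\CC$. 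For part (a), I would argue by induction on $i$: if $\tau_n^i(D\Lambda)\in\CC$ then, being an object of $\CC$ on which $\tau_n$ is defined, its image $\tau_n^{i+1}(D\Lambda)$ again lies in $\CC$ by Theorem \ref{n-AR translation}; the same induction using $\tau_n^-$ on the seed $\Lambda$ gives $\MM'\subseteq\CC$. Hence $\MM\cup\MM'\subseteq\CC$, and the $n$-rigidity of $X\oplus Y$ for $X\in\MM$, $Y\in\MM'$ follows because $\CC$ is $n$-rigid, so that $\Ext^i_\Lambda(X\oplus Y,X\oplus Y)=0$ for $0<i<n$.

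For part (b), the strategy is to track the orbit of each indecomposable injective $\Lambda$-module under the partial map $\tau_n$. By Theorem \ref{n-AR translation}, $\tau_n$ and $\tau_n^-$ set up a bijection between isoclasses of indecomposable non-projective objects of $\CC$ and indecomposable non-injective objects of $\CC$. When $\CC$ contains an $n$-cluster tilting \emph{object}, it is an $\add M$ with $M$ having only finitely many indecomposable summands, so every $\tau_n$-orbit is finite and each starts at an injective and terminates at a projective. First I would show that for an indecomposable injective $I$ the iterates $\tau_n^\ell I$ remain indecomposable and nonzero until the process reaches a projective, at which point $\tau_n$ is no longer defined and the next application returns $0$; this defines $\ell_I$ as the largest $\ell$ with $\tau_n^\ell I\neq0$. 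Then I would check that $\tau_n^{\ell_I}I$ is indecomposable projective, and that the assignment $I\mapsto\tau_n^{\ell_I}I$ is a bijection by constructing its inverse through the dual orbit analysis using $\tau_n^-$ starting from projectives.

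The main obstacle will be justifying that the $\tau_n$-orbits behave like the $\tau$-orbits in classical Auslander-Reiten theory, namely that starting from an injective and iterating $\tau_n$ one lands on a projective precisely at the terminal step, with $\tau_n$ undefined (equivalently, returning zero in $\underline{\CC}$) exactly there. This requires that $\tau_n$ sends non-injective indecomposables to non-projective ones and is only undefined on projectives, together with finiteness of the orbit, which hinges on $\CC=\add M$ being of finite type. I would establish this by combining the bijection in Theorem \ref{n-AR translation} with the fact that the indecomposable projective objects are exactly those outside the image of $\tau_n$ and the indecomposable injective objects exactly those outside the image of $\tau_n^-$, so that each finite orbit is a chain from one injective to one projective; the bijection of part (b) then matches the two endpoints. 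The bookkeeping of endpoints, rather than any deep homological input, is where the care is needed.
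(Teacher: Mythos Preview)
Your approach is correct and matches the paper's, which simply records that the proposition is ``an immediate consequence of Theorem~\ref{n-AR translation}'' without further detail. Your inductive argument for (a) and your orbit analysis for (b)---using that $\tau_n$ and $\tau_n^-$ restrict to mutually inverse bijections between indecomposable non-projectives and indecomposable non-injectives in $\CC$, together with the finiteness of $\ind(\add M)$ to force each orbit to terminate---is exactly the intended unpacking.
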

In this subsection we construct a family of finite-dimensional algebras $\Lambda$ such that $\MM$ itself forms an $n$-cluster tilting subcategory of $\mod\Lambda$
(or $T^\perp$ for some tilting $\Lambda$-module $T$).
Let us start with the following typical examples.

\begin{example} \label{example-B}
Let $\Lambda_1$ and $\Lambda'_1$ be the path algebras of the quivers
{\small\[\bullet\longrightarrow\bullet\longrightarrow\bullet\ \ \ \mbox{ and }\ \ \
\bullet\longrightarrow\bullet\longleftarrow\bullet\ ,\ \mbox{ respectively,}\]}%
over a field $k$.
Then we have $\MM_1(\Lambda_1)=\mod\Lambda_1$ and $\MM_1(\Lambda'_1)=\mod\Lambda'_1$.
We denote by $\Lambda_2$ and $\Lambda'_2$ the Auslander algebras of $\Lambda_1$ and $\Lambda'_1$ respectively, so they are given by the translation quivers
\[\xymatrix@C=0.5cm@R0.2cm{
&&{\scriptstyle 3}\ar[dl]\\
&{\scriptstyle 5}\ar[dl]\ar@{..>}[rr]&&{\scriptstyle 2}\ar[dl]\ar[ul]\\
{\scriptstyle 6}\ar@{..>}[rr]&&{\scriptstyle 4}\ar[ul]\ar@{..>}[rr]&&{\scriptstyle 1}\ar[ul]}\ \ \ \ \ \ \ \ \ \ \xymatrix@C=0.3cm@R0.1cm{
&{\scriptstyle 5}\ar[dl]\ar@{..>}[rr]&&{\scriptstyle 2}\ar[dl]\\
{\scriptstyle 6}\ar@{..>}[rr]&&{\scriptstyle 3}\ar[dl]\ar[ul]\\
&{\scriptstyle 4}\ar[ul]\ar@{..>}[rr]&&{\scriptstyle 1}\ar[ul]}\]
with the mesh relations.
We have
\begin{eqnarray*}
&D\Lambda_2=
\left(\begin{smallmatrix}
1
\end{smallmatrix}\oplus
\begin{smallmatrix}
1&\\
&2
\end{smallmatrix}\oplus
\begin{smallmatrix}
1&&\\
&2&\\
&&3
\end{smallmatrix}\oplus
\begin{smallmatrix}
&2\\
4&
\end{smallmatrix}\oplus
\begin{smallmatrix}
&2&\\
4&&3\\
&5&
\end{smallmatrix}\oplus
\begin{smallmatrix}
&&3\\
&5&\\
6&&
\end{smallmatrix}\right),&\\
&\tau_2D\Lambda_2=
\left(\begin{smallmatrix}
4
\end{smallmatrix}\oplus
\begin{smallmatrix}
4&\\
&5
\end{smallmatrix}\oplus
\begin{smallmatrix}
&5\\
6&
\end{smallmatrix}\right),\
\tau_2^2D\Lambda_2=
\left(\begin{smallmatrix}
6
\end{smallmatrix}\right),
\tau_2^3D\Lambda_2=0,&\\
&D\Lambda_2'=
\left(\begin{smallmatrix}
1
\end{smallmatrix}\oplus
\begin{smallmatrix}
2
\end{smallmatrix}\oplus
\begin{smallmatrix}
1&&2\\
&3&
\end{smallmatrix}\oplus
\begin{smallmatrix}
&&2\\
&3&\\
4&&
\end{smallmatrix}\oplus
\begin{smallmatrix}
1&&\\
&3&\\
&&5
\end{smallmatrix}\oplus
\begin{smallmatrix}
&3&\\
4&&5\\
&6&
\end{smallmatrix}\right),&\\
&\tau_2D\Lambda_2'=
\left(\begin{smallmatrix}
4
\end{smallmatrix}\oplus
\begin{smallmatrix}
5
\end{smallmatrix}\oplus
\begin{smallmatrix}
4&&5\\
&6&
\end{smallmatrix}\right),\
\tau_2^2D\Lambda_2'=0.&
\end{eqnarray*}
The quivers of $\MM_2(\Lambda_2)$ and $\MM_2(\Lambda_2')$ are
{\tiny\[\xymatrix@C=0.0cm@R0.0cm{
&&&&{\begin{smallmatrix}
1&&\\
&2&\\
&&3
\end{smallmatrix}}\ar[drr]\\
&&{\begin{smallmatrix}
&2&\\
4&&3\\
&5&
\end{smallmatrix}}\ar[drr]\ar[urr]
&&&&{\begin{smallmatrix}
1&&\ \\
&2&
\end{smallmatrix}}\ar[drrrr]\ar@{..>}[ddlll]\\
{\begin{smallmatrix}
&&3\\
&5&\\
6&&
\end{smallmatrix}}\ar[urr]
&&&&{\begin{smallmatrix}
&2&\ \\
4&&
\end{smallmatrix}}\ar[urr]\ar@{..>}[ddlll]
&&&&&&{\begin{smallmatrix}
\ &1&\
\end{smallmatrix}}\ar@{..>}[ddlllll]\\
&&&{\begin{smallmatrix}
4&&\ \\
&5&
\end{smallmatrix}}\ar[uul]\ar[drr]\\
&{\begin{smallmatrix}
\ \\
&5&\ \\
6&&
\end{smallmatrix}}\ar[uul]\ar[urr]
&&&&{\begin{smallmatrix}
\ &4&\
\end{smallmatrix}}\ar[uul]\ar@{..>}[dddlll]\\
\ \\
\ \\
&&{\begin{smallmatrix}
\ \\
\ &6&\ \\
\
\end{smallmatrix}}\ar[uuul]}
\ \ \ \ \ \ \ \ \ \xymatrix@C=0.0cm@R0.0cm{
&&{\begin{smallmatrix}
1&&\\
&3&\\
&&5
\end{smallmatrix}}\ar[drr]
&&&&&&{\begin{smallmatrix}
2
\end{smallmatrix}}\ar@{..>}[dddlllll]\\
{\begin{smallmatrix}
&3&\\
4&&5\\
&6&
\end{smallmatrix}}\ar[drr]\ar[urr]
&&&&{\begin{smallmatrix}
1&&2\\
&3&
\end{smallmatrix}}\ar[drrrr]\ar[urrrr]\ar@{..>}[dddlll]\\
&&{\begin{smallmatrix}
&&2\\
&3&\\
4&&
\end{smallmatrix}}\ar[urr]
&&&&&&{\begin{smallmatrix}
\ &1&\
\end{smallmatrix}}\ar@{..>}[dddlllll]\\
&&&{\begin{smallmatrix}
\ \\
5\\
\
\end{smallmatrix}}\ar[uuul]\\
&{\begin{smallmatrix}
\ \\
4&&5\\
&6&
\end{smallmatrix}}\ar[drr]\ar[urr]\ar[uuul]\\
&&&{\begin{smallmatrix}
\ \\
4\\
\
\end{smallmatrix}}\ar[uuul]
}\]}%
where the dotted arrows indicate $\tau_2$.
Using Lemma \ref{criterion for n-cluster tilting}, one can verify that $\MM_2(\Lambda_2)$ is a 2-cluster tilting subcategory of $\mod\Lambda_2$, and that
$\MM_2(\Lambda_2')$ is a $2$-cluster tilting subcategory of $T_2^\perp$ for the tilting $\Lambda_2'$-module
\[T_2=\left(\begin{smallmatrix}
&&2\\
&3&\\
4&&
\end{smallmatrix}\oplus
\begin{smallmatrix}
1&&\\
&3&\\
&&5
\end{smallmatrix}\oplus
\begin{smallmatrix}
&3&\\
4&&5\\
&6&
\end{smallmatrix}\oplus
\begin{smallmatrix}
4
\end{smallmatrix}\oplus
\begin{smallmatrix}
5
\end{smallmatrix}\oplus
\begin{smallmatrix}
4&&5\\
&6&
\end{smallmatrix}\right)\]
with $\pd T_2=1$. Now let $\Lambda_3$ and $\Lambda_3'$ be the endomorphism algebras of the above $2$-cluster tilting objects.
Again one can verify that $\MM_3(\Lambda_3)$ is a $3$-cluster tilting subcategory of $\mod\Lambda_3$,
and that $\MM_3(\Lambda_3')$ is a $3$-cluster tilting subcategory of $T_3^\perp$ for some tilting $\Lambda_3'$-module $T_3$ with $\pd T_3=2$.

Moreover our inductive construction of $n$-cluster tilting objects continues infinitely!
\end{example}

Let us formalize the above example.
\begin{definition}\label{complete}
Let $\Lambda$ be a finite-dimensional algebra and $n$ a positive integer.
Define subcategories of $\MM=\MM_n(\Lambda)$ by
\begin{eqnarray*}
\PP(\MM)&:=&\{X\in\MM\ |\ \pd X<n\},\\
\MM_P&:=&\{X\in\MM\ |\ X\ \mbox{ has no non-zero summand in }\ \PP(\MM)\}.
\end{eqnarray*}
We call $\Lambda$ \emph{$n$-complete} if the following conditions are satisfied:
\begin{itemize}
\item[(a)] There exists a tilting $\Lambda$-module $T$ satisfying $\pd T<n$ and $\PP(\MM)=\add T$;
\item[(b)] there exists an $n$-cluster tilting object $M$ of $T^{\perp}$ satisfying $\MM=\add M$;
\item[(c)] $\gl\Lambda\le n$;
\item[(d)] $\Ext^i_\Lambda(\MM_P,\Lambda)=0$ for any $0<i<n$.
\end{itemize}
In this case, we call $\End_\Lambda(M)$ the \emph{cone} of $\Lambda$.
\end{definition}
It is easy to check that a finite-dimensional algebra is $1$-complete if and only if it is representation-finite and hereditary.
Our inductive construction is given by the following result.
\begin{theorem}
The cone of an $n$-complete algebra is $(n+1)$-complete.
\end{theorem}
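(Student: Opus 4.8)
The plan is to verify the four conditions of Definition~\ref{complete}, at level $n+1$, for the cone $\Gamma:=\End_\Lambda(M)$, where $M$ is the additive generator of $\MM=\MM_n(\Lambda)$. One condition comes for free: applying the relative version of Theorem~\ref{higher auslander correspondence} to the $n$-cluster tilting object $M$ of $T^\perp$ (here $\pd T<n$, so $M$ is $(n-1)$-relative) shows that $\Gamma$ is an $(n-1)$-relative $n$-Auslander algebra, hence $\gl\Gamma\le n+1$. This is exactly condition (c) for $(n+1)$-completeness, and it also records the two-sided $(n,n+1)$-condition, which controls the minimal injective resolution of $\Gamma$ and will be used to locate the projective-injective $\Gamma$-modules. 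It then remains to produce the tilting module, the $(n+1)$-cluster tilting object, and the $\Ext$-vanishing of conditions (a), (b), (d).

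First I would fix the candidate objects: let $M'$ be the additive generator of $\MM_{n+1}(\Gamma)=\add\{\tau_{n+1}^i(D\Gamma)\mid i\ge0\}$ and let $T'$ be the additive generator of $\PP(\MM_{n+1}(\Gamma))=\{X\in\MM_{n+1}(\Gamma)\mid\pd X<n+1\}$. The equivalence $P_-=\Hom_\Lambda(M,-)\colon\add M\xrightarrow{\sim}\add\Gamma$ identifies the projective $\Gamma$-modules with $\MM$, and (as in the proof of Theorem~\ref{higher auslander correspondence}) identifies the projective-injective $\Gamma$-modules with $P_-(D\Lambda)$. The key computation is that $P_-$ carries each $n$-almost split sequence of $\MM$ to a minimal projective resolution of a simple $\Gamma$-module of projective dimension $n+1$, exactly as in the classical case recalled in Subsection~\ref{Almost split sequences}. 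From these resolutions one reads off $\tau_{n+1}$ on the injective $\Gamma$-modules and shows that the $\tau_{n+1}$-orbits of $D\Gamma$ remain inside an explicitly controlled additive subcategory; this both identifies $\MM_{n+1}(\Gamma)$ and pins down which of its objects are projective, injective, or of projective dimension $<n+1$. Crucially, condition (d) for $\Lambda$, namely $\Ext^i_\Lambda(\MM_P,\Lambda)=0$ for $0<i<n$, should transport through $P_-$ to the $(n+1)$-rigidity $\Ext^i_\Gamma(M',M')=0$ for $0<i<n+1$ and to condition (d) for $\Gamma$.

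With $M'$ and $T'$ in hand, I would verify condition (b) using Lemma~\ref{criterion for n-cluster tilting} with $n$ replaced by $n+1$. Since $\add T'$ consists of the projective objects of $(T')^\perp$ and $\add D\Gamma$ of its injective objects, and both lie in $\add M'$, the module $M'$ is an $(n+1)$-rigid generator-cogenerator of $(T')^\perp$ (rigidity from the previous step, and $\pd T'\le n\le n+3$). By that lemma it then suffices to exhibit, for each indecomposable $X\in\add M'$, an exact sequence $0\to C_{n+2}\to\cdots\to C_1\to X$ with terms in $\add M'$ that stays exact after applying $\Hom_\Gamma(M',-)$; for non-projective $X$ these are the $(n+1)$-almost split sequences, which I would construct by transporting the $n$-almost split sequences of $\MM$ through $P_-$ and splicing with projective covers. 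This yields condition (b) and simultaneously shows $\PP(\MM_{n+1}(\Gamma))=\add T'$ with $\pd T'\le n<n+1$. Finally condition (a) requires $T'$ to be a genuine tilting $\Gamma$-module: one checks $\pd T'<n+1$, the self-orthogonality $\Ext^{>0}_\Gamma(T',T')=0$ (a consequence of $(n+1)$-rigidity together with the location of the low-projective-dimension objects), and that $T'$ has the correct number of indecomposable summands, so that $(T')^\perp$ is the correct ambient category.

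The hard part will be the homological comparison of the second paragraph: computing $\tau_{n+1}$ on the injective $\Gamma$-modules and the projective dimensions of the objects of $\MM_{n+1}(\Gamma)$ purely in terms of the $n$-Auslander-Reiten theory of $\MM$, and checking that condition (d) for $\Lambda$ is precisely what forces both the $(n+1)$-rigidity of $M'$ and the vanishing $\Ext^i_\Gamma(M',\Gamma)$. The second delicate point is establishing that $T'$ is an honest tilting module, and that $\MM_{n+1}(\Gamma)$ really coincides with the subcategory produced from the $n$-almost-split data rather than merely containing it; controlling these boundary phenomena at the projective-injective modules is where the two-sided $(n,n+1)$-condition and Proposition~\ref{tau_n closure} do the essential work.
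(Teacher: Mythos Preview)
The paper does not prove this theorem: it is stated immediately after Definition~\ref{complete} and followed directly by Corollary~\ref{inductive construction}, with all details deferred to the preprint \cite{I-inductive}. There is therefore no proof in the paper against which to compare your proposal.

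That said, your outline is a reasonable strategic plan and lines up with the tools the paper makes available. Condition (c) does indeed follow from the relative $n$-Auslander correspondence, and Lemma~\ref{criterion for n-cluster tilting} is the natural device for verifying condition (b). Your identification of the two genuinely hard points is accurate: first, computing $\tau_{n+1}$ on $\mod\Gamma$ in terms of the $n$-Auslander--Reiten combinatorics of $\MM$ via $P_-$, and showing that $\MM_{n+1}(\Gamma)$ is exactly the image of this transport (not merely contained in it); second, proving that $T'$ is a tilting module. Both require substantial work not visible in this survey. One point to be careful about: you assert that condition (d) for $\Lambda$ ``should transport through $P_-$'' to $(n+1)$-rigidity of $M'$ and to condition (d) for $\Gamma$, but $P_-$ is only an equivalence on $\add M$, not on all of $\mod\Lambda$, so Ext-groups do not transfer formally; one needs the approximation sequences of Proposition~\ref{M-approximation} to mediate, and this is where the argument becomes delicate. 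Your sketch is a plausible roadmap, but at present it is an outline rather than a proof.
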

A special case is the following result, which explains the above examples.
\begin{corollary}\label{inductive construction}
Let $Q$ be a Dynkin quiver and $\Lambda_1=kQ$ its path algebra over a field $k$. For any $n\ge1$, there exists an $n$-complete algebra $\Lambda_n$ with the cone $\Lambda_{n+1}$.
\end{corollary}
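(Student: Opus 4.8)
The plan is to argue by induction on $n$, taking the preceding Theorem as the inductive step and Gabriel's theorem as the base case. I would first define the algebras recursively: set $\Lambda_1:=kQ$, and having constructed $\Lambda_n$, let $\Lambda_{n+1}$ be its cone in the sense of Definition \ref{complete}. It then suffices to show that each $\Lambda_n$ produced in this way is $n$-complete, for then $\Lambda_{n+1}$ is, by construction, its cone.

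For the base case $n=1$, I would invoke the characterization recorded just before the Theorem: a finite-dimensional algebra is $1$-complete if and only if it is representation-finite and hereditary. The path algebra $\Lambda_1=kQ$ of a quiver without relations is hereditary since $\gl kQ\le 1$, and by Gabriel's theorem it is representation-finite precisely because the underlying graph of the Dynkin quiver $Q$ is of type $A$, $D$ or $E$. Hence $\Lambda_1$ is $1$-complete, and its cone $\Lambda_2$ is well defined.

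For the inductive step, suppose $\Lambda_n$ is $n$-complete. By Definition \ref{complete} this supplies a tilting module $T$ with $\pd T<n$ and an $n$-cluster tilting object $M$ of $T^\perp$ with $\MM_n(\Lambda_n)=\add M$, so that the cone $\Lambda_{n+1}=\End_{\Lambda_n}(M)$ is defined. The Theorem asserts precisely that this cone is $(n+1)$-complete, which is the statement for $n+1$ and closes the induction. All the real content is therefore absorbed into the preceding Theorem; granting it, the Corollary is a purely formal induction with no further obstacle. The only step requiring genuine, if routine, verification is the base case, where one must confirm both halves of the $1$-completeness criterion for $kQ$ — heredity being immediate from $\gl kQ\le 1$, and representation-finiteness being exactly Gabriel's classification of representation-finite hereditary algebras as those of Dynkin type.
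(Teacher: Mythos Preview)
Your proposal is correct and is precisely the intended argument: the paper states this result as a corollary of the preceding Theorem without further proof, so the induction you describe---with the base case handled by the remark that $1$-complete means representation-finite hereditary, together with Gabriel's theorem---is exactly what is implicit in calling it a corollary.
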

Thus, for each Dynkin quiver $Q$, we have an infinite series of $(n-1)$-relative $n$-Auslander algebras $\Lambda_n$ starting from the path algebra $\Lambda_1=kQ$.
If $Q$ is of Dynkin type $A_2$, then we have Example \ref{examples-A} (a).
The following case is especially interesting.
\begin{corollary}
Let $\Lambda_1$ be the $m\times m$ triangular matrix algebra over a field $k$ for some $m\ge1$.
For any $n\ge1$, there exist an algebra $\Lambda_n$ and an $n$-cluster tilting object $M_n$ in $\mod\Lambda_n$ such that $\Lambda_{n+1}=\End_{\Lambda_n}(M_n)$.
\end{corollary}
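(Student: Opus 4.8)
The plan is to recognize this as the special case of Corollary~\ref{inductive construction} in which $Q$ is the linearly oriented quiver $1\to 2\to\cdots\to m$ of type $A_m$, whose path algebra $kQ$ is precisely the $m\times m$ triangular matrix algebra. Corollary~\ref{inductive construction} then supplies $n$-complete algebras $\Lambda_n$ together with cones $\Lambda_{n+1}=\End_{\Lambda_n}(M)$, where by Definition~\ref{complete} there is a tilting $\Lambda_n$-module $T$ with $\pd T<n$ and $\PP(\MM)=\add T$, and $M$ is an $n$-cluster tilting object of $T^\perp$ with $\MM_n(\Lambda_n)=\add M$. Since the general construction places $M$ only inside $T^\perp$, the entire content beyond Corollary~\ref{inductive construction} is to show that here $T^\perp=\mod\Lambda_n$, equivalently that $T$ is projective; then $M_n:=M$ is an $n$-cluster tilting object of $\mod\Lambda_n$ itself with $\Lambda_{n+1}=\End_{\Lambda_n}(M_n)$.

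First I would reduce projectivity of $T$ to a statement about projective dimensions. Since $T$ is a tilting module, it has exactly as many indecomposable summands as $\Lambda_n$ has simples, hence as many as $\Lambda_n$ has indecomposable projectives; therefore $T$ is projective (and so $T=\Lambda_n$ up to multiplicity) if and only if every indecomposable summand of $T$ is projective, that is, as $\PP(\MM)=\add T$, if and only if every indecomposable $X\in\MM=\MM_n(\Lambda_n)$ with $\pd X<n$ is projective. Because $\gl\Lambda_n\le n$ by Definition~\ref{complete}(c), every $X\in\MM$ satisfies $\pd X\le n$, so the whole statement reduces to the dichotomy: \emph{every indecomposable $X\in\MM_n(\Lambda_n)$ has $\pd_{\Lambda_n}X\in\{0,n\}$}.

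I would establish this dichotomy by induction on $n$ along the cone tower. The base case $n=1$ is immediate: $\Lambda_1$ is hereditary and $\MM_1(\Lambda_1)=\mod\Lambda_1$, so $\pd X\in\{0,1\}$ for all $X$. For the inductive step I would pass from $\Lambda_n$ to its cone $\Lambda_{n+1}=\End_{\Lambda_n}(M_n)$ and compute the projective dimensions of the $\tau_{n+1}$-translates of $D\Lambda_{n+1}$ that generate $\MM_{n+1}(\Lambda_{n+1})$, using the $\tau_{n+1}$-orbit description of Proposition~\ref{tau_n closure}. Here I would invoke higher Auslander--Reiten theory: exactly as in Subsection~\ref{Almost split sequences}, projective resolutions of the simple $\Lambda_{n+1}$-modules are governed by the $n$-almost split sequences in $\MM_n(\Lambda_n)$, which allows one to read off $\pd_{\Lambda_{n+1}}$ of each indecomposable of $\MM_{n+1}$ from the position of the corresponding object in the already-understood level-$n$ structure, together with the explicit combinatorial model of the type-$A$ higher Auslander algebras.

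The main obstacle is precisely this projective-dimension bookkeeping over the cone, and in particular isolating the feature of the linear orientation that forbids intermediate projective dimensions. That the dichotomy genuinely fails for other orientations is already visible in Example~\ref{example-B}: for the quiver $\bullet\to\bullet\leftarrow\bullet$ the tilting module $T_2$ has $\pd T_2=1$ and is not projective, so $\MM_2(\Lambda_2')$ is only $2$-cluster tilting in $T_2^\perp$, a proper subcategory of $\mod\Lambda_2'$, whereas for the linear quiver $\bullet\to\bullet\to\bullet$ one has $T=\Lambda_2$ and $\MM_2(\Lambda_2)$ is $2$-cluster tilting in all of $\mod\Lambda_2$. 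Verifying that the linear orientation propagates the required dichotomy through every level is the crux, and this is carried out via the explicit structure of the cones in \cite{I-inductive}.
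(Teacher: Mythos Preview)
Your reduction is exactly right and matches the paper's intended derivation: the $m\times m$ triangular matrix algebra is the path algebra of the linearly oriented $A_m$ quiver, so Corollary~\ref{inductive construction} supplies the tower of $n$-complete algebras $\Lambda_n$, and the only additional content is that for this particular orientation the tilting module $T$ in Definition~\ref{complete}(a) is projective, so that $T^\perp=\mod\Lambda_n$ and the $n$-cluster tilting object $M_n$ lives in the full module category rather than merely in $T^\perp$. Your reformulation of this as the projective-dimension dichotomy $\pd_{\Lambda_n}X\in\{0,n\}$ for indecomposable $X\in\MM_n(\Lambda_n)$ is correct and is precisely the feature that distinguishes the linear orientation from others, as Example~\ref{example-B} illustrates.

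The paper itself gives no proof of this corollary beyond stating it as a specialization; the verification is deferred to \cite{I-inductive}, just as you do. Your inductive sketch via $n$-almost split sequences and the orbit description of Proposition~\ref{tau_n closure} is a reasonable outline of how that verification proceeds, though the honest acknowledgment that the bookkeeping lives in \cite{I-inductive} is appropriate: the argument ultimately rests on the explicit combinatorics of the simplex-shaped quivers $Q^{(n)}$ of Theorem~\ref{n-AR quiver}, where for the linear $A_m$ one can check directly that every non-projective indecomposable in $\MM_n$ arises as a genuine $\tau_n$-translate with full-length minimal projective resolution.
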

A lot of $n$-complete algebras will be constructed in a joint work \cite{IO} with Oppermann by using `$n$-APR tilting modules'.

\medskip
In the rest of this subsection we describe the quiver of $\Lambda_n$ in Corollary \ref{inductive construction}.
Let $I_x$ be the indecomposable injective $\Lambda_1$-module corresponding to the vertex $x\in Q_0$ and
\[\ell_x:=\sup\{\ell\ge0\ |\ \tau^\ell I_x\neq0\}.\]
Since $Q$ is a Dynkin quiver, we have $\ell_x<\infty$ for any $x\in Q_0$.
We put
\[\Delta_x:=\{(\ell_1,\cdots,\ell_n)\in\Z^n\ |\ \ell_1,\cdots,\ell_n\ge0,\ \ell_1+\cdots+\ell_n\le \ell_x\}.\]
For $1\le i\le n$, we put
\begin{eqnarray*}
\mbox{\boldmath $e$}_i&:=&(\stackrel{1}{0},\cdots,\stackrel{i-1}{0},\stackrel{i}{1},\stackrel{i+1}{0},\cdots,\stackrel{n}{0})\in\Z^n,\\
\mbox{\boldmath $v$}_i&:=&\left\{\begin{array}{cc}
-\mbox{\boldmath $e$}_i&i=1,\\
\mbox{\boldmath $e$}_{i-1}-\mbox{\boldmath $e$}_i&1<i\le n.
\end{array}\right.
\end{eqnarray*}
\begin{definition}
For a Dynkin quiver $Q$ and $n\ge1$, we define a quiver $Q^{(n)}=(Q_0^{(n)},Q_1^{(n)})$ as follows:
We put
\[Q_0^{(n)}:=\{(x,\mbox{\boldmath $\ell$})\ |\ x\in Q_0,\ \mbox{\boldmath $\ell$}\in\Delta_x\}.\]
There are the following $(n+1)$ kinds of arrows in $Q^{(n)}_1$:
\begin{itemize}
\item For any arrow $a:w\to x$ in $Q$, we have an arrow $(a^*,\mbox{\boldmath $\ell$}):(x,\mbox{\boldmath $\ell$})\to(w,\mbox{\boldmath $\ell$})$,
\item for any arrow $b:x\to y$ in $Q$, we have an arrow $(b,\mbox{\boldmath $\ell$}):(x,\mbox{\boldmath $\ell$})\to(y,\mbox{\boldmath $\ell$}+\mbox{\boldmath $v$}_1)$,
\item for any $1<i\le n$, we have an arrow $(x,\mbox{\boldmath $\ell$})_i:(x,\mbox{\boldmath $\ell$})\to(x,\mbox{\boldmath $\ell$}+\mbox{\boldmath $v$}_i)$,
\end{itemize}
if both the tail and the head belong to $Q_0^{(n)}$.

It is helpful to regard $x$ as a $0$-th entry of $(x,\mbox{\boldmath $\ell$})$, above $w$ as `$x-1$', and above $y$ as `$x+1$'.
\end{definition}
\begin{theorem}\label{n-AR quiver}
Under the circumstances in Corollary \ref{inductive construction}, the quiver of $\Lambda_n^{\rm op}$ is given by $Q^{(n)}$.
\end{theorem}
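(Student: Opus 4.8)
The plan is to induct on $n$ via the cone construction. Since $\Lambda_n$ is the cone of the $(n-1)$-complete algebra $\Lambda_{n-1}$, we have $\Lambda_n=\End_{\Lambda_{n-1}}(M)$ for an additive generator $M$ of the $(n-1)$-cluster tilting category $\MM:=\MM_{n-1}(\Lambda_{n-1})$. For such an endomorphism algebra the quiver of $\Lambda_n^{\rm op}$ is exactly the ``Auslander--Reiten quiver'' of $\MM$: its vertices are the indecomposable objects of $\MM$, and the number of arrows between two of them equals $\dim_k(J_{\MM}/J_{\MM}^2)(-,-)$, i.e. the number of irreducible maps in $\MM$. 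Thus it suffices to (i) set up a bijection between the indecomposable objects of $\MM$ and the vertex set $Q_0^{(n)}$, and (ii) show that the irreducible maps correspond bijectively to the three families of arrows in the definition of $Q^{(n)}$.

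For (i) I would describe the indecomposables of $\MM$ by their position in the tower. At the bottom, $\Lambda_1=kQ$ is representation-finite hereditary and its indecomposables form the preinjective component, so they are exactly the $\tau^{\ell_1}I_x$ with $x\in Q_0$ and $0\le\ell_1\le\ell_x$; this gives the one-index parametrization $(x,\ell_1)$ and is the base case. For the inductive step, Proposition \ref{tau_n closure} shows that $\MM=\MM_{n-1}(\Lambda_{n-1})$ is the $\tau_{n-1}$-closure of $D\Lambda_{n-1}$, and Theorem \ref{n-AR translation} organizes its indecomposables into finite $\tau_{n-1}$-orbits, each starting at an indecomposable injective and terminating, by Proposition \ref{tau_n closure}(b), at an indecomposable projective. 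Feeding in the induction hypothesis (the quiver of $\Lambda_{n-1}^{\rm op}$ is $Q^{(n-1)}$, so the simple, hence the injective, $\Lambda_{n-1}$-modules are indexed by $Q_0^{(n-1)}$) and recording the number of applications of $\tau_{n-1}$ as a new coordinate, I would obtain the multi-index $(x,\mbox{\boldmath $\ell$})$ with $\mbox{\boldmath $\ell$}\in\Z^n$; the defining inequality $\ell_1+\cdots+\ell_n\le\ell_x$ of $\Delta_x$ should record precisely the moment an orbit terminates at a projective, that is, a vertex lying on the boundary hyperplane of the simplex.

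For (ii) the arrows are read off from the $n$-almost split sequences and the minimal $\MM$-approximation sequences of Proposition \ref{M-approximation}, or equivalently from the presentation in Lemma \ref{criterion for n-cluster tilting}(c): the irreducible maps into an indecomposable $X$ are the indecomposable summands of the first term of the sequence ending at $X$. I expect the three families to arise as follows. The arrows $(a^*,\mbox{\boldmath $\ell$})$ and $(b,\mbox{\boldmath $\ell$})$ attached to arrows $a\colon w\to x$ and $b\colon x\to y$ of $Q$ propagate the mesh structure of the base $kQ$ through each slice of constant $\mbox{\boldmath $\ell$}$, the shift by $\mbox{\boldmath $v$}_1=-\mbox{\boldmath $e$}_1$ on the $b$-arrows encoding one application of the top-level translation. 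The vertical arrows $(x,\mbox{\boldmath $\ell$})_i$ for $1<i\le n$, with shift $\mbox{\boldmath $v$}_i=\mbox{\boldmath $e$}_{i-1}-\mbox{\boldmath $e$}_i$, come from the inductive embedding of $Q^{(n-1)}$ as a face of $Q^{(n)}$ and encode the irreducible maps created by the cone construction at the previous level.

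The main obstacle will be the combinatorial bookkeeping in (i) and (ii): proving that each $\tau_n$-orbit has exactly the predicted length, so that the vertices fill out $\Delta_x$ with no collisions and terminate precisely on the hyperplane $\ell_1+\cdots+\ell_n=\ell_x$, and that the irreducible maps are exactly the listed ones, each of multiplicity one. Both require the explicit form of the $n$-almost split sequences in these particular algebras from \cite{I-inductive}, together with $\gl\Lambda_n\le n+1$ to guarantee that the presentation in Lemma \ref{criterion for n-cluster tilting}(c) is bounded and that the mesh relations implicit in these sequences match those carried by $Q^{(n)}$, so that no further arrows appear.
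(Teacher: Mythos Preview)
The paper does not prove this theorem; it is stated without argument as a result from the companion preprint \cite{I-inductive}. So there is no in-paper proof to compare your proposal against.

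Your outline is a sensible strategy and correctly identifies the natural inductive structure: the quiver of $\Lambda_n^{\rm op}$ is the quiver of the category $\MM_{n-1}(\Lambda_{n-1})$, whose indecomposables are organized into $\tau_{n-1}$-orbits of injectives by definition of $\MM$, and recording the orbit position as the new coordinate $\ell_n$ is the right idea for the vertex bijection. Reading irreducible maps from the minimal right almost split maps (the first step of the sequences in Lemma~\ref{criterion for n-cluster tilting}(c)) is also the correct mechanism.

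That said, your sketch stops exactly where the real work begins, and you are candid about this. Two points deserve emphasis. First, the determination of orbit lengths---why the orbit of the injective indexed by $(x,\ell_1,\dots,\ell_{n-1})$ under $\tau_{n-1}$ has length exactly $\ell_x-(\ell_1+\cdots+\ell_{n-1})$, so that the indices fill $\Delta_x$ with no gaps or collisions---is not a formality; it requires knowing when $\tau_{n-1}^j$ of such an injective first becomes projective, which in turn depends on the explicit structure of the algebras $\Lambda_{n-1}$. Second, and more seriously, the $(n-1)$-almost split sequences give you the irreducible maps into each non-projective object, but you must also verify that no further irreducible maps appear: in particular that the only maps linking different $\tau_{n-1}$-slices are the predicted ``vertical'' arrows $(x,\mbox{\boldmath $\ell$})_n$, with multiplicity one. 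This is not visible from the abstract theory in this paper; it requires the explicit computation of the $(n-1)$-almost split sequences for these specific algebras carried out in \cite{I-inductive}. Your proposal points accurately to where the work lies but does not supply it.
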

We end this subsection with the following example.
For simplicity, we denote by
\[x\ell_1\cdots \ell_n\ \ \ (\mbox{respectively, }\ \ x\ell_1\cdots\dot{\ell}_i\cdots\ell_n,\ \ \ a^*\ell_1\cdots \ell_n,\ \ \ b\ell_1\cdots \ell_n)\]
the vertex $(x,\mbox{\boldmath $\ell$})\in Q_0^{(n)}$ (respectively, the arrow $(x,\mbox{\boldmath $\ell$})_i$, $(a^*,\mbox{\boldmath $\ell$})$,
$(b,\mbox{\boldmath $\ell$})\in Q_1^{(n)}$) for $\mbox{\boldmath $\ell$}=(\ell_1,\cdots,\ell_n)$.

\begin{example} \label{example-C}
Let $Q$ be the quiver
${\scriptstyle\bf 1}\stackrel{a}{\longrightarrow}{\scriptstyle\bf 2}\stackrel{b}{\longrightarrow}{\scriptstyle\bf 3}\stackrel{c}{\longrightarrow}{\scriptstyle\bf 4}$ of Dynkin type $A_4$.
Then the quiver of $\Lambda_2^{\rm op}$ is the following
{\tiny\[\xymatrix@C=0.5cm@R0.2cm{
&&&{\scriptstyle\bf 40}\ar[dr]^{c^*0}\\
&&{\scriptstyle\bf 31}\ar[ur]^{c1}\ar[dr]^{b^*1}&&{\scriptstyle\bf 30}\ar[dr]^{b^*0}\\
&{\scriptstyle\bf 22}\ar[ur]^{b2}\ar[dr]^{a^*2}&&{\scriptstyle\bf 21}\ar[ur]^{b1}\ar[dr]^{a^*1}&&{\scriptstyle\bf 20}\ar[dr]^{a^*0}\\
{\scriptstyle\bf 13}\ar[ur]^{a3}&&{\scriptstyle\bf 12}\ar[ur]^{a2}&&{\scriptstyle\bf 11}\ar[ur]^{a1}&&{\scriptstyle\bf 10}}\]}%
The quiver of $\Lambda_3^{\rm op}$ is the following
{\tiny\[\xymatrix@C=0.5cm@R0.2cm{
&&&{\scriptstyle\bf 400}\ar[dr]^{c^*00}\\
&&{\scriptstyle\bf 310}\ar[ur]^{c10}\ar[dr]^{b^*10}&&{\scriptstyle\bf 300}\ar[dr]^{b^*00}\\
&{\scriptstyle\bf 220}\ar[ur]^{b20}\ar[dr]^{a^*20}&&{\scriptstyle\bf 210}\ar[ur]^{b10}\ar[dr]^{a^*10}&&{\scriptstyle\bf 200}\ar[dr]^{a^*00}\\
{\scriptstyle\bf 130}\ar[ur]^{a30}&&{\scriptstyle\bf 120}\ar[ur]^{a20}&&{\scriptstyle\bf 110}\ar[ur]^{a10}&&{\scriptstyle\bf 100}\\
 &&&{\scriptstyle\bf 301}\ar[dr]^{b^*01}\ar[uuul]^{30\dot{1}}\\
 &&{\scriptstyle\bf 211}\ar[ur]^{b11}\ar[dr]^{a^*11}\ar[uuul]^{21\dot{1}}&&{\scriptstyle\bf 201}\ar[dr]^{a^*01}\ar[uuul]^{20\dot{1}}\\
 &{\scriptstyle\bf 121}\ar[ur]^{a21}\ar[uuul]^{12\dot{1}}&&{\scriptstyle\bf 111}\ar[ur]^{a11}\ar[uuul]^{11\dot{1}}&&{\scriptstyle\bf 101}\ar[uuul]^{10\dot{1}}\\
\ \\
&&&{\scriptstyle\bf 202}\ar[dr]^{a^*02}\ar[uuul]^{20\dot{2}}\\
&&{\scriptstyle\bf 112}\ar[ur]^{a12}\ar[uuul]^{11\dot{2}}&&{\scriptstyle\bf 102}\ar[uuul]^{10\dot{2}}\\
\ \\
\ \\
&&&{\scriptstyle\bf 103}\ar[uuul]^{10\dot{3}}
}\]}%
The quiver of $\Lambda_4^{\rm op}$ is the following
{\tiny\[\xymatrix@C=0.4cm@R0.1cm{
&&&&&&&&&&&&&\bullet\ar[dr]&&&&\\
&&&&&&&&&&&&\bullet\ar[ur]\ar[dr]&&\bullet\ar[dr]&&&\\
&&&&&&&&&&&\bullet\ar[ur]\ar[dr]&&\bullet\ar[ur]\ar[dr]&&\bullet\ar[dr]&&\\
&&&&&&&&\bullet\ar[dr]\ar[drrrrr]&&
\ar[ur]&&\bullet\ar[ur]&&\bullet\ar[ur]&&\bullet\\
&&&&&&&\bullet\ar[ur]\ar[dr]\ar[drrrrr]&&\bullet\ar[dr]\ar[drrrrr]&&
&&\bullet\ar[dr]\ar[uuul]\\
&&&&&&\bullet\ar[ur]\ar[drrrrr]&&\bullet\ar[ur]\ar[drrrrr]&&\bullet\ar[drrrrr]
&&\bullet\ar[ur]\ar[dr]\ar[uuul]&&\bullet\ar[dr]\ar[uuul]&&\\
&&&\bullet\ar[dr]\ar[drrrrr]&&&&&&&
&\bullet\ar[ur]\ar[uuul]&&\bullet\ar[ur]\ar[uuul]&&\bullet\ar[uuul]&&\\
&&\bullet\ar[ur]\ar[drrrrr]&&\bullet\ar[drrrrr]&&
&&\bullet\ar[dr]\ar[uuul]\ar[drrrrr]\\
&&&&&&&\bullet\ar[ur]\ar[uuul]\ar[drrrrr]&&\bullet\ar[uuul]\ar[drrrrr]&
&&&\bullet\ar[dr]\ar[uuul]\\
\bullet\ar[drrr]&\ \ \ \ \ \ \ \ \ \ \ \ &&&&&&&&&
&&\bullet\ar[ur]\ar[uuul]&&\bullet\ar[uuul]\\
&&&\bullet\ar[uuul]\ar[drrrrr]\\
&&&&&&&&\bullet\ar[uuul]\ar[drrrrr]\\
&&&&&&&&&&&&&\bullet\ar[uuul]
}\]}%
In general, the shape of the quiver of $\Lambda_n^{\rm op}$ looks like an $n$-simplex.
\end{example}

\subsection{Questions}

In this subsection we shall pose a few questions on $n$-cluster tilting subcategories.
Throughout let $\Lambda$ be a finite-dimensional algebra.
By definition $\Lambda$ contains a $1$-cluster tilting object if and only if $\Lambda$ is representation-finite.
For $n\ge2$, the following question is not understood well.
\begin{itemize}
\item When does $\mod\Lambda$ contain an $n$-cluster tilting object (respectively, subcategory)?
\end{itemize}
A necessary conditions is given by Proposition \ref{tau_n closure}.
Let us give another necessary condition.
The \emph{complexity} of a $\Lambda$-module $M$ with a minimal projective resolution $\cdots\to P_1\to P_0\to M\to0$ is defined by
\[\inf\{\ell\ge0\ |\ \mbox{there exists $c>0$ such that }\dim_kP_i\le ci^{\ell-1}\ \mbox{ for any }i\}.\]

Erdmann and Holm \cite{EH} proved the following result.
\begin{theorem}
Let $\Lambda$ be a selfinjective algebra and $n\ge1$. If there exists an $n$-cluster tilting object in $\mod\Lambda$, then the complexity of any $\Lambda$-module is at most one.
\end{theorem}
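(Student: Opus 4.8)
The plan is to exploit that, since $\Lambda$ is selfinjective, the stable and costable categories coincide, $\underline{\mod}\Lambda=\overline{\mod}\Lambda$, so that by Theorem \ref{n-AR translation} the $n$-Auslander-Reiten translation $\tau_n$ becomes an \emph{autoequivalence} of $\underline{\CC}$, where $\CC=\add M$ for the given $n$-cluster tilting object $M$. First I would record the standard formula $\tau\cong\nu\Omega^2$ on $\underline{\mod}\Lambda$ valid for selfinjective algebras, where $\nu=D\Hom_\Lambda(-,\Lambda)$ is the Nakayama functor, which here is an exact autoequivalence of $\mod\Lambda$ preserving projectives and commuting with $\Omega$ on the stable category. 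Combined with the definition $\tau_n=\tau\Omega^{n-1}$ this gives $\tau_n\cong\nu\Omega^{n+1}$ on $\underline{\mod}\Lambda$.

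Since $M$ has only finitely many indecomposable summands, $\underline{\CC}$ has only finitely many indecomposable objects, and the autoequivalence $\tau_n$ permutes them; hence $\tau_n^{p}\cong\id$ on $\underline{\CC}$ for some $p\ge1$. Rewriting this via $\tau_n\cong\nu\Omega^{n+1}$ yields $\nu^{p}\Omega^{p(n+1)}X\cong X$ in $\underline{\mod}\Lambda$ for every $X\in\CC$, equivalently $\Omega^{mp(n+1)}X\cong\nu^{-mp}X$ for all $m\ge0$. Choosing $X$ indecomposable non-projective, every syzygy $\Omega^iX$ has no projective summand (automatic over a selfinjective algebra), and since $\nu$ preserves projectives the same holds for $\nu^{-mp}X$; hence these stable isomorphisms are genuine isomorphisms in $\mod\Lambda$.

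The key dimension estimate comes next: $\nu$, being an exact autoequivalence, permutes the simple modules and therefore preserves composition length, so $\dim_k\nu^{-mp}X$ is bounded by $(\max_S\dim_k S)\cdot\ell(X)$ independently of $m$ (there being only finitely many simples $S$). Thus $\dim_k\Omega^{mp(n+1)}X$ is bounded, and since within each block of $p(n+1)$ consecutive syzygies the dimension grows by at most a factor $\dim_k\Lambda$ per step (the projective cover of $Y$ has dimension at most $\dim_k\Lambda\cdot\dim_kY$), all the syzygies $\Omega^iX$, hence all terms of the minimal projective resolution of $X$, have uniformly bounded $k$-dimension. This is precisely complexity at most one for every $X\in\CC$, in particular for $M$. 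Finally, for an arbitrary $N\in\mod\Lambda$ I would apply Proposition \ref{M-approximation} to obtain an exact sequence $0\to C_{n-1}\to\cdots\to C_0\to N\to0$ with all $C_i\in\CC$, and conclude by subadditivity of complexity along this long exact sequence that the complexity of $N$ is at most $\max_i$ of the complexities of the $C_i$, hence at most one.

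The hard part will be the bookkeeping in the middle two steps rather than any single deep input: one must verify that the abstract periodicity $\tau_n^p\cong\id$ on the finite category $\underline{\CC}$ actually lifts to honest isomorphisms of modules (controlling projective summands on both sides) and that $\nu$ genuinely preserves composition length, so that finite order of $\tau_n$ is converted into a \emph{uniform dimension bound} on all syzygies rather than a merely qualitative finiteness statement. Once that conversion is secured, both the bound for $M$ and its propagation to arbitrary modules via the $\CC$-resolution of Proposition \ref{M-approximation} are routine.
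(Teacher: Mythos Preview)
The paper does not give its own proof of this theorem; it merely attributes the result to Erdmann and Holm \cite{EH}. Your argument is correct and is essentially the one Erdmann and Holm use: for a selfinjective algebra the identification $\underline{\mod}\Lambda=\overline{\mod}\Lambda$ together with Theorem~\ref{n-AR translation} makes $\tau_n\cong\nu\Omega^{n+1}$ an autoequivalence of the finite Krull--Schmidt category $\underline{\CC}$, so every indecomposable summand of $M$ is $\Omega$-periodic up to a $\nu$-twist; since $\nu$ is an exact autoequivalence permuting the finitely many simples this yields a uniform bound on $\dim_k\Omega^iX$, and Proposition~\ref{M-approximation} propagates the bound to arbitrary modules.

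Two minor points of phrasing. First, from the fact that $\tau_n$ permutes the finitely many isoclasses of indecomposables in $\underline{\CC}$ you only get $\tau_n^pX\cong X$ for each indecomposable $X$, not a natural isomorphism $\tau_n^p\cong\id$ of functors; but the object-wise statement is all you use, so the argument is unaffected. Second, when you invoke ``subadditivity of complexity along the long exact sequence'', it is cleanest to break the sequence $0\to C_{n-1}\to\cdots\to C_0\to N\to 0$ into short exact sequences and use the standard fact that in $0\to A\to B\to C\to 0$ each term has complexity at most the maximum of the other two; induction then gives $\mathrm{cx}\,N\le\max_i\mathrm{cx}\,C_i\le 1$.
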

The following question is also interesting.
\begin{itemize}
\item Does any $n$-cluster tilting subcategory $\CC$ of $\mod\Lambda$ with $n\ge 2$ have an additive generator?
\end{itemize}
This is related to the following question due to Auslander and Smal\o \ \cite{ASm2}:
\begin{itemize}
\item Let $\XX$ be a functorially finite extension-closed subcategory of $\mod\Lambda$.
Are there only finitely many isoclasses of indecomposable projective (respectively, injective) objects in $\XX$?
\end{itemize}
If this is true, then the previous question is also true since any $n$-cluster tilting subcategory satisfies the conditions and all objects are projective.

It seems that answer to the following simple question is also unknown.
\begin{itemize}
\item Is there a finite-dimensional algebra $\Lambda$ with an infinite set $\{ X_i\}_{i\in I}$ of isoclasses of indecomposable $\Lambda$-modules satisfying
$\Ext^1_\Lambda(X_i,X_j)=0$ for any $i,j\in I$?
\end{itemize}
It is shown in \cite{I-aorder} that if $\repdim\Lambda\le 3$, then there is no such an infinite set.
The following result \cite{I-inductive} is interesting also from the viewpoint of the above question.
\begin{proposition}
For any finite-dimensional algebra $\Lambda$, the subcategories $\MM_2(\Lambda)$ and $\MM'_2(\Lambda)$ are 2-rigid.
\end{proposition}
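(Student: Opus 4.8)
The plan is to reduce the statement to an explicit vanishing of $\Ext^1$ between the generating objects and then to prove that vanishing by an induction fuelled by the Auslander--Reiten duality. Since $\MM_2(\Lambda)=\add\{\tau_2^i(D\Lambda)\mid i\ge0\}$ and $\Ext^1$ commutes with finite direct sums, it suffices to show that
\[E(i,j):=\Ext^1_\Lambda(\tau_2^i(D\Lambda),\tau_2^j(D\Lambda))=0\qquad\text{for all }i,j\ge0.\]
The ingredients I would use are: the Auslander--Reiten duality $\Ext^1_\Lambda(X,Y)\simeq D\overline{\Hom}_\Lambda(Y,\tau X)$ valid for all modules, the fact that $\tau\colon\underline{\mod}\Lambda\to\overline{\mod}\Lambda$ is an equivalence (in particular fully faithful), the defining relation $\tau_2=\tau\Omega$, and the standard isomorphism $\Ext^1_\Lambda(A,B)\simeq\underline{\Hom}_\Lambda(\Omega A,B)$.

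First I would establish the recursion
\[E(i,j)\simeq D\,E(j-1,i)\qquad(j\ge1).\]
Applying the Auslander--Reiten duality to $X=\tau_2^i(D\Lambda)$ and $Y=\tau_2^j(D\Lambda)$ gives $E(i,j)\simeq D\overline{\Hom}_\Lambda(\tau_2^j(D\Lambda),\tau\tau_2^i(D\Lambda))$. For $j\ge1$ the relation $\tau_2=\tau\Omega$ lets me write $\tau_2^j(D\Lambda)=\tau(\Omega\tau_2^{j-1}(D\Lambda))$, so that both arguments of $\overline{\Hom}_\Lambda$ lie in the image of $\tau$; full faithfulness of $\tau$ then yields $\overline{\Hom}_\Lambda(\tau_2^j(D\Lambda),\tau\tau_2^i(D\Lambda))\simeq\underline{\Hom}_\Lambda(\Omega\tau_2^{j-1}(D\Lambda),\tau_2^i(D\Lambda))$. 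Finally the syzygy formula identifies the right-hand side with $\Ext^1_\Lambda(\tau_2^{j-1}(D\Lambda),\tau_2^i(D\Lambda))=E(j-1,i)$, giving the recursion.

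With the recursion in hand I would conclude by induction on $i+j$. The boundary case is immediate: $E(i,0)=\Ext^1_\Lambda(\tau_2^i(D\Lambda),D\Lambda)=0$ because $D\Lambda$ is injective (in particular $E(0,0)=0$). For the inductive step, if $j\ge1$ then the recursion gives $E(i,j)\simeq D\,E(j-1,i)$, and since $(j-1)+i<i+j$ the induction hypothesis forces $E(j-1,i)=0$, whence $E(i,j)=0$. This proves that $\MM_2(\Lambda)$ is $2$-rigid.

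For $\MM'_2(\Lambda)$ I would argue by duality. The duality $D\colon\mod\Lambda\to\mod\Lambda^{\op}$ interchanges $\tau$ with $\tau^-$ and $\Omega$ with $\Omega^-$, hence $\tau_2$ with $\tau_2^-$, sends the projective generator $\Lambda$ to the injective cogenerator of $\mod\Lambda^{\op}$, and converts $\Ext^1_\Lambda(X,Y)$ into $\Ext^1_{\Lambda^{\op}}(DY,DX)$. Thus $D$ identifies $\MM'_2(\Lambda)$ with $\MM_2(\Lambda^{\op})$, and the $2$-rigidity of the latter, already proved for the arbitrary finite-dimensional algebra $\Lambda^{\op}$, yields the $2$-rigidity of $\MM'_2(\Lambda)$. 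The hard part will be the bookkeeping in the recursion step: one must track carefully in which of the categories $\underline{\mod}\Lambda$ or $\overline{\mod}\Lambda$ each object is being regarded, and check that the identity $\tau_2^j(D\Lambda)=\tau(\Omega\tau_2^{j-1}(D\Lambda))$ together with $\tau^-\tau\simeq\id$ holds up to exactly the projective (respectively injective) summands that are invisible to these stable Hom-spaces; once this is in place, the argument is a purely formal consequence of the Auslander--Reiten duality.
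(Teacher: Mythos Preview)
The paper does not actually prove this proposition; it is only stated, with a reference to \cite{I-inductive}. So there is no in-paper proof to compare against, and I will simply assess your argument on its own merits.

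Your overall strategy---reduce to showing $E(i,j)=\Ext^1_\Lambda(\tau_2^iD\Lambda,\tau_2^jD\Lambda)=0$, then induct on $i+j$ via a recursion obtained from Auslander--Reiten duality and the equivalence $\tau:\underline{\mod}\Lambda\to\overline{\mod}\Lambda$---is correct and is indeed the standard approach. The base case $E(i,0)=0$ and the duality argument reducing $\MM'_2(\Lambda)$ to $\MM_2(\Lambda^{\op})$ are both fine.

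There is, however, one genuine inaccuracy. You invoke a ``standard isomorphism'' $\underline{\Hom}_\Lambda(\Omega X,Y)\simeq\Ext^1_\Lambda(X,Y)$. This is \emph{not} an isomorphism for arbitrary finite-dimensional algebras: from the sequence $0\to\Omega X\to P\to X\to 0$ one obtains $\Ext^1_\Lambda(X,Y)=\Hom_\Lambda(\Omega X,Y)/I$ with $I$ the image of $\Hom_\Lambda(P,Y)$, whereas $\underline{\Hom}_\Lambda(\Omega X,Y)=\Hom_\Lambda(\Omega X,Y)/[\Lambda](\Omega X,Y)$. One always has $I\subseteq[\Lambda](\Omega X,Y)$, so there is a natural \emph{surjection} $\Ext^1_\Lambda(X,Y)\twoheadrightarrow\underline{\Hom}_\Lambda(\Omega X,Y)$, but equality can fail; for instance take $Y=\Lambda$, where $\underline{\Hom}_\Lambda(\Omega X,\Lambda)=0$ always while $\Ext^1_\Lambda(X,\Lambda)$ need not vanish (e.g.\ $\Lambda=k[x,y]/(x,y)^2$, $X=k$).

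Fortunately this does not damage your induction. Your chain of isomorphisms through AR duality and full faithfulness of $\tau$ gives a genuine isomorphism
\[
E(i,j)\simeq D\,\underline{\Hom}_\Lambda\bigl(\Omega\tau_2^{j-1}D\Lambda,\ \tau_2^iD\Lambda\bigr),
\]
and the surjection above shows the right-hand side is a quotient of $E(j-1,i)$ (hence $E(i,j)$ injects into $D\,E(j-1,i)$). By the inductive hypothesis $E(j-1,i)=0$, so its quotient vanishes and $E(i,j)=0$. Thus your recursion should be stated as an injection $E(i,j)\hookrightarrow D\,E(j-1,i)$ rather than an isomorphism, and with that correction the proof goes through.
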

In Theorem \ref{derived equivalence} we shall show that the numbers of indecomposable direct summands of
all basic $2$-cluster tilting objects in $\mod\Lambda$ are equal.
Thus the following question is natural.
\begin{itemize}
\item Are the numbers of indecomposable direct summands of all basic $n$-cluster tilting objects in $\mod\Lambda$ equal for a fixed $n$?
\end{itemize}

\section{Orders and Cohen-Macaulay modules}\label{Orders and Cohen-Macaulay modules}

In this section
we consider a large class of algebras, containing the finite-dimensional algebras, which are called \emph{orders} over complete local rings with Krull dimension $d$.
We briefly recall the Auslander-Reiten theory for the category of (maximal) \emph{Cohen-Macaulay} modules over orders.
The case $d=0$ is what we recalled in Section \ref{From Auslander-Reiten theory}.
The case $d=1$ was developped by many authors including Drozd, Kirichenko, Roiter, Reiner, Roggenkamp, Rump, Simson, Hijikata, Nishida (see \cite{DK,DKR,CRe1,CRe2,RH,Rog,Ru1,Ru2,Si,HN1,HN2}).
The case $d\ge 2$ is studied mainly in the context of commutative ring theory (see the references in \cite{Y}) or non-commutative algebraic geometry (see \cite{Ar1,Ar2,ASc,GN1,GN2,RV1,V1,V2}).
An especially nice phenomenon appears for the case $d=2$. In this case, the category $\CM(\Lambda)$ has fundamental sequences
connecting indecomposable projective modules and indecomposable injective modules in $\CM(\Lambda)$ which behave like almost split sequences.

In Subsection \ref{$n$-almost split sequences and $n$-Auslander algebras} we introduce $n$-Auslander-Reiten theory on $n$-cluster tilting subcategories $\CC$ of $\CM(\Lambda)$
by constructing the $n$-Auslander-Reiten translation, the $n$-Auslander-Reiten duality and $n$-almost split sequences.
For the case $n=d-1$ we shall show the existence of $(d-1)$-fundamental sequences connecting indecomposable projective objects in $\CC$ and indecomposable injective objects in $\CC$.
This implies that the $(d-1)$-Auslander algebras $\End_\Lambda(M)$ of $(d-1)$-cluster tilting objects $M\in\CM(\Lambda)$ are especially nice.
In fact they are $R$-orders with global dimension $d$.
If in addition $\Lambda$ is a symmetric $R$-order, then $\End_\Lambda(M)$ is a $d$-Calabi-Yau algebra.
In Subsections \ref{Cohen-Macaulay modules and triangulated categories} and \ref{2-cluster tilting for 2-Calabi-Yau categories}
we recall several results on Calabi-Yau algebras and Calabi-Yau triangulated categories given by orders.

Our basic references in this section are \cite{BH,Ma} for commutative rings,
\cite{A-order,A-isolated,Y} for the representation theory of orders,
and \cite{CRe1,GN2} for module-finite algebras.

\subsection{From Auslander-Reiten theory}

In this section we denote by $R$ a noetherian complete local ring of Krull dimension $d$.
For $X\in\mod R$, let us introduce the invariants
\begin{eqnarray*}
\dim X&:=&\dim(R/\ann X),\\
\depth X&:=&\inf\{i\ge0\ |\ \Ext^i_R(R/J_R,X)\neq0\},
\end{eqnarray*}
called the \emph{dimension} and the \emph{depth}.
We always have an inequality $\depth X\le \dim X$, and an important class of $R$-modules is the category
\[\CM_i(R):=\{X\in\mod\Lambda\ |\ X=0\ \mbox{ or }\ \dim X=i=\depth X\}\]
of \emph{Cohen-Macaulay modules of Krull dimension $i$}.
We are interested in the category
\[\CM(R):=\CM_d(R)\]
of \emph{(maximal) Cohen-Macaulay} $R$-modules.
We call $R$ a \emph{Cohen-Macaulay ring} if $R\in\CM(R)$.

\medskip
In the rest of this section we assume that $R$ is a \emph{Gorenstein ring},
that is the injective dimension $\id R$ of the $R$-module $R$ is finite, or equivalently $\id R=d$.
Any Gorenstein ring is Cohen-Macaulay.
An important class of Gorenstein rings is given by \emph{regular rings} $R$, for which $\gl R<\infty$, or equivalently $\gl R=d$.
A typical example of regular rings is the formal power series ring $k[[t_1,\cdots,t_d]]$ over a field $k$.
Conversely, any complete regular local ring containing a field must be a formal power series ring over a field by Cohen's structure theorem.
Since we have the Auslander-Buchsbaum formula
\begin{eqnarray}\label{Auslander-Buchsbaum}
\depth X+\pd X=\depth R\ \mbox{ if $\pd X<\infty$},
\end{eqnarray}
in general, we have $\CM(R)=\add R$ if $R$ is regular.

We have a nice duality theory for Gorenstein rings $R$.
In particular we have a duality
\[D_i:=\Ext^{d-i}_R(-,R):\CM_i(R)\xrightarrow{\sim}\CM_i(R).\]
The category $\CM_0(R)$ coincides with the category $\fl R$ of finite length $R$-modules,
and the duality
\[D:=D_0=\Ext^d_R(-,R):\fl R\xrightarrow{\sim}\fl R\]
is called the \emph{Matlis duality}.

\medskip
Let us introduce the algebras which we are interested in.
Let $\Lambda$ be a \emph{module-finite} $R$-algebra, namely an $R$-algebra which is finitely generated as an $R$-module (and not necessarily commutative).
Then $\Lambda$ is clearly noetherian, and the category $\mod\Lambda$ is Krull-Schmidt in the sense of
Introduction
since we assumed that the base ring $R$ is complete \cite{CRe1}.
We put
\[\CM(\Lambda):=\{X\in\mod\Lambda\ |\ X\in\CM(R) \mbox{ as an $R$-module}\}.\]
The category $\CM(\Lambda)$ is independent of the choice of the central subring $R$ of $\Lambda$ since so are $\dim X$ and $\depth X$.
\begin{definition}
We call $\Lambda$ an \emph{$R$-order} if $\Lambda\in\CM(\Lambda)$.
\end{definition}
In the rest of this subsection we assume that $\Lambda$ is an $R$-order.
The duality $D_d:\CM(R)\to\CM(R)$ induces a duality
\[D_d=\Hom_R(-,R):\CM(\Lambda)\stackrel{\sim}{\longleftrightarrow}\CM(\Lambda^{\rm op}).\]
Thus $\CM(\Lambda)$ forms an extension-closed subcategory of $\mod\Lambda$ with enough projectives $\add\Lambda$ and enough injectives $\add D_d\Lambda$.
We have the \emph{Nakayama functors}
\[\nu:=D_d(-)^*:\mod\Lambda\to\mod\Lambda\ \mbox{ and }\ \nu^-:=(-)^*D_d:\mod\Lambda\to\mod\Lambda,\]
which induce mutually quasi-inverse equivalences
\[\nu:\add\Lambda\xrightarrow{\sim}\add D_d\Lambda\ \mbox{ and }\ \nu^-:\add D_d\Lambda\xrightarrow{\sim}\add\Lambda.\]

\begin{examples} \label{examples-D}
(a) An $R$-order of Krull dimension $d=0$ is nothing but an artin $R$-algebra, and we have $\mod\Lambda=\CM(\Lambda)$ in this case.

(b) Let $S$ be a noetherian commutative complete local ring containing a field.
Then $S$ always contains a complete regular local subring $R$ such that $S$ is a module-finite $R$-algebra.
In this case, $S$ is an $R$-order if and only if $S$ is a Cohen-Macaulay ring.
Moreover, an $S$-module is Cohen-Macaulay as an $S$-module if and only if it is Cohen-Macaulay as an $R$-module.
\end{examples}

Since $R$ is Goresntein, we have
\begin{eqnarray*}
\dim X&=&d-\inf\{i\ge0\ |\ \Ext^i_R(X,R)\neq0\}\\
&=&d-\inf\{i\ge0\ |\ \Ext^i_\Lambda(X,D_d\Lambda)\neq0\},\\
\depth X&=&d-\sup\{i\ge0\ |\ \Ext^i_R(X,R)\neq0\}\\
&=&d-\sup\{i\ge0\ |\ \Ext^i_\Lambda(X,D_d\Lambda)\neq0\}.
\end{eqnarray*}
In particular, we have
\[\CM(\Lambda)=\{X\in\mod\Lambda\ |\ \Ext^i_\Lambda(X,D_d\Lambda)=0 \mbox{ for } i>0\}.\]
Thus the study of Cohen-Macaulay modules is a special case of cotilting theory with respect to $D_d\Lambda$ \cite{AR-contravariant,ABu}.

For an $R$-order $\Lambda$, we always have an inequality $\gl\Lambda\ge d$.
\begin{definition}\label{definition isolated singularity}
An $R$-order $\Lambda$ is called \emph{non-singular} \cite{A-isolated} if $\gl\Lambda=d$.

An $R$-order $\Lambda$ is called an \emph{isolated singularity} \cite{A-isolated} if
\[\gl(\Lambda\otimes_RR_{\mathfrak p})=\dim R_{\mathfrak p}\]
for any non-maximal prime ideal ${\mathfrak p}$ of $R$.
For example, non-singular $R$-orders are isolated singularities.
\end{definition}
In the rest of this subsection we assume that $\Lambda$ is an $R$-order which is an isolated singularity.
For example, an $R$-order of Krull dimension $d=0$ is always an isolated singularity, and
an $R$-order of Krull dimension $d=1$ is an isolated sigularity if and only if $\Lambda\otimes_RK$ is a semisimple $K$-algebra for the quotient field $K$ of $R$.
As for $\mod\Lambda$ for finite-dimensional algebras $\Lambda$, many results in Auslander-Reiten theory hold in $\CM(\Lambda)$
for an $R$-order $\Lambda$ which is an isolated singularity.
We put
\[\underline{\CM}(\Lambda):=\CM(\Lambda)/[\Lambda]\ \mbox{ and }\ \overline{\CM}(\Lambda):=\CM(\Lambda)/[D_d\Lambda],\]
where $[\Lambda]$ and $[D_d\Lambda]$ are ideals of $\CM(\Lambda)$ defined in
Introduction.
We notice that isolated singularities are characterized in terms of stable categories as follows \cite{A-isolated}.
\begin{itemize}
\item An $R$-order $\Lambda$ is non-singular if and only if $\CM(\Lambda)=\add\Lambda$ if and only if $\underline{\CM}(\Lambda)=0$.
\item An $R$-order $\Lambda$ is an isolated singularity if and only if the morphism set $\underline{\Hom}_\Lambda(X,Y)$ in $\underline{\CM}(\Lambda)$ belongs to $\fl R$ for any $X,Y\in\CM(\Lambda)$.
\end{itemize}
We denote by
\[\Omega:\underline{\CM}(\Lambda)\to\underline{\CM}(\Lambda)\ \mbox{ and }\ \Omega^-:\overline{\CM}(\Lambda)\to\overline{\CM}(\Lambda)\]
the syzygy and the cosyzygy functors.
A key observation is the following, which gives the Auslander-Bridger transpose duality for $\CM(\Lambda)$ \cite{A-order} (see \cite[1.1.1, 1.3.1]{I-aorder} for a short proof).
\begin{theorem}
We have a duality
\[\Omega^d\Tr:\underline{\CM}(\Lambda)\stackrel{\sim}{\longleftrightarrow}\underline{\CM}(\Lambda^{\rm op})\]
satisfying $(\Omega^d\Tr)^2\simeq 1_{\underline{\CM}(\Lambda)}$.
\end{theorem}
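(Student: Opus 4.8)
The plan is to realize $\Omega^d\Tr$ as the composite of the unconditional Auslander--Bridger transpose duality with the $d$-fold syzygy functor, and to prove the involutivity by showing that over a Gorenstein base the error terms obstructing the commutation of $\Tr$ with $\Omega^d$ all vanish. First I would recall the general transpose duality $\Tr\colon\underline{\mod}\Lambda\to\underline{\mod}\Lambda^{\op}$ from \cite{ABr}: dualizing a projective presentation $P_1\to P_0\to X\to0$ and dualizing again recovers $X=\Cok(P_1\to P_0)$, so $\Tr^2\simeq\mathrm{id}$ on the stable categories with no hypothesis on $\Lambda$. This is the backbone; the whole problem is to control its interaction with $\Omega^d$.

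Second I would check that $\Omega^d\Tr$ is well defined into $\underline{\CM}(\Lambda^{\op})$. For any $M$ the depth lemma applied to $0\to\Omega M\to P\to M\to0$ gives $\depth\Omega M\ge\min(d,\depth M+1)$, since $\depth\Lambda=\depth R=d$; iterating, $\depth\Omega^dM\ge\min(d,\depth M+d)=d$, so $\Omega^dM$ is Cohen--Macaulay. Hence $\Omega^d$ sends $\underline{\mod}\Lambda$ into $\underline{\CM}(\Lambda)$, and $\Omega^d\Tr$ carries $\underline{\CM}(\Lambda)$ into $\underline{\CM}(\Lambda^{\op})$; the same construction over $\Lambda^{\op}$ supplies the candidate quasi-inverse.

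Third comes the heart of the matter: proving $(\Omega^d\Tr)^2\simeq\mathrm{id}$ on $\underline{\CM}(\Lambda)$. Dualizing a projective resolution of $X$ produces the complex $0\to X^*\to P_0^*\to P_1^*\to\cdots$ whose positive cohomology is $\Ext^i_\Lambda(X,\Lambda)$, and tracking this through $\Tr$ yields the Auslander--Bridger obstruction sequences relating $\Tr\Omega^d(\Tr X)$ to $\Omega^{-d}\Tr(\Tr X)=\Omega^{-d}X$; the error terms are the modules $\Ext^i_{\Lambda^{\op}}(\Tr X,\Lambda)$ for $1\le i\le d$, which measure the failure of $X$ to be highly torsionfree. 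I would then show these vanish: because $X$ is maximal Cohen--Macaulay it is reflexive ($X\simeq X^{**}$, as $R$ is Gorenstein, hence $(S_2)$), which kills the bottom obstructions, and because $\Lambda$ is an isolated singularity the higher $\Ext^i_{\Lambda^{\op}}(\Tr X,\Lambda)$ have finite length; a Peskine--Szpiro acyclicity argument, exploiting that every term of the dualized resolution has maximal depth $d$, then forces these finite-length modules to vanish in all degrees below $d$. With the obstructions gone, the commutation $\Tr\Omega^d\simeq\Omega^{-d}\Tr$ holds on the modules in play, the telescoping short exact sequences with projective middle terms collapse, and one obtains a natural isomorphism $X\xrightarrow{\ \sim\ }(\Omega^d\Tr)^2X$ in $\underline{\CM}(\Lambda)$.

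I expect the main obstacle to be precisely this third step: organizing the iterated commutation of $\Tr$ past the $d$ syzygies and certifying that the accumulated obstructions are exactly the vanishing $\Ext^i_{\Lambda^{\op}}(\Tr X,\Lambda)$. The conceptual crux is that maximal depth (from Cohen--Macaulayness) together with finite length (from the isolated-singularity hypothesis) conspire, through the acyclicity lemma, to annihilate every intermediate Ext, so that the unconditional transpose duality descends cleanly to $\underline{\CM}(\Lambda)$ once it is corrected by $\Omega^d$.
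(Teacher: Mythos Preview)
The paper does not actually prove this theorem in the text; it only states the result and refers to \cite[1.1.1, 1.3.1]{I-aorder} for a short proof. So there is no in-text argument to compare your proposal against. That said, your outline is the standard Auslander--Bridger route and is essentially what one finds in the cited reference: reduce to showing that every $X\in\CM(\Lambda)$ is $d$-torsionfree, and get this from the isolated-singularity hypothesis via depth counting.

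Two points deserve tightening. First, the reflexivity you invoke must be $\Lambda$-reflexivity ($X\simeq\Hom_{\Lambda^{\rm op}}(\Hom_\Lambda(X,\Lambda),\Lambda)$), not merely $R$-reflexivity; the argument is that $\Ext^1_{\Lambda^{\rm op}}(\Tr X,\Lambda)$ and $\Ext^2_{\Lambda^{\rm op}}(\Tr X,\Lambda)$ have finite length (isolated singularity) while the Auslander--Bridger four-term sequence and the depth lemma force them to have positive depth, hence they vanish. Second, the phrase ``Peskine--Szpiro acyclicity'' is slightly misleading: the classical lemma requires $\depth F_i\ge i$, but in your dualized complex every term has the same depth $d$, so the hypotheses do not match after reindexing. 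What actually works is an inductive depth-lemma argument of exactly the kind used for reflexivity: using $\Omega^2\Tr X\simeq X^*$ one reduces $\Ext^i_{\Lambda^{\rm op}}(\Tr X,\Lambda)$ for $i\ge3$ to $\Ext^{i-2}_{\Lambda^{\rm op}}(X^*,\Lambda)$, and then repeats the finite-length-versus-positive-depth contradiction, climbing one step at a time up to $i=d$. This is morally an acyclicity argument, but it is cleaner to phrase it as iterated depth counting rather than a single invocation of the lemma.
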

Since the duality $D_d:\CM(\Lambda)\leftrightarrow\CM(\Lambda^{\rm op})$ induces a duality $D_d:\underline{\CM}(\Lambda)\leftrightarrow\overline{\CM}(\Lambda^{\rm op})$,
we have mutually quasi-inverse equivalences
\begin{eqnarray*}
\tau:=D_d\Omega^d\Tr&:&\underline{\CM}(\Lambda)\xrightarrow{\Omega^d\Tr}\underline{\CM}(\Lambda^{\rm op})\xrightarrow{D_d}\overline{\CM}(\Lambda),\\
\tau^-:=\Omega^d\Tr D_d&:&\overline{\CM}(\Lambda)\xrightarrow{D_d}\underline{\CM}(\Lambda^{\rm op})\xrightarrow{\Omega^d\Tr}\underline{\CM}(\Lambda),
\end{eqnarray*}
called the \emph{Auslander-Reiten translations}.
As in the case of finite-dimensional algebras,
$\tau$ gives a bijection from isoclasses of indecomposable non-projective objects in $\CM(\Lambda)$ to
isoclasses of indecomposable non-injective objects in $\CM(\Lambda)$.
Moreover, we have the following results.
\begin{theorem}\label{AR theory for orders}
\begin{itemize}
\item[(a)] There exist the following functorial isomorphisms for any $X,Y\in\CM(\Lambda)$ (called the \emph{Auslander-Reited duality})
\[\underline{\Hom}_\Lambda(\tau^-Y,X)\simeq D\Ext^1_\Lambda(X,Y)\simeq\overline{\Hom}_\Lambda(Y,\tau X).\]
\item[(b)] For any indecomposable non-projective $X\in\CM(\Lambda)$ (respectively, non-injective $Z\in\CM(\Lambda)$), there exists an exact sequence (called an \emph{almost split sequence})
\[0\to Z\xrightarrow{g}Y\xrightarrow{f}X\to0\]
with terms in $\CM(\Lambda)$ which induces the following exact sequences on $\CM(\Lambda)$
\begin{eqnarray*}
&0\to(-,Z)\xrightarrow{g}(-,Y)\xrightarrow{f}J_{\CM(\Lambda)}(-,X)\to0,&\\
&0\to(X,-)\xrightarrow{f}(Y,-)\xrightarrow{g}J_{\CM(\Lambda)}(Z,-)\to0.&
\end{eqnarray*}
Moreover, we have $Z\simeq\tau X$ (respectively, $X\simeq\tau^-Z$).
\end{itemize}
\end{theorem}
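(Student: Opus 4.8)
The plan is to derive both statements from the Auslander--Bridger transpose duality $\Omega^d\Tr\colon\underline{\CM}(\Lambda)\leftrightarrow\underline{\CM}(\Lambda^{\op})$ of the preceding theorem, together with the Matlis duality $D=\Ext^d_R(-,R)$ on $\fl R$ and the duality $D_d=\Hom_R(-,R)$. First observe that (b) will follow formally from (a), and that within (a) it suffices to produce a single natural isomorphism, say $\overline{\Hom}_\Lambda(Y,\tau X)\simeq D\Ext^1_\Lambda(X,Y)$: since $\tau\colon\underline{\CM}(\Lambda)\to\overline{\CM}(\Lambda)$ is an equivalence with quasi-inverse $\tau^-$, the identification of the two outer terms $\underline{\Hom}_\Lambda(\tau^-Y,X)\simeq\overline{\Hom}_\Lambda(Y,\tau X)$ is automatic. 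Note also that, because $\Lambda$ is an isolated singularity, all the relevant groups --- $\underline{\Hom}$, $\overline{\Hom}$ and $\Ext^{>0}_\Lambda$ between objects of $\CM(\Lambda)$ --- lie in $\fl R$, so that applying $D$ makes sense throughout.

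For (a), I would unravel $\tau X=D_d\Omega^d\Tr X$ and transport the computation to $\Lambda^{\op}$ using the duality $D_d$, which gives $\overline{\Hom}_\Lambda(Y,D_dW)\simeq\underline{\Hom}_{\Lambda^{\op}}(W,D_dY)$ for $W\in\CM(\Lambda^{\op})$. This reduces (a) to the core identity
\[\underline{\Hom}_{\Lambda^{\op}}(\Omega^d\Tr X,\,D_dY)\simeq D\Ext^1_\Lambda(X,Y),\]
natural in $X,Y$. To prove it I would follow the classical transpose argument of the case $d=0$ in Subsection \ref{Almost split sequences}: choose a projective presentation $P_1\to P_0\to X\to0$ in $\CM(\Lambda)$, so that $\Ext^1_\Lambda(X,Y)$ is the cokernel of $\Hom_\Lambda(P_0,Y)\to\Hom_\Lambda(P_1,Y)$, and compare this, via the adjunction $\Hom_\Lambda(P,Y)\simeq P^*\otimes_\Lambda Y$ and the Hom--tensor form of Matlis duality, with the presentation defining $\Tr X$. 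The new feature for $d\ge1$ is the syzygy shift $\Omega^d$ needed to return $\Tr X$ to $\CM(\Lambda^{\op})$; the dimension shift is absorbed because every syzygy and every object of $\CM(\Lambda)$ is maximal Cohen--Macaulay over the Gorenstein ring $R$, so that $\Ext^i_R(-,R)$ vanishes on them for $i>0$ and only the top Matlis term survives. \textbf{This dimension-shift bookkeeping, together with the naturality and nondegeneracy of the resulting pairing, is the step I expect to be the main obstacle}; it is exactly here that the Gorenstein hypothesis on $R$ and the short proof of the transpose duality are used.

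Finally I would deduce (b) from (a) by a socle argument identical to the finite-dimensional case. Taking $Y=\tau X$ in the left-hand isomorphism and using $\tau^-\tau X\simeq X$ gives $\Ext^1_\Lambda(X,\tau X)\simeq D\underline{\End}_\Lambda(X)$. Since $X$ is indecomposable and $\Lambda$ is an isolated singularity, $\underline{\End}_\Lambda(X)$ is a local finite-length $R$-algebra, so its Matlis dual has simple socle; let $\xi\in\Ext^1_\Lambda(X,\tau X)$ correspond to a socle generator and let $0\to\tau X\xrightarrow{g}Y\xrightarrow{f}X\to0$ be the associated extension. To see that this is almost split I would test right almost splitness: for $h\colon W\to X$ with $W\in\CM(\Lambda)$, naturality of (a) in the first variable identifies the obstruction $\Ext^1_\Lambda(h,\tau X)(\xi)$ with the functional $\phi\mapsto\sigma(h\phi)$ on $\underline{\Hom}_\Lambda(X,W)$, where $\sigma$ is the socle generator dual to $\xi$; if $h$ is not a retraction then $h\phi$ lies in the radical of the local ring $\underline{\End}_\Lambda(X)$, on which $\sigma$ vanishes, so $h$ factors through $f$. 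The symmetric statement for $g$, together with Proposition \ref{selfduality of almost split sequences} in its $\CM(\Lambda)$ form, shows that the sequence is almost split with $Z\simeq\tau X$ as claimed, and the non-injective case is dual.
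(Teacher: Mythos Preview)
The paper does not supply its own proof of this theorem; it is stated as a recalled result from Auslander's work, with the references \cite{A-order,A-isolated,Y} given at the start of Section~\ref{Orders and Cohen-Macaulay modules}. There is therefore nothing in the paper to compare your argument against directly.

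That said, your outline is exactly the standard route taken in those references: reduce the Auslander--Reiten duality to the transpose duality $\Omega^d\Tr$ of the preceding theorem combined with the dualities $D_d$ and $D$, and then obtain almost split sequences from the extension corresponding to a socle generator of $D\underline{\End}_\Lambda(X)$. One small slip: $\Ext^1_\Lambda(X,Y)$ is not the \emph{cokernel} of $\Hom_\Lambda(P_0,Y)\to\Hom_\Lambda(P_1,Y)$ but the homology there; equivalently, it is the cokernel of $\Hom_\Lambda(P_0,Y)\to\Hom_\Lambda(\Omega X,Y)$. This does not affect your strategy. For a clean treatment of the dimension-shift bookkeeping you flag as the main obstacle, the argument in \cite[Chapter~3]{Y} or \cite[1.1]{I-aorder} is what you want.
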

The category $\CM(\Lambda)$ is especially nice for the case $d=2$.
\begin{theorem}\label{fundamental}
If $d=2$, then we have the following.
\begin{itemize}
\item[(a)] The Nakayama functor induces an equivalence $\nu:\CM(\Lambda)\to\CM(\Lambda)$ which makes the following diagram commutative
\[\begin{array}{ccc}
\CM(\Lambda)&\xrightarrow{\nu}&\CM(\Lambda)\\
\downarrow&&\downarrow\\
\underline{\CM}(\Lambda)&\xrightarrow{\tau}&\overline{\CM}(\Lambda).
\end{array}\]
\item[(b)] For any projective $P\in\CM(\Lambda)$, there exists an exact sequence (called a \emph{fundamental sequence})
\[0\to \nu P\xrightarrow{g}C\xrightarrow{f}P\to P/PJ_\Lambda\to0\]
which induces the following exact sequences on $\CM(\Lambda)$
\begin{eqnarray*}
&0\to(-,\nu P)\xrightarrow{g}(-,C)\xrightarrow{f}J_{\CM(\Lambda)}(-,P)\to0,&\\
&0\to(P,-)\xrightarrow{f}(C,-)\xrightarrow{g}J_{\CM(\Lambda)}(\nu P,-)\to0.&
\end{eqnarray*}
\end{itemize}
\end{theorem}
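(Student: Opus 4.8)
The plan is to build $\nu$ on $\CM(\Lambda)$ as a composite of two dualities and then to recognize it as $\tau$ on the stable level, where the value $d=2$ enters decisively. First I would record that $(-)^*=\Hom_\Lambda(-,\Lambda)$ restricts to a duality $(-)^*:\CM(\Lambda)\xrightarrow{\sim}\CM(\Lambda^{\rm op})$: for $X\in\CM(\Lambda)$ the module $X^*$ again lies in $\CM(\Lambda^{\rm op})$, and the canonical map $X\to X^{**}$ is an isomorphism (reflexivity of Cohen--Macaulay modules over the Gorenstein order $\Lambda$). This reflexivity is where the hypotheses on $\Lambda$ are used; it fails for $d=0$, where $\nu$ is the ordinary Nakayama functor and is not an equivalence on $\mod\Lambda$. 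Composing with the duality $D_d:\CM(\Lambda^{\rm op})\xrightarrow{\sim}\CM(\Lambda)$ recalled above, the covariant functor $\nu=D_d(-)^*$ is an equivalence $\CM(\Lambda)\to\CM(\Lambda)$ with quasi-inverse $\nu^-=(-)^*D_d$.

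For the commutative square I would compute $\tau=D_d\Omega^d\Tr$ directly. Dualizing a projective presentation $P_1\to P_0\to X\to 0$ of $X\in\CM(\Lambda)$ gives an exact sequence
\[0\to X^*\to P_0^*\to P_1^*\to\Tr X\to 0\]
in $\mod\Lambda^{\rm op}$, which exhibits $P_0^*\to P_1^*\to\Tr X\to 0$ as a projective presentation of $\Tr X$ and hence identifies $\Omega^2\Tr X\cong X^*$ in $\underline{\CM}(\Lambda^{\rm op})$. Since $d=2$, we obtain $\tau X=D_d\Omega^2\Tr X\cong D_dX^*=\nu X$ functorially on the stable category, which is exactly the commutativity of the square relating $\nu$ and $\tau$. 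I would emphasize that this is the \emph{only} point where the exact value $d=2$ is used in (a): for $d\neq 2$ the functor $D_d\Omega^d\Tr$ genuinely differs from $\nu$.

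\textbf{Part (b).} For the fundamental sequence I would splice the canonical projection with a Cohen--Macaulay approximation. Put $S:=P/PJ_\Lambda$ and start from $0\to\rad P\to P\to S\to 0$; since $\depth S=0$ while $\depth P=d=2$, the depth/$\Ext$ formulas recalled above force $\depth(\rad P)=1$, so $\rad P$ is not Cohen--Macaulay. Applying the Auslander--Buchweitz Cohen--Macaulay approximation theory \cite{ABu} to $\rad P$ produces a short exact sequence $0\to Y\to C\xrightarrow{f}\rad P\to 0$ with $C\in\CM(\Lambda)$ and $Y$ of finite injective dimension, that is $Y\in\add D_d\Lambda$. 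Splicing yields a sequence $0\to Y\to C\to P\to S\to 0$ with terms in $\CM(\Lambda)$. The crucial step is the identification $Y\cong\nu P$, which I would obtain from an $\Ext^d$-computation: the injective part of the approximation of the finite-length module $S=\top P$ is governed by $\Ext^d_\Lambda(S,\Lambda)$, and, because $R$ is Gorenstein of dimension $d=2$ and $P$ is the projective cover of $S$, the Nakayama functor converts $P$ into precisely the injective object $\nu P$.

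Finally I would verify the two induced sequences of representable functors, which assert that $f$ is a minimal right almost split morphism for the projective $P$ and that $g:\nu P\to C$ is a minimal left almost split morphism for the injective $\nu P$. Left exactness and exactness in the middle follow from $\nu P\in\add D_d\Lambda$ being injective in $\CM(\Lambda)$ (so $\Ext^1_\Lambda(-,\nu P)$ vanishes on $\CM(\Lambda)$), while surjectivity onto $J_{\CM(\Lambda)}(-,P)$ is the approximation property of $f$; the two sequences are interchanged by the duality $D_d$ together with $(-)^*$, exactly as in Proposition \ref{selfduality of almost split sequences}, so it suffices to treat one. The main obstacle is the identification $Y\cong\nu P$ and the accompanying almost split property: one must show that the finite-length ``defect'' $S=\top P$ forces the co-Cohen--Macaulay part of the approximation to be precisely $\nu P$ and that the resulting maps lie in and generate $J_{\CM(\Lambda)}$. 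This is where the Gorenstein hypothesis on $R$ and the exact value $d=2$ are indispensable, since they make the two-step resolution $0\to\nu P\to C\to P\to S\to 0$ self-dual and of the correct length; I would assemble these facts together with the Auslander--Reiten duality of Theorem \ref{AR theory for orders}(a) to conclude.
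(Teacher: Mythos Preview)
The paper does not give a proof of this theorem; it is recorded in Subsection~3.1 as part of the recollection of known Auslander--Reiten theory for orders, with implicit reference to \cite{A-order,A-isolated,RV1}. So there is no ``paper's own proof'' to compare against, and I assess your proposal on its own.

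Your argument for (a) is correct and is the standard one. The four-term sequence $0\to X^*\to P_0^*\to P_1^*\to\Tr X\to 0$ is exact for any $X$ (no $\Ext$-vanishing is needed: left exactness of $\Hom$ gives the first three terms, and $\Tr X$ is by definition the cokernel of $g^*$), so for $d=2$ it exhibits $X^*$ as $\Omega^2\Tr X$ in $\underline{\CM}(\Lambda^{\rm op})$, whence $\tau\cong\nu$ on the stable level. You should, however, say explicitly why $(-)^*$ lands in $\CM(\Lambda^{\rm op})$: this uses the fact, recorded later in the paper, that $\Hom_\Lambda(X,Y)\in\CM(R)$ for $X,Y\in\CM(\Lambda)$ when $d\le 2$.

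For (b) your strategy is sound, but the identification $Y\cong\nu P$ is left as an acknowledged ``obstacle'' and your reference to $\Ext^d_\Lambda(S,\Lambda)$ does not pin it down. A clean way to close this gap: apply $D_2=\Hom_R(-,R)$ to the two short exact sequences $0\to\rad P\to P\to S\to 0$ and $0\to Y\to C\to\rad P\to 0$. Since $\Ext^i_R(S,R)=0$ for $i\neq 2$ and $\Ext^2_R(S,R)\cong DS$, the first sequence gives $D_2(\rad P)\cong D_2P$ and $\Ext^1_R(\rad P,R)\cong DS$; feeding this into the long exact sequence of the second yields
\[0\to D_2P\to D_2C\to D_2Y\to DS\to 0\]
in $\mod\Lambda^{\rm op}$ with $D_2Y\in\add{}_\Lambda\Lambda$ projective. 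Minimality of the approximation then forces $D_2Y$ to be the projective cover $P^*$ of the simple $\Lambda^{\rm op}$-module $DS$, so $Y\cong D_2(P^*)=\nu P$. With this in hand, your remaining remarks are correct: the approximation property of $C\to\rad P$ gives surjectivity onto $J_{\CM(\Lambda)}(-,P)$ (any radical map into the local-endomorphism object $P$ factors through $\rad P$), injectivity of $\nu P$ in $\CM(\Lambda)$ gives exactness of the first functor sequence, and the autoequivalence $\nu$ together with $D_2$ exchanges the two functor sequences.
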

The existence of fundamental sequences plays a crucial role in the classification theory of representation-finite orders of Krull dimension $d=2$
due to Reiten and Van den Bergh \cite{RV1}.
In this case, the structure theory of $\CM(\Lambda)$ is much nicer than in other dimensions $d\neq2$ (including $d=0,1$)
since fundamental sequences play the role of almost split sequences for projective $\Lambda$-modules (see \cite{I-tau12-I,I-tau12-II,I-tau3}),
and the whole Auslander-Reiten quiver of $\CM(\Lambda)$ always can be regarded as a stable translation quiver.

\subsection{$n$-Auslander-Reiten theory}\label{$n$-almost split sequences and $n$-Auslander algebras}

In this subsection we consider $n$-cluster tilting subcategories of $\CM(\Lambda)$.
As in the case of finite-dimensional algebras, we consider the functors
\begin{eqnarray*}
\tau_n:=\tau\Omega^{n-1}&:&\underline{\CM}(\Lambda)\xrightarrow{\Omega^{n-1}}\underline{\CM}(\Lambda)\xrightarrow{\tau}\overline{\CM}(\Lambda),\\
\tau_n^-:=\tau^-\Omega^{-(n-1)}&:&\overline{\CM}(\Lambda)\xrightarrow{\Omega^{-(n-1)}}\overline{\CM}(\Lambda)\xrightarrow{\tau^-}\underline{\CM}(\Lambda).
\end{eqnarray*}
As in the case of finite-dimensional algebras, we have the following results.

\begin{theorem}
Let $\CC$ be an $n$-cluster tilting subcategory of $\CM(\Lambda)$.
\begin{itemize}
\item[(a)] $\tau_n$ and $\tau_n^-$ induce mutually quasi-inverse equivalences $\tau_n:\underline{\CC}\to\overline{\CC}$ and $\tau_n^-:\overline{\CC}\to\underline{\CC}$
(called the \emph{$n$-Auslander-Reiten translations}).
In particular, $\tau_n$ gives a bijection from isoclasses of indecomposable non-projective objects in $\CC$ to
isoclasses of indecomposable non-injective objects in $\CC$.
\item[(b)] There exist the following functorial isomorphisms for any $X,Y\in\CC$ (called the \emph{$n$-Auslander-Reiten duality})
\[\underline{\Hom}_\Lambda(\tau_n^-Y,X)\simeq D\Ext^n_\Lambda(X,Y)\simeq\overline{\Hom}_\Lambda(Y,\tau_nX).\]
\item[(c)] Any indecomposable non-projective $X\in\CC$ (respectively, non-injective $Y\in\CC$) has an exact sequence (called an \emph{$n$-almost split sequence})
\[0\to Y\xrightarrow{f_{n+1}}C_n\xrightarrow{f_n}\cdots\xrightarrow{f_2}C_1\xrightarrow{f_1}X\to0\]
with terms in $\CC$ such that $f_i\in J_{\CC}$ for any $i$ and the following sequences are exact on $\CC$
\begin{eqnarray*}
&0\to(-,Y)\xrightarrow{f_{n+1}}(-,C_n)\xrightarrow{f_n}\cdots\xrightarrow{f_2}(-,C_1)\xrightarrow{f_1}J_{\CC}(-,X)\to0,&\\
&0\to(X,-)\xrightarrow{f_1}(C_1,-)\xrightarrow{f_2}\cdots\xrightarrow{f_n}(C_n,-)\xrightarrow{f_{n+1}}J_{\CC}(Y,-)\to0.&
\end{eqnarray*}
Moreover, we have $Y\simeq\tau_nX$ (respectively, $X\simeq\tau_n^-Y$).
\end{itemize}
\end{theorem}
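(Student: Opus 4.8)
The plan is to reduce everything to the degree-one theory already recorded in Theorem \ref{AR theory for orders}, exploiting the fact that under our standing hypotheses $\CM(\Lambda)$ is a structural analogue of $\mod\Lambda$ for a finite-dimensional algebra. Concretely, since $R$ is Gorenstein and $\Lambda$ is an $R$-order, $\CM(\Lambda)$ is an extension-closed subcategory of $\mod\Lambda$ with enough projectives $\add\Lambda$ and enough injectives $\add D_d\Lambda$, it is Krull-Schmidt, and it is closed under the syzygy and cosyzygy operations; because $\Lambda$ is an isolated singularity, $\underline{\Hom}_\Lambda(X,Y)$ lies in $\fl R$ for all $X,Y\in\CM(\Lambda)$, so the Matlis duality $D$ may be applied to it. These are precisely the properties used in the finite-dimensional arguments establishing Theorem \ref{n-AR translation} and the duality and existence theorems accompanying it, so those proofs transport to the present setting once the degree-one input is supplied by Theorem \ref{AR theory for orders} in place of the finite-dimensional Auslander-Reiten duality.

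For part (b), I would first establish the dimension-shifting isomorphisms
\[\Ext^n_\Lambda(X,Y)\simeq\Ext^1_\Lambda(\Omega^{n-1}X,Y)\simeq\Ext^1_\Lambda(X,\Omega^{-(n-1)}Y),\]
valid for $X,Y\in\CM(\Lambda)$ because $\CM(\Lambda)$ is closed under (co)syzygies over the Gorenstein order $\Lambda$. Feeding $\Omega^{n-1}X$ (respectively $\Omega^{-(n-1)}Y$) into the degree-one duality of Theorem \ref{AR theory for orders}(a) and using $\tau_n=\tau\Omega^{n-1}$ and $\tau_n^-=\tau^-\Omega^{-(n-1)}$ yields
\[\underline{\Hom}_\Lambda(\tau_n^-Y,X)\simeq D\Ext^n_\Lambda(X,Y)\simeq\overline{\Hom}_\Lambda(Y,\tau_nX).\]
Functoriality in $X$ and $Y$ is inherited from that of the degree-one duality and of the (co)syzygy functors on the stable and costable categories.

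Part (a) then follows by the same bookkeeping as in the proof of Theorem \ref{n-AR translation}: using the duality of (b) together with the cluster-tilting vanishing $\Ext^i_\Lambda(\CC,\CC)=0$ for $0<i<n$, one checks that $\tau_n$ sends $\underline{\CC}$ into $\overline{\CC}$ and $\tau_n^-$ sends $\overline{\CC}$ into $\underline{\CC}$, and that the adjoint pair $(\tau_n^-,\tau_n)$ has unit and counit which become isomorphisms after restriction to the cluster-tilting subcategory, giving the mutually quasi-inverse equivalences and hence the bijection on indecomposables. For part (c), the isomorphism $\Ext^n_\Lambda(X,\tau_nX)\simeq D\underline{\End}_\Lambda(X)$ from (b) supplies a canonical class; realising it as an $n$-fold extension and applying the approximation sequences of Proposition \ref{M-approximation} to place all terms in $\CC$ and all maps in $J_{\CC}$ produces the desired sequence, whose exactness on $\CC$ is read off from the duality as in the finite-dimensional case.

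The main obstacle is the compatibility underlying this entire reduction: one must verify that the syzygy and cosyzygy functors on $\underline{\CM}(\Lambda)$ and $\overline{\CM}(\Lambda)$ commute with $\tau$ and shift $\Ext_\Lambda$ exactly as in the finite-dimensional case. This is where the Gorenstein hypothesis on $R$ and the isolated-singularity hypothesis on $\Lambda$ do the work: they guarantee that $\CM(\Lambda)$ is a Frobenius category, that taking (co)syzygies is an equivalence on the stable (respectively costable) category, and that the degree-one Auslander-Reiten duality and almost split sequences of Theorem \ref{AR theory for orders} are available. Once these structural facts are in hand, no genuinely new argument beyond the finite-dimensional case is required.
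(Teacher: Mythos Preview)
Your overall strategy---transporting the finite-dimensional arguments for Theorem \ref{n-AR translation} and its companions to $\CM(\Lambda)$ by replacing the degree-one input with Theorem \ref{AR theory for orders}---is exactly what the paper does: it simply says ``As in the case of finite-dimensional algebras, we have the following results'' and refers to \cite{I-ar,I-aorder}. Your dimension-shifting derivation of (b) and the deduction of (a) and (c) from it are the right outline.

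However, your final paragraph contains a genuine error that you should correct. You assert that the Gorenstein hypothesis on $R$ and the isolated-singularity hypothesis on $\Lambda$ ``guarantee that $\CM(\Lambda)$ is a Frobenius category, that taking (co)syzygies is an equivalence on the stable (respectively costable) category''. This is false: $\CM(\Lambda)$ is Frobenius only when $\Lambda$ is a \emph{Gorenstein order}, i.e.\ when $D_d\Lambda$ is projective (see Subsection \ref{Cohen-Macaulay modules and triangulated categories}), and no such hypothesis is in force here. In general $\add\Lambda$ and $\add D_d\Lambda$ are distinct, and $\Omega$ is not an autoequivalence of $\underline{\CM}(\Lambda)$. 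Fortunately your actual argument never uses the Frobenius property: the dimension shift $\Ext^n_\Lambda(X,Y)\simeq\Ext^1_\Lambda(\Omega^{n-1}X,Y)$ only needs that $\CM(\Lambda)$ has enough projectives and is closed under syzygies (both true for any $R$-order over Gorenstein $R$), and dually for cosyzygies. So replace the Frobenius claim with precisely those closure properties, and your reduction goes through.
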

The case $n=d-1$ is especially nice and we have the following analogous result to Theorem \ref{fundamental}.
\begin{theorem}\label{(d-1)-fundamental sequence}
Let $\CC$ be a $(d-1)$-cluster tilting subcategory in $\CM(\Lambda)$.
\begin{itemize}
\item[(a)] The Nakayama functor induces an equivalence $\nu:\CC\to\CC$ which makes the following diagram commutative
\[\begin{array}{ccc}
\CC&\xrightarrow{\nu}&\CC\\
\downarrow&&\downarrow\\
\underline{\CC}&\xrightarrow{\tau_{d-1}}&\overline{\CC}
\end{array}.\]
\item[(b)] For any indecomposable projective $P\in\CC$, there exists an exact sequence (called a \emph{$(d-1)$-fundamental sequence})
\[0\to\nu P\xrightarrow{f_{n+1}}C_{n}\xrightarrow{f_n}\cdots\xrightarrow{f_2}C_1\stackrel{f_1}{\to}P\to P/PJ_\Lambda\to 0\]
with terms in $\CC$ such that $f_i\in J_{\CC}$ for any $i$ and the following sequences are exact on $\CC$
\begin{eqnarray*}
&0\to(-,\nu P)\xrightarrow{f_{n+1}}(-,C_n)\xrightarrow{f_n}\cdots\xrightarrow{f_2}(-,C_1)\xrightarrow{f_1}J_{\CC}(-,P)\to0&\\
&0\to(P,-)\xrightarrow{f_1}(C_1,-)\xrightarrow{f_2}\cdots\xrightarrow{f_n}(C_n,-)\xrightarrow{f_{n+1}}J_{\CC}(\nu P,-)\to0.&
\end{eqnarray*}
\end{itemize}
\end{theorem}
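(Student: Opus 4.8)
The plan is to deduce (b) from the Nakayama functor produced in (a), so I would prove (a) first by identifying $\nu$ with $\tau_{d-1}$ on the costable category through a syzygy computation. Write $n=d-1$ and recall $\tau=D_d\Omega^d\Tr$, so that $\tau_{d-1}X=\tau\Omega^{d-2}X=D_d\,\Omega^d\Tr\,\Omega^{d-2}X$. For $X\in\CM(\Lambda)$ a projective presentation $P_1\to P_0\to X\to0$ gives, by the definition of $\Tr$ and left exactness of $(-)^\ast$, the four term exact sequence $0\to X^\ast\to P_0^\ast\to P_1^\ast\to\Tr X\to0$, so $\Omega^2\Tr X\simeq X^\ast$ in $\underline{\CM}(\Lambda^{\rm op})$. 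Since $X$ is totally reflexive, the $\Lambda$-dual of a projective resolution of $X$ is again exact; this yields the relation $\Omega\Tr\Omega\simeq\Tr$ on $\underline{\CM}(\Lambda)$, and iterating gives $\Omega^{d-2}\Tr\,\Omega^{d-2}\simeq\Tr$. Consequently $\Omega^d\Tr\,\Omega^{d-2}X\simeq\Omega^2\Tr X\simeq X^\ast$, and therefore $\tau_{d-1}X\simeq D_dX^\ast=\nu X$ functorially in $\overline{\CM}(\Lambda)$, which is exactly the commutativity of the diagram (generalizing Theorem \ref{fundamental}(a), the case $d=2$).

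To finish (a) I would observe that $\nu=D_d(-)^\ast$ is the composite of the two dualities $(-)^\ast\colon\CM(\Lambda)\leftrightarrow\CM(\Lambda^{\rm op})$ and $D_d\colon\CM(\Lambda^{\rm op})\leftrightarrow\CM(\Lambda)$, hence an auto-equivalence of $\CM(\Lambda)$ carrying $\add\Lambda$ onto $\add D_d\Lambda$. For $X\in\CC$ the isomorphism above gives $\nu X\simeq\tau_{d-1}X$ in $\overline{\CM}(\Lambda)$; by the preceding theorem $\tau_{d-1}\colon\underline{\CC}\to\overline{\CC}$ is an equivalence, so $\tau_{d-1}X$ is represented by an object of $\CC$, and as the injectives $\add D_d\Lambda$ lie in $\CC$ we get $\nu X\in\CC$. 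The symmetric argument for $\nu^-$ via $\tau_{d-1}^-$ shows $\nu$ restricts to an auto-equivalence of $\CC$, proving (a).

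For (b), fix an indecomposable projective $P\in\CC$ (so $P\in\add\Lambda$) and start from $0\to PJ_\Lambda\to P\to P/PJ_\Lambda\to0$. I would build the sequence from the right by successive minimal right $\CC$-approximations: put $K_1:=PJ_\Lambda$ and inductively take a right $\CC$-approximation $C_i\to K_i$, necessarily surjective since $\add\Lambda\subseteq\CC$, with kernel $K_{i+1}$. Splicing these short exact sequences produces a complex $0\to K_{n+1}\xrightarrow{f_{n+1}}C_n\xrightarrow{f_n}\cdots\xrightarrow{f_2}C_1\xrightarrow{f_1}P\to P/PJ_\Lambda\to0$ in $\mod\Lambda$ with every $f_i\in J_{\CC}$ by minimality, and the sequence $0\to(-,K_{n+1})\to(-,C_n)\to\cdots\to(-,C_1)\to J_{\CC}(-,P)\to0$ is exact on $\CC$ because each $C_i\to K_i$ is a right $\CC$-approximation. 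Since $\depth(P/PJ_\Lambda)=0$ and each $C_i\in\CM(\Lambda)$ has depth $d$, the depth lemma gives $\depth K_{i+1}\ge\min(d,\depth K_i+1)$, so that $\depth K_i\ge i$ for all $i$ and $\depth K_{n+1}\ge\min(d,n+1)=d$; thus $K_{n+1}\in\CM(\Lambda)$. A dimension shift along the complex, in which the right $\CC$-approximation property kills the connecting maps and the $n$-rigidity $\Ext^i_\Lambda(\CC,\CC)=0$ ($0<i<n$) kills the remaining terms, gives $\Ext^i_\Lambda(\CC,K_{n+1})=0$ for $0<i<n$, so $K_{n+1}\in\CC$.

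It remains to identify $K_{n+1}$ with $\nu P$ and to produce the second (dual) exact sequence. Applying the duality $D_d$ to the Cohen--Macaulay part and invoking (a) for $\Lambda^{\rm op}$ converts the construction into the left $\CC$-approximation resolution dual to the above, yielding exactness of $0\to(P,-)\to(C_1,-)\to\cdots\to(C_n,-)\to J_{\CC}(K_{n+1},-)\to0$ on $\CC$; comparing the two sequences as in the self-duality of $n$-almost split sequences shows $K_{n+1}$ is the indecomposable injective object of $\CC$ attached to $P$, which by the equivalence $\nu\colon\add\Lambda\to\add D_d\Lambda$ of (a) is $\nu P$. I expect the genuine obstacle to lie in this part (b): simultaneously controlling that the approximation complex stays exact on $\CC$, that the depth lemma forces termination in a Cohen--Macaulay object after exactly $n=d-1$ steps, and that this object is the injective $\nu P$. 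The hypothesis that the Krull dimension equals $n+1$ is indispensable here, since it is precisely what makes the depths rise from $1$ to $d$ in $n$ steps and what makes the projective--injective term $\nu P$ close the sequence.
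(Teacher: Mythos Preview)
The paper states this result without proof, deferring to \cite{I-aorder}, so there is no in-paper argument to compare against; I will comment on the proposal itself.

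Your syzygy strategy for (a) is the right idea, but it rests on two claims that fail for non-Gorenstein orders in dimension $d\ge 3$. First, you assert that $(-)^{\ast}=\Hom_\Lambda(-,\Lambda)$ is a duality $\CM(\Lambda)\leftrightarrow\CM(\Lambda^{\rm op})$, hence that $\nu$ is an auto-equivalence of all of $\CM(\Lambda)$. The paper explicitly warns against this just before Theorem~\ref{higher auslander correspondence 2}: for $d\ge 3$, $\Hom_\Lambda(X,Y)$ need not lie in $\CM(R)$. Second, the relation $\Omega\Tr\Omega\simeq\Tr$ that you iterate requires $\Ext^{>0}_\Lambda(X,\Lambda)=0$, i.e.\ total reflexivity, which is not available unless $\Lambda$ is Gorenstein. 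What actually makes the theorem work is the $(d-1)$-cluster tilting hypothesis itself: for $X\in\CC$ one has $\Ext^i_\Lambda(X,\Lambda)=0$ for $0<i<d-1$ because $\Lambda\in\CC$, and this partial vanishing is exactly enough to make
\[
0\to X^{\ast}\to P_0^{\ast}\to\cdots\to P_{d-1}^{\ast}\to\Tr\Omega^{d-2}X\to 0
\]
exact for a projective resolution $P_\bullet\to X$. This single sequence gives both $\Omega^d\Tr\Omega^{d-2}X\simeq X^{\ast}$ in $\underline{\CM}(\Lambda^{\rm op})$ and, by the depth lemma applied $d$ times, $X^{\ast}\in\CM(\Lambda^{\rm op})$. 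Only then does $\nu X=D_dX^{\ast}$ land in $\CM(\Lambda)$, after which your stable identification with $\tau_{d-1}X$ and the inclusion $\add D_d\Lambda\subseteq\CC$ finish (a) as you wrote.

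Your construction in (b) via iterated minimal right $\CC$-approximations and the depth count showing $K_{n+1}\in\CM(\Lambda)$ is correct and is the standard route. The weakest step is the last paragraph, where the identification $K_{n+1}\simeq\nu P$ and the dual exact sequence are obtained by ``applying $D_d$ and invoking (a) for $\Lambda^{\rm op}$''. To make this precise one should argue on the Auslander side: the first sequence exhibits a minimal projective resolution of the simple $\Gamma$-module $S_P$ (with $\Gamma=\End_\Lambda(M)$ for an additive generator $M$ of $\CC$) of length $d$, and since $\Gamma$ is a non-singular $R$-order (Theorem~\ref{Auslander algebra for n=d-1}) satisfying the Gorenstein condition, $\Ext^d_\Gamma(S_P,\Gamma)$ is the simple $\Gamma^{\rm op}$-module $S^{\nu P}$; dualizing the resolution then forces $K_{n+1}\simeq\nu P$ and yields the second exact sequence at once.
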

Let us give an example of $(d-1)$-cluster tilting object.

\begin{example}
Let $k$ be a field of characteristic zero, and $S=k[[x_1,\cdots,x_d]]$ the formal power series ring.
Let $G$ be a finite subgroup of $\GL_d(k)$.
Then $G$ naturally acts on the $k$-vector space $V:=k^d$, and the action extends to the completion $S$ of the symmetric algebra of $V$. Let
\[\Lambda:=S^G=\{f\in S\ |\ f^\sigma=f\ \mbox{ for any }\ \sigma\in G\}\]
be the invariant subring of $S$.
Assume that $\Lambda$ satisfies the following conditions:
\begin{itemize}
\item $\Lambda$ is an isolated singularity;
\item if $\sigma\in G$ is a pseudo-reflection (namely $\rank(\sigma-1)\le 1$), then $\sigma=1$.
\end{itemize}
\end{example}
The second condition implies that  $\End_{\Lambda}(S)$ is naturally
isomorphic to the skew group ring $S * G$ \cite{A-rational,Y}
(the assumption $d = 2$ there was not used).
Then we have the following result \cite{I-ar,I-aorder}.
\begin{theorem}\label{invariant subring}
\begin{itemize}
\item[(a)] $S$ is a $(d-1)$-cluster tilting object in $\CM(\Lambda)$.
\item[(b)] The Koszul complex
\[0\to S\otimes_k(\wedge^dV)\to S\otimes_k(\wedge^{d-1}V)\to\cdots\to S\otimes_kV\to S\to k\to0\]
is a direct sum of $(d-1)$-almost split sequences and $(d-1)$-fundamental sequences in the $(d-1)$-cluster tilting subcategory $\add S_\Lambda$ in $\CM(\Lambda)$.
\item[(c)] The quiver of the category $\add S_\Lambda$ coincides with the McKay quiver of $G$.
\end{itemize}
\end{theorem}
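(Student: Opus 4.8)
The plan is to transfer the whole problem to the skew group order $\Gamma:=S*G$, exploiting the isomorphism $\End_\Lambda(S)\cong S*G$ recalled just above. First I would record the elementary structural facts. Since $S$ is free of rank one over itself it lies in $\CM(R)$, so $\Gamma=S*G$ is free of rank $|G|$ over $S$ and is again an $R$-order. In characteristic zero $|G|$ is invertible, so $e:=\frac1{|G|}\sum_{g\in G}g$ is an idempotent of $\Gamma$ with $e\Gamma e\cong S^G=\Lambda$ and $\Gamma e\cong S$ as $\Lambda$-$\Gamma$-bimodules; in particular $S=\Gamma e$ is a projective $\Gamma$-module and $\Lambda=\End_\Gamma(S)$. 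The crucial classical input, which I would cite rather than reprove, is that $\gl\Gamma=\gl S=d$ (see \cite{A-rational,Y}); together with $\Gamma\in\CM(R)$ this says that $\Gamma$ is a \emph{non-singular} $R$-order in the sense of Definition \ref{definition isolated singularity}.

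For (a) I would not check the defining conditions of Definition \ref{definition of n-cluster tilting} by hand, but invoke the Cohen--Macaulay analogue of the higher Auslander correspondence from \cite{I-aorder} (the order versions of Theorem \ref{higher auslander correspondence} and Lemma \ref{criterion for n-cluster tilting}). Here $S$ is a generator-cogenerator of $\CM(\Lambda)$: it is a generator because the Reynolds operator splits $\Lambda\hookrightarrow S$, so $\Lambda\in\add S$, and it is a cogenerator because $S$ is Gorenstein, whence $D_dS=\Hom_R(S,R)\cong S$ and therefore $D_d\Lambda\in\add D_dS=\add S$. Granting that $S$ is moreover $(d-1)$-rigid, the criterion reads: $S$ is $(d-1)$-cluster tilting in $\CM(\Lambda)$ if and only if $\gl\End_\Lambda(S)=\gl\Gamma\le d$, which holds. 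I expect the real work to be exactly this $(d-1)$-rigidity $\Ext^i_\Lambda(S,S)=0$ for $0<i<d-1$, and this is the main obstacle. The clean route is to feed the non-singular order $\Gamma$ and the idempotent $e$ into part (b) of the order correspondence, which manufactures a $(d-1)$-cluster tilting object with endomorphism order $\Gamma$ directly from the homological data of $\Gamma$, so that rigidity is an output rather than a hypothesis; alternatively one argues that, $\Lambda$ being an isolated singularity, each $\Ext^i_\Lambda(S,S)$ with $i>0$ has finite length, and then kills it below degree $d$ by a local-duality computation using that $\Spec S\to\Spec\Lambda$ is \'etale off the closed point.

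For (b) the key observation is that the Koszul complex is $G$-equivariant, hence a complex of $\Gamma$-modules, and each term $S\otimes_k\wedge^jV$ is a projective $\Gamma$-module (the projective $\Gamma$-modules are exactly the $S\otimes_k\rho$ for representations $\rho$ of $G$). Thus the Koszul complex is precisely the minimal projective $\Gamma$-resolution of the simple $\Gamma$-module $k$, its length $d$ matching $\gl\Gamma=d$; the differentials are multiplication by the $x_i$, hence lie in the radical, which is what forces minimality. Twisting the whole complex by an irreducible $\rho$ yields the minimal projective resolution of the simple $\Gamma$-module $S_\rho$. Under the equivalence $\Hom_\Lambda(S,-)\colon\add S\xrightarrow{\sim}\add\Gamma$ these resolutions translate into exact sequences with terms in $\CC=\add S$, and by the order versions of the $n$-almost split sequence theorem and of Theorem \ref{(d-1)-fundamental sequence} --- which identify $(d-1)$-almost split and $(d-1)$-fundamental sequences with projective resolutions of simple $\Gamma$-modules exactly as in the prototype of Subsection \ref{Almost split sequences} --- the twist by the trivial $\rho$ gives the $(d-1)$-fundamental sequence of the projective object $\Lambda$, while the twists by the non-trivial $\rho$ give the $(d-1)$-almost split sequences ending at the corresponding non-projective indecomposables. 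Summing over the irreducibles (each with multiplicity $\dim\rho$) reassembles the Koszul complex, which is the asserted direct-sum decomposition.

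Finally (c) falls out of (b) by reading off the first differential. The quiver of $\CC=\add S$ is by definition that of $\Gamma=\End_\Lambda(S)=S*G$, whose simple modules are the irreducibles $\rho$ of $G$ and whose arrows $\rho\to\rho'$ are counted by the multiplicity of $\rho'$ in the first radical layer of the projective cover of $S_\rho$. The first map $S\otimes_kV\otimes_k\rho\to S\otimes_k\rho$ of the twisted Koszul resolution computes this layer, giving the number of arrows $\rho\to\rho'$ as $[\,V\otimes_k\rho:\rho'\,]$, which is exactly the definition of the McKay quiver of $G$. The only delicate point here is bookkeeping: matching $V$ against its dual and fixing orientation conventions so that the extracted quiver agrees with the McKay quiver on the nose.
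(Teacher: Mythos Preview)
The paper does not prove this theorem; it is stated with a citation to \cite{I-ar,I-aorder}, so there is no in-paper proof to compare against. Your overall architecture---reduce everything to the non-singular order $\Gamma=S*G$ via $\End_\Lambda(S)\cong\Gamma$, then read off (b) and (c) from the $G$-equivariant Koszul complex as the minimal $\Gamma$-projective resolution of the simples---is exactly the approach taken in those references, and your treatment of (b) and (c) is correct.

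For (a), however, your handling of the rigidity step has a genuine gap. You propose to ``feed the non-singular order $\Gamma$ and the idempotent $e$ into part (b) of the order correspondence'' so that rigidity becomes an output. But look at how the construction in the proof of Theorem~\ref{higher auslander correspondence}(b) actually runs: it starts from the projective-injective $\Gamma$-modules. When $\Gamma$ is a non-singular $R$-order, $\CM(\Gamma)=\add\Gamma$, so \emph{every} projective is injective in $\CM(\Gamma)$; the construction then returns the trivial pair $(\Gamma,\Gamma)$, not $(\Lambda,S)$. The correspondence is a bijection on Morita-equivalence classes, and there is nothing in its statement that lets you choose the idempotent $e$ and land on a prescribed $\Lambda$. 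So this route does not bypass rigidity.

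Your alternative---$\Ext^i_\Lambda(S,S)$ has finite length for $i>0$ because $\Lambda$ is an isolated singularity, then ``kill it below degree $d$ by a local-duality computation''---is closer to what is actually done in \cite{I-ar}, but as written it is a slogan rather than an argument. The point that makes it work is that $\Ext^i_\Lambda(S,S)$ is naturally an $S$-module (via the $S$-action on the second variable), and a finite-length $S$-module sits in degree $\ge d$ of the local cohomology spectral sequence; combined with the depth of $\Hom_\Lambda(S,S)=\Gamma$ (which is $d$, since $\Gamma\in\CM(R)$), one obtains the vanishing for $0<i<d-1$. You should spell this out, or alternatively verify condition (c) of Lemma~\ref{criterion for n-cluster tilting 2} directly from the Koszul complex---which you already have in hand for part (b)---since that criterion does not presuppose rigidity.
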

This generalizes famous results due to Herzog \cite{He} and Auslander \cite{A-rational} for Krull dimension $d=2$.
In this case, $S$ is a $1$-cluster tilting object in $\CM(\Lambda)$, so we have $\add S=\CM(\Lambda)$.
In particular, $\Lambda$ is representation-finite.

It is interesting to classify all $(d-1)$-cluster tilting objects in $\CM(\Lambda)$.
We have the following result \cite{IY,KR2,KV}.
\begin{theorem}
\begin{itemize}
\item[(a)] Let $G$ be a cyclic subgroup of $\GL_3(k)$ generated by $\sigma={\rm diag}(\omega,\omega,\omega)$ with the primitive third root of unity $\omega$.
Put
\[S_i:=\{f\in S\ |\ f^\sigma=\omega^if\}\]
for $i\in\Z/3\Z$.
Then all basic 2-cluster tilting objects in $\CM(\Lambda)$ are given by
$S_0\oplus\Omega^iS_1\oplus\Omega^iS_2$ and $S_0\oplus\Omega^{i+1}S_1\oplus\Omega^iS_2$, for $i\in\Z$.
\item[(b)] Let $G$ be a cyclic subgroup of $\GL_4(k)$ generated by $\sigma={\rm diag}(-1,-1,-1,-1)$.
Put
\[S_i:=\{f\in S\ |\ f^\sigma=(-1)^if\}\]
for $i\in\Z/2\Z$.
Then all basic 3-cluster tilting objects in $\CM(\Lambda)$ are given by $S_0\oplus\Omega^iS_1$, for $i\in\Z$.
\end{itemize}
\end{theorem}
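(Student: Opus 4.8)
The plan is to pass to the stable category and read off the classification from its Calabi-Yau structure together with mutation. Since $\det\sigma=1$ in both cases ($\omega^{3}=1$ and $(-1)^{4}=1$), we have $G\subset\SL_d(k)$, so $\Lambda=S^G$ is a Gorenstein isolated singularity and $\underline{\CM}(\Lambda)$ is a $\Hom$-finite $(d-1)$-Calabi-Yau triangulated category, namely $2$-Calabi-Yau for (a) with $d=3$ and $3$-Calabi-Yau for (b) with $d=4$. The invariant ring $S_0=\Lambda$ is a commutative Gorenstein local ring, hence the unique indecomposable projective object of $\CM(\Lambda)$ and, since its canonical module is free, also the unique indecomposable injective; thus by Proposition \ref{one-side condition} the object $S_0$ is a direct summand of every basic $(d-1)$-cluster tilting object, and it suffices to classify the corresponding cluster tilting objects of $\underline{\CM}(\Lambda)$, which have $|G|-1$ indecomposable summands (two in (a), one in (b)). By Theorem \ref{invariant subring} the image $\overline S$ of $S$ is one such object, serving as a base point.

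For the easy direction I would exhibit the listed objects as cluster tilting. The syzygy $\Omega$ is a triangle autoequivalence of $\underline{\CM}(\Lambda)$ and therefore carries cluster tilting objects to cluster tilting objects; applying $\Omega^{i}$ to $\overline S$ and lifting back to $\CM(\Lambda)$ produces the family $S_0\oplus\Omega^iS_1\oplus\Omega^iS_2$ in (a) and the family $S_0\oplus\Omega^iS_1$ in (b). In case (a) the second family is the $\Omega$-orbit of a single Iyama-Yoshino mutation of $S$: replacing the summand $S_1$ by the unique indecomposable $X^{*}\neq S_1$ with $S_0\oplus X^{*}\oplus S_2$ cluster tilting, and identifying $X^{*}\simeq\Omega S_1$ from the exchange sequence obtained by decomposing the Koszul complex of Theorem \ref{invariant subring}(b) into its $G$-isotypic components. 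This also shows that the exchange graph of the listed objects is the line on which $\Omega$ acts by translation, with the two $\Omega$-orbits (the two families) interchanged by mutation.

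The hard part, and the main obstacle, is completeness: showing that no further cluster tilting objects exist. I would reduce this to a classification of the indecomposable rigid objects together with their compatibility, the claim being that every indecomposable summand of a cluster tilting object is a syzygy $\Omega^iS_j$ and that the admissible collections are exactly those in the statement. The compatibility conditions $\Ext^p_\Lambda(\,\cdot\,,\,\cdot\,)=0$ for $0<p<d$ would be computed through the $(d-1)$-Calabi-Yau Serre duality and the $(d-1)$-Auslander-Reiten duality of Theorem \ref{(d-1)-fundamental sequence}, turning each condition into an explicit numerical constraint on the exponents $i$; in case (a) this forces the two summands to be an adjacent pair on the line, and in case (b) it pins down a single $\Omega$-orbit. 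The genuinely delicate step is excluding exotic indecomposable rigid modules of higher rank, equivalently proving that the exchange graph is connected so that iterated mutation from $\overline S$ reaches everything; this is where one must exploit both the explicit syzygy calculus coming from the Koszul complex and the $\Hom$-finiteness guaranteed by the isolated-singularity hypothesis, and it is here that the arguments of \cite{IY,KR2,KV} do the real work.
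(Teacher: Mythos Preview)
The paper does not give its own proof of this theorem; it merely states the result and cites \cite{IY,KR2,KV}. So there is no in-paper argument to compare against, and your final sentence already acknowledges that the substantive work lives in those references.

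Your outline for the ``easy'' direction is correct and matches the approach in \cite{IY}: $G\subset\SL_d(k)$ makes $\Lambda$ Gorenstein, $\underline{\CM}(\Lambda)$ is $(d-1)$-CY, $S_0=\Lambda$ is the unique indecomposable projective-injective, and the Koszul complex supplies the exchange triangles that identify the mutation of $S_1$ in case~(a). The resulting exchange graph is indeed a line with $\Omega$ acting by translation by two.

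For the ``hard'' direction, your sketch is closer in spirit to \cite{IY}, which classifies rigid objects directly via a covering argument and explicit syzygy computation. The papers \cite{KR2,KV} take a rather different route you do not mention: they prove that $\underline{\CM}(\Lambda)$ is triangle equivalent to an explicit orbit category of the derived category of a finite-dimensional algebra (a higher cluster category of type $A$), and then read off the classification of cluster tilting objects from the known combinatorics there. This bypasses any direct analysis of rigid Cohen-Macaulay modules.

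One genuine gap in your sketch: your mutation-and-connectedness heuristic is tailored to the $2$-CY case~(a), where Iyama--Yoshino mutation at a single summand is always defined and unique. In case~(b) the category is $3$-CY and the objects are $3$-cluster tilting; mutation of $n$-cluster tilting objects for $n\ge3$ does not enjoy the same existence and uniqueness properties, so ``connectedness of the exchange graph'' is not even well-posed in the same way. The arguments in \cite{IY,KV} for case~(b) do not proceed via mutation but via the orbit-category description or direct computation of all rigid objects; you should say explicitly how you intend to handle~(b).
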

In these cases, any $2$-rigid object in $\CM(\Lambda)$ is a direct summand of a direct sum of copies of some $(d-1)$-cluster tilting object.
This is not the case in general.

\medskip
In the rest of this subsection we study endomorphism algebras of $n$-cluster tilting objects in $\CM(\Lambda)$ \cite{ARo,I-aorder}.
We have $\Hom_\Lambda(X,Y)\in\CM(R)$ for any $X,Y\in\CM(\Lambda)$ for $d\le 2$, but this is not the case for $d\ge3$ (see \cite[2.5.1]{I-ar}).
Thus the general characterization of endomorphism algebras of $n$-cluster tilting objects in $\CM(\Lambda)$ becomes rather complicated.
Nevertheless the case $d\le n$ is quite nice.

We call an $R$-algebra $\Gamma$ an \emph{$n$-Auslander algebra} if
\begin{itemize}
\item $\Gamma$ is an $R$-order which is an isolated singularity;
\item $\Gamma$ satisfies $\gl\Gamma\le n+1$ and the two-sided $(d+1,n+1)$-condition in Definition \ref{Auslander-type condition}.
\end{itemize}
This is consistent with the definition of $n$-Auslander algebras in Subsection \ref{Auslander algebras}.
We have the following characterization of $n$-Auslander algebras, where we call an additive category \emph{$n$-cluster tilting} if it is equivalent to an $n$-cluster tilting subcategory of $\CM(\Lambda)$ for an $R$-order $\Lambda$ which is an isolated singularity.
\begin{theorem}\label{higher auslander correspondence 2}
Assume $d\le n$.
\begin{itemize}
\item[(a)] Let $M\in\CM(\Lambda)$ be an $n$-cluster tilting object. Then $\End_\Lambda(M)$ is an $n$-Auslander algebra.
\item[(b)] Any $n$-Auslander algebra is obtained in this way.
This gives a bijection between the sets of equivalence classes of $n$-cluster tilting categories with additive generators and Morita-equivalence classes of $n$-Auslander algebras.
\end{itemize}
\end{theorem}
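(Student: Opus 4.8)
The plan is to transplant the proof of Theorem~\ref{higher auslander correspondence} into the exact category $\CM(\Lambda)$, whose projective objects are $\add\Lambda$ and whose injective objects are $\add D_d\Lambda$, and to add the order-theoretic input coming from the hypothesis $d\le n$ and the duality $D_d=\Hom_R(-,R)$. For part~(a), writing $\Gamma=\End_\Lambda(M)$ and $\CC=\add M$, I must check four things: that $\Gamma$ is an $R$-order, that it is an isolated singularity, that $\gl\Gamma\le n+1$, and that $\Gamma$ satisfies the two-sided $(d+1,n+1)$-condition of Definition~\ref{Auslander-type condition}.

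The genuinely new step, which I expect to be the crux, is to show $\Gamma\in\CM(R)$, that is, that $\Gamma$ is an $R$-order; I would prove the stronger statement that $\Hom_\Lambda(X,Y)\in\CM(R)$ for all $X,Y\in\CC$. Take a coresolution $0\to Y\to J^0\to J^1\to\cdots$ with each $J^i\in\add D_d\Lambda$, the injective objects of $\CM(\Lambda)$. Applying $\Hom_\Lambda(X,-)$ and using both $\Ext^{>0}_\Lambda(X,\add D_d\Lambda)=0$ (since $X\in\CM(\Lambda)$) and the $n$-rigidity $\Ext^i_\Lambda(X,Y)=0$ for $0<i<n$, the truncation $0\to\Hom_\Lambda(X,Y)\to\Hom_\Lambda(X,J^0)\to\cdots\to\Hom_\Lambda(X,J^{n-1})$ is exact. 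Each term satisfies $\Hom_\Lambda(X,D_d\Lambda)\cong\Hom_R(X,R)\in\CM(R)$, so this is an exact sequence beginning with $\Hom_\Lambda(X,Y)$ followed by $n$ Cohen--Macaulay modules of dimension $d$; the iterated depth lemma then forces $\depth_R\Hom_\Lambda(X,Y)\ge\min\{d,n\}=d$, so $\Hom_\Lambda(X,Y)\in\CM(R)$. This is precisely where $d\le n$ enters. For the isolated-singularity condition I would localise: for a non-maximal prime $\mathfrak p$ the order $\Lambda_{\mathfrak p}$ is non-singular, so $\CM(\Lambda_{\mathfrak p})=\add\Lambda_{\mathfrak p}$, $M_{\mathfrak p}$ becomes a progenerator, and $\Gamma_{\mathfrak p}=\End_{\Lambda_{\mathfrak p}}(M_{\mathfrak p})$ is Morita equivalent to $\Lambda_{\mathfrak p}$, whence $\gl\Gamma_{\mathfrak p}=\gl\Lambda_{\mathfrak p}=\dim R_{\mathfrak p}$.

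The estimate $\gl\Gamma\le n+1$ I would obtain as in Theorem~\ref{higher auslander correspondence}: via the equivalence $P_-=\Hom_\Lambda(M,-)\colon\add M\xrightarrow{\sim}\add\Gamma$, lift a projective presentation of a $\Gamma$-module to a morphism in $\add M$, and resolve the resulting syzygy by the sequences of Proposition~\ref{M-approximation}, which $P_-$ turns into a projective $\Gamma$-resolution of length at most $n+1$; equivalently one may apply $P_-$ to the $n$-almost split sequences of Subsection~\ref{$n$-almost split sequences and $n$-Auslander algebras} to resolve the simple $\Gamma$-modules. The two-sided $(d+1,n+1)$-condition is the order-theoretic substitute for $\dom\Gamma\ge n+1$: transporting an injective coresolution of $M$ in $\CM(\Lambda)$ through $P_-$ produces the beginning of an injective coresolution of $\Gamma$ whose first $n+1$ cosyzygies have flat dimension at most $d$ over $\Gamma$, the bound $d$ reflecting that $\Gamma$ is an order over the $d$-dimensional base $R$.

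Part~(b) reverses the construction exactly as in Theorem~\ref{higher auslander correspondence}, with the field duality replaced by $D_d$ and Matlis duality: from an $n$-Auslander $R$-order $\Gamma$ one singles out the projective--injective objects of $\CM(\Gamma)$, forms the associated progenerator $Q$, sets $\Lambda:=\End_\Gamma(Q)$ and $M:=\Hom_\Gamma(Q,\Gamma)$, recovers an injective coresolution of $M$ from one of $\Gamma$ via the adjoint pair $P_-,Q_-$, and finally verifies by Proposition~\ref{one-side condition} that $M$ is $n$-cluster tilting in $\CM(\Lambda)$ with $\End_\Lambda(M)\cong\Gamma$; the $(d+1,n+1)$-condition is what guarantees that there are enough projective--injectives for this reconstruction. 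The main obstacle throughout is the Cohen--Macaulay bookkeeping: one must keep the syzygies and cosyzygies appearing in these arguments inside $\CM(\Lambda)$ so that Proposition~\ref{M-approximation} applies, and this rests entirely on the depth estimate above, for which the hypothesis $d\le n$ is indispensable.
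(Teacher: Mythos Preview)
The paper does not prove this theorem in the text; it is quoted from \cite{I-aorder}. Your plan to transplant the proof of Theorem~\ref{higher auslander correspondence} into $\CM(\Lambda)$ is nevertheless the intended route, and your identification of the depth estimate $\Hom_\Lambda(X,Y)\in\CM(R)$ for $X,Y\in\CC$ as the crux is exactly right: the paper itself flags this as the obstruction for $d\ge3$ and the reason the hypothesis $d\le n$ is needed. Your argument for this estimate---truncating an injective coresolution of $Y$ in $\CM(\Lambda)$ at stage $n$, noting that the last cosyzygy embeds into a module of depth $d\ge1$ and hence itself has depth $\ge1$, and then iterating the depth lemma backwards---is correct and is the heart of the matter. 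The isolated-singularity argument by localisation is also fine.

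There is, however, a real gap in your treatment of $\gl\Gamma\le n+1$. You lift a projective presentation $P_1\xrightarrow{f}P_0\to X\to0$ to $g\colon M_1\to M_0$ in $\add M$ and propose to resolve $\Ker g$ via Proposition~\ref{M-approximation}. But that proposition applies only to objects of $\CM(\Lambda)$, and for $d\ge3$ the kernel of an arbitrary morphism between maximal Cohen--Macaulay modules need not be Cohen--Macaulay: from $0\to\Ker g\to M_1\to\operatorname{Im}g\to0$ with $\operatorname{Im}g\hookrightarrow M_0$ one only gets $\depth\Ker g\ge\min\{d,2\}$. Your depth estimate concerns $\Hom_\Lambda(X,Y)$ for $X,Y\in\CC$, not kernels of maps in $\CC$, so invoking it here does not close the gap. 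Your alternative, resolving the simple $\Gamma$-modules via $n$-almost split sequences, handles only the simples attached to non-projective indecomposables of $\CC$; the simples attached to summands of $\Lambda$ require a separate argument which you have not supplied. Likewise, your sketch of the two-sided $(d+1,n+1)$-condition is too compressed to stand: the sequence $0\to\Gamma\to P_{J^0}\to\cdots\to P_{J^{n}}$ you produce has \emph{projective} terms, not injective ones, and you must still explain how this controls the flat dimension of the first $n+1$ terms of the \emph{minimal injective} resolution of $\Gamma$.
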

Let us explain another converse of (a).
Later we shall use the following characterization \cite{I-aorder} of $n$-cluster tilting objects, which is analogous to Lemma \ref{criterion for n-cluster tilting}.
\begin{lemma}\label{criterion for n-cluster tilting 2}
Let $M\in\CM(\Lambda)$ be an $n$-rigid generator-cogenerator in $\CM(\Lambda)$ with $d\le n+2$.
Then the following conditions are equivalent.
\begin{itemize}
\item[(a)] $M$ is an $n$-cluster tilting object in $\CM(\Lambda)$.
\item[(b)] $\gl\End_\Lambda(M)\le n+1$.
\item[(c)] For any indecomposable object $X\in\add M$, there exists an exact sequence
\[0\to C_{n+1}\xrightarrow{f_{n+1}}C_n\xrightarrow{f_n}\cdots\xrightarrow{f_2}C_1\xrightarrow{f_1}X\]
with terms in $\add M$ such that the following sequence is exact on $\add M$
\[0\to(M,C_{n+1})\xrightarrow{f_{n+1}}(M,C_n)\xrightarrow{f_n}\cdots\xrightarrow{f_2}(M,C_1)\xrightarrow{f_1}J_{\CC}(M,X)\to0.\]
\end{itemize}
\end{lemma}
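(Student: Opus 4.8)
The plan is to transport the homological content of the three conditions across the functor
\[P_-:=\Hom_\Lambda(M,-)\colon\CM(\Lambda)\to\mod\Gamma,\qquad \Gamma:=\End_\Lambda(M),\]
which restricts to an equivalence $P_-\colon\add M\xrightarrow{\sim}\add\Gamma$ onto the projective $\Gamma$-modules, exactly as in the proof of Theorem \ref{higher auslander correspondence}. Since $M$ is a generator-cogenerator we have $\add\Lambda\subseteq\add M$ and $\add D_d\Lambda\subseteq\add M$, so $\add M$ contains all projective and all injective objects of $\CM(\Lambda)$, which is what lets Proposition \ref{one-side condition} and the coresolution argument below apply. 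I would establish (a)$\Rightarrow$(b)$\Rightarrow$(c)$\Rightarrow$(b) together with (b)$\Rightarrow$(a), closely following Lemma \ref{criterion for n-cluster tilting}. For (a)$\Rightarrow$(b) I argue as in Theorem \ref{higher auslander correspondence}(a): assuming $M$ is $n$-cluster tilting, Proposition \ref{M-approximation} applies in $\CM(\Lambda)$ (which has enough projectives $\add\Lambda$ and enough injectives $\add D_d\Lambda$), and resolving the first syzygy of a projective presentation lifted to $\add M$ bounds the projective dimension of every $\Gamma$-module by $n+1$, whence $\gl\Gamma\le n+1$.

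The equivalence (b)$\Leftrightarrow$(c) is a translation statement. For indecomposable $X\in\add M$ the module $P_X=(M,X)$ is an indecomposable projective $\Gamma$-module with simple top $S_X$, every simple $\Gamma$-module arises this way, and $J_\CC(M,X)=\rad P_X$. Thus a sequence as in (c) becomes, after applying $P_-$, precisely a projective resolution $0\to P_{C_{n+1}}\to\cdots\to P_{C_1}\to P_X\to S_X\to0$ of length $\le n+1$; reading this off gives (c)$\Rightarrow$(b). For (b)$\Rightarrow$(c) I would lift such a resolution back to a complex $C_{n+1}\to\cdots\to C_1\to X$ in $\add M$ through the equivalence $P_-$. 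The only delicate point is that this lifted complex is again exact in $\CM(\Lambda)$, and this holds because $\Lambda\in\add M$ forces $(\Lambda,-)=\id$ to be a direct summand of $(M,-)$, so exactness of the $P_-$-image forces exactness of the complex itself.

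For (b)$\Rightarrow$(a) I would invoke Proposition \ref{one-side condition}: since $\add M$ contains all projectives, it suffices to show that any $X\in\CM(\Lambda)$ with $\Ext^i_\Lambda(M,X)=0$ for $0<i<n$ lies in $\add M$. Choosing an injective coresolution $0\to X\to I^0\to\cdots\to I^n$ in $\CM(\Lambda)$ with $I^j\in\add D_d\Lambda\subseteq\add M$ and applying $P_-$, the vanishing of these $\Ext$ groups yields an exact sequence $0\to P_X\to P_{I^0}\to\cdots\to P_{I^n}$ of $\Gamma$-modules with each $P_{I^j}$ projective. Appending the cokernel $W$ exhibits $P_X$ as the syzygy $\Omega^{n+1}_\Gamma W$, which is projective because $\pd_\Gamma W\le\gl\Gamma\le n+1$; hence $P_X\in\add\Gamma$, and since $P_-\colon\add M\xrightarrow{\sim}\add\Gamma$ is an equivalence, $X\in\add M$.

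The main obstacle is the homological bookkeeping over $\Gamma$, which need not be an $R$-order when $d\ge3$ (as $\Hom_\Lambda(X,Y)$ may fail to be Cohen-Macaulay), so Gorenstein duality cannot be applied freely to $\Gamma$-modules. The hypothesis $d\le n+2$ plays here exactly the role that $\pd T\le n+2$ plays in Lemma \ref{criterion for n-cluster tilting}: it guarantees that the syzygies and cosyzygies appearing above remain in $\CM(\Lambda)$, and that the $\Ext$-vanishing in the range $0<i<n$ is genuinely detected by the coresolution, so that the dimension shifting producing the exact sequence $0\to P_X\to P_{I^0}\to\cdots\to P_{I^n}$ is valid. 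Verifying these Cohen-Macaulayness and exactness claims carefully is the crux, and I would carry them out as in \cite{I-aorder}.
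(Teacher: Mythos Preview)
The paper does not give its own proof of this lemma; it is stated with a reference to \cite{I-aorder} and explicitly flagged as ``analogous to Lemma \ref{criterion for n-cluster tilting}''. Your outline follows precisely the template the paper uses for the analogous results (the proof of Theorem \ref{higher auslander correspondence} and the remarks after Lemma \ref{criterion for n-cluster tilting}): transport along $P_-=\Hom_\Lambda(M,-)$, Proposition \ref{M-approximation} for (a)$\Rightarrow$(b), the coresolution argument with $Q_-\circ P_-\simeq\id$ for (b)$\Rightarrow$(a), and the Yoneda translation for (b)$\Leftrightarrow$(c). So methodologically you are on the intended track.

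Two points deserve sharpening. First, in (c)$\Rightarrow$(b) you only conclude $\pd_\Gamma S\le n+1$ for every simple $\Gamma$-module $S$; to get $\gl\Gamma\le n+1$ you need that the global dimension of $\Gamma$ is detected on simples. This is true here because $\Gamma$ is a module-finite algebra over the complete local ring $R$, hence semiperfect noetherian, and for such rings $\gl\Gamma=\pd_\Gamma(\Gamma/J_\Gamma)$ (use minimal projective resolutions to see $\Tor^i_\Gamma(\Gamma/J_\Gamma,X)\cong P_i/J_\Gamma P_i$, so $\pd X\le\pd(\Gamma/J_\Gamma)$). You should say this explicitly rather than call it a pure ``translation''. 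Second, your account of where $d\le n+2$ enters is not right: syzygies and cosyzygies in $\CM(\Lambda)$ \emph{always} remain in $\CM(\Lambda)$, independently of $d$, because $\CM(\Lambda)$ is extension-closed with enough projectives $\add\Lambda$ and enough injectives $\add D_d\Lambda$; and the $\Ext$-vanishing in your (b)$\Rightarrow$(a) step is detected by the $\CM(\Lambda)$-coresolution simply because $\Ext^i_\Lambda(M,D_d\Lambda)\cong\Ext^i_R(M,R)=0$ for $i>0$. The hypothesis $d\le n+2$ is instead used to control the resolutions attached to \emph{projective} indecomposables $X\in\add\Lambda$ (where the radical $PJ_\Lambda$ is not Cohen--Macaulay), parallel to the role of $\pd T\le n+2$ in Lemma \ref{criterion for n-cluster tilting}; this is the step you would genuinely have to extract from \cite{I-aorder}.
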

We have observed in Theorem \ref{(d-1)-fundamental sequence} that $(d-1)$-cluster tilting subcategories in $\CM(\Lambda)$ have $(d-1)$-fundamental sequences.
Consequently the endomorphism algebras of $(d-1)$-cluster tilting objects enjoys especially nice properties.
\begin{theorem}\label{Auslander algebra for n=d-1}
Let $M$ be a $(d-1)$-cluster tilting object in $\CM(\Lambda)$. Then $\End_\Lambda(M)$
is a non-singular $R$-order (Definition \ref{definition isolated singularity}) and satisfies the Gorenstein condition (Definition \ref{Gorenstein condition}).
\end{theorem}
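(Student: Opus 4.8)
The plan is to write $\CC=\add M$, $\Gamma=\End_\Lambda(M)$ and $n=d-1$, and to run the argument in close analogy with the classical Auslander algebra computation of Subsection \ref{Almost split sequences}. Since $\CC$ is $(d-1)$-cluster tilting it contains all projective objects $\add\Lambda$ and all injective objects $\add D_d\Lambda$ of $\CM(\Lambda)$, so $M$ is a generator-cogenerator. I would set up the two functors $P_-=\Hom_\Lambda(M,-)$ and $P^-=\Hom_\Lambda(-,M)$, giving equivalences $\add M\xrightarrow{\sim}\add\Gamma_\Gamma$ and $\add M\xrightarrow{\sim}\add{}_\Gamma\Gamma$ with $P^-\simeq\Hom_\Gamma(P_-,\Gamma)$ on $\CC$, and for each indecomposable $X\in\CC$ let $S_X$ (resp.\ $S^X$) denote the top of the projective $P_X$ (resp.\ $P^X$). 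The proof then splits into three tasks: bounding $\gl\Gamma$, showing $\Gamma\in\CM(R)$, and verifying the Gorenstein condition.

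For the global dimension, take an indecomposable $X\in\CC$. If $X$ is non-projective, apply $P_-$ to the first exact sequence attached to the $n$-almost split sequence ending in $X$ (with $Y\simeq\tau_nX$), and splice the result with $0\to J_{\CC}(M,X)\to P_X\to S_X\to0$; this produces a projective resolution $0\to P_{\tau_nX}\to P_{C_n}\to\cdots\to P_{C_1}\to P_X\to S_X\to0$ of length $n+1=d$. If $X=P$ is projective I would use the $(d-1)$-fundamental sequence of Theorem \ref{(d-1)-fundamental sequence} in exactly the same way, obtaining a length-$d$ resolution of $S_P$. As $\Gamma$ is module-finite over the complete local ring $R$ it is semiperfect, so $\gl\Gamma$ equals the supremum of the projective dimensions of the simple modules; hence $\gl\Gamma\le d$.

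The main obstacle is showing that $\Gamma=\Hom_\Lambda(M,M)$ is an $R$-order, i.e.\ that $\Gamma\in\CM(R)$. One cannot feed the resolutions above into a depth estimate, since a projective $\Gamma$-module is maximal Cohen-Macaulay over $R$ exactly when $\Gamma$ is, and indeed the crude approximation/depth-lemma bound falls one short of $d$. Instead I would compute $\depth_R\Gamma$ from a projective resolution $\cdots\to Q_1\to Q_0\to M\to0$ of $M$ over $\Lambda$ (with $Q_i\in\add\Lambda$ and all syzygies in $\CM(\Lambda)$): applying $\Hom_\Lambda(-,M)$ gives a complex $0\to\Gamma\to\Hom_\Lambda(Q_0,M)\to\Hom_\Lambda(Q_1,M)\to\cdots$ of maximal Cohen-Macaulay $R$-modules whose cohomology is $\Ext^i_\Lambda(M,M)$. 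Because $\Lambda$ is an isolated singularity, localization shows each $\Ext^i_\Lambda(M,M)$ with $i>0$ has finite length, and $(d-1)$-rigidity of $M$ makes it vanish for $0<i<d-1$. Now I would run the local-cohomology hyper-spectral sequence $E_2^{pq}=H^p_{\mathfrak m}(\Ext^q_\Lambda(M,M))\Rightarrow\mathbf H^{p+q}$: since every $\Hom_\Lambda(Q_i,M)$ has $H^p_{\mathfrak m}=0$ for $p<d$, the conjugate spectral sequence forces $\mathbf H^p=0$ for $p<d$; on the displayed page the row $q=0$ computes $H^p_{\mathfrak m}(\Gamma)$, and for $p<d$ the only possible incoming differentials originate from $H^0_{\mathfrak m}(\Ext^{q}_\Lambda(M,M))$ with $q\ge d-1$, which land in total degree $\ge d$, hence are out of range. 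Thus $H^p_{\mathfrak m}(\Gamma)$ survives as a subquotient of $\mathbf H^p=0$ for all $p<d$, giving $\depth_R\Gamma=d$ and $\Gamma\in\CM(R)$. This is precisely where the hypothesis is used essentially: the vanishing range $0<i<d-1$ coming from $(d-1)$-rigidity is exactly what clears the spectral sequence. Since $\Gamma$ is now an $R$-order we have $\gl\Gamma\ge d$, and combined with the previous paragraph $\gl\Gamma=d$, so $\Gamma$ is non-singular in the sense of Definition \ref{definition isolated singularity}.

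Finally, for the Gorenstein condition I would dualize the length-$d$ resolutions. Applying $\Hom_\Gamma(-,\Gamma)$ to the resolution of $S_X$ and using $P^Y\simeq\Hom_\Gamma(P_Y,\Gamma)$ turns it into the complex $0\to P^X\to P^{C_1}\to\cdots\to P^{C_n}\to P^{\tau_nX}\to0$, whose cohomology is $\Ext^i_\Gamma(S_X,\Gamma)$. The second exact sequence attached to the same $n$-almost split sequence, evaluated at $M$, identifies this complex with a projective resolution of the simple $\Gamma^{\rm op}$-module $S^{\tau_nX}$; hence $\Ext^i_\Gamma(S_X,\Gamma)=0$ for $i\neq d$ and $\Ext^d_\Gamma(S_X,\Gamma)\simeq S^{\tau_nX}$ is simple. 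The projective case is handled identically via the $(d-1)$-fundamental sequence, with $\nu P$ in place of $\tau_nX$. Since the entire construction is symmetric under the duality $D_d\colon\CM(\Lambda)\leftrightarrow\CM(\Lambda^{\rm op})$ (which carries the $(d-1)$-cluster tilting object $M$ to $D_dM$), the same vanishing and simplicity hold for the simple $\Gamma^{\rm op}$-modules. As $\gl\Gamma=d<\infty$, this is exactly the Gorenstein condition of Definition \ref{Gorenstein condition}, completing the proof.
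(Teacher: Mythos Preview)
Your argument is correct in all three parts, and its structure---resolving simple $\Gamma$-modules by applying $P_-$ to $(d-1)$-almost split and $(d-1)$-fundamental sequences, then dualizing via $P^-\simeq\Hom_\Gamma(P_-,\Gamma)$ to read off the Gorenstein condition---is exactly the approach the paper has in mind when it writes ``Consequently'' after Theorem \ref{(d-1)-fundamental sequence}. (The paper states Theorem \ref{Auslander algebra for n=d-1} without proof, deferring to \cite{I-aorder}.)

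One remark on the second step. Your hypercohomology spectral sequence is valid, but it is heavier machinery than needed, and your claim that the depth-lemma bound ``falls one short of $d$'' is not quite right. From the same complex
\[
0\to\Gamma\to\Hom_\Lambda(Q_0,M)\to\cdots\to\Hom_\Lambda(Q_{d-2},M)\to B\to0,
\]
which is exact because $\Ext^i_\Lambda(M,M)=0$ for $0<i\le d-2$, the iterated depth lemma gives $\depth_R\Gamma\ge\min(d,(d-1)+\depth_RB)$ where $B=\operatorname{im}\bigl(\Hom_\Lambda(Q_{d-2},M)\to\Hom_\Lambda(Q_{d-1},M)\bigr)$. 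The point you are missing is that $B$ is a submodule of the maximal Cohen--Macaulay $R$-module $\Hom_\Lambda(Q_{d-1},M)$, hence torsion-free, hence $\depth_RB\ge1$; this gives $\depth_R\Gamma\ge d$ directly. So the elementary route already closes the gap, and the spectral sequence---while correct---can be replaced by this one-line observation.
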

Conversely, if $\Gamma$ is a non-singular $R$-order, then $\CM(\Gamma)=\add\Gamma$ holds and
$\Gamma$ is an $n$-cluster tilting object in $\CM(\Gamma)$ for arbitrary $n$.
In particular, Theorem \ref{higher auslander correspondence 2} holds also for $n=d-1$, since any $R$-order which is an isolated singularity satisfies the two-sided $(d,d)$-condition by \cite{GN1}.

\medskip
In general, there are a lot of $n$-cluster tilting subcategories in a fixed category $\CM(\Lambda)$,
and it is natural to study their relationship.
For the case $n=2$ (and arbitrary $d$), they are related by derived equivalence \cite{I-ar,L}.
\begin{theorem}\label{derived equivalence}
Let $M$ and $N$ be $2$-cluster tilting objects in $\CM(\Lambda)$ with $\Gamma:=\End_\Lambda(M)$ and $\Gamma':=\End_\Lambda(N)$.
Then $T:=\Hom_\Lambda(M,N)$ is a tilting $\Gamma$-module of projective dimension at most one with $\End_{\Gamma}(T)\simeq\Gamma'$.
In particular, $\Gamma$ and $\Gamma'$ are derived equivalent.
\end{theorem}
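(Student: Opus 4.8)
The plan is to verify directly that $T=\Hom_\Lambda(M,N)$, regarded as a right $\Gamma$-module via precomposition, satisfies the three defining axioms of a classical tilting module: $\pd_\Gamma T\le 1$, $\Ext^1_\Gamma(T,T)=0$, and the existence of a short exact sequence $0\to\Gamma_\Gamma\to T^0\to T^1\to0$ with $T^0,T^1\in\add T$. The key inputs are the two length-two approximation sequences furnished by Proposition \ref{M-approximation} (here $n=2$, and $\CM(\Lambda)$ has enough projectives $\add\Lambda$ and enough injectives $\add D_d\Lambda$), together with the $2$-rigidity of $M$ and $N$, that is $\Ext^1_\Lambda(M,M)=\Ext^1_\Lambda(N,N)=0$. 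Concretely, applying Proposition \ref{M-approximation} to $N$ with $\CC=\add M$ yields an exact sequence $0\to M_1\xrightarrow{g}M_0\xrightarrow{h}N\to0$ with $M_0,M_1\in\add M$, and applying it to $M$ with $\CC=\add N$ yields an exact sequence $0\to M\xrightarrow{g'}N^0\xrightarrow{h'}N^1\to0$ with $N^0,N^1\in\add N$.

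First I would establish $\pd_\Gamma T\le1$ and the coresolution simultaneously. Applying $\Hom_\Lambda(M,-)$ to the first sequence and invoking the ``exact on $\CC$'' clause of Proposition \ref{M-approximation} (evaluated at $M$), one obtains a projective resolution $0\to\Hom_\Lambda(M,M_1)\to\Hom_\Lambda(M,M_0)\to T\to0$ of right $\Gamma$-modules, since each $\Hom_\Lambda(M,M_i)$ lies in $\add\Gamma_\Gamma$ under the standard equivalence $\Hom_\Lambda(M,-)\colon\add M\xrightarrow{\sim}\add\Gamma_\Gamma$; hence $\pd_\Gamma T\le1$. Applying $\Hom_\Lambda(M,-)$ instead to the second sequence, the relevant connecting term is $\Ext^1_\Lambda(M,M)=0$ by $2$-rigidity, so $0\to\Gamma\to\Hom_\Lambda(M,N^0)\to\Hom_\Lambda(M,N^1)\to0$ is exact with both right-hand terms in $\add T$. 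This is precisely the required tilting coresolution of $\Gamma_\Gamma$.

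It remains to verify $\Ext^1_\Gamma(T,T)=0$ and to identify $\End_\Gamma(T)$. Applying $\Hom_\Gamma(-,T)$ to the projective resolution above and using the isomorphisms $\Hom_\Gamma(\Hom_\Lambda(M,M_i),T)\cong\Hom_\Lambda(M_i,N)$ coming from the same equivalence, the complex computing $\Hom_\Gamma(T,T)$ and $\Ext^1_\Gamma(T,T)$ becomes identified with $\Hom_\Lambda(M_0,N)\xrightarrow{g^*}\Hom_\Lambda(M_1,N)$, namely with $\Hom_\Lambda(-,N)$ applied to $M_1\xrightarrow{g}M_0$. Comparing with the long exact sequence obtained by applying $\Hom_\Lambda(-,N)$ to $0\to M_1\to M_0\to N\to0$, whose connecting term is $\Ext^1_\Lambda(N,N)=0$ by $2$-rigidity of $N$, I read off $\Ext^1_\Gamma(T,T)=\Cok(g^*)\hookrightarrow\Ext^1_\Lambda(N,N)=0$ and $\End_\Gamma(T)=\Ker(g^*)\cong\Hom_\Lambda(N,N)=\Gamma'$, the last isomorphism being one of $R$-algebras since it is induced by functoriality of $\Hom_\Lambda(M,-)$. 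Thus $T$ is a tilting $\Gamma$-module of projective dimension at most one with $\End_\Gamma(T)\cong\Gamma'$, and the classical tilting theorem \cite{Ha} then yields a derived equivalence between $\mod\Gamma$ and $\mod\Gamma'$.

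The routine points — the module structures, the Yoneda identifications, and the reduction of the derived statement to the tilting theorem — are all standard. The step that carries the real content, and which I expect to require the most care, is the coordinated use of \emph{both} approximation sequences of Proposition \ref{M-approximation} together with the vanishing $\Ext^1_\Lambda(M,M)=\Ext^1_\Lambda(N,N)=0$: the argument closes up precisely because $n=2$ forces these approximations to have length two, so that $T$ acquires projective dimension exactly one and the cokernel/kernel computations terminate against the $2$-rigidity of the cluster tilting objects. The only genuine subtlety is verifying that the identification $\End_\Gamma(T)\cong\Gamma'$ respects multiplication, which follows by tracking the functoriality of $\Hom_\Lambda(M,-)$ through the Yoneda isomorphisms.
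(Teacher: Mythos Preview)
Your argument is correct and is essentially the standard one: the paper itself does not supply a proof of this theorem but merely cites \cite{I-ar,L}, and the proof in \cite{I-ar} proceeds exactly as you do, by feeding the two length-two approximation sequences from Proposition~\ref{M-approximation} through $\Hom_\Lambda(M,-)$ and $\Hom_\Lambda(-,N)$ and invoking $\Ext^1_\Lambda(M,M)=\Ext^1_\Lambda(N,N)=0$. Your identification of $\End_\Gamma(T)$ and $\Ext^1_\Gamma(T,T)$ via the Yoneda isomorphisms $\Hom_\Gamma((M,M_i),T)\cong\Hom_\Lambda(M_i,N)$ is the right mechanism, and the multiplicativity check you flag at the end is indeed routine.
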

In particular, the numbers of indecomposable direct summands of all basic $2$-cluster tilting objects in $\CM(\Lambda)$ are equal.
Moreover, we have a relationship between 2-cluster tilting objects and tilting modules of projective dimension at most one.

Our Theorem \ref{derived equivalence} reminds us of a conjecture of Van den Bergh \cite{V1,V2},
who introduced a certain class of non-commutative algebras to study a conjecture of Bondal and Orlov \cite{BO} in birational geometry.
In the rest of this subsection we assume that $R$ is a normal domain.
Then reflexive $R$-modules behave nicely, for example they are closed under kernels and extensions.
\begin{definition}
Let $\Lambda$ be a module-finite $R$-algebra.
We say that $M\in\mod\Lambda$ gives a \emph{non-commutative crepant resolution} (or \emph{NCCR}) $\End_\Lambda(M)$ of $\Lambda$ if
\begin{itemize}
\item[(a)] $M$ is a reflexive $R$-module and $\End_\Lambda(M)$ is a non-singular $R$-order;
\item[(b)] $M_{\mathfrak p}$ is a generator of $\Lambda_{\mathfrak p}$ for any height one prime ideal ${\mathfrak p}$ of $R$.
\end{itemize}
\end{definition}
Notice that the condition (b) is not assumed in \cite{V1,V2} where the case $\Lambda=R$ is treated and (b) is automatically satisfied.
Van den Bergh conjectured that all (commutative and non-commutative) crepant resolutions of $\Lambda=R$ are derived equivalent,
and proved this for 3-dimensional terminal singularities.
Easy examples suggest that it is appropriate to impose the condition (b) to treat non-commutative algebras $\Lambda$.

Comparing the above condition (a) and Theorem \ref{Auslander algebra for n=d-1}, it is natural to hope that NCCR is related to $(d-1)$-cluster tilting objects.
In fact, we have the following result \cite{I-ar}.
\begin{theorem}
Let $\Lambda$ be an $R$-order which is an isolated singularity.
The following conditions are equvalent for $M\in\CM(\Lambda)$.
\begin{itemize}
\item[(a)] $M$ is a $(d-1)$-cluster tilting object in $\CM(\Lambda)$.
\item[(b)] $M$ is a generator-cogenerator in $\CM(\Lambda)$ and gives a NCCR of $\Lambda$.
\end{itemize}
\end{theorem}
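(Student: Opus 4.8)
The plan is to prove the two implications separately, invoking Theorem \ref{Auslander algebra for n=d-1} for the forward direction and the characterization of Lemma \ref{criterion for n-cluster tilting 2} (taken with $n=d-1$) for the backward one. Observe first that $n=d-1\ge1$ forces $d\ge2$, and that the inequality $d\le n+2$ required by that lemma holds trivially for $n=d-1$.

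First I would treat (a) $\Rightarrow$ (b). If $M$ is a $(d-1)$-cluster tilting object then, being cluster tilting, $\add M$ contains all projective objects $\add\Lambda$ and all injective objects $\add D_d\Lambda$ of $\CM(\Lambda)$ by Proposition \ref{one-side condition}; hence $M$ is a generator-cogenerator. By Theorem \ref{Auslander algebra for n=d-1} the algebra $\End_\Lambda(M)$ is a non-singular $R$-order, which is precisely the endomorphism-ring requirement in the definition of an NCCR. Since $M\in\CM(\Lambda)\subseteq\CM(R)$ is a maximal Cohen-Macaulay module over the normal Gorenstein domain $R$ with $d\ge2$, it is reflexive. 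Finally, because $\Lambda\in\add M$, localizing at any height one prime ${\mathfrak p}$ gives $\Lambda_{\mathfrak p}\in\add M_{\mathfrak p}$, so $M_{\mathfrak p}$ is a generator of $\Lambda_{\mathfrak p}$; this verifies the remaining NCCR condition, and $M$ gives an NCCR.

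For (b) $\Rightarrow$ (a) I would apply Lemma \ref{criterion for n-cluster tilting 2} with $n=d-1$. By hypothesis $M$ is a generator-cogenerator, and the non-singularity of $\End_\Lambda(M)$ means $\gl\End_\Lambda(M)=d=(d-1)+1$, so condition (b) of that lemma is in force. The only missing hypothesis is that $M$ be $(d-1)$-rigid, that is $\Ext^i_\Lambda(M,M)=0$ for $0<i<d-1$; once this is secured, the lemma immediately yields that $M$ is $(d-1)$-cluster tilting.

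Establishing this rigidity is the crux, and I expect it to be the main obstacle. My plan is two-step. Since $\Lambda$ is an isolated singularity, localizing at any non-maximal prime ${\mathfrak p}$ makes $\Lambda_{\mathfrak p}$ non-singular, so $\CM(\Lambda_{\mathfrak p})=\add\Lambda_{\mathfrak p}$ and $M_{\mathfrak p}$ becomes projective; hence $\Ext^i_\Lambda(M,M)_{\mathfrak p}=0$ for all $i>0$, and each $\Ext^i_\Lambda(M,M)$ has finite length. It then remains to kill these finite-length modules in the range $0<i<d-1$, for which I would run a depth argument: as $\End_\Lambda(M)$ is a non-singular $R$-order of global dimension $d$ that is itself maximal Cohen-Macaulay of depth $d$ over $R$, a projective $\End_\Lambda(M)$-resolution of $\Hom_\Lambda(M,X)$ has length at most $d$, and transporting it back through the fully faithful equivalence $\Hom_\Lambda(M,-)\colon\add M\xrightarrow{\sim}\add\End_\Lambda(M)$ yields, for every $X\in\CM(\Lambda)$, a right $\add M$-resolution of length at most $d$ that stays exact after applying $\Hom_\Lambda(M,-)$. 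Combining this bounded resolution with the long exact sequences for $\Ext_\Lambda(M,-)$, the Auslander-Buchsbaum formula \eqref{Auslander-Buchsbaum} for $\End_\Lambda(M)$-modules, and the finite-length estimate, one controls the depths of the modules $\Hom_\Lambda(M,X)$ and thereby forces $\Ext^i_\Lambda(M,M)=0$ for $0<i<d-1$. The delicate part, and the step I expect to demand the most care, is the depth bookkeeping that upgrades the bound $\gl\End_\Lambda(M)=d$ into the sharp vanishing range $0<i<d-1$ rather than a weaker one; this is exactly where the value $n=d-1$ and the Gorenstein duality $D_d$ on $\CM(\Lambda)$ enter essentially.
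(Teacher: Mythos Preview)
Your overall strategy is correct and is the natural one; the paper itself gives no proof here (it only cites \cite{I-ar}), so there is nothing to compare against beyond checking your argument. The forward direction (a)$\Rightarrow$(b) is complete as written. For (b)$\Rightarrow$(a), reducing via Lemma~\ref{criterion for n-cluster tilting 2} with $n=d-1$ to the single missing hypothesis of $(d-1)$-rigidity is exactly right, and your two ingredients---finite length of $\Ext^i_\Lambda(M,M)$ for $i>0$ (isolated singularity) and $\depth_R\End_\Lambda(M)=d$ (it is an $R$-order)---are the correct ones.

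The only genuine imprecision is in the depth argument itself. Lifting a projective $\Gamma$-resolution of $\Hom_\Lambda(M,X)$ back through $\Hom_\Lambda(M,-)$ does \emph{not} automatically produce an exact complex in $\mod\Lambda$: the functor $\Hom_\Lambda(M,-)$ is faithful when $M$ is a generator, but it need not reflect exactness. So the route through $\add M$-resolutions of general $X$ is a detour with a gap. A direct argument avoids this entirely. Take an injective resolution $0\to M\to I^0\to I^1\to\cdots$ of $M$ in $\CM(\Lambda)$ with $I^j\in\add D_d\Lambda$, and apply $\Hom_\Lambda(M,-)$ to get a complex $C^\bullet$ with $C^j=\Hom_\Lambda(M,I^j)\in\CM(R)$ and cohomology $H^i=\Ext^i_\Lambda(M,M)$. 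Arguing by induction on $i$ with $0<i\le d-2$, assume $H^j=0$ for $0<j<i$; then $0\to\Gamma\to C^0\to\cdots\to C^{i-1}\to B^i\to0$ is exact, and iterating the depth lemma (using $\depth\Gamma=d$) gives $\depth B^i\ge d-i\ge 2$. Independently, $Z^i=\ker(C^i\to C^{i+1})$ sits in $0\to Z^i\to C^i\to B^{i+1}\to0$ with $B^{i+1}\hookrightarrow C^{i+1}$, so $\depth Z^i\ge 2$. Now $0\to B^i\to Z^i\to H^i\to 0$ forces $\depth H^i\ge\min(\depth Z^i,\depth B^i-1)\ge 1$, contradicting the finite-length of $H^i$ unless $H^i=0$. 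This is precisely the ``depth bookkeeping'' you anticipated; once made explicit, the proof is complete.
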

Since modules giving NCCR are not necessarily Cohen-Macaulay, they are more general than $(d-1)$-cluster tilting objects in $\CM(\Lambda)$.
But the case $d=3$ is especially nice, and we have the following stronger result than Theorem \ref{derived equivalence} \cite{IR}.
\begin{theorem}\label{derived equivalence 2}
Let $d=3$ and $\Lambda$ be a module-finite $R$-algebra.
Assume that $M$ and $N$ give NCCR, $\Gamma:=\End_\Lambda(M)$ and $\Gamma'=\End_\Lambda(N)$ of $\Lambda$, respectively.
Then $T:=\Hom_\Lambda(M,N)$ is a tilting $\Gamma$-module of projective dimension at most one with $\End_{\Gamma}(T)\simeq\Gamma'$.
In particular, $\Gamma$ and $\Gamma'$ are derived equivalent.
\end{theorem}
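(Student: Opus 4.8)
The plan is to run the argument of Theorem~\ref{derived equivalence} with ``Cohen--Macaulay'' systematically replaced by ``reflexive'', the point being that when $d=3$ a reflexive module over a non-singular $R$-order automatically has projective dimension at most one. Since $M$ and $N$ give NCCRs, $\Gamma=\End_\Lambda(M)$ and $\Gamma'=\End_\Lambda(N)$ are non-singular $R$-orders (Definition~\ref{definition isolated singularity}), so $\gl\Gamma=\gl\Gamma'=3$ and $\depth_R\Gamma=\depth_R\Gamma'=3$; and $T=\Hom_\Lambda(M,N)$, together with $U:=\Hom_\Lambda(N,M)$, is a reflexive $R$-module, because a $\Hom$ between reflexive modules is the kernel of a map of reflexive modules and reflexive modules are closed under kernels over the normal domain $R$. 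It carries a $(\Gamma',\Gamma)$-bimodule structure.

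First I would bound the projective dimension. Because $\gl\Gamma=3<\infty$ we have $\pd_\Gamma T<\infty$, so the order version of the Auslander--Buchsbaum formula \eqref{Auslander-Buchsbaum} gives $\pd_\Gamma T=\depth_R\Gamma-\depth_R T=3-\depth_R T$. As $T$ is reflexive over a normal domain of dimension three it satisfies $(S_2)$, hence $\depth_R T\ge 2$ and $\pd_\Gamma T\le 1$; this is exactly where $d=3$ is used. Next I would analyse everything in codimension one. For a prime $\mathfrak{p}$ of height at most one, $R_{\mathfrak{p}}$ is regular of dimension at most one, $\Gamma_{\mathfrak{p}}$ is hereditary, and $T_{\mathfrak{p}}$ is $R_{\mathfrak{p}}$-free, so Auslander--Buchsbaum over $\Gamma_{\mathfrak{p}}$ forces $T_{\mathfrak{p}}$ to be projective. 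Moreover condition (b) in the definition of NCCR makes $M_{\mathfrak{p}}$ and $N_{\mathfrak{p}}$ progenerators of $\Lambda_{\mathfrak{p}}$, so $\Hom_{\Lambda_{\mathfrak{p}}}(M_{\mathfrak{p}},-)$ is a Morita equivalence sending $N_{\mathfrak{p}}$ to $T_{\mathfrak{p}}$; in particular $T_{\mathfrak{p}}$ is a progenerator with $\End_{\Gamma_{\mathfrak{p}}}(T_{\mathfrak{p}})\simeq\Gamma'_{\mathfrak{p}}$.

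The identity $\End_\Gamma(T)\simeq\Gamma'$ then follows formally: the natural map $\Gamma'\to\End_\Gamma(T)$ is a morphism of reflexive $R$-modules which, by the previous step, is an isomorphism at every height-one prime, and a morphism of reflexive modules that is an isomorphism in codimension one is an isomorphism. For the tilting property it remains to prove rigidity $\Ext^1_\Gamma(T,T)=0$ (higher $\Ext$ vanish since $\pd_\Gamma T\le 1$) and to produce a short exact sequence $0\to\Gamma\to T^0\to T^1\to 0$ with $T^0,T^1\in\add T$, obtained from a left $\add T$-approximation of $\Gamma$. By the codimension-one picture, $\Ext^1_\Gamma(T,T)$ and the homology of this candidate $\add T$-coresolution are $R$-modules supported in codimension $\ge 2$.

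The main obstacle is precisely this last upgrade: support in codimension $\ge 2$ does not by itself force an $\Ext$-module to vanish. To close the gap I would use the reflexive bimodule duality between $T$ and $U$, showing that the composition maps $T\otimes_\Gamma U\to\Gamma'$ and $U\otimes_{\Gamma'}T\to\Gamma$ become isomorphisms after reflexive hull and that $\operatorname{Tor}^\Gamma_i(T,U)=0$ for $i>0$, so that $T$ and $U$ induce mutually inverse derived equivalences and $T\otimes^{\mathbf{L}}_\Gamma U\simeq\Gamma'$. The required $\Ext$- and $\operatorname{Tor}$-vanishing are then computed as cohomology of complexes of reflexive modules, where the $d=3$ Auslander--Buchsbaum bound forces the depth of any nonzero homology to be incompatible with its codimension-$\ge 2$ support, so that homology must vanish. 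Once rigidity and the coresolution are in hand, $T$ is a tilting $\Gamma$-module of projective dimension at most one with $\End_\Gamma(T)\simeq\Gamma'$, and the derived equivalence of $\Gamma$ and $\Gamma'$ follows from tilting theory exactly as in Theorem~\ref{derived equivalence}.
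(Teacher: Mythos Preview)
The paper does not actually prove Theorem~\ref{derived equivalence 2}; it is stated with a reference to \cite{IR}. So there is no in-paper proof to compare against, and your proposal should be read as a reconstruction of the argument in \cite{IR}. In broad outline you have that argument right: reflexivity of $T$ plus Auslander--Buchsbaum over the non-singular order $\Gamma$ gives $\pd_\Gamma T\le 1$ (this is the step that uses $d=3$), the codimension-one picture identifies $\End_\Gamma(T)$ with $\Gamma'$ via reflexive equivalence, and the remaining Ext-vanishing is a depth count.

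Two comments. First, a small correction: at height-one primes condition~(b) only guarantees that $M_{\mathfrak p}$ is a \emph{generator}, not a progenerator, so you do not get a Morita equivalence. But you do not need one: since $\Lambda_{\mathfrak p}\in\add M_{\mathfrak p}$, the functor $\Hom_{\Lambda_{\mathfrak p}}(M_{\mathfrak p},-)$ is fully faithful, which already yields $\End_{\Gamma_{\mathfrak p}}(T_{\mathfrak p})\simeq\End_{\Lambda_{\mathfrak p}}(N_{\mathfrak p})=\Gamma'_{\mathfrak p}$.

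Second, your final paragraph is more elaborate than necessary. You observe correctly that codimension-$\ge 2$ support alone does not kill $\Ext^1_\Gamma(T,T)$, but in fact the support is better than that: since $T$ is reflexive, $\depth_{R_{\mathfrak p}}T_{\mathfrak p}\ge\min(2,\dim R_{\mathfrak p})$ at \emph{every} prime, so Auslander--Buchsbaum makes $T_{\mathfrak p}$ projective over $\Gamma_{\mathfrak p}$ at every non-maximal prime, and $\Ext^1_\Gamma(T,T)$ has finite length. Now use the four-term sequence
\[
0\to\End_\Gamma(T)\to\Hom_\Gamma(P_0,T)\to\Hom_\Gamma(P_1,T)\to\Ext^1_\Gamma(T,T)\to 0
\]
coming from a projective resolution $0\to P_1\to P_0\to T\to 0$. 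You have already shown $\End_\Gamma(T)\simeq\Gamma'$, which has depth $3$, and the middle terms lie in $\add T$ and have depth $\ge 2$. The depth lemma then forces $\depth\Ext^1_\Gamma(T,T)\ge 1$ if it is non-zero, contradicting finite length; hence $\Ext^1_\Gamma(T,T)=0$. The generation condition $\Gamma\in\operatorname{thick}T$ is handled by the symmetric argument with $U=\Hom_\Lambda(N,M)$, or by a summand count using the codimension-one equivalence. There is no need to pass through $T\otimes^{\mathbf L}_\Gamma U$ or Tor-vanishing.
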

In particular, if $d=3$, then all NCCR of $\Lambda$ are derived equivalent.

We call an $R$-order $\Lambda$ \emph{symmetric} if $\Hom_R(\Lambda,R)\simeq \Lambda$ as $(\Lambda,\Lambda)$-modules.
If $d=3$ and $\Lambda$ is a symmetric $R$-order which is an isolated singularity, then we have the following nice behaviour of
modules giving NCCR of $\Lambda$ \cite{IR}.
\begin{theorem}\label{CT and NCCR}
Let $d=3$ and $\Lambda$ be a symmetric $R$-order which is an isolated singularity.
\begin{itemize}
\item[(a)] $\CM(\Lambda)$ contains a 2-cluster tilting object if and only if $\Lambda$ has a NCCR.
\item[(b)] Let $M$ be a 2-cluster tilting object in $\CM(\Lambda)$.
Then $N\mapsto\Hom_\Lambda(M,N)$ gives a bijection between $\Lambda$-modules giving NCCR and tilting $\End_\Lambda(M)$-modules which are reflexive $R$-modules.
\end{itemize}
\end{theorem}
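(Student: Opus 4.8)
The plan is to deduce everything from three ingredients available above, specializing $n=d-1=2$: the preceding characterization of $2$-cluster tilting objects in $\CM(\Lambda)$ as exactly the generator-cogenerators that give an NCCR; Theorem \ref{Auslander algebra for n=d-1}, which says the endomorphism algebra of such an object is a non-singular $R$-order; and Theorem \ref{derived equivalence 2}, which for $d=3$ links any two NCCR-modules by a reflexive tilting module. Throughout I would use that $\Lambda$ symmetric gives $D_d\Lambda\simeq\Lambda$, so that $\add\Lambda=\add D_d\Lambda$ serves simultaneously as the projectives and the injectives of $\CM(\Lambda)$, and that (as recalled just before the NCCR definition) over the normal $3$-dimensional $R$ reflexive modules are closed under kernels and extensions.

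For (a), the forward implication is immediate: a $2$-cluster tilting $M\in\CM(\Lambda)$ is in particular a generator-cogenerator giving an NCCR, so $\Lambda$ has an NCCR. The converse is the substantial direction. Starting from a reflexive $N$ giving an NCCR, I would first pass to a Cohen-Macaulay module: reflexivity over the normal $3$-dimensional $R$ forces Serre's condition $S_2$, hence $\depth N\ge2$; choosing a syzygy $0\to\Omega N\to P\to N\to0$ with $P\in\add\Lambda\subseteq\CM(\Lambda)$, the depth lemma gives $\depth\Omega N\ge3$, so $\Omega N\in\CM(\Lambda)$. Setting $M:=\Omega N\oplus\Lambda$, the summand $\Lambda\simeq D_d\Lambda$ makes $M$ at once a generator and a cogenerator of $\CM(\Lambda)$, so by the preceding characterization it remains only to check that $M$ still gives an NCCR. \textbf{This verification is the main obstacle}: one must show that taking a syzygy and adjoining the progenerator $\Lambda$ keeps the endomorphism algebra a non-singular $R$-order and preserves the height-one generator condition (b) of the NCCR definition, where $\Lambda_{\mathfrak p}$ is hereditary. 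This is exactly where $d=3$ and the symmetry of $\Lambda$ enter.

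For (b), let $M$ be a $2$-cluster tilting object and $\Gamma:=\End_\Lambda(M)$, a non-singular $R$-order by Theorem \ref{Auslander algebra for n=d-1}; since $M$ is a generator we have $\Lambda\in\add M$, and $\Hom_\Lambda(M,-)$ is a reflexive equivalence between reflexive $\Lambda$-modules and reflexive $\Gamma$-modules. First I would show the map $N\mapsto T:=\Hom_\Lambda(M,N)$ is well defined: if $N$ gives an NCCR, then $T$ is reflexive (a $\Hom$ into a reflexive module is reflexive and reflexives are closed under kernels), and, applying Theorem \ref{derived equivalence 2} to the two NCCR-modules $M$ and $N$, $T$ is a tilting $\Gamma$-module of projective dimension at most one with $\End_\Gamma(T)\simeq\End_\Lambda(N)$. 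Injectivity is then immediate, since the reflexive equivalence recovers $N$ from $T$. For surjectivity, given a reflexive tilting $\Gamma$-module $T$, let $N$ be the reflexive $\Lambda$-module with $\Hom_\Lambda(M,N)\simeq T$, and verify that $N$ gives an NCCR, i.e. that $\End_\Lambda(N)\simeq\End_\Gamma(T)$ is again a non-singular $R$-order and that $N$ satisfies the height-one generator condition.

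The hardest step is this last point in (b), and its counterpart in (a): proving that the endomorphism algebra of a reflexive tilting module over the non-singular order $\Gamma$ is itself a non-singular $R$-order. The route I would take is that $T$ tilting makes $\End_\Gamma(T)$ derived equivalent to $\Gamma$, so $\gl\End_\Gamma(T)<\infty$; then, using reflexivity over the normal $3$-dimensional $R$ together with the isolated-singularity hypothesis, one argues that $\End_\Gamma(T)$ is a reflexive $R$-order of finite global dimension, whence $\gl=d=3$ and it is non-singular. This derived-invariance of the NCCR property is special to $d=3$ and is precisely what Theorem \ref{derived equivalence 2} packages; I expect the bookkeeping at height-one primes to be the most delicate part of the argument.
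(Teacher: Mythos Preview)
The paper does not actually prove Theorem~\ref{CT and NCCR}; it is stated with a citation to \cite{IR} and no argument is supplied, so there is no in-text proof to compare against. Your outline is nonetheless a reasonable reconstruction: the forward direction of (a) and the well-definedness and injectivity in (b) do reduce, as you say, to the preceding characterization of $(d-1)$-cluster tilting objects and to Theorem~\ref{derived equivalence 2}, and you correctly isolate the two hard points as (i) passing from an arbitrary reflexive NCCR-module to a Cohen--Macaulay one, and (ii) showing that the endomorphism ring of a reflexive tilting module over a non-singular symmetric $R$-order is again non-singular. These are indeed where the content of \cite{IR} lies, and both use $d=3$ and symmetry essentially; the remark before Proposition~\ref{(d-1) and d}, that an NCCR of a symmetric order is itself a non-singular \emph{symmetric} order (hence $3$-CY), is the key input you will need and should state explicitly.

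One genuine concern: your syzygy construction $M:=\Omega N\oplus\Lambda$ for the converse of (a) puts you in $\CM(\Lambda)$ with a generator--cogenerator, but the step ``$\End_\Lambda(N)$ non-singular $\Rightarrow$ $\End_\Lambda(\Omega N\oplus\Lambda)$ non-singular'' is not just height-one bookkeeping. You cannot invoke Theorem~\ref{derived equivalence 2} here, since that theorem assumes both modules already give NCCRs, which is precisely what you are trying to prove for $\Omega N\oplus\Lambda$. In \cite{IR} the argument runs the other way: one first proves the tilting-invariance statement (your point (ii)) directly for $3$-CY algebras, and then uses it to manufacture a Cohen--Macaulay NCCR-module, rather than hoping the syzygy happens to work. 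So your identification of the obstacle is correct, but your proposed route to it is circular as written; the ``derived-invariance of the NCCR property'' needs an independent proof before it can be applied.
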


\subsection{Cohen-Macaulay modules and triangulated categories}\label{Cohen-Macaulay modules and triangulated categories}

In this subsection we observe that nice triangulated categories are obtained from the category of Cohen-Macaulay modules.

Again, let $R$ be a complete local Gorenstein ring of Krull dimension $d$.
We call an $R$-order $\Lambda$ \emph{Gorenstein} if $D_d\Lambda$ is a projective $\Lambda$-module, or equivalently $D_d\Lambda$ is a projective $\Lambda^{\rm op}$-module.
For example, Gorenstein $R$-orders of Krull dimension $d=0$ are nothing but selfinjective artin $R$-algebras.
In this case, $\CM(\Lambda)$ forms a Frobenius category whose projective objects are just projective $\Lambda$-modules.
By a general result of Happel \cite{Ha}, the stable category $\underline{\CM}(\Lambda)$ forms a triangulated category with a suspension functor $\Omega^{-1}$.
Let us observe that the category $\underline{\CM}(\Lambda)$ enjoys a further nice property.

\begin{definition}\label{serre functor}
Let $\TTT$ be an $R$-linear category such that $\Hom_{\TTT}(X,Y)\in\fl R$ for any $X,Y\in\TTT$.
We call an autoequivalence $F$ of $\TTT$ a \emph{Serre functor} if there exists a functorial isomorphism
\[\Hom_{\TTT}(X,Y)\simeq D\Hom_{\TTT}(Y,FX)\]
for any $X,Y\in\TTT$ \cite{BK,RV2}.
A triangulated category $\TTT$ with the suspension functor $\Sigma$ is called \emph{$n$-Calabi-Yau} (or \emph{$n$-CY}) for an integer $n$
if $\Sigma^n$ gives a Serre functor of $\TTT$ \cite{Ko,Ke}.

A Frobenius category $\XX$ is called \emph{$n$-CY} if the stable category $\TTT:=\underline{\XX}$ is an $n$-CY triangulated category.
\end{definition}
We have the following result from the Auslander-Reiten duality given in Theorem \ref{AR theory for orders}.
\begin{theorem}
Let $\Lambda$ be a Gorenstein $R$-order which is an isolated singularity.
Then $\underline{\CM}(\Lambda)$ forms a triangulated category with a Serre functor $\Omega^{-1}\tau$.
\end{theorem}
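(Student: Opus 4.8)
The plan is to verify the defining property of a Serre functor directly from the Auslander--Reiten duality established in Theorem \ref{AR theory for orders}(a). First I would record two structural facts that make $\Omega^{-1}\tau$ a genuine autoequivalence of $\underline{\CM}(\Lambda)$. Since $\Lambda$ is a Gorenstein $R$-order, $\CM(\Lambda)$ is a Frobenius category in which the injective objects $\add D_d\Lambda$ coincide with the projective objects $\add\Lambda$; hence $[\Lambda]=[D_d\Lambda]$ and the two stable categories agree, $\underline{\CM}(\Lambda)=\overline{\CM}(\Lambda)$. Under this identification the equivalence $\tau\colon\underline{\CM}(\Lambda)\to\overline{\CM}(\Lambda)$ becomes an autoequivalence of $\underline{\CM}(\Lambda)$, and composing with the shift $\Omega^{-1}$ (the suspension of Happel's triangulated structure) yields an autoequivalence $\Omega^{-1}\tau$. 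Moreover, because $\Lambda$ is an isolated singularity, $\underline{\Hom}_\Lambda(X,Y)\in\fl R$ for all $X,Y$, so the Matlis duality $D$ applies to every morphism space and satisfies $DD\simeq\id$; this is exactly the finiteness hypothesis required in Definition \ref{serre functor}.

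The computation itself is a short chain of functorial isomorphisms. The key input, valid in any Frobenius category, is $\Ext^1_\Lambda(Y,W)\simeq\underline{\Hom}_\Lambda(Y,\Omega^{-1}W)$, together with the Auslander--Reiten duality $D\Ext^1_\Lambda(A,B)\simeq\overline{\Hom}_\Lambda(B,\tau A)$ of Theorem \ref{AR theory for orders}(a). For $X,Y\in\CM(\Lambda)$ I would compute
\[
D\underline{\Hom}_\Lambda(Y,\Omega^{-1}\tau X)\simeq D\Ext^1_\Lambda(Y,\tau X)\simeq\overline{\Hom}_\Lambda(\tau X,\tau Y)\simeq\underline{\Hom}_\Lambda(X,Y),
\]
where the first isomorphism is the Frobenius identity applied to $W=\tau X$, the second is the Auslander--Reiten duality with $A=Y$ and $B=\tau X$, and the last uses that $\tau$ is fully faithful on $\underline{\CM}(\Lambda)=\overline{\CM}(\Lambda)$. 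Applying $D$ once more and using $DD\simeq\id$ gives the required functorial isomorphism $\underline{\Hom}_\Lambda(X,Y)\simeq D\underline{\Hom}_\Lambda(Y,\Omega^{-1}\tau X)$, which is precisely the Serre condition for $F=\Omega^{-1}\tau$.

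The only real work lies in checking that the composite isomorphism is bifunctorial in $X$ and $Y$, rather than merely an isomorphism for each fixed pair. I would handle this by noting that each of the three isomorphisms above is already natural: the Frobenius identity and the naturality of the Auslander--Reiten duality are functorial by construction, and $\tau$ being an additive equivalence makes $\overline{\Hom}_\Lambda(\tau X,\tau Y)\simeq\underline{\Hom}_\Lambda(X,Y)$ natural in both arguments. The main obstacle I anticipate is purely bookkeeping: keeping track of the directions of the shift $\Omega^{-1}$ and of the two dualities $D$ and $\tau$, and confirming that the identification $\underline{\CM}(\Lambda)=\overline{\CM}(\Lambda)$ is compatible with the functorial structure of Theorem \ref{AR theory for orders}. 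Once these compatibilities are in place, the theorem follows immediately, with $\Omega^{-1}\tau$ serving as the Serre functor exactly as the Auslander--Reiten duality predicts.
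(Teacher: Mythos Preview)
Your proposal is correct and follows exactly the approach the paper indicates: the paper states only that the result follows ``from the Auslander-Reiten duality given in Theorem \ref{AR theory for orders}'', and your argument spells this out by combining the Frobenius identification $\Ext^1_\Lambda(Y,W)\simeq\underline{\Hom}_\Lambda(Y,\Omega^{-1}W)$ with the duality $D\Ext^1_\Lambda(Y,\tau X)\simeq\overline{\Hom}_\Lambda(\tau X,\tau Y)$ and the fact that $\tau$ is an equivalence. The preliminary observations (Gorenstein $\Rightarrow$ $\underline{\CM}(\Lambda)=\overline{\CM}(\Lambda)$, isolated singularity $\Rightarrow$ Hom-finiteness over $R$) are precisely the ingredients the paper has set up before the theorem, so nothing is missing.
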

A special class of Gorenstein $R$-orders is given by symmetric $R$-orders defined in the previous subsection.
For a symmetric $R$-order $\Lambda$, we have that the Nakayama functor $\nu:\CM(\Lambda)\to\CM(\Lambda)$ is isomorphic to the identity functor $\id_{\CM(\Lambda)}$.
Using this property one can show the following result due to Auslander \cite{A-order}.
\begin{proposition}\label{CM is (d-1)-Calabi-Yau}
Let $\Lambda$ be a symmetric $R$-order which is an isolated singularity.
Then we have an isomorphism $\tau\simeq\Omega^{2-d}$ of functors $\underline{\CM}(\Lambda)\to\underline{\CM}(\Lambda)$.
In particular $\underline{\CM}(\Lambda)$ forms a $(d-1)$-CY triangulated category.
\end{proposition}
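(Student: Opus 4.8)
The plan is to reduce everything to a single stable-categorical identity, $\tau\simeq\Omega^{2-d}\nu$, valid for any Gorenstein $R$-order which is an isolated singularity, and then to specialize using $\nu\simeq\id$ for symmetric orders. Since $\Lambda$ symmetric means $D_d\Lambda=\Hom_R(\Lambda,R)\simeq\Lambda$ is projective, $\Lambda$ is Gorenstein, so $\CM(\Lambda)$ is a Frobenius category, $\underline{\CM}(\Lambda)$ is triangulated with suspension $\Sigma=\Omega^{-1}$ (here $\Omega$ is invertible on the stable category), and by the Serre functor theorem recalled above $\Omega^{-1}\tau$ is a Serre functor. So once $\tau\simeq\Omega^{2-d}$ is proved, the Serre functor becomes $\Omega^{-1}\Omega^{2-d}=\Omega^{1-d}=\Sigma^{d-1}$, which is exactly the $(d-1)$-Calabi-Yau condition.

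First I would compute $\Omega^d\Tr$ in terms of the $\Lambda$-dual $(-)^*=\Hom_\Lambda(-,\Lambda)$. Starting from a projective presentation $P_1\xrightarrow{g}P_0\to X\to0$ of $X\in\CM(\Lambda)$ and applying $(-)^*$, left exactness produces the four-term exact sequence $0\to X^*\to P_0^*\xrightarrow{g^*}P_1^*\to\Tr X\to0$ with $P_0^*,P_1^*$ projective. Reading off syzygies from this sequence gives $\Omega\Tr X\simeq\Im g^*$ and $\Omega^2\Tr X\simeq X^*$ in the stable category, functorially in $X$. Since $\Lambda$ is Gorenstein, $X\in\CM(\Lambda)$ forces $X^*\in\CM(\Lambda^{\op})$, and on $\underline{\CM}$ the syzygy $\Omega$ is invertible, so $\Omega^d\Tr X\simeq\Omega^{d-2}X^*$ as an object of $\underline{\CM}(\Lambda^{\op})$, which is exactly where $\Omega^d\Tr$ takes values by the Auslander-Bridger transpose duality recalled earlier.

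Next I would transport this through $D_d$. The key point is that $D_d=\Hom_R(-,R)$ restricts to an exact duality between the Frobenius categories $\CM(\Lambda^{\op})$ and $\CM(\Lambda)$ that carries projective-injective objects to projective-injective objects; applying $D_d$ to a conflation $0\to\Omega Y\to P\to Y\to0$ therefore exhibits $D_d\Omega Y$ as a cosyzygy of $D_dY$, giving the intertwining relation $D_d\Omega\simeq\Omega^{-1}D_d$. Combining the two steps,
\[
\tau X=D_d\Omega^d\Tr X\simeq D_d\Omega^{d-2}X^*\simeq\Omega^{2-d}D_dX^*=\Omega^{2-d}\nu X,
\]
using $\nu=D_d(-)^*$ in the last equality. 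For $\Lambda$ symmetric we have $\nu\simeq\id$ (as recalled just before the statement), whence $\tau\simeq\Omega^{2-d}$, and the Calabi-Yau conclusion follows as above.

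The hard part will be the bookkeeping that makes all of these isomorphisms functorial on the stable category rather than merely objectwise, and ensuring every module stays inside $\CM$ so that $(-)^*$ and $D_d$ are honest dualities and $\Omega$ is invertible; in particular one must be careful that $\Tr$ and the intermediate syzygies, which a priori need not be Cohen-Macaulay, enter only through $\Omega^d\Tr$, which does land in $\CM(\Lambda^{\op})$. Once naturality of the four-term sequence and of $D_d\Omega\simeq\Omega^{-1}D_d$ is checked, the chain of isomorphisms is formal.
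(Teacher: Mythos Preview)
Your argument is correct and is exactly the approach the paper gestures toward: the paper itself does not give a proof, only remarking that for a symmetric order the Nakayama functor $\nu$ is isomorphic to the identity and attributing the result to Auslander. Your computation $\tau\simeq\Omega^{2-d}\nu$ via the four-term sequence $0\to X^*\to P_0^*\to P_1^*\to\Tr X\to0$ (giving $\Omega^2\Tr\simeq(-)^*$) together with the intertwining $D_d\Omega\simeq\Omega^{-1}D_d$ is the standard way to supply the missing details, and your caveats about functoriality and about staying inside $\CM$ so that $\Omega$ is invertible are exactly the points that need care.
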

For example, if $\Lambda$ is a finite-dimensional symmetric algebra over a field $k$, then the stable category $\underline{\mod}\Lambda$ forms a $(-1)$-CY triangulated category.

In the rest of this subsection we discuss other triangulated categories with Serre functors.
Let $\Lambda$ be a module-finite $R$-algebra.
We denote by $\DD^{\rm b}(\mod\Lambda)$ (respectively, $\DD^{\rm b}(\fl\Lambda)$) the bounded derived category of $\mod\Lambda$ (respectively, $\fl\Lambda$),
and by $\KK^{\rm b}(\add\Lambda)$ the homotopy category of bounded complexes on $\add\Lambda$.
It is not difficult to check the following result \cite{Ha,IY,AH}.
\begin{proposition}\label{serre functor of derived category}
\begin{itemize}
\item[(a)] If $\gl\Lambda<\infty$, then $\DD^{\rm b}(\fl\Lambda)$ forms a triangulated category with a Serre functor $-\stackrel{\bf L}{\otimes}_\Lambda{\bf R}\Hom_R(\Lambda,R)[d]$.
\item[(b)] If $\id{}_\Lambda\Lambda=\id\Lambda_\Lambda<\infty$, then
$\DD^{\rm b}(\fl\Lambda)\cap\KK^{\rm b}(\add\Lambda)$ forms a triangulated category with a Serre functor $-\stackrel{\bf L}{\otimes}_\Lambda{\bf R}\Hom_R(\Lambda,R)[d]$.
\end{itemize}
\end{proposition}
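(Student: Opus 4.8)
The plan is to verify that $\DD^{\mathrm b}(\fl\Lambda)$ (respectively $\DD^{\mathrm b}(\fl\Lambda)\cap\KK^{\mathrm b}(\add\Lambda)$) is a triangulated category admitting the claimed Serre functor, by reducing everything to the commutative Gorenstein base ring $R$ where Serre duality is already available. The key object is the functor $F:=-\stackrel{\mathbf L}{\otimes}_\Lambda\mathbf R\Hom_R(\Lambda,R)[d]$, and the heart of the matter is to produce a functorial isomorphism
\[
\Hom_{\DD}(X,Y)\simeq D\Hom_{\DD}(Y,FX)
\]
for $X,Y$ in the relevant subcategory, where $D=\Hom_R(-,E)$ with $E$ the injective hull of $R/J_R$ (Matlis duality, which on $\fl R$ agrees with the $D$ used throughout this section). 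First I would recall that since $R$ is complete local Gorenstein of Krull dimension $d$, the dualizing complex of $R$ is $R[d]$, so that $\mathbf R\Hom_R(M,R[d])$ computes the Matlis dual $D M$ up to the expected shift whenever $M$ has finite length; this is the local duality that underlies the whole construction.

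The core computation is an adjunction chain. For a complex $X$ in $\DD^{\mathrm b}(\fl\Lambda)$ and $Y$ in $\DD^{\mathrm b}(\mod\Lambda)$, I would write
\[
\Hom_{\DD(\mod\Lambda)}(Y,X\stackrel{\mathbf L}{\otimes}_\Lambda\mathbf R\Hom_R(\Lambda,R))\simeq\Hom_{\DD(\mod R)}(Y\stackrel{\mathbf L}{\otimes}_\Lambda X^{\vee},\mathbf R\Hom_R(\Lambda,R)),
\]
and then apply $\mathbf R\Hom_R(-,R[d])$ together with tensor-hom adjunction and the isomorphism $\mathbf R\Hom_R(\mathbf R\Hom_R(\Lambda,R),R)\simeq\Lambda$ (reflexivity of $\Lambda$ as a maximal Cohen-Macaulay, hence reflexive, $R$-module, valid since $R$ is Gorenstein). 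Running these adjunctions and feeding in local duality $\mathbf R\Hom_R(-,R[d])\simeq D(-)$ on finite-length cohomology turns $\Hom_{\DD}(Y,FX)$ into $D\Hom_{\DD}(X,Y)$. The boundedness and finite-length hypotheses guarantee that all cohomology groups are finite length, so Matlis duality is honest and the spectral-sequence or truncation bookkeeping terminates; in particular $F$ preserves the subcategory, which is what makes it an autoequivalence rather than merely an endofunctor.

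For part (b) the only extra input is that one must stay inside $\KK^{\mathrm b}(\add\Lambda)$: the hypothesis $\id{}_\Lambda\Lambda=\id\Lambda_\Lambda<\infty$ is precisely what ensures that $\mathbf R\Hom_R(\Lambda,R)$, equivalently $D_d\Lambda$, is a perfect complex so that $F$ carries $\KK^{\mathrm b}(\add\Lambda)$ to itself while still preserving finite-length total cohomology; I would check stability of both defining conditions under $F$ and its quasi-inverse $-\stackrel{\mathbf L}{\otimes}_\Lambda\mathbf R\Hom_R(R,\Lambda)[-d]$. The main obstacle I anticipate is purely homological bookkeeping rather than any conceptual difficulty: one must keep careful track of the shift by $[d]$ and verify that the chain of adjunction isomorphisms is genuinely functorial in both variables (not just an isomorphism of groups), which requires using the derived functors and their natural transformations rather than cohomology in a fixed degree. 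Given the Gorenstein hypothesis on $R$ and the finiteness hypotheses on $\Lambda$, everything else is formal, so I would present the argument as the above adjunction computation with local duality as its engine.
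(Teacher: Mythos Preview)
The paper does not actually give a proof of this proposition; it simply states that ``it is not difficult to check'' and cites \cite{Ha,IY,AH}. So there is no argument in the paper to compare against. Your overall strategy---reduce to local duality over the Gorenstein base $R$ via a chain of tensor--hom adjunctions, using that $\mathbf R\Hom_R(-,R[d])\simeq D$ on $\fl R$---is the standard and correct one, and your identification of the roles of the hypotheses (finite global dimension in (a) to make every $X$ perfect; finite two-sided injective dimension in (b) to ensure $\mathbf R\Hom_R(\Lambda,R)$ is perfect so $F$ preserves $\KK^{\rm b}(\add\Lambda)$) is accurate.

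One point deserves tightening. Your displayed isomorphism
\[
\Hom_{\DD(\mod\Lambda)}\bigl(Y,\,X\stackrel{\mathbf L}{\otimes}_\Lambda\mathbf R\Hom_R(\Lambda,R)\bigr)\simeq\Hom_{\DD(\mod R)}\bigl(Y\stackrel{\mathbf L}{\otimes}_\Lambda X^{\vee},\,\mathbf R\Hom_R(\Lambda,R)\bigr)
\]
is not quite the right adjunction step as written, and $X^{\vee}$ is never defined. The clean chain, for $X$ perfect with $X^{\vee}:=\mathbf R\Hom_\Lambda(X,\Lambda)$, is
\[
\mathbf R\Hom_\Lambda\bigl(Y,\,X\stackrel{\mathbf L}{\otimes}_\Lambda\mathbf R\Hom_R(\Lambda,R)\bigr)
\simeq\mathbf R\Hom_R\bigl(\mathbf R\Hom_\Lambda(X,Y),\,R\bigr),
\]
obtained by first writing $X\stackrel{\mathbf L}{\otimes}_\Lambda\mathbf R\Hom_R(\Lambda,R)\simeq\mathbf R\Hom_R(X^{\vee},R)$ (valid for perfect $X$), then applying tensor--hom adjunction and the isomorphism $Y\stackrel{\mathbf L}{\otimes}_\Lambda X^{\vee}\simeq\mathbf R\Hom_\Lambda(X,Y)$. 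After this correction your local-duality step finishes the argument exactly as you describe. The reflexivity of $\Lambda$ you invoke is not needed here; what matters is perfection of $X$, which is supplied by the hypotheses in (a) and (b) respectively.
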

A module-finite $R$-algebra $\Lambda$ is called \emph{$n$-CY} for an integer $n$ if $\DD^{\rm b}(\fl\Lambda)$ forms an $n$-CY triangulated category
in the sense of Definition \ref{serre functor} \cite{IR} (see also \cite{CRo,G}).
Similarly, $\Lambda$ is called \emph{perfectly $n$-CY}
if $\DD^{\rm b}(\fl\Lambda)\cap\KK^{\rm b}(\add\Lambda)$ forms an $n$-CY triangulated category.
Immediately from Proposition \ref{serre functor of derived category} we have that
\begin{itemize}
\item any non-singular symmetric $R$-order is $d$-CY,
\item any symmetric $R$-order is perfectly $d$-CY.
\end{itemize}
Notice that a symmetric $R$-order $\Lambda$ is non-singular if and only if $\gl\Lambda<\infty$.
For example, a finite-dimensional symmetric algebra $\Lambda$ over a field is perfectly $0$-CY.
If a symmetric $R$-order $\Lambda$ has a NCCR $\Gamma$, then it is not difficult to show that
$\Gamma$ is a non-singular symmetric $R$-order, hence $\Gamma$ is $d$-CY.

For example, let $k$ be a field of characteristic zero and $S=k[[x_1,\cdots,x_d]]$ the formal power series ring.
For a finite subgroup $G$ of $\SL_d(k)$, let $\Lambda$ be the invariant subring $S^G$.
Then $S$ gives a NCCR $\End_\Lambda(S)\simeq S*G$ of $\Lambda$ \cite{V2}, and $S*G$ is a $d$-CY algebra.

In particular, we have the following observation.
\begin{proposition}\label{(d-1) and d}
Let $\Lambda$ be a symmetric $R$-order which is an isolated singularity.
If $M$ is a $(d-1)$-cluster tilting object in $\CM(\Lambda)$, then $\End_\Lambda(M)$ is a non-singular symmetric $R$-order, and hence a $d$-CY algebra.
\end{proposition}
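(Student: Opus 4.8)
The non-singularity of $\Gamma:=\End_\Lambda(M)$ is already supplied by Theorem \ref{Auslander algebra for n=d-1}, so the plan is to establish the two remaining assertions: that $\Gamma$ is \emph{symmetric}, and that it is consequently $d$-CY. I would dispose of the second (easy) implication first, since it only repeats the reasoning behind Proposition \ref{serre functor of derived category}. As $\Gamma$ is an $R$-order we have $\Gamma\in\CM(R)$, and since $R$ is Gorenstein this forces $\Ext^i_R(\Gamma,R)=0$ for $i>0$, so ${\bf R}\Hom_R(\Gamma,R)\simeq\Hom_R(\Gamma,R)$; if the latter is isomorphic to $\Gamma$ as a $(\Gamma,\Gamma)$-bimodule, then the Serre functor $-\stackrel{\bf L}{\otimes}_\Gamma{\bf R}\Hom_R(\Gamma,R)[d]$ of Proposition \ref{serre functor of derived category}(a) (available because $\gl\Gamma=d<\infty$) collapses to the shift $[d]$, which is exactly the $d$-CY property. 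Thus everything reduces to the symmetry of $\Gamma$.

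To prove $\Hom_R(\Gamma,R)\simeq\Gamma$ as bimodules I would exploit that $\Lambda$ is symmetric, which (by the remark recalled above) makes the Nakayama functor $\nu=D_d(-)^*$ on $\CM(\Lambda)$ naturally isomorphic to the identity; in particular there is an isomorphism $\nu M\simeq M$ natural in $M$, hence compatible with the $\Gamma=\End_\Lambda(M)$-action. On projectives $\nu$ behaves as a Serre functor: for $P\in\add\Lambda$ and any $N$ there is a natural bimodule isomorphism $D_d\Hom_\Lambda(P,N)\simeq\Hom_\Lambda(N,\nu P)$, immediate from the adjunction $\Hom_\Lambda(N,\Hom_R(\Lambda,R))\simeq\Hom_R(N,R)$. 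Setting $N=P=M$ and combining with $\nu M\simeq M$ would give the desired
\[
D_d\Gamma=D_d\Hom_\Lambda(M,M)\simeq\Hom_\Lambda(M,\nu M)\simeq\Hom_\Lambda(M,M)=\Gamma,
\]
\emph{provided} $M$ were projective.

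The main obstacle is precisely that $M$ is not projective, so this Serre-type formula does not apply directly. I would circumvent it by a codimension-one descent. The adjunction together with $\nu M\simeq M$ still produces a canonical bimodule homomorphism $\Gamma\to D_d\Gamma$ for arbitrary $M$, and both $\Gamma$ and $D_d\Gamma$ lie in $\CM(R)$ (recall $D_d$ is a duality on $\CM(R)$), hence are reflexive $R$-modules. A morphism of reflexive modules over the Gorenstein local ring $R$ is an isomorphism as soon as it is one after localizing at every prime of height at most one; as $d\ge 2$ such primes are non-maximal. Here the isolated-singularity hypothesis is exactly what is needed: for each non-maximal prime $\mathfrak p$ the order $\Lambda_{\mathfrak p}$ is non-singular, so $M_{\mathfrak p}\in\CM(\Lambda_{\mathfrak p})=\add\Lambda_{\mathfrak p}$ is projective, while the symmetric form on $\Lambda$ localizes to one on $\Lambda_{\mathfrak p}$. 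The projective case of the preceding paragraph then shows the localized map is an isomorphism, and the descent completes the proof that $\Gamma$ is symmetric.

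Finally I would record the shortcut matching the surrounding discussion: by the characterization of $(d-1)$-cluster tilting objects as generator-cogenerators giving a NCCR of $\Lambda$ (the theorem preceding this one), $M$ gives the NCCR $\Gamma$ of $\Lambda$, and the assertion that a NCCR of a symmetric $R$-order is a non-singular symmetric $R$-order — whose content is precisely the symmetry argument above — yields the proposition, with the $d$-CY conclusion following as in the first paragraph.
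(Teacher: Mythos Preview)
Your proposal is correct and follows the paper's own route: the paper treats this proposition as an observation, deducing non-singularity from Theorem~\ref{Auslander algebra for n=d-1}, symmetry from the remark that a NCCR of a symmetric $R$-order is a non-singular symmetric $R$-order (stated there as ``not difficult to show''), and the $d$-CY conclusion from the bullet following Proposition~\ref{serre functor of derived category}. Your codimension-one descent is precisely a way to fill in that ``not difficult'' step, and your final paragraph reproduces the paper's argument verbatim; the only cosmetic point is that the natural bimodule map one writes down is more easily produced in the direction $D_d\Gamma\to\Gamma$ (by dualizing $M^*\otimes_\Lambda M\to\Hom_\Lambda(M,M)$), but since both sides are reflexive this is immaterial.
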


We end this subsection by the following characterization of $n$-CY algebras \cite{IR}.
\begin{theorem}
Let $\Lambda$ be a module-finite $R$-algebra which is a faithful $R$-module.
Then the following conditions are equivalent.
\begin{itemize}
\item[(a)] $\Lambda$ is $n$-CY for some $n$.
\item[(b)] $\Lambda$ is $d$-CY.
\item[(c)] $\Lambda$ is a non-singular symmetric $R$-order.
\end{itemize}
\end{theorem}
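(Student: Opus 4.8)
The plan is to prove the cycle $(c)\Rightarrow(b)\Rightarrow(a)\Rightarrow(c)$, with essentially all the work in the last implication. The implication $(b)\Rightarrow(a)$ is trivial, since a $d$-CY algebra is $n$-CY for $n=d$. For $(c)\Rightarrow(b)$ I would argue as the paper already indicates: if $\Lambda$ is a non-singular symmetric $R$-order then $\gl\Lambda=d<\infty$, so Proposition~\ref{serre functor of derived category}(a) applies and the Serre functor of $\DD^{\rm b}(\fl\Lambda)$ is $-\stackrel{\bf L}{\otimes}_\Lambda{\bf R}\Hom_R(\Lambda,R)[d]$. Since $\Lambda\in\CM(R)$ we have $\Ext^i_R(\Lambda,R)=0$ for $i>0$, hence ${\bf R}\Hom_R(\Lambda,R)=\Hom_R(\Lambda,R)$, and the symmetry hypothesis gives $\Hom_R(\Lambda,R)\simeq\Lambda$ as $(\Lambda,\Lambda)$-bimodules. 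Thus the Serre functor is $-\stackrel{\bf L}{\otimes}_\Lambda\Lambda[d]=[d]$, so $\Lambda$ is $d$-CY.

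For $(a)\Rightarrow(c)$ assume $\Lambda$ is $n$-CY, so $\DD^{\rm b}(\fl\Lambda)$ carries the Serre functor $[n]$ and we have Serre duality $\Ext^i_\Lambda(S,S')\simeq D\Ext^{n-i}_\Lambda(S',S)$ for all simple $\Lambda$-modules $S,S'$. First I would establish that $\gl\Lambda=n$. For $i>n$ the right-hand side vanishes, so $\Ext^i_\Lambda(S,S')=0$ for every simple $S'$; as $R$ is complete local and $\Lambda$ is semiperfect this forces $\pd S\le n$, whence $\gl\Lambda\le n$. Conversely, taking $i=n$ gives $\Ext^n_\Lambda(S,S)\simeq D\Hom_\Lambda(S,S)\neq0$, so $\pd S=n$ for every simple $S$ and $\gl\Lambda=n$.

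Now that $\gl\Lambda<\infty$, Proposition~\ref{serre functor of derived category}(a) provides a second description of the Serre functor, namely $-\stackrel{\bf L}{\otimes}_\Lambda\omega[d]$ with $\omega:={\bf R}\Hom_R(\Lambda,R)$. By uniqueness of Serre functors this is isomorphic to $[n]$, so $-\stackrel{\bf L}{\otimes}_\Lambda\omega\simeq[n-d]$ on $\DD^{\rm b}(\fl\Lambda)$. The plan is to upgrade this to a bimodule statement: a complex of $(\Lambda,\Lambda)$-bimodules whose derived tensor action on every finite length module is the fixed shift $[n-d]$ must itself be isomorphic to $\Lambda[n-d]$ in the derived category of bimodules. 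Here completeness of $R$ is used, so that $\Lambda$ is semiperfect and the action on the finite length modules (equivalently, on the simples, which generate) recovers the bimodule up to isomorphism. Granting $\omega\simeq\Lambda[n-d]$, the faithfulness of $\Lambda$ as an $R$-module finishes the argument: faithfulness gives $\dim_R\Lambda=d$, while $\Ext^i_R(\Lambda,R)$ has dimension $<d$ for $i>0$ and vanishes for $i<0$. Since $\omega\simeq\Lambda[n-d]$ is concentrated in the single cohomological degree $d-n$, where its cohomology $\Ext^{d-n}_R(\Lambda,R)$ must equal the full-dimensional module $\Lambda$, that degree is forced to be $0$. Hence $n=d$, $\Ext^i_R(\Lambda,R)=0$ for $i>0$ (so $\Lambda\in\CM(R)$ is an $R$-order), and $\Hom_R(\Lambda,R)\simeq\Lambda$ as bimodules (so $\Lambda$ is symmetric); finally $\gl\Lambda=n=d$ makes $\Lambda$ non-singular, giving (c).

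The main obstacle is the bimodule recovery step, that is, passing from ``$-\stackrel{\bf L}{\otimes}_\Lambda\omega$ is the shift $[n-d]$ on $\DD^{\rm b}(\fl\Lambda)$'' to ``$\omega\simeq\Lambda[n-d]$ as bimodules''. Evaluation on simples together with the change-of-rings isomorphism ${\bf R}\Hom_\Lambda(S,\omega)\simeq{\bf R}\Hom_R(S,R)\simeq(DS)[-d]$ constrains $\omega$, but turning this into an identification of the bimodule requires knowing the tensor action determines it; making this precise, using the completeness of $R$, is the delicate point. Once the bimodule is identified, the remaining conclusions ($n=d$, the Cohen-Macaulay property, symmetry, and non-singularity) follow from the elementary dimension bookkeeping enabled by faithfulness.
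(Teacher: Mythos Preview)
The paper does not prove this theorem; it states the result and refers to \cite{IR} for the argument, so there is no in-paper proof to compare against directly. That said, your cycle $(c)\Rightarrow(b)\Rightarrow(a)\Rightarrow(c)$ is the natural strategy and matches what one finds in \cite{IR}.

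The implications $(c)\Rightarrow(b)$ and $(b)\Rightarrow(a)$ are handled correctly. For $(a)\Rightarrow(c)$, your deduction of $\gl\Lambda=n$ from Serre duality on simples is fine once you identify morphisms in $\DD^{\rm b}(\fl\Lambda)$ with the usual $\Ext^i_\Lambda$; this needs the reading of $\DD^{\rm b}(\fl\Lambda)$ as the full subcategory of $\DD^{\rm b}(\mod\Lambda)$ on complexes with finite-length cohomology, which is the intended convention here but deserves a sentence. Your endgame, extracting $n=d$, the Cohen--Macaulay property, and symmetry from a bimodule isomorphism $\omega\simeq\Lambda[n-d]$ via faithfulness and the Gorenstein bound $\dim\Ext^i_R(\Lambda,R)\le d-i$, is correct.

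You correctly flag the bimodule recovery step as the obstacle, and it is genuinely the heart of the matter rather than a formality. Uniqueness of Serre functors only gives a natural isomorphism of \emph{functors} $-\stackrel{\bf L}{\otimes}_\Lambda\omega\simeq[n-d]$ on $\DD^{\rm b}(\fl\Lambda)$. Upgrading this to $\omega\simeq\Lambda[n-d]$ as a one-sided complex is easy (replace $\omega$ by a minimal perfect complex of left projectives and test against each simple right module; minimality kills the differentials and pins down every term). But promoting this to a $(\Lambda,\Lambda)$-\emph{bimodule} isomorphism is not automatic: one must argue, using completeness, that the natural isomorphisms on the tower $\Lambda/J_\Lambda^m$ are compatible and pass to the inverse limit, or equivalently exploit that the isomorphism of Serre functors is functorial in both $\Lambda$-actions. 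As written, your proposal is an accurate roadmap that stops just short of this step.
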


\subsection{$2$-cluster tilting for $2$-Calabi-Yau categories}\label{2-cluster tilting for 2-Calabi-Yau categories}

Let $\TTT$ be a triangulated category.
We define $n$-cluster tilting subcategories (respectively, objects) and $n$-rigid subcategories (respectively, objects) of $\TTT$
by replacing $\Ext^i_{\A}(-,-)$ in Definition \ref{definition of n-cluster tilting} by $\Hom_{\TTT}(-,\Sigma^i-)$.

In this section we briefly recall a few results on $2$-cluster tilting subcategories of $2$-CY triangulated categories.
They are mainly studied from the viewpoint of categorification of Fomin-Zelevinsky cluster algebras \cite{FZ}
using cluster categories \cite{BMRRT,CCS,CK} and stable categories of preprojective algebras \cite{GLS1,GLS5,BIRSc}.
Especially, the following unique replacement property \cite{BMRRT,IY} of indecomposable direct summands of $2$-cluster tilting objects is important.
\begin{theorem}
Let $M$ be a basic $2$-cluster tilting object in a 2-CY triangulated category $\TTT$.
For any indecomposable direct summand $X$ of $M$, we take a decomposition $M=X\oplus N$.
\begin{itemize}
\item[(a)] There exists exactly one indecomposable object $Y\in\TTT$ which is not isomorphic to $X$ such that $Y\oplus N$ is a basic $2$-cluster tilting object in $\TTT$.
\item[(b)] There exist triangles
\[X\xrightarrow{g'}N_1\xrightarrow{f'}Y\to\Sigma X\ \mbox{ and }\
Y\xrightarrow{g}N_0\xrightarrow{f}X\to\Sigma Y\]
in $\TTT$ such that $f$ and $f'$ are right $(\add N)$-approximation and $g$ and $g'$ are left $(\add N)$-approximation.
\end{itemize}
\end{theorem}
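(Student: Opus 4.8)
\emph{Plan.} Write $\CC=\add M$ and $\DD=\add N$; since $M$ is $2$-cluster tilting, $\CC$ and hence $\DD$ are functorially finite in $\TTT$. The plan is to realize $Y$ as the third term of approximation triangles over $\DD$ and then to prove that $\add(N\oplus Y)$ is again $2$-cluster tilting. Concretely, first I would choose a minimal right $\DD$-approximation $f\colon N_0\to X$ and complete it to a triangle $Y\xrightarrow{g}N_0\xrightarrow{f}X\xrightarrow{w}\Sigma Y$, and dually a minimal left $\DD$-approximation $g'\colon X\to N_1$ giving $X\xrightarrow{g'}N_1\xrightarrow{f'}Y'\to\Sigma X$. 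The approximation statements in (b) are then cheap: applying $\Hom_{\TTT}(-,N)$ to the first triangle and using $\Hom_{\TTT}(X,\Sigma N)=0$ shows $g$ is a left $\DD$-approximation of $Y$, and dually applying $\Hom_{\TTT}(N,-)$ with $\Hom_{\TTT}(N,\Sigma X)=0$ shows $f'$ is a right $\DD$-approximation; $f$ and $g'$ are approximations by construction. Since $X\notin\DD$ we have $w\neq0$, so the $2$-CY isomorphism $\Hom_{\TTT}(X,\Sigma Y)\simeq D\Hom_{\TTT}(Y,\Sigma X)$ forces both to be nonzero and hence $Y\not\simeq X$; that $Y$ is moreover indecomposable will fall out of the uniqueness analysis below.

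The real content is that $N\oplus Y$ is $2$-cluster tilting, and I would split this into rigidity and maximality. For rigidity, the mixed vanishing comes from the approximation property: applying $\Hom_{\TTT}(N,-)$ to the triangle, the surjectivity of $\Hom_{\TTT}(N,N_0)\to\Hom_{\TTT}(N,X)$ built into $f$ kills $\Hom_{\TTT}(N,\Sigma Y)$ outright, while for $\Hom_{\TTT}(Y,\Sigma N)$ one first embeds it into $\Hom_{\TTT}(X,\Sigma^2 N)\simeq D\Hom_{\TTT}(N,X)$ via Serre duality and then observes that the next connecting map is the $D$-dual of the onto map $\Hom_{\TTT}(N,N_0)\to\Hom_{\TTT}(N,X)$, hence injective, which forces the image of $\Hom_{\TTT}(Y,\Sigma N)$ to vanish. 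The hard part will be the self-extension $\Hom_{\TTT}(Y,\Sigma Y)=0$: unwinding the triangle identifies it with $\Cok\bigl(\Hom_{\TTT}(Y,N_0)\to\Hom_{\TTT}(Y,X)\bigr)$, so I must show that \emph{every} map $Y\to X$ factors through $f$, and here the approximation property alone is insufficient --- one needs a diagram chase that feeds the already-established vanishing $\Hom_{\TTT}(N_0,\Sigma Y)=0$ back through the $2$-CY pairing.

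For maximality, suppose $\Hom_{\TTT}(N\oplus Y,\Sigma Z)=0$; I must show $Z\in\add(N\oplus Y)$. The naive reduction to $N\oplus X$ fails, because applying $\Hom_{\TTT}(-,\Sigma Z)$ to the triangle only exhibits $\Hom_{\TTT}(X,\Sigma Z)$ as a quotient of $\Hom_{\TTT}(Y,Z)$, not as zero. Instead I would resolve $Z$ by a triangle with terms in $\CC=\add(N\oplus X)$ (available since $M$ is $2$-cluster tilting), splice in the exchange triangle, and cancel the $X$-contribution using $\Hom_{\TTT}(X,\Sigma Y)\neq0$. Granting rigidity and maximality, $N\oplus Y$ is $2$-cluster tilting; this gives existence in (a), and by uniqueness of minimal right $\DD$-approximations it identifies $Y'\simeq Y$, so the two triangles of (b) share the same third term. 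For uniqueness in (a) I would show the indecomposable complements of $N$ are exactly $X$ and $Y$: any complement $Y''$ satisfies $\Hom_{\TTT}(X,\Sigma Y'')\neq0$ (otherwise $\Hom_{\TTT}(X,\Sigma(N\oplus Y''))=0$ would put $X\in\add(N\oplus Y'')$, which is impossible), so $Y''$ is forced to be the third term of the essentially unique minimal right $\DD$-approximation triangle of $X$, whence $Y''\simeq Y$; this also yields indecomposability of $Y$.

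\textbf{Main obstacle.} The two genuinely delicate steps are the self-extension vanishing $\Hom_{\TTT}(Y,\Sigma Y)=0$ and the maximality of $N\oplus Y$. Both are precisely where the $2$-Calabi--Yau (Serre) duality is indispensable, since it is what converts the one-sided surjectivity of the approximation maps into the two-sided $\Hom$-vanishing required to recognise a cluster tilting object; everything else --- constructing the triangles and reading off the approximation properties in (b) --- is formal.
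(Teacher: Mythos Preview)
The paper does not prove this theorem at all: it is quoted verbatim from \cite{BMRRT,IY} with no argument given, as is typical for this survey. So there is no ``paper's own proof'' to compare against; what you have written is essentially the standard proof from those references, in particular the Iyama--Yoshino mutation argument.

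Your plan is sound and hits the right points, but one remark will save you work: you overcomplicate the vanishing of $\Hom_{\TTT}(Y,\Sigma N)$. In a $2$-CY category this is \emph{immediate} from $\Hom_{\TTT}(Y,\Sigma N)\simeq D\Hom_{\TTT}(N,\Sigma Y)=0$, which you already have; there is no need to pass through $\Hom_{\TTT}(X,\Sigma^2 N)$ and chase connecting maps. More substantively, your uniqueness sketch has a small gap: knowing $\Hom_{\TTT}(X,\Sigma Y'')\neq0$ does not by itself force $Y''$ to be the cone of the minimal right $\DD$-approximation of $X$. The usual closure of this step is to take a minimal right $\DD$-approximation $N_0''\to Y''$ and observe that, since $N\oplus Y''$ is $2$-cluster tilting, its cone lies in $\add(N\oplus X)$ and has no summand in $\add N$ (by minimality), hence lies in $\add X$; indecomposability of $Y''$ and of $X$ then pins it down. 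Your maximality paragraph is also a bit hand-wavy where you ``cancel the $X$-contribution'': in practice one takes a right $\add M$-approximation of $Z$, writes its $\add X$-part through the exchange triangle, and uses the already-proved rigidity of $N\oplus Y$ together with $2$-CY duality to conclude; this is exactly the step in \cite{IY} that you should cite or reproduce carefully.
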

In this case, we call $Y\oplus N$ a \emph{$2$-cluster tilting mutation} of $X\oplus N$.
This is an analogue of tilting mutations which we shall apply in Subsection \ref{Hypersurface singularities}.
There are many interesting results on $2$-cluster tilting mutations.
For example, the quivers of $\End_{\TTT}(X\oplus N)$ and $\End_{\TTT}(Y\oplus N)$ are related by Fomin-Zelevinsky mutation \cite{FZ}, \cite{BMR2,BIRSc}.

Next we consider the triangulated analogue of $2$-Auslander algebras.
The endomorphism algebra $\End_{\TTT}(M)$ of a $2$-cluster tilting object $M$ in a 2-CY triangulated category $\TTT$ is called a \emph{2-CY tilted algebra}.
When $\TTT$ is the stable category $\underline{\CM}(\Lambda)$ of an $R$-order $\Lambda$, then $\End_{\TTT}(M)$ is
a factor algebra of the $2$-Auslander algebra $\End_\Lambda(M)$ which enjoys nice properties (see Theorem \ref{Auslander algebra for n=d-1}, Proposition \ref{(d-1) and d}).
The following result \cite{BMR,KR} shows that 2-CY tilted algebras also enjoy nice properties.
\begin{theorem} \label{properties of 2-CY}
Let $\Gamma=\End_{\TTT}(M)$ be a 2-CY tilted algebra.
\begin{itemize}
\item[(a)] $\id{}_\Gamma\Gamma=\id\Gamma_\Gamma\le 1$ holds.
\item[(b)] The following category forms a $3$-CY Frobenius category
\[\Sub\Gamma:=\{X\in\mod\Gamma\ |\ X \mbox{ is a submodule of $\Gamma^\ell$ for some }\ell\}.\]
\item[(c)] We have the following equivalence of categories
\[\Hom_{\TTT}(M,-):\TTT/[\Sigma M]\to\mod\Gamma.\]
\end{itemize}
\end{theorem}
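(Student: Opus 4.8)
The plan is to make the functor $F:=\Hom_\TTT(M,-)\colon\TTT\to\mod\Gamma$ the central tool, where $\Hom_\TTT(M,X)$ is viewed as a right $\Gamma$-module by precomposition. The backbone of the whole argument is the observation that \emph{both} the projective and the injective right $\Gamma$-modules lie in the image of $F$. Indeed, $F$ restricts to the standard equivalence $\add M\xrightarrow{\sim}\add\Gamma_\Gamma$ onto the projectives. For the injectives, the indecomposable injective right $\Gamma$-modules are the $D$-duals of the indecomposable projective left $\Gamma$-modules $\Hom_\TTT(M',M)$ with $M'\in\add M$; applying the $2$-CY isomorphism $\Hom_\TTT(M',M)\simeq D\Hom_\TTT(M,\Sigma^2M')$ together with $DD\simeq\id$ identifies them bifunctorially with $F(\Sigma^2M')=\Hom_\TTT(M,\Sigma^2M')$, so the injectives come from $\add\Sigma^2M$. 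I would also recall that, by the cluster-tilting property of $M$, every object admits a right-approximation triangle $M_1\to M_0\to X\to\Sigma M_1$ and a left-approximation (co-resolution) triangle $X\to M^0\to M^1\to\Sigma X$ with terms in $\add M$.

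For part (c) I would first prove density: given $N\in\mod\Gamma$ with projective presentation $Q_1\to Q_0\to N\to0$, lift it via the equivalence on $\add M$ to $b\colon M_1\to M_0$, complete to a triangle $M_1\xrightarrow{b}M_0\to X\to\Sigma M_1$, and apply $F$; since $F(\Sigma M_1)=0$ by rigidity, the long exact sequence yields $FX\simeq N$. Next I would compute $\ker F$: for $f\colon X\to Y$ one has $Ff=0$ iff $f$ kills the right approximation $M_0\to X$, and the triangle $M_1\to M_0\to X\to\Sigma M_1$ then forces $f$ to factor through $\Sigma M_1\in\add\Sigma M$; conversely any map through $\add\Sigma M$ dies under $F$ because $F(\Sigma M)=0$. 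Hence $\ker F=[\Sigma M]$, so $F$ is faithful on $\TTT/[\Sigma M]$, and fullness follows by lifting a given $\Gamma$-map to a morphism of the two approximation triangles. Together these give the equivalence $\TTT/[\Sigma M]\xrightarrow{\sim}\mod\Gamma$.

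For part (a) I would apply $F$ to a single well-chosen triangle. Take the left-approximation triangle of $\Sigma^{-2}M$, namely $\Sigma^{-2}M\to N_0\to N_1\to\Sigma^{-1}M$ with $N_0,N_1\in\add M$, and apply $\Sigma^2$ to get $M\to\Sigma^2N_0\to\Sigma^2N_1\to\Sigma M$. Applying $F$ and using that the two outer terms $\Hom_\TTT(M,\Sigma N_1)$ and $\Hom_\TTT(M,\Sigma M)$ vanish by rigidity, the long exact sequence collapses to a short exact sequence $0\to\Gamma_\Gamma\to F(\Sigma^2N_0)\to F(\Sigma^2N_1)\to0$ whose two nonzero right-hand terms are injective by the backbone observation. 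Thus $\id\Gamma_\Gamma\le1$. Replacing $\TTT$ by $\TTT^{\rm op}$ (again $2$-CY, with $\End(M)=\Gamma^{\rm op}$) gives $\id{}_\Gamma\Gamma\le1$, and the two values coincide by Zaks' theorem since both are finite.

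Part (b) is where the real work lies. From (a), $\Gamma$ is Iwanaga--Gorenstein of Gorenstein dimension at most one, so $\Ext^i_\Gamma(-,\Gamma)=0$ for $i\ge2$; a dimension shift along a sequence $0\to X\to\Gamma^\ell\to C\to0$ then shows that every torsionless module $X$ satisfies $\Ext^i_\Gamma(X,\Gamma)=0$ for $i\ge1$, whence $\Sub\Gamma$ coincides with the category of maximal Cohen--Macaulay, equivalently Gorenstein-projective, $\Gamma$-modules. By the Buchweitz--Happel theory this is a Frobenius exact category with projective-injective objects $\add\Gamma$, and its stable category $\underline{\Sub\Gamma}=\underline{\CM}(\Gamma)$ is triangulated with suspension $\Omega^{-1}$. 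The remaining, and hardest, point is that this stable category is $3$-CY. I would establish the functorial isomorphism $\underline{\Hom}_\Gamma(X,Y)\simeq D\,\underline{\Hom}_\Gamma(Y,\Omega^{-3}X)$ by transporting Serre duality from $\TTT$ through the equivalence of (c): the $2$-CY duality of $\TTT$ supplies two of the three shifts, and the extra shift arises from the syzygy functor relating the module structure of $\mod\Gamma$ to the triangulated structure of $\TTT$ --- equivalently, writing the Serre functor of $\underline{\CM}(\Gamma)$ as $\tau\circ\Omega^{-1}$ and showing that the transpose-type functor $\tau$ is $\Omega^{-2}$. The delicate step, which I expect to be the main obstacle, is to control precisely how $F$ converts the approximation triangles into $\Gamma$-module syzygies and to verify that the accumulated shift is exactly $3$; as an alternative I would pass to an algebraic model $\TTT=\underline{\mathcal E}$ for a stably $2$-CY Frobenius category $\mathcal E$ and transfer the $3$-CY structure from the non-stable endomorphism algebra $\End_{\mathcal E}(M)$, which is $3$-CY in the spirit of Proposition \ref{(d-1) and d}.
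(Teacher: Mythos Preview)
The paper does not give a proof of this theorem; it is quoted from \cite{BMR} and \cite{KR} without argument. So there is no in-paper proof to compare against, and your proposal is in effect a reconstruction of the Keller--Reiten and Buan--Marsh--Reiten arguments.

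Your treatment of (c) and (a) is correct and is essentially the original one. The identification of the injective $\Gamma$-modules with $F(\add\Sigma^2M)$ via the $2$-CY duality, and the injective coresolution of $\Gamma_\Gamma$ obtained by applying $F$ to the shifted left-approximation triangle of $\Sigma^{-2}M$, are exactly the Keller--Reiten computations. For (c), density, $\ker F=[\Sigma M]$, and fullness are handled as in \cite{BMR,KR}; your sketch of fullness (lift a $\Gamma$-map to a map of approximation triangles) is the standard one and works.

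For (b), the identification $\Sub\Gamma=\CM(\Gamma)$ and the Frobenius structure via Buchweitz--Happel are fine. The $3$-CY assertion is, as you say, the nontrivial point, and your proposal does not yet prove it. The missing ingredient is a precise compatibility statement: one needs that $F$ induces a \emph{triangle} equivalence from a suitable quotient of $\TTT$ to $\underline{\CM}(\Gamma)$ intertwining $\Sigma$ with $\Omega^{-1}$, so that the Serre functor $\Sigma^2$ on $\TTT$ transports to $\Omega^{-2}$ on $\underline{\CM}(\Gamma)$; combined with the general fact that the Serre functor of $\underline{\CM}(\Gamma)$ for a Gorenstein algebra is $\Omega\circ\nu$ (and here $\nu$ is controlled by $\Sigma^2$), one gets the shift of $3$. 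Your two suggested routes---tracking the shift through approximation triangles, or passing to a Frobenius model $\TTT=\underline{\mathcal E}$ and using the $3$-CY property of the non-stable endomorphism algebra---are both viable and both appear in the literature, but neither is carried out in your text. This is a genuine gap, not a mere matter of notation: without the triangle-compatibility of $F$ (or an equivalent device) the count of shifts cannot be pinned down.
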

We end this subsection with the following analogue \cite{BIRSc} of Bongartz completion for tilting modules of projective dimension at most one \cite{ASS}.
\begin{theorem}\label{cluster tilting completion}
Let $\XX$ be a 2-CY Frobenius category. Assume that $\XX$ has a $2$-cluster tilting object.
Then any $2$-rigid object in $\XX$ is a direct summand of some $2$-cluster tilting object in $\XX$.
\end{theorem}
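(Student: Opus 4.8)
The plan is to reduce the statement to the classical Bongartz completion for tilting modules of projective dimension at most one over an artin algebra, transported through the $2$-CY tilted algebra of a chosen $2$-cluster tilting object. First I would pass from the Frobenius category $\XX$ to its stable category $\TTT:=\underline{\XX}$, which is a $2$-CY triangulated category by hypothesis. Since $\Ext^1_{\XX}(X,Y)\simeq\Hom_{\TTT}(X,\Sigma Y)$ for $X,Y\in\XX$, an object is $2$-rigid in $\XX$ precisely when it is rigid in $\TTT$, and $M$ is a $2$-cluster tilting object in $\XX$ precisely when $\add M$ contains all projective-injective objects of $\XX$ and the image of $M$ is a $2$-cluster tilting object in $\TTT$. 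Thus it suffices to complete a given rigid object $U$ to a $2$-cluster tilting object inside $\TTT$, after which the projective-injectives are adjoined automatically.

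Next I would fix the given $2$-cluster tilting object $M$ of $\TTT$ and set $\Gamma:=\End_{\TTT}(M)$, a $2$-CY tilted algebra. By Theorem \ref{properties of 2-CY} we have $\id{}_\Gamma\Gamma=\id\Gamma_\Gamma\le1$ and an equivalence $F:=\Hom_{\TTT}(M,-)\colon\TTT/[\Sigma M]\xrightarrow{\sim}\mod\Gamma$. The key step is to translate the rigid object $U$ into a \emph{partial tilting} $\Gamma$-module. Writing $U=U'\oplus U''$ with $U''\in\add\Sigma M$ (these summands are annihilated by $F$) and $U'$ having no summand in $\add\Sigma M$, I would show that $FU'$ is a partial tilting module, that is $\pd_\Gamma FU'\le1$ and $\Ext^1_\Gamma(FU',FU')=0$. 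The bound on projective dimension comes from applying $F$ to a triangle $M_1\to M_0\to U'\to\Sigma M_1$ with $M_0,M_1\in\add M$, which yields a projective presentation, together with the Gorenstein property $\id\Gamma\le1$; the vanishing of self-extensions follows from a natural isomorphism $\Ext^1_\Gamma(FX,FY)\simeq\Hom_{\TTT}(X,\Sigma Y)$ for objects with no summand in $\add\Sigma M$, since $\Hom_{\TTT}(U',\Sigma U')=0$.

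I would then apply classical Bongartz completion in $\mod\Gamma$: there is a $\Gamma$-module $V$ with $FU'\oplus V$ a tilting module of projective dimension at most one. Lifting $V$ through the equivalence $F$ to an object $\tilde V\in\TTT$, the candidate completion is $N:=U'\oplus\tilde V\oplus U''$; one checks that $N$ is rigid and that the tilting property of $FU'\oplus V$ forces $N$ to be $2$-cluster tilting, via the correspondence between tilting $\Gamma$-modules of projective dimension at most one and $2$-cluster tilting objects of $\TTT$ (equivalently, because a maximal rigid object produced this way has the constant number of indecomposable summands of any basic $2$-cluster tilting object and is therefore cluster tilting).

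The hard part will be the dictionary in the second and third paragraphs: establishing the Ext-compatibility $\Ext^1_\Gamma(FX,FY)\simeq\Hom_{\TTT}(X,\Sigma Y)$ and the projective-dimension bound with full control of the summands in $\add\Sigma M$ that $F$ annihilates, and then verifying that the abstract Bongartz complement $V$ genuinely lifts to an object $\tilde V$ making $N$ cluster tilting rather than merely rigid. Making the correspondence between tilting $\Gamma$-modules of projective dimension at most one and $2$-cluster tilting objects of $\TTT$ precise — and in particular reconciling the summands $U''$ that are invisible to $F$, possibly by replacing $M$ with a suitable mutation so that $U$ has no summand in $\add\Sigma M$ — is the technical core; the $2$-Calabi-Yau property is used repeatedly to convert the one-sided vanishing conditions into the symmetric ones needed for rigidity.
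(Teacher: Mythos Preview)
The paper does not prove this theorem; it only states it with a reference to \cite{BIRSc}. The argument there is a \emph{direct} Bongartz-type construction inside the $2$-CY category: given a $2$-cluster tilting object $M$ and a $2$-rigid object $U$, one takes (in the stable category $\TTT$) a right $\add U$-approximation $g\colon U_0\to\Sigma M$, completes it to a triangle $M\to E\to U_0\xrightarrow{g}\Sigma M$, and checks using the $2$-CY duality that $E\oplus U$ is $2$-cluster tilting. No passage through $\mod\Gamma$ is involved.

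Your proposed route through the $2$-CY tilted algebra $\Gamma=\End_{\TTT}(M)$ has a genuine gap at the point you yourself flag. The forward direction is fine: from a triangle $M_1\to M_0\to X\to\Sigma M_1$ with $M_i\in\add M$ one obtains a length-one projective resolution of $FX$, and a short computation gives an \emph{injection}
\[
\Ext^1_\Gamma(FX,FY)\ \hookrightarrow\ \Hom_{\TTT}(X,\Sigma Y),
\]
so rigidity in $\TTT$ does imply $\Ext^1_\Gamma$-vanishing. The problem is the converse, which is what you need after Bongartz completion: the cokernel of $\Hom_{\TTT}(M_0,Y)\to\Hom_{\TTT}(M_1,Y)$ is only the \emph{kernel} of the map $\Hom_{\TTT}(X,\Sigma Y)\to\Hom_{\TTT}(M_0,\Sigma Y)$, and the latter map has no reason to vanish when $Y\notin\add M$. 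Hence $\Ext^1_\Gamma(FN,FN)=0$ for the completed tilting module $FN$ does not force $\Hom_{\TTT}(N,\Sigma N)=0$, so the lift $N=U'\oplus\tilde V$ need not be rigid in $\TTT$. The suggested fix of mutating $M$ to avoid summands in $\add\Sigma M$ does not touch this issue. More conceptually, the dictionary you are reaching for is not ``cluster tilting objects $\leftrightarrow$ tilting $\Gamma$-modules of projective dimension $\le 1$''; the correct bijection (established later, in work on $\tau$-tilting) is with \emph{support $\tau$-tilting} $\Gamma$-modules, and the classical Bongartz complement does not land in that class in any obvious way.

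The moral is that it is both shorter and safer to imitate Bongartz \emph{inside} the $2$-CY category rather than to export to $\mod\Gamma$ and try to import back: the approximation triangle above, together with the $2$-CY isomorphism $\Hom_{\TTT}(A,\Sigma B)\simeq D\Hom_{\TTT}(B,\Sigma A)$, lets you verify rigidity and maximality of $E\oplus U$ by hand without ever invoking a module-theoretic correspondence.
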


\section{Examples of $2$-cluster tilting objects}

In this section we give two classes of $2$-CY Frobenius categories with $2$-cluster tilting objects.
One class is constructed from finite-dimensional factor algebras of preprojective algebras of arbitrary quivers without loops.
Another class is constructed from one-dimensional hypersurface singularities $k[[x,y]]/(f)$.
Although these constructions are rather different, they are similar in the sense that we use factor algebras of $2$-CY algebras (preprojective algebras or $k[[x,y]]$).

\subsection{Preprojective algebras}\label{Cluster tilting for preprojective algebras}

In this subsection we shall recall the construction of 2-CY Frobenius categories with $2$-cluster tilting objects given in joint work with Buan, Reiten and Scott \cite{IR,BIRSc}.
This is closely related to the work of Geiss, Leclerc and Schr\"oer \cite{GLS4,GLS5}.

To accord with the conventions in \cite{BIRSc}, \emph{all modules in this subsection are left modules}.
Throughout this subsection we fix a connected quiver $Q$ without loops.
Let $\widetilde{Q}$ be the quiver constructed from $Q$ by adding an arrow $a^*:j\to i$ for each arrow $a:i\to j$ in $Q$.
We denote by $\widehat{k\widetilde{Q}}$ the \emph{complete path algebra} of $\widetilde{Q}$ over a field $k$.
Thus as a $k$-vector space $\widehat{k\widetilde{Q}}$ is a direct product $\prod_{i\ge0}k\widetilde{Q}_i$
of $k$-vector spaces $k\widetilde{Q}_i$ whose basis is given by all paths in $\widetilde{Q}$ of length $i$,
and the multiplication in $\widehat{k\widetilde{Q}}$ is given by the same way as for usual path algebra $k\widetilde{Q}$.
The \emph{complete preprojective algebra} of $Q$ is defined as $\Lambda:=\widehat{k\widetilde{Q}}/I$, where $I$ is the closure of the ideal
\[\langle\sum_{a\in Q_1}(aa^*-a^*a)\rangle\]
of $\widehat{k\widetilde{Q}}$ with respect to the $J_{\widehat{k\widetilde{Q}}}$-adic topology on $\widehat{k\widetilde{Q}}$.
For the case of Dynkin and extended Dynkin quivers, we have the following.
\begin{itemize}
\item $Q$ is a Dynkin quiver if and only if $\Lambda$ is finite-dimensional over $k$. In this case $\Lambda$ forms a selfinjective algebra.
\item If $Q$ is an extended Dynkin quiver, then $\Lambda$ is Morita-equivalent to the skew group algebra $k[[x,y]]*G$ for a finite subgroup $G$ of $\SL_2(k)$.
\end{itemize}
We observed in Subsection \ref{Cohen-Macaulay modules and triangulated categories} that $k[[x,y]]*G$ is a 2-CY algebra. More generally, we have the following result \cite{CB2,CRo,GLS3}.
\begin{theorem}\label{preprojective is 2-CY}
\begin{itemize}
\item[(a)] If $Q$ is a Dynkin quiver, then $\mod\Lambda$ forms a 2-CY Frobenius category.
\item[(b)] If $Q$ is not a Dynkin quiver, then $\DD^{\rm b}(\fl\Lambda)$ forms a 2-CY triangulated category.
\end{itemize}
\end{theorem}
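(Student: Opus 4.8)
The plan is to derive both statements from a single device, the canonical projective bimodule presentation of $\Lambda$ coming from the arrows of $\widetilde Q$ and the single preprojective relation. Write $S:=\prod_{i\in Q_0}ke_i$ for the semisimple subalgebra spanned by the vertices and $\Lambda^e:=\Lambda\otimes_k\Lambda^{\op}$. First I would set up the complex of projective $\Lambda^e$-modules
\[
\Lambda\otimes_S\Lambda\xrightarrow{\ d_2\ }\bigoplus_{a\in\widetilde Q_1}\Lambda e_{t(a)}\otimes_k e_{s(a)}\Lambda\xrightarrow{\ d_1\ }\Lambda\otimes_S\Lambda\xrightarrow{\ \mu\ }\Lambda\to0,
\]
(using completed tensor products when $\Lambda$ is infinite dimensional), where $\mu$ is multiplication, $d_1$ sends the summand indexed by $a$ to $a\otimes1-1\otimes a$, and $d_2$ is assembled from the relation $\sum_{a\in Q_1}(aa^*-a^*a)$. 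I would then record the two classical homological facts about this complex: it is exact except possibly at the leftmost term, and the differential $d_2$ is injective precisely when $Q$ is not Dynkin, in which case the complex is a genuine projective bimodule resolution of length two, so that $\gl\Lambda=2$.

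The decisive input is the self-duality of this complex: applying $\Hom_{\Lambda^e}(-,\Lambda^e)$ returns the same complex read backwards, up to a shift by $2$ and a twist by an automorphism $\sigma$ of $\Lambda$. Concretely I would check that the middle term is self-dual because the index set $\widetilde Q_1$ is stable under $a\mapsto a^*$, and that the duality interchanges $d_1$ with $d_2$; the only delicate points are the signs arising from $aa^*-a^*a$ and the precise determination of $\sigma$ via the identification $\Hom_{\Lambda^e}(\Lambda,\Lambda^e)\cong{}_1\Lambda_\sigma$ in the appropriate degree. I expect this self-duality, and in particular the bookkeeping of signs and of the twist $\sigma$, to be the main obstacle, since it requires a basis-free verification that the two differentials are transposes of one another.

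For part (b), $Q$ non-Dynkin, self-duality of the length-two resolution says exactly that ${\bf R}\Hom_{\Lambda^e}(\Lambda,\Lambda^e)\simeq{}_1\Lambda_\sigma[-2]$ as complexes of bimodules; here $\sigma$ turns out to be inner, so that ${\bf R}\Hom_{\Lambda^e}(\Lambda,\Lambda^e)\simeq\Lambda[-2]$ and $\Lambda$ is a $2$-Calabi--Yau algebra. Tensoring a projective resolution of $X\in\fl\Lambda$ with this self-dual bimodule complex and dualizing over $k$ then yields functorial isomorphisms $\Ext^i_\Lambda(X,Y)\simeq D\Ext^{2-i}_\Lambda(Y,X)$ for $X,Y\in\fl\Lambda$, which is precisely the statement that $\Sigma^2$ is a Serre functor on $\DD^{\rm b}(\fl\Lambda)$. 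Hence $\DD^{\rm b}(\fl\Lambda)$ is $2$-CY in the sense of Definition \ref{serre functor}.

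For part (a), $Q$ Dynkin, $\Lambda$ is finite dimensional and selfinjective, so $\mod\Lambda$ is a Frobenius category and $\underline{\mod}\Lambda$ is triangulated with suspension $\Sigma=\Omega^{-1}$. Now $d_2$ is no longer injective and the self-dual complex becomes periodic; the periodicity reads $\Omega^3_{\Lambda^e}\Lambda\cong{}_1\Lambda_\sigma$ with $\sigma$ the Nakayama automorphism, which (with the twist direction fixed by the duality above) identifies the Nakayama functor with $\nu\simeq\Omega^{-3}$ on $\underline{\mod}\Lambda$. Combining this with the stable formula $\tau\simeq\nu\Omega^2$ for selfinjective algebras and with the general identity $F\simeq\tau\Sigma$ for the Serre functor $F$ of an Auslander--Reiten triangulated category gives $F\simeq\nu\Omega\simeq\Omega^{-2}=\Sigma^2$. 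Therefore $\underline{\mod}\Lambda$ is $2$-CY, that is, $\mod\Lambda$ is a $2$-CY Frobenius category. Alternatively, part (a) can be deduced from (b) by a flat degeneration of the non-Dynkin preprojective algebra that preserves the self-duality of the bimodule complex.
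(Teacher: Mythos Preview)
The paper does not supply its own proof of this theorem; it merely states it with references to Crawley-Boevey, Chuang--Rouquier, and Geiss--Leclerc--Schr\"oer. Your approach via the self-dual bimodule complex is exactly the device used in those sources (and in Ginzburg's Calabi--Yau algebra framework), so you are reconstructing the intended argument rather than proposing an alternative one.

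Your outline is sound, but two points deserve more care than you give them. First, in part~(b) you assert that the twist $\sigma$ is inner; in the usual normalisation of the complex (with signs $\epsilon(a)=1$, $\epsilon(a^*)=-1$ built into $d_2$) the duality pairs $d_1$ with $d_2$ on the nose and $\sigma$ is the identity, but this depends on how you set up the identification of the middle term with its dual via $a\leftrightarrow a^*$, and should be checked explicitly rather than declared. Second, in part~(a) the statement $\Omega^3_{\Lambda^e}\Lambda\cong{}_1\Lambda_\sigma$ with $\sigma$ the Nakayama automorphism is correct, but the Nakayama automorphism of a Dynkin preprojective algebra is in general \emph{not} inner (it involves the canonical involution of the Dynkin diagram); what you actually need, and what follows from this isomorphism, is only that the Nakayama \emph{functor} on $\underline{\mod}\Lambda$ is isomorphic to $\Omega^{-3}$, which is enough for $F\simeq\nu\Omega\simeq\Omega^{-2}=\Sigma^2$. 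Your final sentence about deducing (a) from (b) by flat degeneration is not a standard argument and would require substantial justification; I would drop it.
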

Let $Q_0=\{1,\cdots,n\}$ be the set of vertices of $Q$,
and we denote by $e_i$ the idempotent of $\Lambda$ corresponding to $i\in Q_0$.
Define a 2-sided ideal of $\Lambda$ by
\[I_i:=\Lambda(1-e_i)\Lambda.\]
Since $I_i$ is generated by an idempotent, we have
\begin{itemize}
\item[(i)] $I_i^2=I_i$ for any $i$.
\end{itemize}
We denote by
\[\langle I_1,\cdots,I_n\rangle\]
the set of two-sided ideals $I_{i_1} \cdots I_{i_l}$, $i_1,\dots,i_l \in \{1,\dots,n\}$,
of $\Lambda$ obtained by multiplying the ideals $I_1,\cdots,I_n$.
This set contains $\Lambda$, too.
We call a left ideal $I$ of $\Lambda$ \emph{cofinite tilting} if $\Lambda/I\in\fl\Lambda$ and $I$ is a tilting $\Lambda$-module.
Dually, we define \emph{cofinite tilting} right ideals.
If $Q$ is a Dynkin quiver, then any tilting $\Lambda$-module is projective since $\Lambda$ is selfinjective.
Otherwise, we have a lot of tilting $\Lambda$-modules by the following result.
\begin{theorem}\label{tilting over 2-CY}
Assume that $Q$ is not a Dynkin quiver.
\begin{itemize}
\item[(a)] $\langle I_1,\cdots,I_n\rangle$ gives a set of cofinite tilting left (respectively, right) ideals of $\Lambda$.
They have projective dimension at most one, and two different elements are non-isomorphic as left (respectively, right) $\Lambda$-modules.

\item[(b)] If $Q$ is an extended Dynkin quiver, then $\langle I_1,\cdots,I_n\rangle$ gives a set of isoclasses of basic tilting $\Lambda$-modules (respectively, tilting $\Lambda^{\rm op}$-modules) of projective dimension at most one.
\end{itemize}
\end{theorem}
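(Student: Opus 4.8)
The left- and right-ideal statements are interchanged by the canonical involution $\Lambda\cong\Lambda^{\op}$ coming from the symmetry $a\leftrightarrow a^*$ of $\widetilde{Q}$, so the plan is to argue only for left ideals. For \textbf{part (a)} I would first settle the single ideals $I_i$ and then propagate the conclusion along products. Since $Q$ has no loops, every path of positive length starting at $i$ leaves $i$ immediately, whence $I_ie_i=\rad P_i$ and $I_ie_j=P_j$ for $j\neq i$; thus $\Lambda/I_i\cong S_i\in\fl\Lambda$, giving cofiniteness. The $2$-Calabi--Yau duality of Theorem \ref{preprojective is 2-CY}, namely $\Ext^k_\Lambda(S_i,S_j)\cong D\Ext^{2-k}_\Lambda(S_j,S_i)$ on $\fl\Lambda$, shows $\pd_\Lambda S_i=2$ (as $\Ext^2_\Lambda(S_i,S_i)\cong D\End_\Lambda(S_i)\neq0$), and hence from $0\to I_i\to\Lambda\to S_i\to0$ that $\pd_\Lambda I_i\le1$. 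For the Miyashita axioms I would use the no-loops projective resolution $0\to P_i\to\bigoplus_{a}P_{o(a)}\to P_i\to S_i\to0$ with $o(a)\neq i$: it splits off $0\to P_i\to\bigoplus_a P_{o(a)}\to\rad P_i\to0$ with all terms in $\add I_i$, providing the required $\add I_i$-coresolution of $\Lambda$, while the rigidity $\Ext^1_\Lambda(I_i,I_i)=0$ and $\End_\Lambda(I_i)\cong\Lambda$ I would deduce from the idempotency $I_i^2=I_i$ together with the same duality.

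The inductive engine is the claim that if $T$ is a cofinite tilting left ideal of $\pd\le1$ then so is $I_iT$. Here I would identify $I_iT$ with $I_i\otimes_\Lambda T$ and show $I_i\otimes^{\mathbf{L}}_\Lambda T$ is concentrated in degree $0$, both of which follow from $\Tor^\Lambda_{>0}(\Lambda/I_i,T)=0$ ($T$ having no finite-length submodule). Since $I_i$ is a tilting module with $\End_\Lambda(I_i)\cong\Lambda$, it induces a derived autoequivalence $I_i\otimes^{\mathbf{L}}_\Lambda-$ of $\DD^{\rm b}(\Lambda)$ carrying $\Lambda$ to $I_i$; this autoequivalence sends the tilting module $T$ to a tilting complex, which by the $\Tor$-vanishing is the module $I_iT$ in degree $0$, and cofiniteness of $\Lambda/I_iT$ follows from finiteness of $\Lambda/I_i$ and $\Lambda/T$. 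Starting from $T=\Lambda$ and iterating then proves that every product in $\langle I_1,\dots,I_n\rangle$ is a cofinite tilting left ideal of $\pd\le1$.

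For the non-isomorphism statement I would pass to the Coxeter group $W$ of the underlying graph of $Q$: the relations $I_i^2=I_i$ together with the braid relations make $w\mapsto I_w:=I_{i_1}\cdots I_{i_l}$ (for a reduced expression $w=s_{i_1}\cdots s_{i_l}$) well defined, with $\langle I_1,\dots,I_n\rangle=\{I_w\mid w\in W\}$. The delicate point in (a) is that distinct $w$ yield non-isomorphic left modules; I would establish it by showing the braid-group action of the autoequivalences $I_i\otimes^{\mathbf{L}}_\Lambda-$ on $\DD^{\rm b}(\fl\Lambda)$ is faithful, so distinct $w$ give distinct two-sided ideals, and then recovering the two-sided ideal from the abstract left module via $\End_\Lambda(I_w)\cong\Lambda$ (a left-module isomorphism $I_w\cong I_{w'}$ would induce compatible bimodule structures and force $I_w=I_{w'}$ inside $\Lambda$).

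\textbf{Part (b).} For $Q$ extended Dynkin, $\Lambda$ is Morita equivalent to $k[[x,y]]*G$ with $G\le\SL_2(k)$, a symmetric $R$-order of Krull dimension $2$ which is $2$-CY; here $W$ is the affine Weyl group, so $\{I_w\}$ is an infinite family of basic $\pd\le1$ tilting modules. To see it is exhaustive I would invoke the tilting-mutation theory of Riedtmann--Schofield and Happel--Unger: the basic tilting modules of $\pd\le1$ form an exchange graph, and I would prove (i) every mutation of $I_w$ at an indecomposable summand is again some $I_{ws_i}$, realized by multiplication by a suitable $I_i$, so the family is closed under mutation, and (ii) this exchange graph is connected. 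Since $\Lambda=I_e$ lies in the family and the graph is connected, closure under mutation forces $\{I_w\}_{w\in W}$ to exhaust the basic $\pd\le1$ tilting modules. The connectedness of the exchange graph in this non-artinian order-theoretic setting, together with the identification of mutation with $I_i$-multiplication, is the principal obstacle; the $2$-CY/order structure and the affine-Weyl-group combinatorics are what I would exploit to overcome it.
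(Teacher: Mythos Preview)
The paper is a survey and states this theorem without proof, referring to \cite{IR} (and \cite{BIRSc}) where it is established; so there is no in-paper argument to compare against directly. Your overall architecture for part~(a)---first proving that each $I_i$ is a cofinite tilting ideal of projective dimension $\le 1$ using the $2$-Calabi--Yau duality of Theorem~\ref{preprojective is 2-CY}, and then propagating this along products by realising $I_i\otimes^{\mathbf L}_\Lambda-$ as a derived autoequivalence---is exactly the strategy of \cite{IR}, and your $\Tor$-vanishing justification for why the derived tensor product stays in degree~$0$ is the right mechanism.

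There is, however, a genuine gap in your non-isomorphism step. The sentence ``a left-module isomorphism $I_w\cong I_{w'}$ would induce compatible bimodule structures and force $I_w=I_{w'}$'' is not justified: a left-module map need not respect the right action. What one actually needs is that every $\phi\in\Hom_\Lambda(I_w,\Lambda)$ is right multiplication by an element of $\Lambda$, i.e.\ that the inclusion $\Lambda\hookrightarrow\Hom_\Lambda(I_w,\Lambda)$ is an isomorphism. This is \emph{not} formal; it is equivalent to $\Ext^1_\Lambda(\Lambda/I_w,\Lambda)=0$, which in turn follows from the $2$-CY property in the strong form $\mathbf R\Hom_\Lambda(M,\Lambda)\simeq DM[-2]$ for $M\in\DD^{\rm b}(\fl\Lambda)$ (so $\Ext^i_\Lambda(M,\Lambda)=0$ for $i\neq 2$). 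Once you have $\Hom_\Lambda(I_w,\Lambda)=\Lambda$, composing an isomorphism $I_w\xrightarrow{\sim}I_{w'}$ with the inclusion $I_{w'}\subset\Lambda$ gives right multiplication by some $\lambda\in\Lambda$, and a short argument with the inverse forces $I_w=I_{w'}$. Your appeal to ``faithfulness of the braid-group action'' is both harder than necessary and does not by itself yield the module-level conclusion.

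For part~(b) your plan is the one used in \cite{IR}: show that the family $\{I_w\}$ is closed under tilting mutation and that the tilting exchange graph (for $\pd\le 1$, restricted to Cohen--Macaulay modules) is connected. You correctly flag connectedness as the crux; in \cite{IR} it is obtained from the Riedtmann--Schofield/Happel--Unger theory via the partial order, using that $\Lambda$ is a maximal element and that from any tilting module one can mutate upward toward $\Lambda$. The order-theoretic argument transfers because $\Lambda$ is a noetherian $R$-order and the relevant finiteness (cofinite quotients) replaces the finite-dimensional hypothesis.
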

For $i,j\in\{1,\cdots,n\}$ with $i\neq j$, we define the two-sided ideal of $\Lambda$ by $I_{i,j}:=\Lambda(1-e_i-e_j)\Lambda$.
Then any ideal of $\Lambda$ obtained by multiplying $I_i$ and $I_j$ any times contains $I_{i,j}$.
If there is no arrow between $i$ and $j$, then the factor algebra $\Lambda/I_{i,j}$ is isomorphic to $k\times k$. This implies
\begin{itemize}
\item[(ii)] $I_iI_j=I_{i,j}=I_jI_i$, if there is no arrow in $Q$ between $i$ and $j$.
\end{itemize}
If there is exactly one arrow in $Q$ between $i$ and $j$, then $\Lambda/I_{i,j}$ is isomorphic to the preprojective algebra of type $A_2$. Looking at Loewy series of $\Lambda$, we have
\begin{itemize}
\item[(iii)] $I_iI_jI_i=I_{i,j}=I_jI_iI_j$, if there is exactly one arrow in $Q$ between $i$ and $j$.
\end{itemize}
Using the relations (i), (ii) and (iii), we can describe the set $\langle I_1,\cdots,I_n\rangle$.
\begin{definition}
The \emph{Coxeter group} $W$ \cite{BB} of $Q$ is a group presented by generators $s_1,\cdots,s_n$ with relations
\begin{itemize}
\item[(a)] $s_i^2=1$, for any $i$;
\item[(b)] $s_is_j=s_js_i$, if there is no arrow in $Q$ between $i$ and $j$;
\item[(c)] $s_is_js_i=s_js_is_j$, if there is exactly one arrow in $Q$ between $i$ and $j$.
\end{itemize}
\end{definition}
The relations (ii) and (iii) coincide with (b) and (c), respectively, but (i) is slightly different from (a).
Thus it is natural to compare the set $\langle I_1,\cdots,I_n\rangle$ with $W$.
In fact, we have the following result.
\begin{theorem}\label{coxeter}
Let $Q$ be an arbitrary quiver without loops.
Then there exists a well-defined bijection $W\xrightarrow{\sim}\langle I_1,\cdots,I_n\rangle$.
It is given by $w\mapsto I_w:=I_{i_1}\cdots I_{i_\ell}$ for an arbitrary reduced expression $w=s_{i_1}\cdots s_{i_\ell}$ of $w$ in $W$.
\end{theorem}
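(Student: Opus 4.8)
The plan is to identify $\langle I_1,\dots,I_n\rangle$, viewed as a monoid under multiplication of ideals, with the $0$-Hecke (Coxeter) monoid $H_0(W)$, and then to promote the resulting surjection to a bijection. The generators $I_i$ satisfy exactly the relations (i) $I_i^2=I_i$, (ii) $I_iI_j=I_jI_i$ when there is no arrow, and (iii) $I_iI_jI_i=I_jI_iI_j$ when there is one arrow, which are precisely the defining relations of $H_0(W)$ once $s_i^2=1$ is weakened to $s_i^2=s_i$; the absence of any relation for $i,j$ joined by at least two arrows matches $m_{ij}=\infty$. First I would invoke Matsumoto's theorem: since the braid relations (b),(c) of $W$ are respected by $I_\bullet$ through (ii),(iii), the product $I_{i_1}\cdots I_{i_\ell}$ along a reduced expression of $w$ depends only on $w$, so $I_w$ is well defined. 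Combined with idempotency (i), any product over an arbitrary (non-reduced) word collapses to $I_w$ with $w=s_{i_1}\cdots s_{i_\ell}$, which says exactly that $w\mapsto I_w$ is a well-defined surjection $W\to\langle I_1,\dots,I_n\rangle$. Equivalently $\langle I_1,\dots,I_n\rangle$ is a monoid quotient of $H_0(W)$, and the classical bijection $H_0(W)\cong W$ delivers these facts together with the Demazure product rule
\[ I_iI_w=\begin{cases} I_{s_iw} & \mbox{if }\ell(s_iw)>\ell(w),\\ I_w & \mbox{if }\ell(s_iw)<\ell(w),\end{cases}\]
and its right-handed analogue.

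It then remains to prove injectivity, i.e.\ that the quotient $H_0(W)\twoheadrightarrow\langle I_1,\dots,I_n\rangle$ is an isomorphism, equivalently that distinct $w$ give distinct ideals. The product rule already lets me extract combinatorial data from an ideal: $I_iI_w=I_w$ holds if and only if $\ell(s_iw)<\ell(w)$, so \emph{provided the other alternative is a strict inclusion}, the left descent set $D_L(w)$ is intrinsic to $I_w$, recovered as $\{i\mid I_iI_w=I_w\}$. Thus everything hinges on the strict inclusions
\[ I_{s_iw}=I_iI_w\subsetneq I_w\quad\mbox{whenever }\ell(s_iw)>\ell(w),\]
and their right versions. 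Granting these, the chain $\Lambda=I_e\supsetneq I_{s_{i_1}}\supsetneq\cdots\supsetneq I_w$ along any reduced word is strictly decreasing, so $\ell(w)$ becomes a length invariant of $I_w$; an induction on $\ell(w)$ that peels off descents, using the intrinsic descent sets, then separates all elements of $W$.

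The strict inclusions are the crux, and this is where the representation theory of the $2$-Calabi--Yau algebra $\Lambda$ must enter, since purely monoid-theoretic reasoning cannot rule out a further collapse of the ideal monoid beyond $H_0(W)$. For $Q$ not Dynkin I would read them off from Theorem \ref{tilting over 2-CY}: there each $I_w$ is a cofinite tilting left ideal, and strictness $I_wI_i\ne I_w$ is equivalent to $I_w/I_wI_i\cong I_w\otimes_\Lambda S_i\ne 0$ (using $\Lambda/I_i\cong S_i$, the simple at $i$, from $I_i=\Lambda(1-e_i)\Lambda$). Concretely this amounts to $S_i$ occurring in the top of $I_w$ as a right module, which one verifies from $\ell(ws_i)>\ell(w)$ via the Loewy structure underlying (ii),(iii); distinctness of the torsion-free classes $\Sub(\Lambda/I_w)$ then confirms non-collapse.

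The Dynkin case, where $\Lambda$ is finite-dimensional and $W$ is finite, I would reduce to the above, either by a localization/deformation argument embedding $Q$ into a non-Dynkin quiver, or by a direct finite-dimensional check that the $\ell(w_0)$-step chain of ideals strictly decreases and the descent data separate all $|W|$ elements. Accordingly, the main obstacle is exactly proving the strict inclusions---equivalently, that $I_w\otimes_\Lambda S_i\ne 0$ precisely when $\ell(ws_i)>\ell(w)$---whereas the combinatorial bookkeeping that upgrades non-collapse to the asserted bijection is routine.
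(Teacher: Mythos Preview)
The paper is a survey and does not actually prove Theorem~\ref{coxeter}; it only motivates the statement by recording the relations (i), (ii), (iii) satisfied by the ideals $I_i$ and observing their similarity to the Coxeter relations, then quotes the result from \cite{IR,BIRSc}. So there is no ``paper's own proof'' to compare against beyond that discussion.

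Your outline is sound and in fact matches the architecture of the proofs in the original references. Packaging the well-definedness and surjectivity via the $0$-Hecke monoid $H_0(W)$ and Matsumoto's theorem is exactly right: the relations (i)--(iii) are the defining relations of $H_0(W)$, so one gets a monoid surjection $H_0(W)\twoheadrightarrow\langle I_1,\dots,I_n\rangle$, and the classical bijection $H_0(W)\cong W$ then gives both that $I_w$ is independent of the reduced expression and that the map $w\mapsto I_w$ is onto. You are also right that the entire content of the theorem lies in injectivity, and that this comes down to the strict inclusions $I_iI_w\subsetneq I_w$ when $\ell(s_iw)>\ell(w)$.

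Where your proposal remains a genuine sketch is precisely at this crux. Invoking Theorem~\ref{tilting over 2-CY} tells you each $I_w$ is a cofinite tilting ideal in the non-Dynkin case, but it does not by itself yield $I_w\otimes_\Lambda S_i\neq0$; the phrase ``which one verifies from $\ell(ws_i)>\ell(w)$ via the Loewy structure underlying (ii),(iii)'' is exactly the step that needs an argument, and in \cite{IR} this is done by tracking the class of $\Lambda/I_w$ in the Grothendieck group under the reflection representation of $W$ (so that distinct $w$ give distinct classes, hence distinct ideals). Your alternative suggestion via the torsion-free classes $\Sub(\Lambda/I_w)$ is the route taken in \cite{BIRSc}. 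For the Dynkin case your treatment is essentially a placeholder: the ``embed into a non-Dynkin quiver'' idea does not work naively (the preprojective algebra changes drastically), and the ``direct finite-dimensional check'' is not an argument. In \cite{BIRSc} the Dynkin case is handled uniformly with the others, again through the $\Lambda_w=\Lambda/I_w$ and their associated subcategories. So your diagnosis of where the difficulty lies is accurate, but the proposal stops short of supplying the representation-theoretic input that actually separates the $I_w$.
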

As an immediate consequence of Theorems \ref{tilting over 2-CY} and \ref{coxeter}, we have the following result
which is related to work of Ishii and Uehara \cite{IU} on McKay correspondence for the minimal resolutions of simple singularities.
\begin{corollary}\label{extended dynkin case}
Let $Q$ be an extended Dynkin quiver. Then basic tilting $\Lambda$-modules correspond bijectively to elements in $W$.
\end{corollary}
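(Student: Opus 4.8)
The plan is to produce the bijection by simply composing the two bijections already supplied by Theorems \ref{coxeter} and \ref{tilting over 2-CY}, and then to address the one place where a genuine argument is needed. First I would recall that Theorem \ref{coxeter} applies to an arbitrary loopless quiver, hence in particular to our extended Dynkin $Q$, and gives a well-defined bijection
\[
W\xrightarrow{\ \sim\ }\langle I_1,\cdots,I_n\rangle,\qquad w\mapsto I_w:=I_{i_1}\cdots I_{i_\ell},
\]
for any reduced expression $w=s_{i_1}\cdots s_{i_\ell}$. At this stage the target is a set of two-sided ideals.

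Next I would pass from ideals to isomorphism classes of left $\Lambda$-modules. Since $Q$ is extended Dynkin it is non-Dynkin, so Theorem \ref{tilting over 2-CY} is available. Its part (b) identifies $\langle I_1,\cdots,I_n\rangle$, read as left $\Lambda$-modules, with the set of isoclasses of basic tilting $\Lambda$-modules of projective dimension at most one; and the last clause of part (a) guarantees that two distinct elements of $\langle I_1,\cdots,I_n\rangle$ are non-isomorphic as left $\Lambda$-modules, so that no collapsing occurs when we pass to isoclasses. Composing, $w\mapsto [I_w]$ becomes a bijection from $W$ onto the isoclasses of basic tilting $\Lambda$-modules of projective dimension at most one. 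The verification that this composite is a bijection is then purely formal, given the two theorems.

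The one substantive point—and the step I expect to be the main obstacle—is the discrepancy between \emph{basic tilting $\Lambda$-modules} as stated in the corollary and the basic tilting modules \emph{of projective dimension at most one} produced above. The plan is to show that for extended Dynkin $Q$ these two classes coincide, i.e.\ that every basic tilting $\Lambda$-module automatically has projective dimension at most one. Here I would exploit the special structure of $\Lambda$ in this case: by the discussion in Subsection \ref{Cluster tilting for preprojective algebras}, $\Lambda$ is Morita equivalent to the skew group algebra $k[[x,y]]*G$ with $G\subset\SL_2(k)$, which is a non-singular symmetric $R$-order of Krull dimension $d=2$ and hence a $2$-Calabi-Yau algebra with $\gl\Lambda=2$. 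The hard part will be ruling out basic tilting modules of projective dimension exactly $2$; I would deduce their nonexistence from the self-duality furnished by the symmetry $\Hom_R(\Lambda,R)\simeq\Lambda$ (equivalently, the $2$-Calabi-Yau property) together with $\gl\Lambda=2$. Once this is in place the corollary follows at once from the composite bijection above.
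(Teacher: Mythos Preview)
Your approach is exactly the paper's: the corollary is presented there as ``an immediate consequence of Theorems \ref{tilting over 2-CY} and \ref{coxeter}'', i.e.\ the composite bijection $W\xrightarrow{\sim}\langle I_1,\cdots,I_n\rangle\xrightarrow{\sim}\{\text{basic tilting $\Lambda$-modules with }\pd\le 1\}$, with no further argument.

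You correctly flag the discrepancy between ``basic tilting $\Lambda$-modules'' in the corollary and ``basic tilting $\Lambda$-modules of projective dimension at most one'' in Theorem \ref{tilting over 2-CY}(b). The paper does not address this point at all; it simply asserts the corollary as immediate. Given that the closing question of the subsection explicitly pairs Corollary \ref{extended dynkin case} with Theorem \ref{tilting over 2-CY}(b), and that every other tilting statement in this subsection carries the $\pd\le 1$ qualifier, the most natural reading is that the corollary is intended to be about basic tilting modules of projective dimension at most one, with the qualifier suppressed for brevity. Your proposed additional argument---using the $2$-Calabi-Yau/symmetric order structure to rule out tilting modules of projective dimension exactly $2$---goes beyond what the paper does, and you should be aware that you have only sketched an intention there rather than given an argument; if you pursue it, the relevant input is that for a non-singular symmetric $R$-order of Krull dimension $2$ the reflexive tilting modules are exactly those of projective dimension at most one (this is in the Iyama--Reiten reference \cite{IR}), but whether every tilting module is reflexive would still need to be checked.
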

Again, let $Q$ be an arbitrary quiver. For any $w\in W$, we put
\[\Lambda_w:=\Lambda/I_w,\]
which is a finite-dimensional algebra. It can be shown that $\Lambda_w$ satisfies
\[\id_{\Lambda_w}(\Lambda_w)=\id(\Lambda_w)_{\Lambda_w}\le 1.\]
Thus $\Lambda_w$ is a cotilting $\Lambda_w$-module of injective dimension at most one, and the classical tilting theory implies that the full subcategory
\[\Sub\Lambda_w:=\{X\in\fl\Lambda\ |\ X \mbox{ is a submodule of $\Lambda_w^\ell$ for some }\ell\}\]
of $\fl\Lambda$ forms an extension-closed subcategory of $\fl\Lambda$ with enough projectives and enough injectives $\add\Lambda_w$.
In particular $\Sub\Lambda_w$ forms a Frobenius category, and $\Ext^1_{\Lambda_w}(X,Y)$ and $\Ext^1_\Lambda(X,Y)$ are isomorphic for any $X,Y\in\Sub\Lambda_w$.
By Theorem \ref{preprojective is 2-CY}, we have a functorial isomorphism
\[\Ext^1_{\Lambda_w}(X,Y)\simeq D\Ext^1_{\Lambda_w}(Y,X)\]
for any $X,Y\in\Sub\Lambda_w$.
Consequently $\Sub\Lambda_w$ forms a 2-CY Frobenius category.
Moreover, it contains $2$-cluster tilting objects by the following result.
\begin{theorem}\label{cluster tilting from reduced expression}
Let $w\in W$. For any reduced expression $w=s_{i_1}\cdots s_{i_\ell}$ of $w$ in $W$, we have the following $2$-cluster tilting object in the 2-CY Frobenius category $\Sub\Lambda_w$
\[T(i_1,\cdots,i_\ell):=\bigoplus_{k=1}^\ell\Lambda_{s_{i_1}\cdots s_{i_k}}=\bigoplus_{k=1}^\ell\Lambda/I_{i_1}\cdots I_{i_k}.\]
\end{theorem}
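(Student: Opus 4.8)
The plan is to induct on the length $\ell=\ell(w)$ of the reduced expression, writing $w=s_{i_1}u$ with $u=s_{i_2}\cdots s_{i_\ell}$ and $\ell(u)=\ell-1$, so that $I_w=I_{i_1}I_u$ and $T(i_1,\dots,i_\ell)=(\Lambda/I_{i_1})\oplus\bigl(\text{the sum of the remaining }\Lambda/I_{w_k}\bigr)$, where $w_k=s_{i_1}\cdots s_{i_k}$. The base case $\ell=1$ is immediate: $\Lambda/I_{i_1}\cong S_{i_1}$ is simple, $\Sub\Lambda_{s_{i_1}}=\add S_{i_1}$ is semisimple, and its unique $2$-cluster tilting object is $S_{i_1}$ itself.

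Before the induction I would record two facts that hold for every reduced word. First, by Theorem \ref{coxeter} the ideals $I_{w_1}\supseteq\cdots\supseteq I_{w_\ell}=I_w$ are pairwise distinct, so the cyclic modules $\Lambda/I_{w_k}$ furnish the intended summands, and one checks they lie in $\Sub\Lambda_w$. Second, I would verify that $T:=T(i_1,\dots,i_\ell)$ is $2$-rigid. Here the input is that each $I_{w_k}$ is a tilting $\Lambda$-module of projective dimension at most one (Theorem \ref{tilting over 2-CY} in the non-Dynkin case, and projectivity of tilting modules over the selfinjective $\Lambda$ in the Dynkin case), so the two-term presentations $0\to I_{w_k}\to\Lambda\to\Lambda/I_{w_k}\to0$ let one compute $\Ext^1_\Lambda(\Lambda/I_{w_j},\Lambda/I_{w_k})$; the vanishing, together with the identification $\Ext^1_{\Lambda_w}\cong\Ext^1_\Lambda$ on $\Sub\Lambda_w$ and the $2$-CY self-duality $\Ext^1_{\Lambda_w}(X,Y)\cong D\Ext^1_{\Lambda_w}(Y,X)$ coming from Theorem \ref{preprojective is 2-CY}, makes the computation symmetric in $j,k$ and yields $\Ext^1(T,T)=0$.

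The heart of the inductive step is a \emph{reduction functor} comparing $\Sub\Lambda_w$ with $\Sub\Lambda_u$. Since $I_w\subseteq I_u$ gives a surjection $\Lambda_w\to\Lambda_u$, and $I_{i_1}$ is a tilting ideal, I expect multiplication by $I_{i_1}$ (equivalently, the reflection at the vertex $i_1$) to induce an equivalence of $2$-CY Frobenius categories from $\Sub\Lambda_u$ onto the subcategory of $\Sub\Lambda_w$ complementary to the new layer $S_{i_1}=\Lambda/I_{i_1}$, carrying the inductively known $2$-cluster tilting object $T(i_2,\dots,i_\ell)$ of $\Sub\Lambda_u$ to $\bigoplus_{k\ge2}\Lambda/I_{w_k}$. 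Adjoining the single new indecomposable $\Lambda/I_{i_1}$ then reassembles $T$ and transports the $2$-cluster tilting property one step up the word. I expect \textbf{this reduction} to be the main obstacle: one must show the functor genuinely lands in $\Sub\Lambda_w$, is exact enough to preserve the vanishing of $\Ext^1$, and matches the passage from $u$ to $s_{i_1}u$ with exactly one new summand. The computation of $\Lambda/I_{w_k}$ above (where, e.g.\ in type $A_2$ one finds $\Lambda/I_{i_1}I_{i_2}\cong S_{i_1}\oplus P_{i_2}$) shows that these modules need not be indecomposable, so the bookkeeping of \emph{distinct} summands must be handled with care.

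As an alternative route to maximality, once $T$ is known to be $2$-rigid I would invoke Theorem \ref{cluster tilting completion}: $\Sub\Lambda_w$ is a $2$-CY Frobenius category which, by the induction, possesses a $2$-cluster tilting object, so $T$ is a direct summand of one. It then remains to check that the number of pairwise non-isomorphic indecomposable summands of $T$ equals $\ell$, which also equals the common number of summands of every basic $2$-cluster tilting object in $\Sub\Lambda_w$; this forces $T$ to be $2$-cluster tilting. Thus the whole argument reduces to the construction of the reduction functor and the summand count, both of which are most naturally carried along the same induction.
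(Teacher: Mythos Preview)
Your proposal differs from the paper's argument in its central mechanism. The paper does not build a reduction functor between $\Sub\Lambda_u$ and $\Sub\Lambda_w$; instead it applies the global-dimension criterion of Lemma~\ref{criterion for n-cluster tilting 2} (the analogue of Lemma~\ref{criterion for n-cluster tilting}). Since $\Lambda_w$ is the $k=\ell$ summand of $T$ and $\add\Lambda_w$ is exactly the class of projective-injective objects in $\Sub\Lambda_w$, the object $T$ is automatically a generator-cogenerator there; with $2$-rigidity in hand it therefore suffices to prove $\gl\End_{\Lambda_w}(T)\le 3$. This inequality is what the paper establishes by induction on $\ell$, and the induction lives entirely on the endomorphism-algebra side: one controls the effect of adjoining the single new summand when passing from $T(i_1,\dots,i_{\ell-1})$ to $T(i_1,\dots,i_\ell)$ homologically, without ever comparing the ambient Frobenius categories for different Coxeter words.

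Your plan, by contrast, rests entirely on the reduction functor you yourself flag as the main obstacle, and you do not indicate how multiplication by $I_{i_1}$ is to yield an equivalence of $2$-CY Frobenius categories onto the correct complement of $S_{i_1}$ inside $\Sub\Lambda_w$, nor why it should carry $T(i_2,\dots,i_\ell)$ exactly to $\bigoplus_{k\ge 2}\Lambda/I_{w_k}$. The ``alternative route'' via Theorem~\ref{cluster tilting completion} does not bypass this: that theorem presupposes that $\Sub\Lambda_w$ already contains a $2$-cluster tilting object, while your inductive hypothesis only supplies one in $\Sub\Lambda_u$, so you still need the reduction functor to transport it before the completion argument can even begin. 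A reflection-type comparison along these lines may well be made to work, but as written the proposal leaves the decisive step unproved, whereas the paper's route through $\gl\End_{\Lambda_w}(T)\le 3$ sidesteps the difficulty entirely.
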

A crucial step in the proof of this theorem is to show that the global dimension of the endomorphism algebra
$\End_{\Lambda_w}(T(i_1\cdots,i_\ell))$ is at most three by using induction on $\ell$.
Then we can apply Lemma \ref{criterion for n-cluster tilting 2}.

It is easy to show that $\add T(i_1,\cdots,i_\ell)$ contains exactly one indecomposable object which is not contained in $\add T(i_1,\cdots,i_{\ell-1})$.
It is shown in \cite{BIRSc} that any full subcategory of $\fl\Lambda$ which is closed under extensions and submodules and contains $2$-cluster tilting objects has the form $\Sub\Lambda_w$ for some $w\in W$.

We have a nice description of the quivers of the endomorphism algebras of these $2$-cluster tilting objects.
For each reduced expression $w=s_{i_1}\cdots s_{i_\ell}$, we define a quiver $Q(i_1,\cdots,i_\ell)$ as follows:
\begin{itemize}
\item For each $i\in\{1,\cdots,n\}$, pick out the expression consisting of the $i_k$ which are $i$.
We draw an arrow from each $i$ to the previous $i$.
\item For each arrow $a:i\to j$ in $\widetilde{Q}$, pick out the expression consisting of the $i_k$ which are $i$ or $j$.
We draw an arrow from the last $i$ in a connected set of $i$'s to the last $j$ in the next set of $j$'s
\end{itemize}
\[\xymatrix@R0.1cm@C0.3cm{
j\ar@/_1pc/[rrr]_{a^*}&i&i\ar[l]&i\ar[l]\ar@/_1pc/[rrr]_a&j\ar@/_1pc/[llll]&j\ar[l]&j\ar[l]\ar@/_1pc/[rr]_{a^*}&i\ar@/_1pc/[llll]&i\ar[l]\ar[r]_a&j\ar@/_1pc/[lll]} .\]
Define the quiver $\underline{Q}(i_1,\cdots,i_\ell)$ by removing the last $i$ from $Q(i_1,\cdots,i_\ell)$ for each $i\in\{1,\cdots,n\}$.
\begin{theorem}
Let $w=s_{i_1}\cdots s_{i_\ell}$ be a reduced expression, and let $T:=T(i_1,\cdots,i_\ell)$ be the corresponding $2$-cluster tilting object in $\Sub\Lambda_w$.
Then the quiver of $\End_\Lambda(T)$ is $Q(i_1\cdots,i_\ell)$, and the quiver of $\underline{\End}_\Lambda(T)$ is $\underline{Q}(i_1\cdots,i_\ell)$.
\end{theorem}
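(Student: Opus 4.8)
The plan is to read off the quiver of $\Gamma:=\End_\Lambda(T)$ from its indecomposable summands (the vertices) and the irreducible maps between them (the arrows), matching both against the recipe defining $Q(i_1,\cdots,i_\ell)$. Write $M_k:=\Lambda_{s_{i_1}\cdots s_{i_k}}=\Lambda/I_{i_1}\cdots I_{i_k}$, so that $T=\bigoplus_{k=1}^\ell M_k$ and $M_0=\Lambda$. First I would settle the vertices: by the remark preceding the theorem, $\add T(i_1,\cdots,i_k)$ contains exactly one indecomposable object $T_k$ not lying in $\add T(i_1,\cdots,i_{k-1})$, so $T$ has precisely $\ell$ pairwise non-isomorphic indecomposable summands $T_1,\dots,T_\ell$, and the positions $1,\dots,\ell$ biject with the vertices of $\Gamma$, exactly as in $Q_0(i_1,\cdots,i_\ell)$. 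Here one pins down $T_k$ concretely as the new top of $M_k$ at the vertex $i_k$, using the exact sequence $0\to I_{i_1}\cdots I_{i_{k-1}}/I_{i_1}\cdots I_{i_k}\to M_k\to M_{k-1}\to0$ coming from $I_{i_1}\cdots I_{i_k}\subseteq I_{i_1}\cdots I_{i_{k-1}}$ together with $\Lambda/I_{i_k}\simeq S_{i_k}$.

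Since the arrows $a\to b$ of $\Gamma$ form a basis of the space of irreducible maps $\mathrm{Irr}(T_a,T_b)=J_{\add T}(T_a,T_b)/J^2_{\add T}(T_a,T_b)$, the heart of the argument is to exhibit explicit radical maps realising the two families in the definition of $Q(i_1,\cdots,i_\ell)$ and to prove that together they form such a basis. The \emph{vertical} arrows from an occurrence of a label $i$ to the previous occurrence $k^-$ should be induced by the canonical surjections $M_k\to M_{k^-}$ arising from the inclusions of ideals; the \emph{diagonal} arrows attached to an arrow $a:i\to j$ of $\widetilde Q$ should be induced by right multiplication by $a$ on the quotients $M_k$, the block/next-block condition reflecting exactly when such a multiplication is nonzero modulo the relevant ideal. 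I would use the description $\Hom_\Lambda(\Lambda/A,\Lambda/B)\simeq\{x\mid Ax\subseteq B\}/B$ for nested ideals to write these maps down and to verify they lie in the radical.

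The cleanest way to prove these are precisely the irreducible maps, with the correct multiplicities and no others, is induction on $\ell$. Set $w'=s_{i_1}\cdots s_{i_{\ell-1}}$, $T'=T(i_1,\cdots,i_{\ell-1})$ and $\Gamma'=\End_\Lambda(T')$, a corner algebra $e\Gamma e$ of $\Gamma$ for the idempotent $e$ omitting $T_\ell$. By the inductive hypothesis the quiver of $\Gamma'$ is $Q(i_1,\cdots,i_{\ell-1})$, and it remains to control the arrows into and out of the new vertex $\ell$. Here the $2$-Calabi-Yau structure of $\Sub\Lambda_w$ is the main tool: the exchange (approximation) sequences governing $T_\ell$ inside $\add T$, together with the functorial duality $\Ext^1_{\Lambda_w}(X,Y)\simeq D\Ext^1_{\Lambda_w}(Y,X)$, pin down the minimal right and left approximations of $T_\ell$ by $\add(T/T_\ell)$ and hence the irreducible maps touching $\ell$. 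The main obstacle is the bookkeeping of the diagonal arrows under appending $s_{i_\ell}$: adding a last symbol $i=i_\ell$ changes, for each neighbour $j$ of $i$ in $\widetilde Q$, which occurrence is ``the last $j$ in the next block'', so one must check that the net effect on the quiver --- a new vertex, a vertical arrow $\ell\to k^-$, and the redirection of the diagonal arrows through $\ell$ --- agrees with the recursive shape of $Q(i_1,\cdots,i_\ell)$. Controlling $J_\Gamma^2$, that is, ruling out spurious arrows and ensuring the listed maps are independent modulo compositions, is where the preprojective relations and the Coxeter relations (i)--(iii) enter.

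Finally, for the stable quiver I would identify the projective-injective objects of the Frobenius category $\Sub\Lambda_w$, namely $\add\Lambda_w=\add M_\ell$, with the summands $T_k$ indexed by the last occurrence of each label $i$ --- one per label appearing in $w$. Since $\underline{\End}_\Lambda(T)=\Gamma/\Gamma e'\Gamma$ for the idempotent $e'$ corresponding to these projective-injective summands, its quiver is the full subquiver of $Q(i_1,\cdots,i_\ell)$ on the remaining vertices; checking that no surviving arrow between remaining vertices factors through a deleted (projective-injective) summand then yields exactly $\underline Q(i_1,\cdots,i_\ell)$, completing the proof.
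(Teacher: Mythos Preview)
The paper does not contain a proof of this theorem. This is a survey article, and the result is stated without argument, with the surrounding material attributed to the joint work \cite{BIRSc}; the only remark following the statement points to \cite{BIRSm} for a further Jacobian-algebra description. So there is no ``paper's own proof'' to compare your proposal against.

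On your proposal itself: the overall strategy---identify $T_k$ as the indecomposable summand $e_{i_k}\Lambda/e_{i_k}I_{i_1}\cdots I_{i_k}$, realise the two arrow families by canonical surjections and by right multiplication by arrows of $\widetilde Q$, and run an induction on $\ell$---is the natural one and is essentially how the result is proved in \cite{BIRSc}. A couple of points deserve care. First, your phrase ``the new top of $M_k$ at the vertex $i_k$'' is imprecise: $T_k$ is the whole indecomposable summand $e_{i_k}M_k$, not its top. Second, the step ``$\Gamma'=e\Gamma e$ so by induction the quiver of $\Gamma'$ is the subquiver on $1,\dots,\ell-1$'' is not automatic: the quiver of an idempotent subalgebra $e\Gamma e$ need not be the full subquiver of the quiver of $\Gamma$ on the vertices in $e$, because paths through the deleted vertex can become new arrows. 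You therefore really must argue, as you indicate at the end, that every irreducible map among $T_1,\dots,T_{\ell-1}$ in $\add T$ remains irreducible in $\add T'$ and conversely---this is exactly the ``bookkeeping of diagonal arrows'' you flag, and it is the substantive part of the induction step. Finally, for the stable statement, note that passing from $\Gamma$ to $\Gamma/\Gamma e'\Gamma$ can in principle both kill arrows (those incident to deleted vertices) and create new ones (length-two paths through a deleted vertex becoming irreducible); the definition of $\underline{Q}(i_1,\cdots,i_\ell)$ as the quiver obtained by simply removing the last occurrence of each label tells you that the latter does not happen here, and your last paragraph correctly identifies this as something to check rather than something automatic.
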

Moreover, it is shown in \cite{BIRSm} that the algebra $\underline{\End}_\Lambda(T)$ is isomorphic to the Jacobian algebra of a quiver with a potential \cite{DWZ}.

\begin{example} \label{example-E}
Let $Q$ be the quiver $\xymatrix@R0.1cm@C0.3cm{1\ar[r]&2\ar[r]&3\ar@/_1pc/[ll]}$,
and let $w=s_1s_2s_1s_3s_2=s_2s_1s_2s_3s_2=s_2s_1s_3s_2s_3$ be reduced expressions.
The first expression gives the quiver
$$\xymatrix@R0.1cm@C0.3cm{1\ar[r]&2\ar[r]\ar@/_1pc/[rr]&
1\ar@/_1pc/[ll]\ar@/_1pc/[rr]\ar[r]&3\ar[r]&2\ar@/_1pc/[lll]\\
{\begin{smallmatrix}1\end{smallmatrix}}&
{\begin{smallmatrix}&2\\ 1&\end{smallmatrix}}&
{\begin{smallmatrix}1&\\ &2\end{smallmatrix}}&
{\begin{smallmatrix}&&3&&\\ &2&&1&\\ 1&&&&2\end{smallmatrix}}&
{\begin{smallmatrix}&2&&&\\ 1&&3&&\\ &2&&1&\\ &&&&2\end{smallmatrix}}},$$
the second one gives the quiver
$$\xymatrix@R0.1cm@C0.3cm{2\ar[r]&1\ar@/_1pc/[rrr]\ar@/_1pc/[rr]&
2\ar@/_1pc/[ll]\ar[r]&3\ar[r]&2\ar@/_1pc/[ll]\\
{\begin{smallmatrix}2\end{smallmatrix}}&
{\begin{smallmatrix}1&\\ &2\end{smallmatrix}}&
{\begin{smallmatrix}&2\\ 1&\end{smallmatrix}}&
{\begin{smallmatrix}&&3&&\\ &2&&1&\\ 1&&&&2\end{smallmatrix}}&
{\begin{smallmatrix}&2&&&\\ 1&&3&&\\ &2&&1&\\ &&&&2\end{smallmatrix}}},$$
and the third one gives the quiver
$$\xymatrix@R0.1cm@C0.3cm{2\ar[r]\ar@/_1pc/[rr]&1\ar@/_1pc/[rr]\ar@/_1pc/[rrr]&
3\ar[r]&2\ar@/_1pc/[lll]\ar[r]&3\ar@/_1pc/[ll]\\
{\begin{smallmatrix}2\end{smallmatrix}}&
{\begin{smallmatrix}1&\\ &2\end{smallmatrix}}&
{\begin{smallmatrix}&3&&\\ 2&&1&\\ &&&2\end{smallmatrix}}&
{\begin{smallmatrix}&2&&&\\ 1&&3&&\\ &2&&1&\\ &&&&2\end{smallmatrix}}&
{\begin{smallmatrix}&&3&&\\ &2&&1&\\ 1&&&&2\end{smallmatrix}}}.$$
\end{example}

We end this subsection with the following question.
\begin{itemize}
\item Do Theorem \ref{tilting over 2-CY}(b) and Corollary \ref{extended dynkin case} hold for arbitrary quivers which are neither Dynkin nor extended Dynkin?
\end{itemize}

\subsection{Hypersurface singularities}\label{Hypersurface singularities}

In this subsection we study $n$-cluster tilting for isolated hypersurface singularities.
Especially, we recall a classification of 2-cluster tilting objects for one-dimensional hypersurface singularity given in a joint work with Burban, Keller and Reiten \cite{BIKR}.

Let $k$ be a field, and $S=k[[x_0,\cdots,x_d]]$ the formal power series ring of $(d+1)$ variables.
We put
\[\Lambda=S/(f)\]
for $f\in(x_0,\cdots,x_d)$.
Then $\Lambda$ is a complete local Gorenstein ring of Krull dimension $d$.
In this subsection we assume that $\Lambda$ is an isolated singularity. Then the category $\CM(\Lambda)$ of Cohen-Macaulay $\Lambda$-modules
forms a $(d-1)$-CY Frobenius category by Proposition \ref{CM is (d-1)-Calabi-Yau}.

A remarkable property of hypersurface singularities is given by \emph{matrix factorization} introduced by Eisenbud \cite{Ei,Y}.
For any $X\in\CM(\Lambda)$, we have $\pd X_S=1$ by Auslander-Buchsbaum formula \eqref{Auslander-Buchsbaum}.
Thus there exists a projective resolution
\[0\to S^\ell\xrightarrow{a}S^\ell\to X\to0,\]
where $a$ can be regarded as an $\ell\times\ell$ matrix with entries in $S$.
Using $fX=0$, one can easily check that there exists another $\ell\times\ell$ matrix $b\in\Hom_S(S^\ell,S^\ell)$ such that $ab=f\cdot 1_{S^\ell}$ and $ba=f\cdot 1_{S^\ell}$.
It is easily checked that we have a 2-periodic projective resolution
\begin{equation}\label{2-periodic of X}
\cdots\xrightarrow{b}\Lambda^\ell\xrightarrow{a}\Lambda^\ell\xrightarrow{b}\Lambda^\ell\xrightarrow{a}\Lambda^\ell\to X\to0
\end{equation}
of the $\Lambda$-module $X$. In particular, we have the following conclusion.
\begin{theorem}\label{2-periodic}
We have $\Omega^2\simeq\id_{\underline{\CM}(\Lambda)}$ as functors $\underline{\CM}(\Lambda)\to\underline{\CM}(\Lambda)$.
\end{theorem}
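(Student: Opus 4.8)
The plan is to read everything off the $2$-periodic resolution \eqref{2-periodic of X}. Fix $X\in\CM(\Lambda)$ together with a matrix factorization $(a,b)$ of $f$, so that $a,b\in\Hom_S(S^\ell,S^\ell)$ satisfy $ab=ba=f\cdot 1_{S^\ell}$ and $X\cong\Cok(a:\Lambda^\ell\to\Lambda^\ell)$. First I would show that the two $\Lambda$-modules cut out by $a$ and $b$ are interchanged by $\Omega$: namely $\Omega X\cong\Cok b$, and then, by the symmetric role of $a$ and $b$, $\Omega(\Cok b)\cong\Cok a\cong X$, so that $\Omega^2X\cong X$ on objects. This is exactly the content of the ``In particular'' remark preceding the statement.

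For the first isomorphism, note that $\Lambda^\ell\xrightarrow{a}\Lambda^\ell\to X\to0$ presents $X$, and (choosing $(a,b)$ reduced, i.e.\ with entries in $J_S$, so that $\Lambda^\ell\to X$ is a projective cover) one has $\Omega X=\Ker(\Lambda^\ell\to X)=\Im(a:\Lambda^\ell\to\Lambda^\ell)$. Over $\Lambda$ the relation $ab=0$ gives $\Im b\subseteq\Ker a$; for the reverse inclusion I would lift $v\in\Lambda^\ell$ with $av=0$ to $\tilde v\in S^\ell$, so that $a\tilde v=fw=abw$ for some $w\in S^\ell$, and then use that $a$ is injective over $S$ (as $\det a\cdot\det b=f^\ell\neq0$ in the domain $S$) to conclude $\tilde v=bw$, whence $v\in\Im_\Lambda b$. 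Thus $\Ker_\Lambda a=\Im_\Lambda b$, and $\Omega X=\Im a\cong\Lambda^\ell/\Ker a=\Lambda^\ell/\Im b=\Cok b$. Interchanging the roles of $a$ and $b$ then completes the object-level claim $\Omega^2X\cong X$.

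It remains to promote this to a \emph{functorial} isomorphism $\Omega^2\simeq\id_{\underline{\CM}(\Lambda)}$. Since $\Lambda$ is an isolated singularity, $\CM(\Lambda)$ is a Frobenius category and $\Omega$ is an autoequivalence of $\underline{\CM}(\Lambda)$, so it suffices to produce a natural transformation. Given $\phi:X\to Y$, I would lift it to a chain map between the $2$-periodic projective resolutions of $X$ and $Y$; the degree-shift-by-two of such a resolution is canonically isomorphic to itself, and this shift sends the chosen lift of $\phi$ to another lift of $\phi$, unique up to homotopy, so that reducing modulo projectives yields the required commuting square in $\underline{\CM}(\Lambda)$. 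I expect the naturality of the comparison $\theta_X:\Omega^2X\to X$ to be the main obstacle, precisely because a matrix factorization is only well defined up to the choices of $\ell$ and of the lift. The conceptually cleanest route around this is to invoke Eisenbud's equivalence $\underline{\CM}(\Lambda)\simeq\mathrm{MF}(S,f)$, under which $\Omega$ corresponds to the shift $(a,b)\mapsto(b,a)$; the square of this shift is literally the identity endofunctor of the matrix factorization category, making $\Omega^2\simeq\id$ transparent and automatically natural.
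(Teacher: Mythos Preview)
Your proposal is correct and follows the same approach as the paper: the theorem is stated there as an immediate consequence of the $2$-periodic projective resolution \eqref{2-periodic of X} arising from the matrix factorization $(a,b)$, with the verification (including functoriality via Eisenbud's equivalence with the homotopy category of matrix factorizations) left to the references \cite{Ei,Y}. You have simply spelled out those details.
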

Immediately we have the following observation.
\begin{itemize}
\item If $d$ is even, then $\CM(\Lambda)$ forms a 1-CY Frobenius category. In particular, any $2$-rigid object in $\CM(\Lambda)$ is projective.

\item If $d$ is odd, then $\CM(\Lambda)$ forms a 2-CY Frobenius category. In particular, any $3$-rigid object in $\CM(\Lambda)$ is projective.
\end{itemize}
Another important application of matrix factorization is the following equivalence often called \emph{Kn\"orrer periodicity} \cite{Kn,Y}
(see \cite{So} for characteristic two).
\begin{theorem}\label{periodicity}
Let $\Lambda':=k[[x_0,\cdots,x_d,u,v]]/(f+uv)$. Then we have a triangulated equivalence
\[\underline{\CM}(\Lambda)\to\underline{\CM}(\Lambda'),\]
which sends $X\in\CM(\Lambda)$ with the projective resolution \eqref{2-periodic of X} to $X'\in\CM(\Lambda')$ with the following projective resolution
\[\cdots\xrightarrow{{b\ \ u\choose v\ -a}}\Lambda'{}^{2\ell}\xrightarrow{{a\ \ u\choose v\ -b}}\Lambda'{}^{2\ell}
\xrightarrow{{b\ \ u\choose v\ -a}}\Lambda'{}^{2\ell}\xrightarrow{{a\ \ u\choose v\ -b}}\Lambda'{}^{2\ell}\to X'\to0.\]
\end{theorem}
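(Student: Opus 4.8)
The plan is to realize both stable categories through matrix factorizations and then exhibit the assignment in the statement as an explicit triangle equivalence. Recall from \eqref{2-periodic of X} and Theorem~\ref{2-periodic} that every $X\in\CM(\Lambda)$ is encoded by a matrix factorization $(a,b)$ of $f$ over $S$, i.e. a pair of $\ell\times\ell$ matrices with $ab=ba=f\cdot 1_\ell$, and that Eisenbud's theory \cite{Ei,Y} yields a triangle equivalence between $\underline{\CM}(\Lambda)$ and the homotopy category of matrix factorizations of $f$, with suspension induced by $(a,b)\mapsto(b,a)$ (so $\Omega^2\simeq\id$). Writing $S':=k[[x_0,\cdots,x_d,u,v]]$ and $\Lambda'=S'/(f+uv)$, I would define the candidate functor on matrix factorizations by
\[(a,b)\longmapsto\left(\begin{pmatrix}a&u\\ v&-b\end{pmatrix},\ \begin{pmatrix}b&u\\ v&-a\end{pmatrix}\right),\]
which is exactly the assignment underlying the two-periodic resolution displayed in the statement. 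First I would verify by direct multiplication, using that $u,v$ are central and $ab=ba=f$, that this pair is a matrix factorization of $f+uv$, via
\[\begin{pmatrix}a&u\\ v&-b\end{pmatrix}\begin{pmatrix}b&u\\ v&-a\end{pmatrix}=\begin{pmatrix}ab+uv&0\\ 0&ba+uv\end{pmatrix}=(f+uv)\cdot 1_{2\ell},\]
and symmetrically for the reversed product.

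Next I would promote this to an additive functor $F\colon\underline{\CM}(\Lambda)\to\underline{\CM}(\Lambda')$ by sending a chain map $(\phi,\psi)$ of matrix factorizations to the block-diagonal map $(\mathrm{diag}(\phi,\psi),\mathrm{diag}(\psi,\phi))$, checking that it respects null-homotopies and hence descends to the stable categories, and that it commutes with the suspension $(a,b)\mapsto(b,a)$, so that it is a triangle functor. Full faithfulness I would then establish by a direct computation of morphism spaces modulo homotopy: a morphism $F(X)\to F(Y)$ is a pair of $2\times 2$ block matrices over $S'$ intertwining the two factorizations, and reducing such a pair modulo the ideal $(u,v)$ together with a homotopy argument should show that every such morphism is homotopic to one of block-diagonal shape coming from a map $X\to Y$, and is null-homotopic precisely when that original map is.

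The main obstacle is density, i.e. essential surjectivity of $F$. Here I would take an arbitrary matrix factorization $(A,B)$ of $f+uv$ over $S'$ and reduce it to standard form by invertible row and column operations over $S'$. The key is to exploit the nondegenerate occurrence of the monomial $uv$: using elementary transformations one splits off contractible (stably zero) direct summands in which $u$ or $v$ acts invertibly, isolating a residual block in which the variables $x_0,\dots,x_d$ carry all the information, and that residual block is precisely of the form $F(a,b)$ for a matrix factorization $(a,b)$ of $f$ over $S$. Controlling the block sizes in this normalization and ensuring the discarded summands are genuinely homotopy-trivial is the technical heart of the argument; once it is carried out, $F$ is a fully faithful, dense triangle functor, hence the asserted triangulated equivalence $\underline{\CM}(\Lambda)\to\underline{\CM}(\Lambda')$. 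Tracing an object $X$ with factorization $(a,b)$ through $F$ then reproduces the resolution of $X'$ displayed in the statement.
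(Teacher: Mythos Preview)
The paper does not actually prove this theorem: it is stated as a known result (Kn\"orrer periodicity) with references \cite{Kn,Y} (and \cite{So} for characteristic two), and no argument is supplied in the text. So there is no ``paper's own proof'' to compare against.

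Your outline is essentially the classical proof one finds in those references, carried out in the language of matrix factorizations. The functor you write down is correct, your block-multiplication check is right, and your identification of essential surjectivity as the technical core is accurate: the reduction of an arbitrary matrix factorization of $f+uv$ to the block form $F(a,b)$ by elementary operations (splitting off contractible summands where $u$ or $v$ acts invertibly) is exactly Kn\"orrer's argument. The only caveat is that the full faithfulness step, as you describe it, is a bit glib: reducing a morphism of $2\ell\times 2\ell$ factorizations modulo $(u,v)$ and then arguing it is homotopic to a block-diagonal one requires the same kind of inductive normalization as the density step, and in practice both are handled together (or one first proves density and then uses that $F$ has an explicit quasi-inverse given by restriction along $u=v=0$). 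But as an outline of the standard proof, what you wrote is sound.
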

In the rest of this subsection we concentrate on the case $d=1$ and study $2$-cluster tilting objects of $\CM(\Lambda)$.
Thus
\[S=k[[x,y]]\ \mbox{ and }\ \Lambda=S/(f).\]
In this case, $\CM(\Lambda)$ consists of $X\in\mod\Lambda$ such that $\soc X=0$.
For a technical reason we assume that $k$ is an infinite field.
We take a decomposition $f=f_1\cdots f_n$ into irreducible factors.
Since we assumed that $\Lambda$ is an isolated singularity, we have that $(f_i)\neq(f_j)$ for any $i\neq j$.
We put
\[S_i:=S/(f_1\cdots f_i)\]
for any $1\le i\le n$.
We have a projective resolution
\[\cdots\xrightarrow{f_1\cdots f_i}\Lambda\xrightarrow{f_{i+1}\cdots f_n}\Lambda\xrightarrow{f_1\cdots f_i}\Lambda\to S_i\to0\]
of $S_i$ as the simplest case of matrix factorization. Applying $\Hom_\Lambda(-,S_j)$, one can easily check that $\Ext^1_\Lambda(S_i,S_j)=0$ holds for any $1\le i,j\le n$.
We have the following criterion for existence of $2$-cluster tilting objects in $\CM(\Lambda)$.
\begin{theorem}\label{existence of cluster tilting}
If $f_i\notin(x,y)^2$ for any $i$, then $\CM(\Lambda)$ has a 2-cluster tilting object $M:=\bigoplus_{i=1}^nS_i$. The converse holds if $k$ is an algebraically closed field of characteristic zero.
\end{theorem}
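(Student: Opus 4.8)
The plan is to establish the two implications separately, in both cases reducing questions about the $2$-cluster tilting property to the order-theoretic criterion Lemma \ref{criterion for n-cluster tilting 2}.

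For the forward implication I would first record that $M$ is a $2$-rigid generator-cogenerator in $\CM(\Lambda)$. Indeed $S_n=\Lambda$ is a direct summand of $M$, so $M$ is a generator; since $\Lambda=S/(f)$ is Gorenstein of dimension $d=1$ we have $D_d\Lambda\simeq\Lambda\in\add M$, so $M$ is a cogenerator; and the rigidity $\Ext^1_\Lambda(S_i,S_j)=0$ was already verified above. As $d=1\le 2+2$, Lemma \ref{criterion for n-cluster tilting 2} applies with $n=2$, and it remains only to check condition (b), that $\gl\End_\Lambda(M)\le 3$ (equivalently, to construct for each indecomposable summand the length-three exact sequence of condition (c)). This is where the hypothesis enters: $f_i\notin(x,y)^2$ forces the branch $S/(f_i)$ to be a regular ring, i.e.\ a discrete valuation ring, over which every Cohen-Macaulay module is free. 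I would feed this into the ``peeling'' short exact sequences
\[0\to S/(f_i)\xrightarrow{\cdot f_1\cdots f_{i-1}}S_i\to S_{i-1}\to 0\]
together with the matrix-factorization periodicity $\Omega^2\simeq\id$ of Theorem \ref{2-periodic} (which identifies $\Omega S_i\simeq S/(f_{i+1}\cdots f_n)$) in order to build the projective resolutions of the simple $\End_\Lambda(M)$-modules. Smoothness of each branch is precisely what guarantees that the syzygies arising remain inside $\add M$ and that the resolutions terminate after at most three steps, yielding the global dimension bound.

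For the converse I would argue contrapositively: assume $k$ is algebraically closed of characteristic zero and that some branch is singular, $f_i\in(x,y)^2$, and show that $\CM(\Lambda)$ then admits \emph{no} $2$-cluster tilting object. Over such a field one may put $f$ into a normal form and apply Knörrer periodicity (Theorem \ref{periodicity}) to compare $\underline{\CM}(\Lambda)$ — a $2$-Calabi-Yau category by Proposition \ref{CM is (d-1)-Calabi-Yau} combined with the $2$-periodicity of Theorem \ref{2-periodic} — with the stable category of a singularity of known representation type. If $\Lambda$ is not a simple (ADE) singularity it has infinite Cohen-Macaulay type, and one exhibits an infinite family of pairwise $\Ext^1$-orthogonal indecomposable objects; assuming a $2$-cluster tilting object exists, Theorem \ref{cluster tilting completion} extends each finite partial sum to a $2$-cluster tilting object, while the consequence of Theorem \ref{derived equivalence} forces all of these to have one fixed finite number of indecomposable summands, a contradiction. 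The finitely many simple singularities carrying a singular branch — the types $A_{2k}$, $D_n$, $E_6$, $E_7$, $E_8$ — are then excluded individually by inspecting their finite Auslander-Reiten quivers.

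The main obstacle is the converse. Showing that the particular candidate $M$ fails when a branch is singular is comparatively routine; the real difficulty is ruling out \emph{every} $2$-cluster tilting object, and this is exactly what dictates the global hypotheses that $k$ be algebraically closed of characteristic zero. Concretely, the two delicate points are the construction of the infinite $\Ext^1$-orthogonal family in the infinite-type case and the case-by-case verification, for the simple singularities with a singular branch, that no maximal rigid object is $2$-cluster tilting; both ultimately rest on the fine structure of $\underline{\CM}(\Lambda)$ as a $2$-Calabi-Yau category.
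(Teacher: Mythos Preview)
Your forward implication is essentially the paper's: both reduce to Lemma \ref{criterion for n-cluster tilting 2} after noting that $M$ is a $2$-rigid generator-cogenerator. The paper, however, verifies condition (c) rather than (b): after a linear change of variables (using that $k$ is infinite) one may assume $(f_i,f_{i+1})=(x,y)$ for each $i$, and then the required length-three sequences are written down explicitly as
\[
0\to S_i\xrightarrow{\binom{f_{i+1}}{-1}}S_{i+1}\oplus S_{i-1}
\xrightarrow{\left(\begin{smallmatrix}f_i&f_if_{i+1}\\ 1&f_{i+1}\end{smallmatrix}\right)}
S_{i+1}\oplus S_{i-1}\xrightarrow{(-1\ f_i)}S_i\to 0
\]
for $1\le i<n$, together with a shorter sequence ending at $S_n=\Lambda$. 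Your route via the regularity of each branch $S/(f_i)$ and the peeling filtrations is heuristically right, but the paper's explicit sequences make the verification immediate and show that these are in fact the $2$-almost split sequences of $\add M$.

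For the converse you diverge substantially from the paper, and your plan has a genuine gap. The paper does \emph{not} argue inside $\CM(\Lambda)$ at all; it passes via Kn\"orrer periodicity to the three-dimensional hypersurface $\Lambda'=k[[x,y,u,v]]/(f+uv)$, then uses Theorem \ref{CT and NCCR} to trade the existence of a $2$-cluster tilting object for the existence of a non-commutative crepant resolution, then Van den Bergh's theorem to trade NCCR for a commutative crepant resolution of the terminal $cA_m$-singularity $\Spec\Lambda'$ (here $m=\ord(f)-1$), and finally Katz's criterion, which says such a resolution exists exactly when $f$ has $m+1$ irreducible factors, i.e.\ when every $f_i$ is smooth. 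The hypotheses ``algebraically closed, characteristic zero'' enter because Van den Bergh's and Katz's results need them. Your proposed alternative hinges on producing, for every non-ADE one-dimensional hypersurface, an \emph{infinite} family of pairwise $\Ext^1$-orthogonal indecomposable Cohen-Macaulay modules; infinite CM type does not by itself supply such a family, and you do not indicate how to build one. Without that step the contradiction via Theorem \ref{cluster tilting completion} and Theorem \ref{derived equivalence} never gets started, and the separate ADE inspection (itself nontrivial for $D_{2m+1}$, $E_7$) does not cover the generic case. The geometric detour in the paper is precisely what circumvents this difficulty.
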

We give examples for representation-finite cases \cite{Y}.

\begin{example} \label{example-F}
Let $\Lambda=k[[x,y]]/(f)$ be a simple singularity, so in characteristic zero $f$ is one of the following
\[\begin{array}{clc}
(A_m)&x^2+y^{m+1}&(m\ge 1),\\
(D_m)&x^2y+y^{m-1}&(m\ge4),\\
(E_6)&x^3+y^4,&\\
(E_7)&x^3+xy^3,&\\
(E_8)&x^3+y^5.&
\end{array}\]
By our theorem $\CM(\Lambda)$ has a 2-cluster tilting object if and only if $\Lambda$ is of type $A_m$ with odd $m$ or $D_m$ with even $m$.
\end{example}

We give a sketch of the proof of Theorem \ref{existence of cluster tilting}.

Let us prove the former assertion.
For simplicity we assume that $(f_i,f_{i+1})=(x,y)$ holds for any $1\le i<n$.
(Any case can be reduced to this case by the assumption that $k$ is an infinite field.)
Since we already observed that $M$ is a 2-rigid generator-cogenerator in $\CM(\Lambda)$,
we only have to construct the sequence in Lemma \ref{criterion for n-cluster tilting 2}(c) for each $X=S_i$ ($1\le i\le n$).
It is given by
\begin{eqnarray*}
&0\to S_i\xrightarrow{{f_{i+1}\choose -1}}S_{i+1}\oplus S_{i-1}
\xrightarrow{{f_i\ f_if_{i+1}\choose 1\ \ \ f_{i+1}}}S_{i+1}\oplus
S_{i-1}\xrightarrow{(-1\ f_i)}S_i\to 0\ \ (1\le i<n),&\\
&0\to S_{n-1}\xrightarrow{{f_n\choose -f_{n+1}}}S_n\oplus S_{n-1}
\xrightarrow{(f_{n+1}\ f_n)}S_n\ \ (i=n).&
\end{eqnarray*}
Notice that the upper sequence gives a 2-almost split sequence in $\add M$.

The proof of the converse is indirect and needs results in birational geometry. Thus our assumptions on the base field $k$ are necessary.
Let \[\Lambda':=k[[x,y,u,v]]/(f(x,y)+uv).\]
By Theorem \ref{periodicity}, the following conditions are equivalent.
\begin{itemize}
\item[(i)] $\CM(\Lambda)$ has a $2$-cluster tilting object.
\item[(ii)] $\CM(\Lambda')$ has a $2$-cluster tilting object.
\end{itemize}
By Theorem \ref{CT and NCCR}, the condition (ii) is equivalent to the following condition.
\begin{itemize}
\item[(iii)] $\Lambda'$ has a non-comutative crepant resolution.
\end{itemize}
Now let us translate the condition (iii) into a geometric condition.
The singularity $\Lambda'$ is a $cA_m$-singularity for $m:=\ord(f)-1$ in the sense that
the intersection of the hypersurface $f(x,y)+uv=0$ with a generic hyperplane $ax+by+cu+dv=0$ in $k^4$ is an $A_m$-singularity.
Since any isolated $cA_m$-singularity is a terminal singularity \cite{Re}, the condition (iii) is equivalent to the following condition by results of Van den Bergh \cite{V1,V2}.
\begin{itemize}
\item[(iv)] $\Spec(\Lambda')$ has a crepant resolution.
\end{itemize}
For isolated $cA_m$-singularities, the condition (iv) is equivalent to the following condition by a result of Katz \cite{Ka}.
\begin{itemize}
\item[(v)] The number of irreducible factors of $f$ is $m+1$.
\end{itemize}
Clearly (v) is equivalent to that $f_i\notin(x,y)^2$ for any $1\le i\le n$.
Thus we finish the proof of the converse.
Notice that we have also proved the former assertion by a different approach without giving explicit $2$-cluster tilting objects in $\CM(\Lambda)$.
\qed

\medskip
To state our classification result of $2$-cluster tilting objects in $\CM(\Lambda)$, we introduce some notations.
We denote by ${\mathfrak S}_n$ the symmetric group of degree $n$. For $w\in{\mathfrak S}_n$, we put
\[S_i^w:=S/(f_{w(1)}f_{w(2)}\cdots f_{w(n)})\ \mbox{ and }\ M_w:=\bigoplus_{i=1}^nS_i^w.\]
For a non-empty subset $I$ of $\{1,\cdots,n\}$, we put
\[S_I:=S/(\prod_{i\in I}f_i).\]
\begin{theorem}\label{n! cluster tilting}
Assume that $f_i\notin(x,y)^2$ for any $i$.
\begin{itemize}
\item[(a)] There exist exactly $n!$ basic $2$-cluster tilting objects $M_w$ ($w\in{\mathfrak S}_n$) in $\CM(\Lambda)$.
\item[(b)] There exist exactly $(2^n-1)$ indecomposable rigid objects $S_I$ ($I\subset\{1,\cdots,n\}$, $I\neq\emptyset$) in $\CM(\Lambda)$.
\end{itemize}
\end{theorem}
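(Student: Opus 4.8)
The plan is to prove both parts at once by reducing them to a single classification statement: \emph{the indecomposable rigid objects of $\CM(\Lambda)$ are exactly the modules $S_I$}. First I would record the elementary structure via matrix factorization. For a nonempty $I\subseteq\{1,\dots,n\}$ put $g_I:=\prod_{i\in I}f_i$ and $h_I:=\prod_{i\notin I}f_i$, so that $g_Ih_I=f$ and $S_I=\Lambda/(g_I)$ carries the rank-one matrix factorization $(g_I,h_I)$, yielding a $2$-periodic free resolution $\cdots\xrightarrow{g_I}\Lambda\xrightarrow{h_I}\Lambda\xrightarrow{g_I}\Lambda\to S_I\to0$. Since $\End_\Lambda(S_I)\simeq S/(g_I)$ is local, each $S_I$ is indecomposable, and there are $2^n-1$ of them. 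Applying $\Hom_\Lambda(-,S_J)$ to this resolution gives $\Ext^1_\Lambda(S_I,S_J)=\ker(h_I|_{S_J})/\operatorname{im}(g_I|_{S_J})$, and because $S$ is a unique factorization domain in which the $f_i$ are pairwise non-associate primes one computes $\ker(h_I|_{S_J})=(g_{I\cap J})/(g_J)$ and $\operatorname{im}(g_I|_{S_J})=(g_I,g_J)/(g_J)$. Hence the vanishing is equivalent to $g_{I\cap J}\in(g_I,g_J)=g_{I\cap J}\,(g_{I\setminus J},g_{J\setminus I})$, i.e.\ to $1\in(g_{I\setminus J},g_{J\setminus I})$. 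As every $f_i$ lies in $(x,y)$, this holds precisely when $I\setminus J=\emptyset$ or $J\setminus I=\emptyset$, so that
\[\Ext^1_\Lambda(S_I,S_J)=0\iff I\ \text{and}\ J\ \text{are comparable under inclusion}.\]
In particular every $S_I$ is rigid.

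Next I would check that each $M_w$ is a basic $2$-cluster tilting object. The hypothesis $f_i\notin(x,y)^2$ is insensitive to the ordering of the factors, so reindexing $f$ as $f_{w(1)}\cdots f_{w(n)}$ and invoking Theorem \ref{existence of cluster tilting} shows that $M_w=\bigoplus_{k=1}^n S/(f_{w(1)}\cdots f_{w(k)})$ is $2$-cluster tilting. Its indecomposable summands are the modules attached to the prefix sets $I_k:=\{w(1),\dots,w(k)\}$, which form a complete flag $\{w(1)\}\subsetneq\cdots\subsetneq\{1,\dots,n\}$ of nonempty subsets (with top $S_{\{1,\dots,n\}}=\Lambda$). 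Distinct permutations give distinct flags, hence non-isomorphic objects, so the $M_w$ account for exactly $n!$ cluster tilting objects.

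Granting the classification, both parts close formally. By the comparability criterion the indecomposable summands of any basic $2$-cluster tilting object, being pairwise $\Ext^1$-orthogonal and each of the form $S_I$, form a chain of nonempty subsets; since all basic $2$-cluster tilting objects in this $2$-Calabi--Yau Frobenius category have the same number $n$ of indecomposable summands (Theorem \ref{derived equivalence}), this chain must be a maximal one, i.e.\ a complete flag, so the object is some $M_w$. This yields (a). For (b), Bongartz completion in the $2$-Calabi--Yau setting (Theorem \ref{cluster tilting completion}) shows that every indecomposable rigid object is a summand of a $2$-cluster tilting object, hence of some $M_w$, hence equals some $S_I$; together with the rigidity and count established above this gives exactly $2^n-1$ indecomposable rigids.

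The main obstacle is therefore the classification itself: that \emph{no} indecomposable rigid Cohen--Macaulay module other than the $S_I$ exists. A head-on matrix-factorization analysis seems hard, so I would transport the problem to dimension three exactly as in the converse half of Theorem \ref{existence of cluster tilting}. Kn\"orrer periodicity (Theorem \ref{periodicity}) supplies a triangle equivalence $\underline{\CM}(\Lambda)\simeq\underline{\CM}(\Lambda')$ with $\Lambda'=k[[x,y,u,v]]/(f+uv)$ a three-dimensional isolated $cA_m$-singularity for $m=\ord(f)-1=n-1$, under which rigid objects correspond to rigid objects. Over the three-dimensional symmetric order $\Lambda'$, Theorem \ref{CT and NCCR} identifies the $2$-cluster tilting objects with the modules giving a non-commutative crepant resolution, and through the results of Van den Bergh and Katz invoked earlier these are governed by the geometry of the crepant resolution of $\Spec\Lambda'$, whose exceptional data is indexed combinatorially by the $n$ branches of $f$. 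The crux is to match this geometric bookkeeping with the subsets $I$ and to pull the resulting list back through the Kn\"orrer equivalence, thereby identifying all rigid indecomposables over $\Lambda$ with the $S_I$. Once this identification is secured, the comparability criterion and the completion argument of the previous paragraph finish both statements.
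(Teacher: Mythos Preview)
Your comparability criterion $\Ext^1_\Lambda(S_I,S_J)=0\iff I\subseteq J$ or $J\subseteq I$ is correct and elegant, and your reduction of both parts to the single claim ``every indecomposable rigid object is some $S_I$'' is valid. The gap is that you do not prove this classification. The geometric route you sketch does not deliver it: the results of Van den Bergh and Katz invoked in the converse of Theorem~\ref{existence of cluster tilting} tell you only that $\Spec\Lambda'$ admits a crepant resolution, and Theorem~\ref{CT and NCCR} relates $2$-cluster tilting objects to modules giving NCCRs, not arbitrary rigid objects to anything geometric. ``Matching geometric bookkeeping with subsets $I$'' is a hope, not an argument; there is no standard statement that the indecomposable rigid MCM modules over an isolated $cA_m$-singularity are parametrised by nonempty subsets of the branches.

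The paper avoids this classification entirely and proceeds in the opposite order. Fixing one $2$-cluster tilting object $M_w$ and $\Gamma:=\End_\Lambda(M_w)$, Theorem~\ref{derived equivalence} sends every basic $2$-cluster tilting object $N$ to a basic Cohen--Macaulay tilting $\Gamma$-module $\Hom_\Lambda(M_w,N)$ of projective dimension at most one, so part (a) reduces to classifying such tilting modules. This is done purely algebraically via tilting mutation theory (Riedtmann--Schofield, Happel--Unger): the partial order on tilting modules has $\Gamma$ as largest element, neighbours differ by a single summand, and any $T>U$ is connected to $U$ by a descending mutation chain. The key observation is that $P_\Lambda=\Hom_\Lambda(M_w,\Lambda)$ is projective-injective in $\CM(\Gamma)$, hence a forced summand of every Cohen--Macaulay tilting module; this caps the number of mutation neighbours at $n-1$, and since each $P_{M_{w'}}$ already has $n-1$ such neighbours $P_{M_{w's_i}}$, the mutation graph on $\{P_{M_{w'}}:w'\in\mathfrak S_n\}$ is closed. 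Finiteness of $\mathfrak S_n$ then forces any descending chain from $\Gamma$ to terminate inside this set, proving (a); part (b) follows from (a) and Theorem~\ref{cluster tilting completion}. If you want to salvage your approach, you would need an independent proof that rigid indecomposables are exactly the $S_I$; but the tilting-mutation argument is what actually closes the problem.
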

We shall give a sketch of proof.
Thanks to Theorem \ref{cluster tilting completion}, we only have to show the part (a).
Fix $w\in{\mathfrak S}_n$ and put $\Gamma:=\End_\Lambda(M_w)$.
Then $\Gamma$ forms a $\Lambda$-order and we have a functor
\[P_-:=\Hom_\Lambda(M_w,-):\CM(\Lambda)\to\CM(\Gamma).\]
Since $M_w$ contains $\Lambda$ as a direct summand, we have that the functor $P_-$ is fully faithful.
Moreover, Theorem \ref{derived equivalence} shows that the image $P_N$ of any basic $2$-cluster tilting object $N$ in $\CM(\Lambda)$ is a basic tilting $\Gamma$-module of projective dimension at most one.
Thus we only have to classify all basic Cohen-Macaulay tilting $\Gamma$-modules of projective dimension at most one.
It is given by the following result.
\begin{theorem}\label{n! tilting modules}
Let $w\in\mathfrak{S}_n$. Then there exist exactly $n!$ basic Cohen-Macaulay tilting $\End_\Lambda(M_w)$-modules $\Hom_\Lambda(M_w,M_{w'})$ ($w'\in{\mathfrak S}_n$) of projective dimension at most one.
\end{theorem}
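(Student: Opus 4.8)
The plan is to use the fully faithful functor $P_-=\Hom_\Lambda(M_w,-)$ to match basic Cohen-Macaulay tilting $\Gamma$-modules of projective dimension at most one with basic $2$-cluster tilting objects of $\CM(\Lambda)$, and then to classify the latter combinatorially. The easy direction first: for each $w'\in\mathfrak{S}_n$ the module $M_{w'}=\bigoplus_iS_i^{w'}$ is $2$-cluster tilting, since reordering the irreducible factors as $f=f_{w'(1)}\cdots f_{w'(n)}$ reduces this to the former assertion of Theorem \ref{existence of cluster tilting}, whose hypothesis $f_i\notin(x,y)^2$ is symmetric in the factors. Hence by Theorem \ref{derived equivalence} each $T_{w'}:=\Hom_\Lambda(M_w,M_{w'})$ is a tilting $\Gamma$-module of projective dimension at most one with $\End_\Gamma(T_{w'})\simeq\End_\Lambda(M_{w'})$, and it is Cohen-Macaulay because $\Hom_\Lambda(X,Y)\in\CM(R)$ for $X,Y\in\CM(\Lambda)$ when $d\le2$. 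As $P_-$ is fully faithful and the $M_{w'}$ are pairwise non-isomorphic, the $T_{w'}$ are pairwise non-isomorphic, giving at least $n!$ such modules.

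For the reverse inclusion I would first show that $P_-$ restricts to an equivalence $\CM(\Lambda)\xrightarrow{\sim}\CM^{\le1}(\Gamma):=\{Y\in\CM(\Gamma)\mid\pd_\Gamma Y\le1\}$. Applying $P_-$ to the resolution $0\to C_1\to C_0\to X\to0$ of Proposition \ref{M-approximation} (taken with $n=2$) gives $\pd_\Gamma P_X\le1$, so the image lies in $\CM^{\le1}(\Gamma)$; for essential surjectivity I would lift a projective presentation of a given $Y$ to a map $g$ in $\add M_w$, put $N:=\Cok g$, and argue as in the proof of Theorem \ref{higher auslander correspondence}(a) that $N\in\CM(\Lambda)$ and $P_N\simeq Y$. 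Granting this, any basic Cohen-Macaulay tilting $\Gamma$-module $T$ of projective dimension at most one has the form $T=P_N$ for a unique $N\in\CM(\Lambda)$, and the tilting axioms for $T$ should translate---through full faithfulness of $P_-$ and the characterization in Lemma \ref{criterion for n-cluster tilting 2}---into $N$ being a $2$-cluster tilting object. Checking that rigidity and projective dimension at most one of $T$ force $N$ to be $2$-rigid with $\gl\End_\Lambda(N)\le3$ (sharper than the bound $\le4$ that derived equivalence alone provides) is a delicate point, and here I would exploit that $\Gamma$ is an isolated-singularity order satisfying the two-sided $(2,3)$-condition, not merely an algebra of finite global dimension.

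It then remains to classify the $2$-cluster tilting objects of $\CM(\Lambda)$ directly. In the $2$-Calabi-Yau Frobenius category $\CM(\Lambda)$ these coincide with the maximal $2$-rigid objects, the nontrivial direction being Theorem \ref{cluster tilting completion}. I would use that the indecomposable $2$-rigid objects are exactly the modules $S_I=S/(\prod_{i\in I}f_i)$ for non-empty $I\subseteq\{1,\dots,n\}$, and that the explicit matrix factorizations yield $\Ext^1_\Lambda(S_I,S_{I'})=0$ if and only if $I$ and $I'$ are nested. Since $\Lambda=S_{\{1,\dots,n\}}$ is projective-injective in $\CM(\Lambda)$ (as $\Lambda$ is Gorenstein), a $2$-cluster tilting object, being a generator-cogenerator, contains it; and being a maximal family of pairwise-nested $S_I$'s it is a maximal chain $\{w'(1)\}\subset\{w'(1),w'(2)\}\subset\cdots\subset\{1,\dots,n\}$ of non-empty subsets, hence corresponds to a unique $w'\in\mathfrak{S}_n$ and equals $M_{w'}$. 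This produces exactly $n!$ objects and finishes the count. The main obstacle is the classification of the indecomposable $2$-rigid objects as the $S_I$ (together with the essential image of $P_-$): this rests on the matrix-factorization description of $\CM(\Lambda)$ and ultimately on the geometry of the $cA_m$-singularity entering the converse of Theorem \ref{existence of cluster tilting}, and is substantially deeper than the formal steps around it.
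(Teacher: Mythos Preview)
Your first paragraph (producing $n!$ distinct CM tilting modules) is correct and matches the paper. For the converse the paper argues entirely on the $\Gamma$-side, using the tilting-mutation theory of Riedtmann--Schofield and Happel--Unger: since $P_\Lambda$ is projective-injective in $\CM(\Gamma)$ it is a summand of every basic CM tilting module, so such a module has at most $n-1$ CM tilting mutations; on the other hand each $P_{M_{w'}}$ has exactly the $n-1$ CM mutations $P_{M_{w's_i}}$ ($1\le i\le n-1$). Given any basic CM tilting $U$ of $\pd\le1$, the Happel--Unger partial order produces a descending mutation chain from $\Gamma=P_{M_w}$ towards $U$; inductively every term is of the form $P_{M_{w_i}}$, and finiteness of $\mathfrak{S}_n$ forces the chain to terminate at $U$. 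No passage back to $2$-cluster tilting objects is needed.

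Your route, by contrast, has two genuine gaps. First, the step ``$T=P_N$ tilting $\Rightarrow$ $N$ is $2$-rigid'' is not justified: from an $\add M_w$-resolution $0\to C_1\to C_0\to N\to0$ one obtains only an injection $\Ext^1_\Gamma(P_N,P_Y)\hookrightarrow\Ext^1_\Lambda(N,Y)$ (the cokernel lies in $\Ext^1_\Lambda(C_0,Y)$, which need not vanish for $Y=N$), so $\Ext^1_\Gamma(T,T)=0$ does not force $\Ext^1_\Lambda(N,N)=0$; your appeal to the two-sided $(2,3)$-condition does not close this. Second, and more seriously, your endgame presupposes that the indecomposable rigid objects are exactly the $S_I$. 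In the paper this is Theorem~\ref{n! cluster tilting}(b), which is \emph{deduced from} Theorem~\ref{n! tilting modules} via Theorem~\ref{cluster tilting completion}; invoking it here is circular. The geometric and matrix-factorization input you allude to is used in the paper only to establish the existence of \emph{one} $2$-cluster tilting object (the converse in Theorem~\ref{existence of cluster tilting}), not to enumerate all rigid objects.
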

It is interesting to compare with Corollary \ref{extended dynkin case}, where a bijection between tilting modules of projective dimension at most one
and elements in an affine Weyl group is given.

The ingredient of the proof of Theorem \ref{n! tilting modules} is the theory of `tilting mutation' developed by Riedtmann-Schofield \cite{RS} and Happel-Unger \cite{HU1,HU2}.
Although they deal with finite-dimensional algebras, their theory is valid also for our $\Gamma$.
Let us recall several results.
\begin{itemize}
\item[(a)] The set of basic tilting modules of projective dimension at most one has a natural partial order defined as follows.
For basic tilting $\Gamma$-modules $T$ and $U$ of projective dimension at most one, we define
\[T\ge U\]
if $\Ext^1_\Gamma(T,U)=0$ for any $i>0$.
Clearly $\Gamma$ is the unique maximal element.
\item[(b)] Let $T=\bigoplus_{i=1}^nT_i$ be a basic tilting $\Gamma$-module of projective dimension at most one.
A \emph{tilting mutation} of $T$ is a tilting $\Gamma$-module $U$ of projective dimension at most one such that
$T$ and $U$ have exactly $(n-1)$ common indecomposable direct summands.
It is shown in \cite{RS} that for any $i$ ($1\le i\le n$), there exists at most one tilting mutation of $T$ obtained by replacing $T_i$.
\item[(c)] Two tilting $\Gamma$-modules $T$ and $U$ of projective dimension at most one are neighbours with respect to the partial order if and only if they are in the relationship of tilting mutation \cite{HU2}.
\item[(d)] If $T>U$, then there exists a sequence $T=T_0>T_1>T_2>\cdots>U$ of tilting $\Gamma$-modules of projective dimension at most one satisfying the following conditions
\begin{itemize}
\item[$\bullet$] $T_{i+1}$ is a tilting mutation of $T_i$,
\item[$\bullet$] either $T_i=U$ for some $i$ or the sequence is infinite.
\end{itemize}
\end{itemize}
The $\Gamma$-module $P_\Lambda$ is projective-injective in $\CM(\Gamma)$ since we have $\Hom_\Lambda(P_\Lambda,\Lambda)=M_w=\Hom_\Lambda(\Lambda,M_w)$.
Consequently, any Cohen-Macaulay tilting $\Gamma$-module $T$ of projective dimension at most one has $P_\Lambda$
as an indecomposable direct summand, since there exists an exact sequence
\[0\to P_\Lambda\to T_0\to T_1\to0\]
with $T_i\in\add T$, which must split.
Thus the above observation (b) implies that
\begin{itemize}
\item any Cohen-Macaulay tilting $\Gamma$-module has at most $(n-1)$ tilting mutations which are Cohen-Macaulay.
\end{itemize}
On the other hand, $M_{w'}$ and $M_{w's_i}$ ($1\le i\le n-1$) have exactly $(n-1)$ common indecomposable direct summands.
Thus $P_{M_{w'}}$ has at least $(n-1)$ tilting mutations $P_{M_{w's_i}}$ with $1\le i\le n-1$.
Consequently, we have that
\begin{itemize}
\item any Cohen-Macaulay tilting $\Gamma$-module of the form $P_{M_{w'}}$ has exactly $(n-1)$ tilting mutations $P_{M_{w's_i}}$ ($1\le i\le n-1$) which are Cohen-Macaulay.
\end{itemize}
Now we can finish our proof of Theorem \ref{n! tilting modules} as follows.
Take any basic tilting $\Gamma$-module $U$ of projective dimension at most one.
Since $\Gamma>U$, we have a sequence
$\Gamma=T_0>T_1>T_2>\cdots>U$ satisfying the properties in (d) above.
Since $\Gamma=P_{M_w}$, the above observation implies that each $T_i$ has the form $T_i=P_{M_{w_i}}$ for some $w_i\in\mathfrak{S}_n$.
Since $T_i\ {\not\simeq}\ T_j$ for any $i\neq j$, we have $w_i\neq w_j$ for any $i\neq j$. Moreover, since ${\mathfrak S}_n$ is a finite group, the sequence must be finite.
Thus $U=T_i=P_{M_{w_i}}$ for some $i$, and the proof is completed.
\qed

\medskip
We give a few remarks on the 2-CY tilted algebra $\underline{\End}_\Lambda(M)$
with $M$ from Theorem \ref{existence of cluster tilting}.
Since $\CM(\Lambda)$ is 2-CY and $\Omega^2\simeq 1_{\underline{\CM}(\Lambda)}$ by Theorem \ref{2-periodic}, we have
\[\underline{\End}_\Lambda(M)\simeq D\Ext^2_\Lambda(M,M)\simeq D\underline{\Hom}_\Lambda(M,M).\]
Thus we have the first assertion in the following result.
\begin{proposition}
Assume that $f_i\notin(x,y)^2$ for any $1\le i\le n$.
Then $\underline{\End}_\Lambda(M)$ is a finite-dimensional symmetric algebra satisfying $\tau^2\simeq\id_{\underline{\mod}(\underline{\End}_\Lambda(M))}$.
Its quiver is
\[\xymatrix@C0.5cm@R0.5cm{
    &S_1\ar@<0.5ex>[r]& S_2\ar@<0.5ex>[l]\ar@<0.5ex>[r]& \cdots\ar@<0.5ex>[l]\ar@<0.5ex>[r]& S_{n-2}\ar@<0.5ex>[l]\ar@<0.5ex>[r]& S_{n-1}\ar@<0.5ex>[l]&}\]
where in addition there is a loop at $S_i$ ($1\le i<n$) if and only if $(f_i,f_{i+1})\neq(x,y)$.
\end{proposition}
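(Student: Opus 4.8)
The plan is to treat the three assertions separately, with the algebra $\Gamma:=\underline{\End}_\Lambda(M)$ as the central object. That $\Gamma$ is a \emph{finite-dimensional symmetric} algebra is essentially already in hand: finite-dimensionality over $k$ holds because $\Lambda$ is an isolated singularity, so every stable Hom-space $\underline{\Hom}_\Lambda(X,Y)$ has finite length over $R$ and hence finite $k$-dimension; and the displayed isomorphism $\underline{\End}_\Lambda(M)\simeq D\underline{\Hom}_\Lambda(M,M)$ preceding the statement, which comes from the $2$-Calabi-Yau duality on $\underline{\CM}(\Lambda)$ together with $\Omega^2\simeq 1$ (Theorem \ref{2-periodic}), is exactly an isomorphism $\Gamma\simeq D\Gamma$ of $(\Gamma,\Gamma)$-bimodules, i.e. symmetry.

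For $\tau^2\simeq\id$ I would exploit that $\underline{\mod}\Gamma$ carries two Serre structures. On one hand $\Gamma$ is symmetric, hence self-injective, so on the stable category one has $\tau\simeq\Omega^2$ and $\underline{\mod}\Gamma$ is $(-1)$-Calabi-Yau, i.e. $\Sigma^{-1}$ is a Serre functor (the case $d=0$ of Proposition \ref{CM is (d-1)-Calabi-Yau}). On the other hand $\Gamma$ is a $2$-CY tilted algebra, being the stable endomorphism algebra of the $2$-cluster tilting object $M$ in the $2$-CY category $\underline{\CM}(\Lambda)$, so by Theorem \ref{properties of 2-CY}(b) the Frobenius category $\Sub\Gamma$ is $3$-CY; since $\Gamma$ is self-injective every module embeds in a free module, whence $\Sub\Gamma=\mod\Gamma$ and therefore $\underline{\mod}\Gamma$ is $3$-Calabi-Yau, i.e. $\Sigma^3$ is a Serre functor. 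A Serre functor is unique up to isomorphism, so $\Sigma^3\simeq\Sigma^{-1}$, giving $\Sigma^4\simeq\id$ and hence $\Omega^4\simeq\id$. Combining with $\tau\simeq\Omega^2$ yields $\tau^2\simeq\Omega^4\simeq\id$, as required.

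For the quiver, the vertices are the non-projective indecomposable summands of $M$: since $S_0=0$ and $S_n=S/(f)=\Lambda$ is projective, these are exactly $S_1,\dots,S_{n-1}$. The double arrows $S_i\leftrightarrow S_{i+1}$ I would read off from the $2$-almost split sequences exhibited in the proof of Theorem \ref{existence of cluster tilting}, which moreover show $\tau_2S_i\simeq S_i$: the source map $S_i\to S_{i+1}\oplus S_{i-1}$ and the sink map $S_{i+1}\oplus S_{i-1}\to S_i$ supply one irreducible map in each direction between consecutive $S_i$, while at the right-hand end the neighbour $S_n=\Lambda$ is projective and dies in $\underline{\add M}$, so $S_{n-1}$ is a terminal vertex; this gives precisely the linear $A_{n-1}$ shape with arrows both ways.

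The delicate point, and the place I expect to do real work, is the loops. Here I would compute $\End_\Lambda(S_i)\simeq S_i$ (every endomorphism of the cyclic module $S/(f_1\cdots f_i)$ is a multiplication), identify the composites through the two neighbours as multiplication by $f_{i+1}$ (via $S_i\to S_{i+1}\to S_i$) and by $f_i$ (via $S_i\to S_{i-1}\to S_i$), and conclude that the loop multiplicity at $S_i$ equals $\dim_k\bigl(\mathfrak m_i/(\mathfrak m_i^2+(f_i,f_{i+1}))\bigr)$ computed in the stable endomorphism ring $\underline{\End}_\Lambda(S_i)$. Since $f_i,f_{i+1}\notin(x,y)^2$, their classes span $\mathfrak m_i/\mathfrak m_i^2$ exactly when their linear parts are independent, i.e. exactly when $(f_i,f_{i+1})=(x,y)$; in that case no loop survives, and otherwise a single loop does. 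The main obstacle is the bookkeeping in the stable category: one must check that passing to $\underline{\End}$ (quotient by maps factoring through $\add\Lambda$) and the contributions of longer radical paths do not alter this count, for which the normalization to the situation $(f_i,f_{i+1})=(x,y)$ and the hypothesis that $k$ is infinite, both already used in Theorem \ref{existence of cluster tilting}, are the right tools.
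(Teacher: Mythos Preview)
Your argument for the symmetric-algebra assertion is exactly the paper's: the paper records the isomorphism $\underline{\End}_\Lambda(M)\simeq D\underline{\Hom}_\Lambda(M,M)$ from the $2$-Calabi--Yau duality together with $\Omega^2\simeq\id$, and then says ``thus we have the first assertion in the following result''. That is the entirety of the paper's proof; the remaining assertions ($\tau^2\simeq\id$ and the quiver description) are stated without proof.

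So your proposal in fact goes further than the paper. Your argument for $\tau^2\simeq\id$ is correct and elegant: $\Gamma$ is symmetric, hence $\underline{\mod}\Gamma$ is $(-1)$-CY; $\Gamma$ is $2$-CY tilted and self-injective, so $\Sub\Gamma=\mod\Gamma$ and Theorem~\ref{properties of 2-CY}(b) makes $\underline{\mod}\Gamma$ also $3$-CY; uniqueness of Serre functors gives $\Sigma^4\simeq\id$, whence $\tau^2\simeq\Omega^4\simeq\id$.

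For the quiver, your outline is reasonable, but note one point. The displayed $2$-almost split sequences in the proof of Theorem~\ref{existence of cluster tilting} are written under the normalization $(f_i,f_{i+1})=(x,y)$ for all $i$, which is precisely the no-loop case; you cannot read the loop criterion off those sequences directly. Your direct computation with $\End_\Lambda(S_i)\simeq S_i$ and the identification of compositions through $S_{i\pm1}$ with multiplication by $f_i$ and $f_{i+1}$ is the right route. The passage to the stable algebra is harmless for $i<n-1$ (no nonzero endomorphism of $S_i$ factors through $\Lambda$), while for $i=n-1$ the ideal of maps factoring through $\Lambda$ in $\End_\Lambda(S_{n-1})$ is exactly $(f_n)S_{n-1}$, so killing it plays the same role as the missing neighbour $S_n$ and your count $\dim_k\bigl(\mathfrak m/(\mathfrak m^2+(f_i,f_{i+1}))\bigr)$ goes through uniformly.
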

This result is interesting from the viewpoint of the following conjecture of Crawley-Boevey on finite-dimensional algebras $\Lambda$ over algebraically closed fields,
where `only if' part is proved in \cite{CB1},
\begin{itemize}
\item $\Lambda$ is representation-tame if and only if all but a finite number of indecomposable modules $X$ of fixed dimension satisfies $\tau X\simeq X$.
\end{itemize}
Since $\Lambda$ is representation-wild for $n>4$ \cite{DG}, 
Theorem \ref{properties of 2-CY}(c) sugests that 
the algebra $\underline{\End}_\Lambda(M)$ should be representation-wild for $n>4$.
Note that there are examples of representation-wild self-injective algebras $\Lambda$ satisfying $\tau^3\simeq\id_{\underline{\mod}\Lambda}$ \cite{AR-DTr}.

\begin{example} \label{example-G}
We shall give examples of 2-CY tilted algebras (with $M$ from Theorem \ref{existence of cluster tilting}).

(a) Let $\Lambda$ be a simple singularity of type $A_{2m-1}$, so \[\Lambda=k[[x,y]]/((x-y^m)(x+y^m)).\]
By Theorem \ref{n! cluster tilting}, there exist exactly two 2-cluster tilting objects in $\CM(\Lambda)$,
and $\underline{\End}_\Lambda(M)$ is isomorphic to $k[x]/(x^m)$.

(b) Let $\Lambda$ be a simple singularity of type $D_{2m}$, so \[\Lambda=k[[x,y]]/((x-y^{m-1})(x+y^{m-1})y).\]
By Theorem \ref{n! cluster tilting}, there exist exactly six 2-cluster tilting objects in $\CM(\Lambda)$,
and $\underline{\End}_\Lambda(M)$ is given by the quiver
\[\xymatrix{
    \cdot \ar@(dl,ul)[]^{\varphi} \ar@<0.4ex>[r]^{\alpha}&
    \cdot\ar@<0.4ex>[l]^{\beta}}\]
with relations
\[\varphi^{m-1}=\alpha\beta,\ \varphi\alpha=\beta\varphi=0.\]

(c) Let $\Lambda$ be a minimally elliptic curve singularity of type $T_{3,2q+2}$ with $q \ge 3$, so \[\Lambda=k[[x,y]]/((x-y^q)(x+y^q)(x-y^2)).\]
By Theorem \ref{n! cluster tilting}, there exist exactly six 2-cluster tilting objects in $\CM(\Lambda)$, and $\underline{\End}_\Lambda(M)$ is given by the quiver
\[\xymatrix{
    \cdot \ar@(dl,ul)[]^{\varphi} \ar@<0.4ex>[r]^{\alpha}&
    \cdot\ar@<0.4ex>[l]^{\beta}  \ar@(dr,ur)[]_{\psi}}\]
with relations
\[\alpha\beta = \varphi^2,\ \beta\alpha=\psi^q,\ \alpha\psi = \varphi\alpha,\  \psi\beta = \beta\varphi.\]

(d) Let $\Lambda$ be a minimally elliptic curve singularity of type $T_{2p+2,2q+2}$ with $p,q \ge 1$ and $(p,q)\neq(1,1)$, so \[\Lambda=S/((x-y^q)(x+y^q)(x^p-y)(x^p+y)).\]
By Theorem \ref{n! cluster tilting}, there exist exactly twenty four 2-cluster tilting objects in $\CM(\Lambda)$, and $\underline{\End}_\Lambda(M)$ is given by the quiver
\[\xymatrix{
    \cdot \ar@(dl,ul)[]^{\varphi}   \ar@<0.4ex>[r]^{\alpha}&
    \cdot\ar@<0.4ex>[l]^{\beta}  \ar@<0.4ex>[r]^{\gamma} &
\cdot\ar@<0.4ex>[l]^{\delta} \ar@(dr,ur)[]_{\psi}   }\]
with relations
\[\alpha\beta = \varphi^p,\ \delta\gamma = \psi^q,\
\varphi\alpha = \alpha\gamma\delta ,\ \beta\varphi = \gamma\delta\beta,\
\psi\delta = \delta\beta\alpha,\ \gamma\psi = \beta\alpha\gamma.\]
Note that the algebras in (c) and (d) appear in Erdmann's list of algebras of quaternion type \cite{Er}.
\end{example}

Finally we give an application to Krull dimension three.
Again, let $\Lambda':=k[[x,y,u,v]]/(f(x,y)+uv)$.
For $w\in\mathfrak{S}_n$, we put
\[U_i^w:=(u,f_{w(1)}f_{w(2)}\cdots f_{w(n)})\subset\Lambda'\ \mbox{ and }\ M'_w:=\bigoplus_{i=1}^nU_i^w.\]
For a non-empty subset $I$ of $\{1,\cdots,n\}$, we put
\[U_I:=(u,\prod_{i\in I}f_i)\subset\Lambda'.\]
We have the following results from Theorems \ref{periodicity}, \ref{n! cluster tilting}, \ref{CT and NCCR} and \ref{derived equivalence}.
\begin{theorem}
Assume that $f_i\notin(x,y)^2$ for any $i$.
\begin{itemize}
\item[(a)] There exist exactly $n!$ basic $2$-cluster tilting objects $M'_w$ ($w\in{\mathfrak S}_n$) in $\CM(\Lambda')$.
\item[(b)] There exist exactly $(2^n-1)$ indecomposable rigid objects $U_I$ ($I\subset\{1,\cdots,n\}$, $I\neq\emptyset$) in $\CM(\Lambda')$.
\item[(c)] $\Lambda'$ has non-commutative crepant resolutions $\End_{\Lambda'}(M'_w)$ ($w\in{\mathfrak S}_n$),
which are mutually derived equivalent 3-CY algebras.
\end{itemize}
\end{theorem}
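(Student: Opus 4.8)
The plan is to deduce every assertion by transporting the one-dimensional picture of Theorem \ref{n! cluster tilting} across the Kn\"orrer periodicity equivalence of Theorem \ref{periodicity}. First I would record the nature of $\Lambda'=k[[x,y,u,v]]/(f+uv)$: it is a three-dimensional complete local hypersurface, hence Gorenstein, and its singular locus is cut out by $\partial_xf=\partial_yf=u=v=0$, so it is an isolated singularity precisely because $\Lambda=k[[x,y]]/(f)$ is. Being a commutative Gorenstein Cohen--Macaulay complete local ring that is module-finite over a regular local subring $R$, it is a symmetric $R$-order which is an isolated singularity, so by Proposition \ref{CM is (d-1)-Calabi-Yau} the stable category $\underline{\CM}(\Lambda')$ is a $2$-CY triangulated category (here $d=3$, so $d-1=2$). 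All three parts will then follow once the Kn\"orrer equivalence is understood on the distinguished objects.

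The computational heart is to identify the triangle equivalence $G\colon\underline{\CM}(\Lambda)\xrightarrow{\sim}\underline{\CM}(\Lambda')$ of Theorem \ref{periodicity} on these objects. Writing $g=f_{w(1)}\cdots f_{w(i)}$ and $h=f_{w(i+1)}\cdots f_{w(n)}$, so that $gh=f$, the module $S_i^w=\Cok(g\colon\Lambda\to\Lambda)$ carries the rank-one matrix factorization $(g,h)$, and by the explicit formula in Theorem \ref{periodicity} its image is $\Cok\left(\begin{smallmatrix}g&u\\ v&-h\end{smallmatrix}\right)$ over $\Lambda'$. Mapping the two generators to $h$ and $u$ and using $f=-uv$ in $\Lambda'$ exhibits a surjection onto the ideal $(u,h)$; since both sides are maximal Cohen--Macaulay of rank one over the normal domain $\Lambda'$, this surjection is an isomorphism. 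Thus $G(S_i^w)\cong(u,f_{w(i+1)}\cdots f_{w(n)})$, and more generally $G(S_I)\cong U_{I^c}$, where $I^c$ denotes the complement of $I$ (with $U_\emptyset=(u,1)\cong\Lambda'$). Writing $w^*$ for the reversal of $w$, this reads $G(S_i^w)\cong U_{n-i}^{w^*}$, whence $G(M_w)\cong M'_{w^*}$, the free summand $U_n^{w^*}=(u,f)\cong\Lambda'$ coming from $S_n^w=\Lambda$; both $w\mapsto w^*$ and $I\mapsto I^c$ are bijections.

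For parts (a) and (b) I would use that $\CM(\Lambda)$ and $\CM(\Lambda')$ are Frobenius categories each with a single indecomposable projective-injective object—the local rings $\Lambda=S_{\{1,\dots,n\}}$ and $\Lambda'\cong U_{\{1,\dots,n\}}=(u,f)$—so that deleting, respectively adjoining, this object gives a bijection between basic $2$-cluster tilting objects of the Frobenius category and of its stable $2$-CY category, and likewise between their non-projective indecomposable rigid objects. Since $G$ is a triangle equivalence it preserves $2$-cluster tilting objects, rigidity, and indecomposability in the stable category. Hence Theorem \ref{n! cluster tilting}(a), giving exactly $n!$ basic $2$-cluster tilting objects $M_w$ in $\CM(\Lambda)$, transports via $G(M_w)\cong M'_{w^*}$ to exactly $n!$ basic $2$-cluster tilting objects $M'_w$ in $\CM(\Lambda')$, proving (a); and the $2^n-1$ indecomposable rigid objects $S_I$ of Theorem \ref{n! cluster tilting}(b) (of which $S_{\{1,\dots,n\}}=\Lambda$ is the projective one) transport via $G(S_I)\cong U_{I^c}$ to exactly $2^n-1$ indecomposable rigid objects $U_I$, proving (b).

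Part (c) is then formal. Each $M'_w$ is a $(d-1)$-cluster tilting, generator-cogenerator object in $\CM(\Lambda')$ (as $d=3$), so since $\Lambda'$ is a symmetric $R$-order that is an isolated singularity, Theorem \ref{CT and NCCR}(a) shows $\Lambda'$ has a non-commutative crepant resolution and $\End_{\Lambda'}(M'_w)$ is one; Proposition \ref{(d-1) and d} then gives that each $\End_{\Lambda'}(M'_w)$ is a non-singular symmetric $R$-order, hence a $3$-CY algebra. Finally, applying Theorem \ref{derived equivalence} with $d=3$ to the pair $M'_w,M'_{w'}$ shows that $\Hom_{\Lambda'}(M'_w,M'_{w'})$ is a tilting $\End_{\Lambda'}(M'_w)$-module of projective dimension at most one with endomorphism ring $\End_{\Lambda'}(M'_{w'})$, so all these algebras are mutually derived equivalent. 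The main obstacle is the matrix-factorization bookkeeping of the second paragraph: identifying $G(S_i^w)$ and $G(S_I)$ with the precise ideals $U_{n-i}^{w^*}$ and $U_{I^c}$ (rather than their syzygies or shifts, using $\Omega^2\simeq\id$ of Theorem \ref{2-periodic}), and correctly tracking the unique projective-injective summand when passing between each Frobenius category and its stable category.
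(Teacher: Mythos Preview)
Your proposal is correct and follows the same route the paper indicates: transport Theorem \ref{n! cluster tilting} through Kn\"orrer periodicity (Theorem \ref{periodicity}) for (a) and (b), then invoke Theorem \ref{CT and NCCR}, Proposition \ref{(d-1) and d}, and Theorem \ref{derived equivalence} for (c). The paper itself gives no argument beyond citing these results, whereas you supply the explicit identification $G(S_i^w)\cong U_{n-i}^{w^*}$ via matrix factorizations and carefully track the projective summands $S_n^w=\Lambda$ and $U_n^{w^*}=(u,f)=(u)\cong\Lambda'$ when passing between each Frobenius category and its stable category---this extra bookkeeping is exactly what is needed to turn the cited statements into a proof, and your handling of it is sound.
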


\frenchspacing

\end{document}